\newcommand{\hl}[1]{#1}
\newcommand{\hlm}[1]{#1}
\newcommand{\hlp}[1]{#1}
\newcommand{\hlc}[1]{#1}
\definecolor{darkbrown}{RGB}{170,100,0} 
\definecolor{darkdarkbrown}{RGB}{110,70,0} 
 \renewcommand*{\backrefalt}[4]{%
    \ifcase #1%
     \or (page:~#2)%
     \else (pages:~#2)%
    \fi%
    }
\def\@fnsymbol#1{\ensuremath{\ifcase#1\or \dagger\or \ddagger\or
   \mathsection\or \mathparagraph\or \|\or **\or \dagger\dagger
   \or \ddagger\ddagger \else\@ctrerr\fi}}
\newtheorem{theorem}{Theorem}[section]
\newtheorem{lemma}{Lemma}[section]
\newtheorem{definition}{Definition}[section]
\newtheorem{proposition}{Proposition}[section]
\newtheorem{example}{Example}[section]
\newtheorem{corollary}{Corollary}[section]
\newtheorem{remark}{Remark}[section]
\providecommand{\keywords}[1]
{
  \small	
  \textbf{\textit{Keywords---}} #1
}
\newcommand{\set}[1]{\left\{ #1 \right\}}
\newcommand{\R}{\mathbb{R}}
\newcommand{\Z}{\mathbb{Z}}
\definecolor{darkgreen}{rgb}{0.2, 0.5, 0.2}
\DeclareMathOperator{\diag}{diag}
\DeclareMathOperator*{\argmin}{arg\,min}
\DeclareMathOperator*{\argmax}{arg\,max}
\DeclareMathOperator{\Span}{Span}
\DeclareMathOperator{\Ker}{Ker}
\DeclareMathOperator{\Range}{Im}
\DeclareMathOperator{\Id}{Id}
\newcommand{\maxentmc}{U}
\newcommand{\pred}[1]{\delta\left[#1\right]}
\newcommand{\eqdef}{\triangleq}
\newcommand{\N}{\mathbb{N}}
\newcommand{\trn}{^\intercal}
\newcommand{\abs}[1]{\left| #1 \right|}
\newcommand{\PR}[2][]{\mathbb{P}_{#1}\left( #2 \right)}
\newcommand{\E}[2][]{\mathbb{E}_{#1}\left[ #2 \right]}
\newcommand{\eps}{\varepsilon}
\newcommand{\stoch}{\mathfrak{s}}
\newcommand{\kl}[2]{D\left(#1 \| #2\right)}
\newcommand{\kld}[2]{D^\star\left(#1 \| #2\right)}
\newcommand{\at}[1]{\Bigr|_{#1}}
\newcommand{\cyc}{_{\mathsf{cyc}}}
\newcommand{\rev}{_{\mathsf{rev}}}
\newcommand{\sym}{_{\mathsf{sym}}}
\newcommand{\bis}{_{\mathsf{bis}}}
\newcommand{\iid}{_{\mathsf{iid}}}
\newcommand{\fshr}{\mathfrak{g}}
\newcommand{\calX}{\mathcal{X}}
\newcommand{\calY}{\mathcal{Y}}
\newcommand{\calZ}{\mathcal{Z}}
\newcommand{\calP}{\mathcal{P}}
\newcommand{\calE}{\mathcal{E}}
\newcommand{\calF}{\mathcal{F}}
\newcommand{\calB}{\mathcal{B}}
\newcommand{\calW}{\mathcal{W}}
\newcommand{\calC}{\mathcal{C}}
\newcommand{\calV}{\mathcal{V}}
\newcommand{\calD}{\mathcal{D}}
\newcommand{\calL}{\mathcal{L}}
\newcommand{\calG}{\mathcal{G}}
\newcommand{\calJ}{\mathcal{J}}
\newcommand{\calR}{\mathcal{R}}
\newcommand{\calS}{\mathcal{S}}
\newcommand{\origin}{\odot}
\newcommand{\IPr}{I}
\newcommand{\rIPr}{rI}
\newcommand{\ymax}{\check{y}}
\DeclareMathOperator{\Emb}{Emb}
\DeclareMathOperator{\MEmb}{M-Emb}
\DeclareMathOperator{\CEmb}{C-Emb}
\DeclareMathOperator{\MMEmb}{MM-Emb}
\newcommand{\mEmb}{\gamma^{(m)}\mbox{-}\Emb}
\newcommand{\eEmb}{\gamma^{(e)}\mbox{-}\Emb}
\newcommand{\Embk}{\Emb_\kappa}
\newcommand{\Embkc}{\CEmb_{\kappa}}
\newcommand{\MEmbk}{\MEmb_\kappa}
\newcommand{\MMEmbk}{\MMEmb_{\kappa}}
\newcommand{\EEmb}{e\mbox{-}\Emb}
\newcommand{\EEmbk}{e\mbox{-}\Emb_\kappa}
\newcommand{\EEmbkc}{e\mbox{-}\Emb_\kappa^C}
\title{Geometric Aspects of 
\\ Data-Processing of Markov Chains}
\author[1]{Geoffrey Wolfer \thanks{email: geoffrey.wolfer@riken.jp. \\
The author is supported by the Special Postdoctoral Researcher
Program (SPDR) of RIKEN and by the Japan Society for the Promotion of
Science KAKENHI under Grant 23K13024.
Part of this project was conducted when the author was an International Research Fellow of Japan Society for the Promotion of Science, and supported under KAKENHI Grant 21F20378.}}
\author[2]{Shun Watanabe \thanks{email: shunwata@cc.tuat.ac.jp. \\ The author is supported in part by Japan Society for the Promotion of Science KAKENHI under Grant 20H02144.}}
\affil[1]{RIKEN Center for AI Project}
\affil[2]{Department of Computer and Information Sciences \protect\\ Tokyo University of Agriculture and Technology}
\date{\today}
\begin{document}

\maketitle

\vspace{2cm}
\begin{abstract}
\hl{We examine data-processing of Markov chains through the lens of information geometry. We first establish a theory of congruent Markov morphisms within the framework of stochastic matrices. Specifically, we introduce and justify the concept of a linear right inverse (congruent embedding) for lumping, a well-known operation used in Markov chains to extract coarse information. Furthermore, we inspect information projections onto geodesically convex sets of stochastic matrices, and show that under some conditions, projecting (m-projection) onto doubly convex submanifolds can be regarded as a form of data-processing. Finally, we show that the family of lumpable stochastic matrices can be meaningfully endowed with the structure of a foliated manifold and motivate our construction in the context of embedded models and inference.}
\end{abstract}

\keywords{Information geometry; Irreducible Markov chain;  Data-processing; Exponential family; Mixture family; Congruent embedding; Markov morphism; Information projection; Lumpability.}

\clearpage
\tableofcontents
\clearpage

\section{Introduction}
\label{section:introduction}
\hl{The information divergence rate 
between two stochastic processes $Y = (Y_t)_{t \in \N}, Y' = (Y'_t)_{t \in \N}$ within a finite space $\calY$
quantifies the average discrepancy between these processes per unit time.}
\hl{It} is defined, when the limit exists, by
\begin{equation}
\label{eq:divergence-rate-random-process}
\kl{Y}{Y'} \eqdef \lim_{k \to \infty} \frac{1}{k} \kl{Y_1, \dots, Y_{k}}{Y'_1, \dots, Y'_{k}},
\end{equation}
where $D$ \hl{represents} the Kullback--Leibler divergence \citep{kullback1951information}.
Information monotonicity dictates that merging symbols in $\calY$ and \hl{observing} the processes on the resulting space $\calX$ \hl{should inevitably result in a decreased divergence between the processes}, originating from an information loss. 
When the two processes under consideration are independent and identically distributed (iid) according to respective discrete distributions $\mu, \nu \in \calP(\calY)$, \hl{where $\calP(\calY)$ represents the probability simplex over $\calY$,}
this property \hl{is effectively captured} by the action of a memoryless channel $W \colon \calP(\calY) \to \calP(\calX)$, \hl{as expressed by the inequality, }
\begin{equation}
\label{eq:action-of-memoryless-channel}
    \kl{\mu}{\nu} \geq \kl{\mu W}{ \nu W}.
\end{equation}
\hl{Alternatively, we can explore embeddings of distributions into a larger space $\calZ$, which may not necessarily be implemented by a channel $W$.}
\hl{Equality is known to be achieved} in \eqref{eq:action-of-memoryless-channel} for any pair of distributions if and only if the embedding \hl{is a} congruent Markov morphism \citep{doi:10.1080/02331887808801428}.
\hl{These principles are firmly established in the iid setting, and in the present study, our aim is to  extend and build upon these concepts within the context of Markov processes.}
While the divergence rate between two Markov chains can still be expressed in terms of their transition \hl{matrices}, \hl{tackling Markov processes presents additional challenges}.
Foremost, processing them, \hl{even through the simple act of merging symbols}, can \hl{disrupt} the Markov property.
Furthermore, although simulating the process resulting from the action of a Markov morphism in the iid setting is straightforward \footnote{In the information theory literature, this amounts to stating that there exists an operational definition. See e.g. \citet{issa2019operational}.}, it \hl{remains unclear} what actions on \hl{stochastic matrices} would also allow us\hl{,} given some trajectory of observations, to simulate the trajectory obtained from the processed transition matrix.
These considerations \hl{prompt} us to put forward the below listed desiderata.
\begin{enumerate}
    \item[\hl{(D1)}] \hl{Data-processing of Markov chains} should be expressible in terms of the action of an operator upon its transition \hl{matrix}.
    \item[\hl{(D2)}] \hl{Markovianity should be preserved throughout data-processing}.
    \item[\hl{(D3)}] \hl{Processing} should \hl{bear} operational meaning, \hl{implying the existence of} a \hl{possibly randomized} mapping defined on trajectories of observations. \hl{Applying} this mapping to the original trajectory \hl{should yield} another trajectory \hl{whose} dynamics are governed by the transition matrix \hl{resulting from the operation described in (D1)}.
\end{enumerate}
The natural \hl{procedure} of reducing the state space whilst preserving Markovianity is commonly referred to as lumping \citep{kemeny1983finite}. Lumping satisfy all of our above requirements, \hl{and will serve as the starting point for our study}.

\hl{Our} first question \hl{pertains to the} natural ``inverse'' operation of lumping\hl{---namely} embedding\footnote{In this note, the \hl{concept} of embedding is distinct from \hl{the embedding problem of}
\citet{elfving1937theorie}.
} a Markov chain into a possibly larger state space, while still \hl{conforming to (D1), (D2) and (D3)}. This \hl{exploration} will lead us to define \hl{linear right inverses (congruent embeddings)} with respect to lumpings of \hl{stochastic matrices}, and result in \hl{characterizing this morphism class} in terms of Markov embeddings, the central notion we introduce in Section~\ref{section:embeddings-markov}.

\hl{Adopting} an information geometry perspective, we \hl{will regard} irreducible \hl{stochastic matrices} as dually flat information manifolds \citep{nagaoka2017exponential}. In this context, embeddings \hl{represent} injective morphisms, \hl{acting as} structure-preserving maps. 
\hl{An intriguing inquiry involves determining which structures} (Fisher information metric, dual affine connections, exponential families,...) \hl{are} preserved under Markov embeddings.
\hl{We aim to address natural questions, including the exploration of larger classes of embeddings that maintain the same structure, albeit potentially compromising some of the desiderata (D1), (D2), (D3)}. Conversely, we will \hl{identify} sub-classes of embeddings that preserve additional structure of interest, and \hl{elucidate how established} embeddings, \hl{such as} Hudson expansions \citep[Section~6.5]{kemeny1983finite}, \hl{align with our framework}.

\hl{Lastly}, we will \hl{conduct a comprehensive examination of} the geometric structure of families of lumpable \hl{stochastic matrices}.
Some families of \hl{stochastic matrices} are known to enjoy \hl{advantageous} properties; \hl{for instance} reversible Markov chains form \hl{both} an e-family (exponential family, see Definition~{\ref{definition:e-family-parametric}}) and an m-family (mixture family, see Definition~{\ref{definition:m-family-parametric})} \citep{wolfer2021information}. \hl{We will clarify} that \hl{while} lumpable \hl{stochastic matrices} generally do not form e-families nor m-families, we still \hl{can} endow the family with the structure of a foliated manifold, and \hl{will provide insightful} interpretation of this construction.

\subsection{Related work}
\label{section:related-work}
The question of whether processing a Markov chain \hl{through} a function retains its Markovianity can be traced back to \citet{burke1958markovian} (see also \citet{pitman1981markov}).
Chains that \hl{maintain} this property under merging \hl{of} some of their states were later termed lumpable by \citet{kemeny1983finite}.
\hl{While a comprehensive} survey of lumpability is beyond the scope of this paper, \hl{notable} works \hl{include those of} \citet{rubino1989weak} and \citet{buchholz1994exact}.
The \hl{concept} was also extended in \hl{various} ways, \hl{such as} quasi-lumpability \citep{franceschinis1994bounds} where the transition matrix is lumpable modulo some perturbation, 
or higher-order lumpability \citep{gurvits2005markov, geiger2014lumpings}, where a lumped Markov chain may lose its first-order Markov property but retains a $k$th-order Markov property.
\hl{Additionally, } the problem of lumpability is directly related to \hl{the} identifiability of hidden Markov sources \citep{ito1992identifiability,kabayashi1991equivalence, hayashi2019local}.

Following an axiomatic approach, 
\citet{doi:10.1080/02331887808801428}
first introduced, motivated and analyzed Markov morphisms
as the statistical mappings of interest for data-processing.
More recently, similar approaches have been taken to put forward several classes of embeddings for conditional models
\citep{lebanon2005axiomatic, lebanon2012extended, montufar2014fisher}.
Although a \hl{stochastic matrix} corresponds to some conditional model, in this paper we also think of it as a stochastic process, with a stationary distribution. \hl{This perspective prompts} us to consider a more restricted class of natural embeddings than \hl{those discussed in} the aforementioned works.

Exponential tilting of stochastic matrices \hl{was} first found in \citet{miller1961convexity}.
The large deviation theory for Markov chains, was \hl{further} developed by \citet{donsker1975asymptotic, gartner1977large, dembo2011large}.
\citet{csiszar1987conditional} first recognized
the exponential structure of the family of irreducible \hl{stochastic matrices}, and
\citet{nagaoka2017exponential} later \hl{provided a comprehensive} treatment in the language of information geometry.
We \hl{direct} the reader to \hlc{the survey of \citet{wolfer2023information}}
for \hl{an historical overview} of the series of works
\citep{ito1988, takeuchi1998asymptotically, takeuchi2007exponential,takeuchi2017asymptotic, nakagawa1993converse}
that contributed to this construction.

The first appearance of the Pythagorean theorem for e-projections onto m-families of distributions can be found in \citet{chentsov1968nonsymmetrical}.
For a complete treatment of the theory of projections onto $\alpha$-families of distributions, we refer the reader to the excellent monograph of \citet{amari2007methods}.
In the context of Markov chains, Pythagorean identities for orthogonal projections can be found in \citet{hayashi2014information}.
In \citet{wolfer2021information}, closed-form expressions are given for the e/m-projections onto the e/m-family of reversible Markov chains.

More general information projections onto convex sets of distributions are also a well-studied topic. The Pythagorean inequality in this context is credited to \citet{csiszar1975divergence} and \citet{topsoe1979information} (see also \citet{csiszar1984sanov}).
Inequalities involving the reverse information projection have also been devised, for example a four-point property in \citet{csiszar1984information}, or a Pythagorean inequality on log-convex sets \hl{in}
\citet{csiszar2003information}. The reader is invited to consult \citet{csiszar2004information} for a complete exposition.
In the Markovian setting, we mention \hlc{\citet{boza1971asymptotically} and \citet{csiszar1987conditional},} who considered information projections of \hl{stochastic matrices} onto convex sets of edge measures. To the best of our knowledge, the analysis of reverse information projections onto e-convex sets of \hl{stochastic matrices} had not yet been carried out.

\hlp{
\section{Summary of the main contributions}
\label{section:main-contributions}
}
\hlp{We introduce only minimal notations to provide a summary of the main contributions, directing the reader to Section~\ref{section:preliminaries} for more details.
We let
$(\calX, \calD)$ and $(\calY, \calE)$ be two strongly connected directed graphs with finite vertex sets $\calX, \calY$, where $\abs{\calX} \leq \abs{\calY}$ and respective edge sets $\calD$ and $\calE$. We denote $\calW(\calX, \calD)$ the set of irreducible row-stochastic matrices over the graph $(\calX, \calD)$.
We fix
$\kappa \colon \calY \to \calX$ a (surjective) function for which $\calD = \set{(\kappa(y), \kappa(y') \colon (y,y') \in \calE }$ and we write
$\biguplus_{x \in \calX} \calS_x = \calY$, for the partition associated with $\kappa$, where for $x \in \calX, \calS_x = \kappa^{-1}(\set{x})$.
The function $\kappa$, by merging symbols together, erases information. In the context of Markov processes, $\kappa$ is called a lumping function \citep{kemeny1983finite}, and induces a push-forward for stochastic matrices,
\begin{equation*}
    \kappa_\star \colon \calW_\kappa(\calY, \calE)  \to \calW(\calX, \calD),
\end{equation*}
where the domain $\calW_\kappa(\calY, \calE)$ is the subset of lumpable stochastic matrices.
Lumpable matrices are characterized by the property that a Markov chain sampled according to them will retain its Markovianity when observations are merged by $\kappa$ at each time coordinate.
To avoid trivialities, we assume compatibility conditions (Proposition~\ref{proposition:lumpable-not-empty}) on $\kappa$, 
and the connection digraphs guaranteeing that
$\calW_\kappa(\calX, \calE) \neq \emptyset$.

\subsection{Contribution 1: Propose a natural embedding operation for Markov chains}
\label{section:contribution-natural-embedding}

Our first chief contribution is to introduce a concept of embedding for families of stochastic matrices, serving as a right inverse of the lumping operation.
For a matrix 
$\Lambda$ positive over $\calE$, and such that for any $y \in \calY$ and $x' \in \calX$, when $(\kappa(y), x') \in \calD$$, \Lambda(y,\cdot)$, is a distribution on $\calS_{x'}$,
we define an induced push-forward $\Lambda_\star$ for stochastic matrices (Definition~\ref{definition:markov-embeddings}). This push-forward is referred to as a Markov embedding, and is expressed as
\begin{equation*}
    \begin{split}
        \Lambda_\star \colon \calW(\calX, \calD)  &\to \calW_\kappa(\calY, \calE),
    \end{split}
\end{equation*}
where for any $(y,y') \in \calE$,
$$\Lambda_\star P(y,y') = P(\kappa(y), \kappa(y'))\Lambda(y,y').$$
By construction, desiderata (D1) and (D2) are satisfied.
Additionally, our definition is motivated by the following properties.
\begin{enumerate}
    \item Markov embeddings have an operational interpretation (D3). Namely, there exists a randomized function which can map a trajectory sampled from $P \in \calW(\calX, \calD)$ to a simulated trajectory sampled from $\Lambda_\star P$ (see Figure~\ref{figure:trajectory-simulation}), without knowing the transition matrix $P$.
    \item Markov embeddings form a strict superset of the Hudson expansion (Theorem~\ref{theorem:hudson-as-markov-embedding}).
    The latter was previously proposed by \citet{kemeny1983finite} as the natural converse operation to lumping. This expansion can be understood as an operation that embeds, using a sliding window approach, the family of first-order stochastic matrices as a sub-family of second-order stochastic matrices (Section~\ref{section:hudson-embedding-sliding-window}).
    \item Markov embeddings maintain the information geometric structure on stochastic matrices introduced by  \citet{nagaoka2017exponential}.
Namely, 
for two points $P, P' \in \calW(\calX, \calD)$, it holds that
\begin{equation*}
    \kl{\Lambda_\star P}{\Lambda_\star P'} = \kl{P}{P'},
\end{equation*}
and as a consequence, the Fisher metric and dual affine connections are preserved (Lemma~\ref{lemma:markov-embedding-preserve-information-geometry}).
    \item A strong justification for Markov embeddings is given by Theorem~\ref{theorem:congruent-embeddings-are-lambda-embeddings},
where we show that a linear map is monotonic, preserves irreducibility and is a right inverse of a lumping map, if and only if it belongs to our class of Markov embeddings.
This characterization mirrors the counterpart established for Markov morphisms in the context of finite measure spaces \citep[Example~5.2]{ay2017}.
\end{enumerate}

\subsection{Contribution 2: Examine the geometric structure of lumpable stochastic matrices}
\label{section:contribution-geometry-lumpable}

Our second main contribution is to provide an analysis of the information geometric structure of the family of lumpable stochastic matrices.
We briefly motivate our analysis as follows.
Geometric properties of families of stochastic matrices translate into statistical properties of the associated Markov models. For instance, in exponential families and curved exponential families of stochastic matrices, for the problem of parametric estimation, it is possible to construct a simple estimator that asymptotically attains the Cram\'{e}r--Rao bound (efficient), given by the inverse of the Fisher information matrix \citep{hayashi2014information}, which is easily computable.
Furthermore, the dually flat structure induced from the information divergence rate between Markov chains leads to decomposition theorems \citep{csiszar1987conditional} in terms of information projections.
For instance, the e/m-nature of reversible stochastic matrices yields Pythagorean identities for reversible e-projection and m-projection
\citep{wolfer2021information}.

Given a family of irreducible stochastic matrices $\calW(\calY, \calE)$ and a fixed lumping function $\kappa \colon \calY \to \calX$, it is not hard to verify that the family of $\kappa$-lumpable stochastic matrices $\calW_\kappa(\calY, \calE)$ does not generally constitute an e-family or an m-family in $\calW(\calY, \calE)$.
One significant application of our notion of Markov embedding (refer to \ref{section:contribution-natural-embedding} Contribution 1) is that it naturally facilitates the decomposition of the lumpable family $\calW_\kappa(\calY, \calE)$ in terms of simpler mathematical structures.
Observing (Lemma~\ref{lemma:construct-canonical-embedding}) that
any $P_{\origin} \in \calW_\kappa(\calY, \calE)$ induces a canonical embedding $\Lambda^{(P_{\origin})}_\star$ satisfying
$P_{\origin} = \Lambda_\star^{(P_{\origin})} \kappa_\star P_{\origin}$,
we define,
\begin{equation*}
    \calJ(P_{\origin}) \eqdef \set{ \Lambda_\star^{(P_{\origin})}\bar{P} \colon \bar{P} \in \calW(\calX, \calD) } \subset \calW_\kappa(\calY, \calE),
\end{equation*}
the image of the base family of stochastic matrices by this canonical embedding.
Fixing some origin $\bar{P}_{0} \in \calW(\calX, \calD)$, it is also natural to define
\begin{equation*}
    \calL(\bar{P}_{0}) \eqdef \set{ P \in \calW_\kappa(\calY, \calE) \colon \kappa_\star P = \bar{P}_{0}},
\end{equation*}
the family of stochastic matrices which lump into $\bar{P}_{0}$.
We prove (Lemma~\ref{lemma:L-and-J-are-special-families}) that
\begin{enumerate}
    \item $\calJ(P_{\origin})$ forms an e-family in $\calW(\calY, \calE)$,
    \item
    $\calL(\bar{P}_0)$ forms an m-family in $\calW(\calY, \calE)$.
\end{enumerate}
Furthermore, we show that their dimension is given by
\begin{equation*}
    \dim \calJ(P_{\origin}) = \abs{\calD} - \abs{\calX}, \qquad \dim \calL(\bar{P}_0) =
    \abs{\calE} - \sum_{(x,x') \in \calD} \abs{\calS_x}.
\end{equation*}
The aforementioned structures enable us to construct a foliation on the manifold of lumpable stochastic matrices (Theorem~\ref{theorem:foliation-of-lumpable-kernels}),
\begin{equation*}
    \calW_\kappa(\calY, \calE) = \biguplus_{P \in \calL(\bar{P}_0)} \calJ(P),
\end{equation*}
and to determine that
\begin{equation*}
    \dim \calW_\kappa(\calY, \calE) = \abs{\calE} - \sum_{(x,x') \in \calD} \abs{\calS_x} + \abs{\calD} - \abs{\calX}.
\end{equation*}
As a result, for each base kernel $\bar{P}_0$, we can construct a mixed coordinate system \citep[Chapter~3.7]{amari2007methods} over $\calW_\kappa(\calY, \calE)$ with $\abs{\calE} - \sum_{(x,x') \in \calD} \abs{\calS_x}$ m-coordinates and $\abs{\calD} - \abs{\calX}$ e-coordinates.
The above foliation is mutually dual \citep[p.75]{amari2007methods}. Specifically, when we fix $\bar{P}_0 \in \calW(\calX, \calD)$ and consider
arbitrarily chosen stochastic matrices $P_{\origin}, P \in \calL(\bar{P}_0)$ and $P' \in \calJ(P_{\origin})$, the following Pythagorean relation holds (Theorem~\ref{theorem:foliation-pythagorean-theorem}),
\begin{equation*}
    \kl{P}{P'} = \kl{P}{P_{\origin}} + \kl{P_{\origin}}{P'},
\end{equation*}
and $P_{\origin}$ is the e-projection of $P'$ onto $\calL(\bar{P}_0)$ and the m-projection of $P$ onto $\calJ(P_{\origin})$.

\subsection{Applications}

Below, we enumerate two applications of our framework. For further motivation, we encourage the reader to refer to Section~\ref{section:foliation-interpretations-applications}.

\subsubsection{Maximum entropy principle for refining Markov models}

A scientist is provided with a base Markov model in the form of a stochastic matrix $\bar{P}_0 \in \calW(\calX, \calD)$. They aim to refine this model by increasing the granularity of the states, corresponding to an embedding operation.
Their initial (prior) model, before confronting it with newly collected data, could be guided by the maximum entropy principle.
In our framework, applying this principle reduces to computing (Section~\ref{section:max-entropy}) the e-projection of the maxentropic chain $U \in \calW(\calY, \calE)$ onto $\calL(\bar{P}_0)$,
\begin{equation*}
    P = \argmin_{P' \in \calL(\bar{P}_0)} \kl{P'}{U}.
\end{equation*}
This involves optimizing a convex objective function on a convex domain, arising from $\calL(\bar{P}_0)$ being an m-family (refer to \ref{section:contribution-geometry-lumpable} Contribution 2).

\subsubsection{Inference in Markov chains from a single trajectory}

Markov embeddings have practical applications in inference for Markov models, enabling the relaxation of certain assumptions in the model.
For example, by employing the symmetrization embedding (Corollary~\ref{corollary:embed-reversible-to-symmetric}),
\citet{wolfer2023geometric}
reduced the problem of identity testing of $\pi$-reversible Markov chains to the better-understood problem of identity testing for symmetric Markov chains (Section~\ref{section:application-identity-testing-reversible-markov-chains}). This approach not only simplifies the methodology but also yields state-of-the-art results.

\subsection{Outline and additional contributions}
\label{section:outline}
Our paper is organized as follows.

In Section~\ref{section:introduction}, we provide a brief introduction to data processing and discuss the challenges of extending from iid to Markov processes.

In Section~\ref{section:main-contributions}, we summarize of our main contributions.

In Section~\ref{section:preliminaries}, we set out our notations, revisit the information geometric structure of irreducible stochastic matrices introduced by \citet{nagaoka2017exponential}, and review a known characterization of lumping of Markov chains in terms of their transition matrices.

In Section~\ref{section:embeddings-markov}, we first extend constructively (Contribution 1) the well-known notion of a Markov morphism in the context of distributions, to that of a Markov morphism in the context of stochastic matrices (Definition~\ref{definition:markov-embeddings}).
We additionally develop (Section~\ref{section:embeddings-congruent}) a theory of lumpable linear operators, and define congruent embeddings (Definition~\ref{definition:congruent-embedding}) in this context.
We proceed to state and prove that congruent embeddings coincide with the Markov morphisms we constructed (Theorem~\ref{theorem:congruent-embeddings-are-lambda-embeddings}).
In Section \ref{section:embeddings-exponential}, we subsequently expand the class of Markov embeddings to the broader class of exponential embeddings. While exponential embeddings are generally not isometric, we show that they still preserve e-structures within families of irreducible stochastic matrices (Theorem~\ref{theorem:exponential-embedding-geodesic-affinity}).
Moving on to Section~\ref{section:embeddings-notable-examples},
we delve into a few notable classes of embeddings.
We start with the special case of Hudson expansions, initially presented by \citet{kemeny1983finite} as the natural inverse operation to lumping.
We express Hudson expansions as Markov embeddings, and propose an interpretation of the embedding of a family of irreducible stochastic matrices as a first-order subfamily of second-order stochastic matrices.
We proceed by examining a natural subset of Markov embeddings, referred to as memoryless Markov embeddings, which not only preserve the m-structure of irreducible stochastic matrices but also maintain reversibility. In particular, we demonstrate that this class is sufficiently rich to embed any rational stochastic matrix into the set of bistochastic matrices (refer to Corollary~\ref{corollary:embedding-to-doubly-stochastic-kernel}). We close this section by more systematically investigating reversibility preservation of Markov embeddings, and discuss some of the advantages that arise from this property.
Concluding this section, we systematically explore the preservation of reversibility in Markov embeddings and discuss some of the advantages that arise from this property. For a comprehensive nomenclature of the various classes of embeddings discussed, we invite the reader to consult Table~\ref{table:nomenclature}. Additionally, Figure~\ref{figure:embeddings-landscape} provides an illustration of their hierarchical structure.

In Section~\ref{section:information-projection}, we complement the theory of information projection on geodesically convex sets of stochastic matrices by establishing a Pythagorean inequality for reverse information projections onto e-convex sets (Proposition~\ref{proposition:pythagorean-inequality-e-convex}).
Furthermore, we delve into the Markovian equivalent of the four-point property and demonstrate that under favorable conditions, the m-projection of two stochastic matrices onto an em-family (both an e-family and an m-family) can be regarded as a form of data processing.

In Section~\ref{section:information-geometry-lumpable}, we analyze the family of lumpable stochastic matrices from an information geometric standpoint (Contribution 2). 
We motivate our construction by considering Pythagorean projections on leaves and propose an interpretation in the context of estimation for embedded models. Beyond the examples provided in this paper, our results hold potential applications in various domains. For instance, in certain coding problems in information theory, the lumpability of sources plays a crucial role in deriving fundamental limits \citep{hayashi2016uniform, hayashi2020finite}.

Finally, Section~\ref{section:higher-order} 
briefly discusses compositions of embeddings, and higher-order lumping and embedding.
We show, by providing examples, that composition opens the door to a class of embeddings that is significantly richer than Markov embeddings. 

Many technical proofs have been deferred to  Section~\ref{section:proofs} for clarity of the exposition.
}

\section{Notation and preliminaries}
\label{section:preliminaries}
We write $[n] = \set{1, 2, \dots, n}$ for $n \in \N$, and $\delta[\cdot] \to \set{0, 1}$ for the predicate indicator function.
Let $\calX, \calY$ be sets such that $\abs{\calX} = n, \abs{\calY} = m$ with $n < \infty,  m < \infty$, where to avoid trivialities, we additionally assume that $n, m > 1$.
\hl{For some index set $I \subset \N$, $\calY = \biguplus_{i \in I} S_i$ denotes a partition of $\calY$, meaning that $\calY = \bigcup_{i \in I} S_i$ and $\forall i,j \in I, i \neq j \implies S_i \cap S_j = \emptyset$.}
We denote $\calP(\calX)$ the probability simplex over $\calX$, and $\calP_{+}(\calX) = \set{ \mu \in \calP(\calX) \colon \forall x \in \calX, \mu(x) > 0}$. 
All vectors will be written as row-vectors, for $x \in \calX$, $e_x$ is the unit vector verifying $\forall x' \in \calX$, $e_x(x') = \delta[x = x']$.
For some real matrices $A$ and $B$, \hl{$A(x,x')$ is the entry in the $x$-th row and $x'$-th column of $A$}, $\rho(A)$ is 
the spectral radius of $A$, 
$A \circ B$ is the Hadamard product of $A$ and $B$, \hl{for $t \in \R,\; A^{\circ t}$ is the matrix such that $\forall x,x' \in \calX, A^{\circ t}(x,x') = A(x,x')^t$,} $A > 0$ (resp. $A \geq 0$) means that $A$ is entry-wise positive (resp. non-negative).
We will routinely identify a function $f \colon \calX^2 \to \R$ with the linear operator \hl{$f \colon \R^ \calX \to \R^\calX, v \mapsto fv$ where $\forall x \in \calX, (fv)(x) = \sum_{x' \in \calX}f(x,x')v(x')$}, and use the shorthand $A(x,S) = \sum_{s \in S} A(x,s)$ when $S \subset \calX$.

\subsection{Irreducible Markov chains}
\label{section:irreducible-kernels}
We let $(\calY, \calE)$ be a strongly connected directed graph, where $\calY$ is its set of vertices, and $\calE \subset \calY^2$ is its set of directed edges.
Let $\calF(\calY, \calE)$ be the set of all real functions over the set $\calE$, identified with the \hl{set} of functions over $\calY^2$ that are null outside of $\calE$, and let $\calF_+(\calY, \calE) \subset \calF(\calY, \calE)$ be the subset of positive functions over $\calE$. 
For a fully connected graph, we write $\calF(\calY) = \calF(\calY, \calY^2)$, and we identify $\calF(\calY)$ with the set of \hl{$\abs{\calY} \times \abs{\calY}$} real square matrices. In particular,
the following inclusions hold
\begin{equation*}
\begin{split}
    \calF_+(\calY, \calE) \subset \calF(\calY, \calE) \subset \calF(\calY).
\end{split}
\end{equation*}
We write $\calW(\calY)$ for the set of (not necessarily irreducible) row-stochastic \hl{transition matrices} over the state space $\calY$, $\calW(\calY, \calE)$ for the subset of irreducible \hl{stochastic matrices} whose support is $\calE$, and $\calW_+(\calY)$ when $\calE = \calY^2$. For $P \in \calW(\calY)$, $P(y,y')$ corresponds to the transition probability\footnote{\hl{In the information theory literature, $P(x' | x)$ is sometimes used to denote $P(x, x')$. Our notation follows the applied probability literature (see e.g. }\hlc{\cite{levin2009markov}}\hl{). It will allow us to extend the notation to more general matrices, as we do when we discuss lumpable matrices, which do not bear the meaning of a conditional distribution.}} from state $y$ to state $y'$. Formally,
\begin{equation*}
\begin{split}
\calW(\calY) &\eqdef \set{ P \in \calF(\calY) \colon P \geq 0, \forall y \in \calY, e_y P 1 \trn = 1} ,\\
\calW(\calY, \calE) &\eqdef \calF_+(\calY, \calE) \cap \calW(\calY).
\end{split}
\end{equation*}
For $P \in \calW(\calY, \calE)$, there exists a unique $\pi \in \calP_+(\calY)$, such that $\pi P =\pi$ \citep[Corollary~1.17]{levin2009markov}, called the stationary distribution of $P$. 
We write 
$Q = \diag (\pi) P$ for the edge measure matrix, \citep[(7.5)]{levin2009markov}, which encodes stationary pair-probabilities of $P$, i.e. 
$Q(y, y') = \PR[\pi]{Y_t = y, Y_{t+1} = y'}$.
Following notations in \citet[Table~1]{wolfer2021information}, 
$$\calW \rev(\calY, \calE), \calW \bis (\calY, \calE), \calW \sym (\calY, \calE), \calW \iid (\calY, \calY^2),$$  will respectively denote the subsets of reversible ($Q = Q \trn$), doubly stochastic ($P \trn \in \calW(\calY, \calE \trn)$, \hl{where $\calE \trn \eqdef \set{(y', y) \colon  (y, y') \in \calE}$}), symmetric ($P = P \trn$), and memoryless ($P = 1 \trn \pi$) \hl{transition matrices} that are irreducible over $(\calY, \calE)$.
We define a stochastic rescaling \footnote{
This rescaling is closely related to the notion of exponential change of measure, also known as tilting, which can be traced back to \citet{miller1961convexity} in the context of \hl{stochastic matrices}.} mapping  
$\stoch$
that constructs a proper irreducible stochastic matrix from any non-negative irreducible matrix over
$(\calY, \calE)$,
\begin{equation}
\label{eq:stochastic-rescaling}
\begin{split}
    \stoch \colon \calF_+(\calY,\calE) &\to \calW(\calY,\calE) \\
    A(y,y') &\mapsto P(y,y') = \frac{A(y,y') v_A(y')}{\rho_A v_A(y)}, 
\end{split}
\end{equation}
where $\rho_A$ 
and $v_A$ are respectively the
Perron-Frobenius (PF) root and right PF eigenvector of $A$ \citep[Chapter~8]{meyer2000matrix}, which we will henceforth refer to as the right PF pair.
The mapping $\stoch$ is invariant under scaling of the argument by a positive constant or conjugation of the argument by a positive diagonal matrix. Namely, for all $\alpha \in \R_+$, $v \in \R_+^{\abs{\calY}}$, and $A \in \calF_+(\calY, \calE)$,
\begin{equation}
\label{eq:stoch-properties}
    \stoch(\alpha A) =  \stoch(A), \qquad \stoch(\diag(v)^{-1} A \diag (v)) =  \stoch(A). \\
\end{equation}

\hlp{\subsection{Information geometry of stochastic matrices}}
\label{section:preliminaries-information-geometry}

Following \citet{nagaoka2017exponential}, we take a differential geometry perspective, and \hl{regard} $ \calW(\calY, \calE)$ as a smooth manifold of dimension 
\begin{equation}
\label{eq:dimension-w}
d = \dim \calW(\calY, \calE) = \abs{\calE} - \abs{\calY},    
\end{equation}
where
for each $P \in \calW(\calY, \calE)$,
\hl{$T_P$ is the $d$-dimensional tangent space at $P$}.
On $\calW(\calY, \calE)$, we introduce the Riemannian metric $\mathfrak{g}$, expressed in some chart\footnote{\hl{As is customary in differential geometry, we will write $\theta$ for both the chart map and the coordinate.}} $\hlm{\theta} \colon \calW(\calY, \calE) \to \Theta \subset \R^d$ by
\begin{equation}
\label{eq:mc-fisher-metric}
\begin{split}
\fshr_{ij}(\theta) = \fshr_{ij}(P_\theta) &\eqdef \fshr_{P_\theta} \left( \partial_i, \partial_j \right) = \sum_{(y, y') \in \calE} Q_\theta(y, y') \partial_i \log P_\theta(y, y') \partial_j \log P_\theta(y, y'),
\end{split}
\end{equation}
where $\partial_i = \partial/\partial \theta_i$, $P_\theta$ is the stochastic matrix at coordinates $\theta$, and $Q_\theta$ is the edge measure pertaining to $P_\theta$ \footnote{We will use the notation $\fshr_{ij}(P_\theta)$ instead of $\fshr_{ij}(\theta)$ when we want to emphasize that we refer to the $\fshr$ metric defined on $\calW(\calY, \calE)$ containing $P_\theta$.}. \hl{In our study, we always assume that there exists a global chart\footnote{\hl{This assumption is not restrictive for our study, and is common in the
information geometry literature.}}.} We also recall the pair of torsion-free affine connections $\nabla^{(e)}$ and $\nabla^{(m)}$ respectively termed e-connection and m-connection, expressed by their Christoffel symbols as
\begin{equation}
\label{eq:mc-affine-connection}
\begin{split}
\Gamma^{(e)}_{ij, k}(\theta) &\eqdef \fshr_{P_\theta}\left( \nabla^{(e)}_{\partial_i} \partial_j, \partial_k \right) = \sum_{(y, y') \in \calE} \partial_i \partial_j \log P_\theta(y, y') \partial_k Q_\theta(y, y'), \\
\Gamma^{(m)}_{ij, k}(\theta) &\eqdef \fshr_{P_\theta} \left( \nabla^{(m)}_{\partial_i} \partial_j, \partial_k \right) = \sum_{(y, y') \in \calE} \partial_i \partial_j  Q_\theta(y, y') \partial_k \log P_\theta(y, y'). \\
\end{split}
\end{equation}
The Fisher metric and connections defined in \eqref{eq:mc-fisher-metric} and \eqref{eq:mc-affine-connection} are natural counterparts of the ones defined in the context of distributions \citep{amari2007methods}.
The connections $\nabla^{(e)}$ and $\nabla^{(m)}$ are dual with respect to $\mathfrak{g}$ in the sense that for any vector fields $X, Y, Z$,
\begin{equation*}
\begin{split}
X \mathfrak{g}(Y, Z) = \mathfrak{g}\left(\nabla^{(e)}_X Y, Z\right) + \mathfrak{g}\left( Y, \nabla^{(m)}_X Z\right).
\end{split}
\end{equation*}
The tuple 
\begin{equation*}
\left(\calW(\calY, \calE), \fshr, \nabla^{(e)}, \nabla^{(m)}\right)
\end{equation*}
encodes the information geometric structure of $\calW(\calY, \calE)$. Namely, it defines the notions of straight lines, parallelism, and distances between points.
The information divergence of a \hl{stochastic matrix} $P \in \calW(\calY, \calE)$ from another \hl{transition matrix} $P' \in \calW(\calY, \calE)$ is given by
\begin{equation}
\label{equation:kl-divergence}
    \kl{P}{P'} \eqdef \sum_{(y, y') \in \calE} \pi(y) P(y, y') \log \frac{P(y, y')}{P'(y, y')},
\end{equation}
while the dual divergence verifies $\kld{P}{P'} = \kl{P'}{P}$. 
Notably, \eqref{equation:kl-divergence} corresponds to the divergence rate  {\eqref{eq:divergence-rate-random-process}}  of the Markov processes induced from $P$ and $P'$,
\begin{equation*}
    \kl{P}{P'} = \lim_{k \to \infty} \frac{1}{k} \kl{Q^{k}}{(Q')^k}, 
\end{equation*}
where for $k \in \N$,
\begin{equation*}
    Q^{k}(y_1, y_2, \dots, y_k) = \pi(y_1) \prod_{t=1}^{k-1} P(y_t, y_{t+1}),
\end{equation*}
defines the distribution of stationary paths of length $k$ induced from $P$.

\subsection{Mixture family and exponential family}

\begin{definition}[e-family of \hl{stochastic matrices}]
\label{definition:e-family-parametric}
Let $\Theta \hlm{=} \R^d$.
We say that the parametric family of irreducible \hl{stochastic matrices} $$\calV_e = \set{P_\theta \colon \theta = (\theta^1, \dots, \theta^d) \in \Theta} \subset \calW( \calY, \calE)$$ 
is 
an exponential family (e-family) of \hl{stochastic matrices} with natural parameter $\theta$, when
there exist functions $K, g_1, \dots, g_d \in \calF(\calY, \calE)$, 
$R \in \R^{\Theta \times \calY}$ and $\psi \in \R^\Theta$,
    such that for any $ (y,y') \in \calE$ and $\theta \in \Theta$,
    \begin{equation}
        \label{eq:e-family-expression}
        \log P_\theta(y, y') =  K(y, y') + \sum_{i = 1}^{d} \theta^i g_i(y, y') + R(\theta, y') - R(\theta, y)  - \psi(\theta).
    \end{equation}
When fixing $\theta \in \Theta$, we will write for convenience $\psi_\theta$ for $\psi(\theta)$ and  $R_\theta$ for $R(\theta, \cdot) \in \R^ \calY$.
\end{definition}

\begin{remark} 
An e-family $\calV_e$ can be identified with some affine space as follows.
Denote, 
\begin{equation}
\label{definition:anti-shift-functions}
\begin{split}
\mathcal{N}(\calY, \calE) \eqdef \bigg\{ &h \in \calF(\calY, \calE) \colon \exists (c, f) \in (\R, \R^\calY), \\ &\forall (y, y') \in \calE, h(y, y') = f(y') - f(y) + c \bigg\}.
\end{split}
\end{equation}
Then 
$\mathcal{N}(\calY, \calE)$ is an $\abs{\calY}$-dimensional vector space \citep[Section~3]{nagaoka2017exponential}. \hl{We can thus} define
the quotient linear space 
$$\mathcal{G}(\calY, \calE) \eqdef \calF(\calY, \calE)/\mathcal{N}(\calY, \calE),$$
\hlp{and the diffeomorphism
\begin{equation*}
\begin{split}
    \Delta \colon \calG(\calY, \calE) &\to \calW(\calY, \calE), \qquad  g \mapsto \Delta(g) \eqdef \stoch(\exp \circ g),
    \end{split}
\end{equation*}
where $\circ$ here denotes function composition and $\exp$ is understood to be entry-wise.
There is a one-to-one correspondence between linear subspaces of $\calG(\calY, \calE)$ and e-families \citep[Theorem~2]{nagaoka2017exponential}.
}
We identify a coset of $\mathcal{G}(\calY, \calE)$ with a representative function in that coset.
For Definition~\ref{definition:e-family-parametric}, and unless stated otherwise, we will assume that the functions $g_1, \dots, g_d$ form an independent family in $\mathcal{G}( \calY, \calE)$.
In this case the family is said to be minimal and $\dim \calV_e = d$.
\end{remark}

\begin{definition}[m-family of \hl{stochastic matrices} {\citep[2.35]{amari2007methods}}]
\label{definition:m-family-parametric}
We say that a family of irreducible \hl{stochastic matrices} $\calV_m$ 
is a mixture family (m-family) of irreducible \hl{stochastic matrices} on $( \calY, \calE)$ when the following holds.
     There exists $C, F_1, \dots, F_d \in \calF( \calY, \calE)$,
such that $C, C + F_1, \dots, C + F_d$ are affinely independent, $$\sum_{(y,y') \in \calE} C(y,y') = 1, \qquad \sum_{(y,y') \in \calE} F_i(y,y') = 0, \forall i \in [d],$$
and 
$$\calV_m = \set{ P_\xi \in \calW( \calY, \calE) \colon Q_\xi = C + \sum_{i =1}^{d} \xi^i F_i, \xi \in \Xi }$$ 
where $\Xi= \set{ \xi \in \R^d\colon Q_\xi(y,y') > 0, \forall (y,y') \in \calE }$, and $Q_\xi$ is the edge measure that pertains to $P_\xi$.
Note that $\Xi$ is an open set, $\xi$ is called the mixture parameter and $d$ is the dimension of the family $\calV_m$.
See also \citep[Definition~1]{wolfer2021information} for alternative equivalent definitions of an m-family.
\end{definition}

\subsection{Lumping of Markov chains}

Let $P \in \calW(\calY, \calE)$, and let $Y_1, Y_2, \dots, Y_k$ be a sequence of observations sampled \hl{according to} $P$.
For $\phi \colon \calY \to \calX$, a possibly random mapping,
we call \hl{memoryless} data processing the application of $\phi$ onto the trajectory sampled from $P$.
\begin{equation*}
    \begin{split}
        Y_1, Y_2,\dots,Y_k \mapsto \phi(Y_1), \phi(Y_2),\dots,\phi(Y_k).
    \end{split}
\end{equation*}
This processing pertains to the action of a memoryless black box that takes a Markovian trajectory as input, and returns the image stochastic process.
Crucially, the output process is generally not Markovian itself \citep{kelly1982markovian}, but rather corresponds to a functional hidden Markov model.

Lumping \footnote{In this work, lumpability always refers to the notion of strong lumpability, which is independent of the starting distribution, as opposed to weak lumpability defined e.g. in \citet[\S~6.4]{kemeny1983finite}.} is a particular type of \hl{memoryless} deterministic processing that projects a chain onto a state space of smaller size where subsets of symbols are merged together. In the distribution setting, this operation is also referred to as a statistic \citep[(1.2)]{ay2017}.
More formally, let us \hl{define} a lumping as a surjective map
$$\kappa \colon \calY \to \calX = \kappa(\calY), \text{ with } \abs{\calX} \leq \abs{\calY} < \infty.$$
Observe that a lumping is completely characterized by a partition $\biguplus_{x \in \calX} \calS_x = \calY$, where for $x \in \calX, \calS_x = \kappa^{-1}(\set{x})$ is the collection of symbols in $\calY$ that are mapped to the new symbol $x$.
For $k \in \N$, a lumping map then naturally defines a \hl{data} processing function on the product space $\kappa_k \colon \calY^k \to \calX^k$. In particular, for  $(y,y') \in \calE$, $\kappa_2(y,y') = (\kappa(y), \kappa(y'))$.
When the Markovian nature of the lumped process is preserved ---regardless of the initial distribution--- we say that $P$ is $\kappa$-lumpable.
In this case, we define the lumped edge set $\calD$ by
\begin{equation}
\label{eq:lumped-edge-set}
        \calD \eqdef \kappa_2(\calE) = \set{(x,x') \in \calX^2 \colon \exists (y,y') \in \calE, \kappa(y) = x, \kappa(y') = x'} \subset \calX^2.
\end{equation}
Irreducibility is preserved, and there exists \hl{a push-forward stochastic matrix} $\kappa_\star P \in \calW(\calX, \calD)$, such that $(\kappa(Y_t))_{t \in \N}$ is sampled according to $\kappa_\star P$.
For a fixed lumping $\kappa$ the set of all $\kappa$-lumpable irreducible \hl{transition matrices} over $(\calY, \calE)$ is denoted $\calW_{\kappa}(\calY, \calE)$.
We also refer to the push-forward map
\begin{equation*}
    \begin{split}
        \kappa_\star \colon \calW_{\kappa}(\calY, \calE) &\to \calW(\kappa(\calY) = \calX, \kappa_2(\calE) = \calD). \\
        P &\mapsto \kappa_\star P.
    \end{split}
\end{equation*}
The following theorem characterizes lumpable chains in terms of their transition \hl{matrix}.
\begin{theorem}[{\citet[Theorem~6.3.2]{kemeny1983finite}}]
\label{theorem:equivalence-lumpable}
Let $\kappa \colon \calY \to \calX$ be a lumping function with associated partition $\biguplus_{x \in \calX} \calS_{x} = \calY$, and let $P \in \calW(\calY, \calE)$. Then $P \in \calW_\kappa(\calY, \calE)$ if and only if
for all $x,x' \in \calX$, and for all $y_1,y_2 \in \calS_x$,
$$P(y_1, \calS_{x'}) = P(y_2, \calS_{x'}),$$
and in this case,
$$\kappa_\star P(x,x') = P(y,\calS_{x'}), y \in \calS_{x}.$$
\end{theorem}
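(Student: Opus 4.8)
The plan is to prove the two implications of this Kemeny--Snell characterization separately, after recording that $\kappa$-lumpability means the image process $(\kappa(Y_t))_{t}$ is Markov with transition probabilities that are insensitive to the initial law of $Y_1$ (strong lumpability, as stipulated in the preceding paragraph).

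For sufficiency I would assume the row-block condition and set $\bar P(x,x') \eqdef P(y,\calS_{x'})$ for an arbitrary $y \in \calS_x$, noting that the hypothesis makes this well-defined and that $\sum_{x'} \bar P(x,x') = P(y, \calY) = 1$ since $\set{\calS_{x'}}_{x' \in \calX}$ partitions $\calY$, so that $\bar P$ is stochastic. The core step is a tower-property computation: fixing a history $x_1, \dots, x_t$ of positive probability and decomposing the event $\set{\kappa(Y_s) = x_s,\ s \le t}$ over the fibres $\calS_{x_s}$, the Markov property of $(Y_t)$ expresses the conditional $\PR{\kappa(Y_{t+1}) = x' \mid \kappa(Y_1)=x_1,\dots,\kappa(Y_t)=x_t}$ as a weighted average of the quantities $P(y_t, \calS_{x'})$ over $y_t \in \calS_{x_t}$. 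The row-block condition collapses every such quantity to the single value $\bar P(x_t, x')$, which therefore factors out of the average and leaves a conditional probability depending only on $x_t$ and not on the initial law --- exactly the Markov property with kernel $\bar P$, establishing lumpability together with the claimed formula $\kappa_\star P = \bar P$.

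For necessity I would exploit the strong-lumpability clause directly by probing with point-mass initial laws. Starting from the point mass $e_y$ with $y \in \calS_x$ forces $\kappa(Y_1) = x$ almost surely, so the one-step image transition out of $x$ equals $P(y, \calS_{x'})$; since lumpability forbids this value from depending on the initial law, comparing two representatives $y_1, y_2 \in \calS_x$ yields $P(y_1, \calS_{x'}) = P(y_2, \calS_{x'})$, which is precisely the asserted condition, and identifies the common value as $\kappa_\star P(x,x')$.

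The genuinely load-bearing point --- and the step I expect to require the most care --- is this necessity direction, because the conclusion is false if one only assumes the image chain is Markov for a single fixed initial law: it is precisely the requirement that the lumped transition probabilities be initial-law-independent that licenses comparing $e_{y_1}$ against $e_{y_2}$ and forcing their block-row-sums to coincide. A secondary item I would record separately is that the resulting kernel is irreducible on $(\calX, \calD)$, which follows by pushing any path in the strongly connected graph $(\calY, \calE)$ forward through $\kappa$.
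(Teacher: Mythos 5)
The paper does not prove this statement at all: it is quoted verbatim as Theorem~6.3.2 of Kemeny and Snell and used as a black box, so there is no in-paper argument to compare against. Your proof is the standard Kemeny--Snell argument and is correct: the tower-property collapse for sufficiency, and the probing with point masses $e_{y_1}$, $e_{y_2}$ for necessity, which correctly isolates the role of the \emph{strong} lumpability requirement (initial-law-independence of the lumped transition probabilities) exactly where it is needed. Your closing remark that irreducibility of $\kappa_\star P$ over $(\calX,\calD)$ must be checked separately by pushing paths forward through $\kappa$ is also apt, since the theorem asserts membership in $\calW(\calX,\calD)$ and not merely stochasticity.
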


\begin{corollary}
\label{corollary:lumped-stationary-distribution-and-edge-measure}
Denoting $\kappa_\star \pi$ and $\kappa_\star Q$ the respective stationary distribution and edge measure of the lumped \hl{stochastic matrix} $\kappa_\star P$, it is straightforward to verify that for any $x,x' \in \calX$,
\begin{equation*}
\begin{split}
\kappa_\star \pi (x) = \pi(\calS_{x}), \qquad \kappa_\star Q (x,x') = Q(\calS_{x}, \calS_{x'}) \hlm{ \eqdef \sum_{(x,x') \in (\calS_{x}, \calS_{x'})} Q(y,y') }. \\
\end{split}
\end{equation*}
\end{corollary}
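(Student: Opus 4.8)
The plan is to verify both identities directly, leveraging the uniqueness of the stationary distribution of the irreducible push-forward kernel $\kappa_\star P$, so that I never need to solve a linear system: it suffices to exhibit a candidate and check the defining properties. Throughout I would rely on Theorem~\ref{theorem:equivalence-lumpable}, which both guarantees $\kappa_\star P(x,x') = P(y,\calS_{x'})$ and, crucially, that this value is the same for every $y \in \calS_x$.

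First I would treat the stationary distribution. I set the candidate $\mu(x) \eqdef \pi(\calS_x) = \sum_{y \in \calS_x}\pi(y)$ and check that $\mu \in \calP(\calX)$: summing over $x$ and using that the blocks $\calS_x$ partition $\calY$ gives $\sum_x \mu(x) = \sum_{y \in \calY}\pi(y) = 1$. The substance is the stationarity check $\mu \kappa_\star P = \mu$. Fixing $x' \in \calX$, I expand $(\mu \kappa_\star P)(x') = \sum_x \pi(\calS_x)\,\kappa_\star P(x,x')$ and substitute $\kappa_\star P(x,x') = P(y,\calS_{x'})$; the constancy of $P(y,\calS_{x'})$ over $y \in \calS_x$ is exactly what lets me distribute the weight $\pi(\calS_x) = \sum_{y \in \calS_x}\pi(y)$ into the block and rewrite the term as $\sum_{y \in \calS_x}\pi(y)\,P(y,\calS_{x'})$. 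Summing over $x$ collapses the partition, so $(\mu \kappa_\star P)(x') = \sum_{y \in \calY}\pi(y)\,P(y,\calS_{x'}) = \sum_{y' \in \calS_{x'}}(\pi P)(y')$. Using the stationarity of $\pi$ for $P$, namely $\pi P = \pi$, this reduces to $\sum_{y' \in \calS_{x'}}\pi(y') = \pi(\calS_{x'}) = \mu(x')$. Since $\kappa_\star P$ is irreducible, its stationary distribution is unique, whence $\kappa_\star \pi = \mu$, the first claim.

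For the edge measure I would combine the definition $\kappa_\star Q = \diag(\kappa_\star \pi)\,\kappa_\star P$ with the first identity, giving $\kappa_\star Q(x,x') = \pi(\calS_x)\,\kappa_\star P(x,x')$. Invoking once more the constancy of $P(y,\calS_{x'})$ over $y \in \calS_x$, I rewrite $\pi(\calS_x)\,P(y,\calS_{x'}) = \sum_{y \in \calS_x}\pi(y)\,P(y,\calS_{x'}) = \sum_{y \in \calS_x}\sum_{y' \in \calS_{x'}}\pi(y)\,P(y,y') = Q(\calS_x,\calS_{x'})$, which is the second claim.

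There is no serious obstacle; the statement is a verification, as the corollary itself advertises. The only point requiring genuine care --- and the step I would flag --- is the repeated use of the lumpability condition (the constancy of $P(y,\calS_{x'})$ across $y$ within a block) to commute the block-sum over $\pi$ with the lumped transition probabilities. Without it, the candidate $\pi(\calS_x)$ would not in general be stationary for $\kappa_\star P$, so this is the hinge on which both identities turn.
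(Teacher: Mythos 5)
Your proof is correct and is precisely the ``straightforward verification'' the paper leaves implicit (the corollary is stated without proof): exhibit $\pi(\calS_x)$ as a candidate, use the lumpability condition from Theorem~\ref{theorem:equivalence-lumpable} to distribute the block mass, and conclude by uniqueness of the stationary distribution of the irreducible lumped kernel. You correctly identify the constancy of $P(y,\calS_{x'})$ over $y\in\calS_x$ as the hinge of both identities; nothing further is needed.
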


Note that when defining the lumping of a class of \hl{transition matrices}, it can be that $\calW_\kappa(\calY, \calE) = \emptyset$.

\begin{example}
Consider $\calX = \set{0,1}, \calY = \set{0,1,2}$, $\kappa$ such that $\kappa(0) = 0, \kappa(1) = \kappa(2) = 1$, and $\calE = \set{(0,0), (0,1), (1,0), (1,2), (2,1)}$. 
In this case, no element in $\calW(\calY, \calE)$ is lumpable.
\end{example}

To avoid this trivial case, we must assume that $\calE$ and $\kappa$ are compatible in the sense described in the next proposition. 

\begin{proposition}
\label{proposition:lumpable-not-empty}
$\calW_\kappa(\calY ,\calE) \neq \emptyset$ if and only if $\forall (x,x') \in \calD = \kappa_2(\calE)$, $\forall y \in \calS_x$, $\exists y' \in \calS_{x'}$ such that $(y,y') \in \calE$.
\end{proposition}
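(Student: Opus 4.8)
The plan is to prove both implications, using the Kemeny characterization of $\kappa$-lumpability (Theorem~\ref{theorem:equivalence-lumpable}) as the main tool. The condition to be established has a transparent reading: it asserts that whenever a block $\calS_x$ can reach a block $\calS_{x'}$ along some edge of $\calE$ (i.e. $(x,x') \in \calD$), then \emph{every} state $y \in \calS_x$ possesses at least one outgoing $\calE$-edge into $\calS_{x'}$. This is exactly the structural requirement that lets all rows of a kernel within a block share identical block-sums while keeping the support equal to $\calE$.

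For the necessity direction, I would fix $P \in \calW_\kappa(\calY, \calE)$ and $(x,x') \in \calD$. By definition of $\calD$ there is an edge $(y_0, y_0') \in \calE$ with $\kappa(y_0)=x$, $\kappa(y_0')=x'$; since $P$ has support exactly $\calE$, this gives $P(y_0, \calS_{x'}) \geq P(y_0, y_0') > 0$. Theorem~\ref{theorem:equivalence-lumpable} then forces $P(y, \calS_{x'}) = P(y_0, \calS_{x'}) > 0$ for every $y \in \calS_x$, so the sum $\sum_{y' \in \calS_{x'}} P(y,y')$ is positive and at least one summand $P(y,y')$ is nonzero, yielding an edge $(y,y') \in \calE$ with $y' \in \calS_{x'}$ as required.

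For the sufficiency direction, I would explicitly construct a witness $P \in \calW_\kappa(\calY, \calE)$. For each $x$, let $\set{x' \colon (x,x') \in \calD}$ be its successor blocks and fix any strictly positive weights $m(x,x')$ on this set summing to one. For each $y \in \calS_x$ and each successor block $x'$, the hypothesis guarantees that the set of $\calE$-edges from $y$ into $\calS_{x'}$ is nonempty, so I distribute the mass $m(x,x')$ strictly positively (e.g. uniformly) over those edges, and set $P(y,y')=0$ for all $(y,y') \notin \calE$. By construction each row sums to one and $P(y, \calS_{x'}) = m(x,x')$ is constant across $y \in \calS_x$, so Theorem~\ref{theorem:equivalence-lumpable} yields $\kappa$-lumpability. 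The support is exactly $\calE$: no mass lands off $\calE$, and every $(y,y') \in \calE$ belongs to a nonempty edge-set receiving positive mass. Since $(\calY, \calE)$ is strongly connected, a kernel with support exactly $\calE$ is irreducible, so $P \in \calW(\calY, \calE)$ and hence $P \in \calW_\kappa(\calY, \calE)$.

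The routine verifications (row-stochasticity, constancy of block-sums) are immediate; the only genuine content is recognizing that the stated condition is precisely what makes every mass-distribution step feasible. The main obstacle is therefore conceptual rather than technical: pinpointing that the failure mode — a state $y \in \calS_x$ unable to reach a block $x'$ that is nonetheless reachable from its block — is exactly what would force a vanishing block-sum in a block whose other rows have positive block-sum, violating lumpability. Ensuring the constructed $P$ has support \emph{exactly} $\calE$, rather than merely contained in it, is the one place to be careful, and is handled by spreading mass with full support over each relevant edge-set.
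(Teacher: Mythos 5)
Your proof is correct and follows exactly the route the paper intends: the paper's own proof is the single line ``Easily verified with Theorem~\ref{theorem:equivalence-lumpable},'' and your argument is precisely the fleshed-out version of that, using the Kemeny block-sum characterization for necessity and an explicit uniform-over-block-edges construction for sufficiency. Both directions check out, including the two points that actually require care (support exactly $\calE$ in the construction, and irreducibility following from strong connectivity of $(\calY,\calE)$).
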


\begin{proof}
Easily verified with Theorem~\ref{theorem:equivalence-lumpable}.
\end{proof}

\section{Embeddings of stochastic matrices}
\label{section:embeddings}
Theorem~\ref{theorem:equivalence-lumpable} enables us to formulate lumping of a chain as a function of its \hl{transition matrix}. In a similar spirit, we define embeddings of Markov chains by viewing their \hl{transition matrices} as first-class citizens.
Namely, a \hl{stochastic matrix} embedding \label{term:Emb} \hl{$E$} is a map from a smooth submanifold of irreducible \hl{stochastic matrices} $\calV \subset \calW(\calX, \calD)$ to another family $\calW(\calY, \calE)$, and such that \hl{$E$} is a diffeomorphism \footnote{Note that \hl{$E$} being a diffeomorphism implies that necessarily $\dim \calW(\calY, \calE) \geq \dim \calV$.} onto its image.
\begin{equation*}
    \hlm{E} \colon \calV \to \calW(\calY, \calE).
\end{equation*}
By abuse of notation, \hl{$E\pi$} and \hl{$EQ$} will denote respectively the stationary distribution and edge measure of the embedded \hl{stochastic matrix} \hl{$EP$}.

\subsection{Markov embeddings}
\label{section:embeddings-markov}

We first recall the notion of a Markov morphism\footnote{The mappings introduced in Definition~\ref{definition:markov-morphism} are more commonly \citep{doi:10.1080/02331887808801428, campbell1986extended, ay2017} referred to as congruent Markov morphisms. To avoid confusion with the later defined congruent embeddings, we will simply refer to them as Markov morphisms.} in the context of probability distributions.

\begin{definition}[\citet{doi:10.1080/02331887808801428, campbell1986extended}]
\label{definition:markov-morphism}
Let there be some partition $\biguplus_{x\in \calX} \calS_{x} = \calY$.
To each $x \in \calX$, we associate a distribution $W^x \in \calP(\calY)$ concentrated on $\calS_x$.
We define the push-forward of $W=(W^x: x \in {\calX})$, referred to as Markov morphism, by
\begin{equation*}
    W_\star \colon \calP_+(\calX) \to \calP_+(\calY),
\end{equation*}
where for any $\mu \in \calP_+(\calX)$, and $y \in \calY$,
\begin{equation*}
    W_\star \mu(y) = \sum_{x \in \calX} W^{x}(y) \mu(x).
\end{equation*}
\end{definition}

\begin{example}
Let $\calX = \set{0, 1}$ and $\mu = (\eta, 1 - \eta)$ for $\eta \in (0,1)$.
We can embed $\mu$ into the larger space $\calY = \set{0,1,2}$ by considering the mapping induced from the channel 
\begin{equation*}
    W_p = \begin{pmatrix} 1 & 0 & 0 \\ 0 & p & 1 - p \end{pmatrix}, p \in (0,1).
\end{equation*}
The resulting distribution is $\mu_p = (\eta, p(1-\eta), (1-p)(1-\eta))$.
We can alternatively give a probabilistic definition for this embedding, where for a sequence of observations $X_1, X_2, \dots$ sampled iid from $\mu$, we record $Y_t = 0$ when $X_t = 0$, and flip a coin with bias $p$ when $X_t = 1$, $\PR{Y_t = 1 | X_{t} = 1} = p$, $\PR{Y_t = 2 | X_{t} = 1} = 1 - p$.
The new process $Y_1, Y_2, \dots$ is drawn from the embedded distribution $\mu_p$.
\end{example}

We now introduce Markov morphisms in the context of \hl{stochastic matrices}, which we term Markov embeddings, and which embed a stochastic matrix over some state space $\calX$ into a stochastic matrix over a space $\calY$ that is no smaller than $\calX$.

\hlc{The well-definedness of Definition~\ref{definition:markov-embeddings} will be discussed after the definition.}

\begin{definition}[Markov embedding]
\label{definition:markov-embeddings}
We call Markov embedding, a mapping
\begin{equation*}
    \begin{split}
        \Lambda_\star \colon  \calW(\calX, \calD)  &\to \calW(\calY, \calE) \\ P &\mapsto \Lambda_\star P, \\
    \end{split}
\end{equation*}
such that for any $(y,y') \in \calE$,
$$\Lambda_\star P(y,y') = P(\kappa(y), \kappa(y'))\Lambda(y,y'),$$
where $\kappa$ and $\Lambda$ satisfy the following requirements
\begin{enumerate}
    \item[$(i)$] $\kappa \colon \calY \to \calX$ is a lumping function for which $\kappa_2(\calE) = \calD$, and such that $\calE$ and $\kappa$ meet the non-triviality condition of Proposition~\ref{proposition:lumpable-not-empty}.
\item[$(ii)$] $\Lambda \in \calF_+(\calY, \calE)$.
\item[$(iii)$] 
Writing
$\biguplus_{x \in \calX} \calS_x = \calY$ for the associated partition of $\kappa$,
$\Lambda$ is such that for any $y \in \calY$ and $x' \in \calX$,
\begin{equation*}
\begin{split}
    (\kappa(y), x') \in \calD &\implies (\Lambda(y,y'))_{y' \in \calS_{x'}} \in \calP(\calS_{x'}).
\end{split}
\end{equation*}
\end{enumerate}
\end{definition}
Note that if $\calE$ and $\kappa$ fail to satisfy the condition of Proposition~\ref{proposition:lumpable-not-empty}, then no $\Lambda_\star$ satisfying $(iii)$ exists.
\label{term:MEmb}
It is instructive to observe that a valid $\Lambda$ corresponds to a block matrix,
\begin{equation*}
    \Lambda = \begin{pmatrix} 
    W_{1,1} & W_{1,2} & \cdots &  & W_{1,n} \\ 
    \vdots &  &  &  &  \\ 
    W_{x,1} & \cdots & W_{x,x'} &  \cdots & W_{x,n} \\ 
    \vdots &  & &   & \vdots \\ 
    W_{n,1} &  & \cdots&  & W_{n,n} \\ 
    \end{pmatrix},
\end{equation*}
where each block $W_{x,x'}$ is either a channel from $\calS_x$ to $\calS_{x'}$ when $(x,x') \in \calD$, or is set to $0$ when $(x,x')\not \in \calD$.

\hlp{
\paragraph{Well-definedness of Definition~\ref{definition:markov-embeddings}}

It is straightforward to verify that  Markov embeddings are well-defined in the sense that a stochastic matrix irreducible in $(\calX, \calD)$ is mapped to a stochastic matrix over $\calW(\calY, \calE)$}\footnote{Note that we can embed a sub-family $\calV \subset \calW(\calX, \calD)$ into $\calW(\calY, \calE)$ by using a restriction of $\Lambda_\star$ to $\calV$.}
Condition $(ii)$ ensures that Markov embeddings preserve irreducibility.
Crucially, when $P \in \calW(\calX, \calD)$, $\Lambda_\star P \in \calW_\kappa(\calY, \calE)$, where
$\kappa$ is the lumping function associated with the embedding defined at Definition~\ref{definition:markov-embeddings}-$(i)$.
We say that an embedding is $\kappa$-compatible \label{term:Embk} \label{term:MEmbk} when it produces $\kappa$-lumpable \hl{stochastic matrices}.

\begin{example}[The weather model]
\label{example:weather-model}
Consider the simple Markovian weather model with two states \emph{Sun} (numbered as $0$) and \emph{Rain} (numbered as $1$), where the probability of weather conditions given the preceding day are given by the following transition matrix,
\begin{equation*}
    P = \left(
\begin{tblr}{c c}
    4/5 & 1/5 \\
    1/2 & 1/2
    \end{tblr}
\right).
\end{equation*}
Suppose we are interested in a finer model, where there are two types of rainy states called \emph{Showers} and \emph{Thunderstorm}. This corresponds to splitting the state \emph{Rain}, and refining the transition probabilities to the newly defined states.
We can represent this splitting operation naturally by a Markov embedding where $\calS_0 = \set{0}, \calS_{1} = \set{1, 2}$, and
\begin{equation*}
    \Lambda = \left(
\begin{tblr}{c|[dashed]cc}
1 & 1/2 & 1/2 \\ \hline[dashed]
1 & 3/5 & 2/5 \\
1 & 2/5 & 3/5 \\
\end{tblr}
\right).
\end{equation*}
See Figure~\ref{figure:embedding-weather-model}. The transition matrix of the embedded Markov chain is 
\begin{equation*}
    \Lambda_\star P = \left(
\begin{tblr}{c|[dashed]cc}
4/5 & 1/10 & 1/10 \\ \hline[dashed]
1/2 & 3/10 & 1/5 \\
1/2 & 1/5 & 3/10 \\
\end{tblr}
\right).
\end{equation*}
\end{example}

\begin{figure}%
\centering

\tikzset{every picture/.style={line width=0.75pt}} %

\begin{tikzpicture}[x=0.50pt,y=0.50pt,yscale=-1,xscale=1]

\draw   (58,101.5) .. controls (58,97.36) and (61.36,94) .. (65.5,94) .. controls (69.64,94) and (73,97.36) .. (73,101.5) .. controls (73,105.64) and (69.64,109) .. (65.5,109) .. controls (61.36,109) and (58,105.64) .. (58,101.5) -- cycle ;
\draw   (178,100.5) .. controls (178,96.36) and (181.36,93) .. (185.5,93) .. controls (189.64,93) and (193,96.36) .. (193,100.5) .. controls (193,104.64) and (189.64,108) .. (185.5,108) .. controls (181.36,108) and (178,104.64) .. (178,100.5) -- cycle ;
\draw    (65.5,94) .. controls (83.54,63.12) and (162.89,61.4) .. (183.99,90.69) ;
\draw [shift={(185.5,93)}, rotate = 239.71] [fill={rgb, 255:red, 0; green, 0; blue, 0 }  ][line width=0.08]  [draw opacity=0] (10.72,-5.15) -- (0,0) -- (10.72,5.15) -- (7.12,0) -- cycle    ;
\draw    (67.77,111.44) .. controls (98.08,142.41) and (157.71,143.42) .. (185.5,108) ;
\draw [shift={(65.5,109)}, rotate = 48.49] [fill={rgb, 255:red, 0; green, 0; blue, 0 }  ][line width=0.08]  [draw opacity=0] (10.72,-5.15) -- (0,0) -- (10.72,5.15) -- (7.12,0) -- cycle    ;
\draw    (60,96.33) .. controls (0.6,43.86) and (-8.81,142.33) .. (56.01,102.74) ;
\draw [shift={(58,101.5)}, rotate = 147.41] [fill={rgb, 255:red, 0; green, 0; blue, 0 }  ][line width=0.08]  [draw opacity=0] (10.72,-5.15) -- (0,0) -- (10.72,5.15) -- (7.12,0) -- cycle    ;
\draw    (191,95.33) .. controls (254.36,42.86) and (257.93,140.35) .. (194.93,101.71) ;
\draw [shift={(193,100.5)}, rotate = 32.76] [fill={rgb, 255:red, 0; green, 0; blue, 0 }  ][line width=0.08]  [draw opacity=0] (10.72,-5.15) -- (0,0) -- (10.72,5.15) -- (7.12,0) -- cycle    ;
\draw   (373,99.5) .. controls (373,95.36) and (376.36,92) .. (380.5,92) .. controls (384.64,92) and (388,95.36) .. (388,99.5) .. controls (388,103.64) and (384.64,107) .. (380.5,107) .. controls (376.36,107) and (373,103.64) .. (373,99.5) -- cycle ;
\draw   (525,60.5) .. controls (525,56.36) and (528.36,53) .. (532.5,53) .. controls (536.64,53) and (540,56.36) .. (540,60.5) .. controls (540,64.64) and (536.64,68) .. (532.5,68) .. controls (528.36,68) and (525,64.64) .. (525,60.5) -- cycle ;
\draw    (380.5,92) .. controls (407.45,69.46) and (418.55,67.09) .. (457.58,69.83) ;
\draw [shift={(460,70)}, rotate = 184.18] [fill={rgb, 255:red, 0; green, 0; blue, 0 }  ][line width=0.08]  [draw opacity=0] (10.72,-5.15) -- (0,0) -- (10.72,5.15) -- (7.12,0) -- cycle    ;
\draw    (380.61,110.2) .. controls (381.9,134.61) and (393.84,130.04) .. (462,131) ;
\draw [shift={(380.5,107)}, rotate = 88.98] [fill={rgb, 255:red, 0; green, 0; blue, 0 }  ][line width=0.08]  [draw opacity=0] (10.72,-5.15) -- (0,0) -- (10.72,5.15) -- (7.12,0) -- cycle    ;
\draw    (375,94.33) .. controls (315.6,41.86) and (306.19,140.33) .. (371.01,100.74) ;
\draw [shift={(373,99.5)}, rotate = 147.41] [fill={rgb, 255:red, 0; green, 0; blue, 0 }  ][line width=0.08]  [draw opacity=0] (10.72,-5.15) -- (0,0) -- (10.72,5.15) -- (7.12,0) -- cycle    ;
\draw    (532.5,53) .. controls (532.01,1.78) and (599.92,-1.9) .. (601.95,58.21) ;
\draw [shift={(602,61)}, rotate = 270] [fill={rgb, 255:red, 0; green, 0; blue, 0 }  ][line width=0.08]  [draw opacity=0] (10.72,-5.15) -- (0,0) -- (10.72,5.15) -- (7.12,0) -- cycle    ;
\draw   (525,150.5) .. controls (525,146.36) and (528.36,143) .. (532.5,143) .. controls (536.64,143) and (540,146.36) .. (540,150.5) .. controls (540,154.64) and (536.64,158) .. (532.5,158) .. controls (528.36,158) and (525,154.64) .. (525,150.5) -- cycle ;
\draw    (532.5,158) .. controls (532.99,180.66) and (630.99,187.79) .. (632.02,133.52) ;
\draw [shift={(632,131)}, rotate = 87.99] [fill={rgb, 255:red, 0; green, 0; blue, 0 }  ][line width=0.08]  [draw opacity=0] (10.72,-5.15) -- (0,0) -- (10.72,5.15) -- (7.12,0) -- cycle    ;
\draw [color={rgb, 255:red, 208; green, 2; blue, 27 }  ,draw opacity=1 ]   (460,70) .. controls (493.6,69.04) and (499.55,59.79) .. (522.09,60.37) ;
\draw [shift={(525,60.5)}, rotate = 183.43] [fill={rgb, 255:red, 208; green, 2; blue, 27 }  ,fill opacity=1 ][line width=0.08]  [draw opacity=0] (10.72,-5.15) -- (0,0) -- (10.72,5.15) -- (7.12,0) -- cycle    ;
\draw [color={rgb, 255:red, 208; green, 2; blue, 27 }  ,draw opacity=1 ]   (460,70) .. controls (506.32,70.97) and (512.6,118.49) .. (525.56,142.49) ;
\draw [shift={(527,145)}, rotate = 238.67] [fill={rgb, 255:red, 208; green, 2; blue, 27 }  ,fill opacity=1 ][line width=0.08]  [draw opacity=0] (10.72,-5.15) -- (0,0) -- (10.72,5.15) -- (7.12,0) -- cycle    ;
\draw [color={rgb, 255:red, 74; green, 144; blue, 226 }  ,draw opacity=1 ]   (465.25,130.9) .. controls (517.59,128.6) and (532.01,94.46) .. (532.5,68) ;
\draw [shift={(462,131)}, rotate = 358.96] [fill={rgb, 255:red, 74; green, 144; blue, 226 }  ,fill opacity=1 ][line width=0.08]  [draw opacity=0] (10.72,-5.15) -- (0,0) -- (10.72,5.15) -- (7.12,0) -- cycle    ;
\draw [color={rgb, 255:red, 74; green, 144; blue, 226 }  ,draw opacity=1 ]   (465.54,130.96) .. controls (509.79,130.99) and (486.98,150.01) .. (525,150.5) ;
\draw [shift={(462,131)}, rotate = 358.83] [fill={rgb, 255:red, 74; green, 144; blue, 226 }  ,fill opacity=1 ][line width=0.08]  [draw opacity=0] (10.72,-5.15) -- (0,0) -- (10.72,5.15) -- (7.12,0) -- cycle    ;
\draw [color={rgb, 255:red, 208; green, 2; blue, 27 }  ,draw opacity=1 ]   (602,60) .. controls (602,79.6) and (578.95,85.75) .. (542.26,61.99) ;
\draw [shift={(540,60.5)}, rotate = 33.86] [fill={rgb, 255:red, 208; green, 2; blue, 27 }  ,fill opacity=1 ][line width=0.08]  [draw opacity=0] (10.72,-5.15) -- (0,0) -- (10.72,5.15) -- (7.12,0) -- cycle    ;
\draw [color={rgb, 255:red, 208; green, 2; blue, 27 }  ,draw opacity=1 ]   (602,60) .. controls (602.98,95.1) and (543.11,105.48) .. (533.16,140.27) ;
\draw [shift={(532.5,143)}, rotate = 281.46] [fill={rgb, 255:red, 208; green, 2; blue, 27 }  ,fill opacity=1 ][line width=0.08]  [draw opacity=0] (10.72,-5.15) -- (0,0) -- (10.72,5.15) -- (7.12,0) -- cycle    ;
\draw [color={rgb, 255:red, 208; green, 2; blue, 27 }  ,draw opacity=1 ]   (632,131) .. controls (632,90.61) and (564.08,107.47) .. (541.02,62.59) ;
\draw [shift={(540,60.5)}, rotate = 65.15] [fill={rgb, 255:red, 208; green, 2; blue, 27 }  ,fill opacity=1 ][line width=0.08]  [draw opacity=0] (10.72,-5.15) -- (0,0) -- (10.72,5.15) -- (7.12,0) -- cycle    ;
\draw [color={rgb, 255:red, 208; green, 2; blue, 27 }  ,draw opacity=1 ]   (632,131) .. controls (614.27,93.57) and (607.21,148.31) .. (542.97,150.44) ;
\draw [shift={(540,150.5)}, rotate = 359.57] [fill={rgb, 255:red, 208; green, 2; blue, 27 }  ,fill opacity=1 ][line width=0.08]  [draw opacity=0] (10.72,-5.15) -- (0,0) -- (10.72,5.15) -- (7.12,0) -- cycle    ;

\draw (40,60) node [anchor=north west][inner sep=0.75pt]   [align=left] {Sun};
\draw (174,60) node [anchor=north west][inner sep=0.75pt]   [align=left] {Rain};
\draw (15,88) node [anchor=north west][inner sep=0.75pt]   [align=left] {4/5};
\draw (118,51) node [anchor=north west][inner sep=0.75pt]   [align=left] {1/5};
\draw (205,87) node [anchor=north west][inner sep=0.75pt]   [align=left] {1/2};
\draw (119,112) node [anchor=north west][inner sep=0.75pt]   [align=left] {1/2};
\draw (360,60) node [anchor=north west][inner sep=0.75pt]   [align=left] {Sun};
\draw (455,20) node [anchor=north west][inner sep=0.75pt]   [align=left] {Showers};
\draw (330,86) node [anchor=north west][inner sep=0.75pt]   [align=left] {4/5};
\draw (409,50) node [anchor=north west][inner sep=0.75pt]   [align=left] {1/5};
\draw (554,18) node [anchor=north west][inner sep=0.75pt]   [align=left] {1/2};
\draw (409,135) node [anchor=north west][inner sep=0.75pt]   [align=left] {1/2};
\draw (562,153) node [anchor=north west][inner sep=0.75pt]   [align=left] {1/2};
\draw (479,40) node [anchor=north west][inner sep=0.75pt]  [color={rgb, 255:red, 208; green, 2; blue, 27 }  ,opacity=1 ] [align=left] {1/2};
\draw (463,89) node [anchor=north west][inner sep=0.75pt]  [color={rgb, 255:red, 208; green, 2; blue, 27 }  ,opacity=1 ] [align=left] {1/2};
\draw (420,177) node [anchor=north west][inner sep=0.75pt]   [align=left] {Thunderstorm};
\draw (563,55) node [anchor=north west][inner sep=0.75pt]  [color={rgb, 255:red, 208; green, 2; blue, 27 }  ,opacity=1 ] [align=left] {3/5};
\draw (525,92) node [anchor=north west][inner sep=0.75pt]  [color={rgb, 255:red, 208; green, 2; blue, 27 }  ,opacity=1 ] [align=left] {2/5};
\draw (607,84) node [anchor=north west][inner sep=0.75pt]  [color={rgb, 255:red, 208; green, 2; blue, 27 }  ,opacity=1 ] [align=left] {2/5};
\draw (555,120) node [anchor=north west][inner sep=0.75pt]  [color={rgb, 255:red, 208; green, 2; blue, 27 }  ,opacity=1 ] [align=left] {3/5};
\draw (275,70) node [anchor=north west][inner sep=0.75pt]   [align=left] {$\Lambda_\star$};
\draw (275,90) node [anchor=north west][inner sep=0.75pt]   [align=left] {$\hookrightarrow$};

\end{tikzpicture}

\caption{Embedding the weather model (Example~\ref{example:weather-model}).}
\label{figure:embedding-weather-model}

\end{figure}

\begin{example}[Non-full support range]
Let $\calX = \set{0, 1}$, $\calY = \set{0, 1, 2}$, 
$$\calE = \set{(0,0), (0,2), (1,0), (1,2), (2,0), (2,1)} \neq \calY^2,$$ 
and the lumping function $\kappa \colon \calY \to \calX$ such that $\kappa(0) = 0, \kappa(1) = \kappa(2) = 1$.
Then $\calD = \calX^2$ and from \eqref{eq:dimension-w}, $\dim \calW(\calX, \calD) = 2, \dim \calW(\calY, \calE) = 3$, and
any elements $\bar{P} \in \calW(\calX, \calD)$ and $P \in \calW(\calY, \calE)$ can be written,
\begin{equation*}
    \bar{P} = \left( \begin{tblr}{cc}
1 - p & p  \\
q & 1 - q 
\end{tblr}\right), \qquad P =\left( \begin{tblr}{ccc}
1-p & 0 & p \\
q_1 & 0 & 1-q_1 \\
q_2 & 1 - q_2 & 0 \\
\end{tblr}\right),
\end{equation*}
for some $p, q, q_1, q_2 \in (0,1)$.
The only possible Markov embedding is defined by the matrix
\begin{equation*}
    \Lambda = \left( \begin{tblr}{c|[dashed]cc}
1 & 0 & 1 \\ \hline[dashed]
1 & 0 & 1 \\
1 & 1 & 0 \\
\end{tblr}\right).
\end{equation*}
\hl{An} element $P_\kappa \in \calW_\kappa(\calY, \calE)$ can thus be expressed as
\begin{equation*}
    P_\kappa = \left( \begin{tblr}{c|[dashed]cc}
1-p & 0 & p \\ \hline[dashed]
q & 0 & 1-q \\
q & 1 - q & 0 \\
\end{tblr}\right),
\end{equation*}
for some $p, q\in (0,1)$, hence $\dim \calW_\kappa (\calY, \calE) = 2 < \dim \calW(\calY, \calE)$. 
\end{example}

\begin{example}[Non-full support domain]
Let $\calX = \set{0, 1}$, $\calY = \set{0, 1, 2}$, 
$$\calE = \set{(0,0), (0,1), (0,2), (1,0), (2,0)} \neq \calY^2,$$ 
and the lumping function $\kappa \colon \calY \to \calX$ such that $\kappa(0) = 0, \kappa(1) = \kappa(2) = 1$.
Notice that 
$$\calD = \set{ (0,0), (0,1), (1,0) } \neq \calX^2,$$
and that $\dim \calW(\calX, \calD) = 1$, $\dim \calW(\calY, \calE) = 2$.
In this case, Markov embeddings are defined by $\kappa$ and the matrix
\begin{equation*}
    \Lambda = \left( \begin{tblr}{c|[dashed]cc}
1 & 1-\lambda & \lambda \\ \hline[dashed]
1 & 0 & 0 \\
1 & 0 & 0 \\
\end{tblr}\right),
\end{equation*}
that enjoys a degree of freedom $\lambda \in (0,1)$.
Any member $P$ of $\calW_\kappa(\calY, \calE)$ is expressed (see forthcoming Lemma~\ref{lemma:construct-canonical-embedding}) as
\begin{equation*}
    P = \left( \begin{tblr}{c|[dashed]cc}
1 - p & (1-\lambda)p & \lambda p \\ \hline[dashed]
1 & 0 & 0 \\
1 & 0 & 0 \\
\end{tblr}\right), (p, \lambda) \in (0,1)^2,
\end{equation*}
thus $\dim \calW_\kappa(\calY, \calE) = 2$, and $\calW_\kappa(\calY, \calE) = \calW(\calY, \calE)$.
\end{example}

We proceed to give a more operational definition of a Markov embedding by showing that 
it can be interpreted as
randomly embedding the sequence of observations of a Markov chain into a larger space.

Let $P \in \calW(\calX, \calD)$, and $\mu \in \calP_+(\calX)$ some initial distribution. Consider a single Markovian trajectory $X_1, X_2, \dots$ sampled according to $P$ and started from a state sampled from $\mu$. Let $\nu \in \calP_+(\calY)$ be such that for any $x \in \calX$,
\begin{equation*}
    \mu(x) = \sum_{y \in \calS_{x}} \nu(y),
\end{equation*}
and for $x \in \calX$, define the conditional probability distribution $\nu_{| x} \in \calP(\calY)$ concentrated on $\calS_{x}$,
\begin{equation*}
    \nu_{| x}(y)  = \frac{\nu(y)}{\mu(x)}.
\end{equation*}

\begin{figure}
    \centering
    \hlp{

\tikzset{every picture/.style={line width=0.75pt}} %

\begin{tikzpicture}[x=0.55pt,y=0.55pt,yscale=-1,xscale=1]

\draw   (74,74) -- (104.1,74) -- (91.2,90.57) -- (61.1,90.57) -- cycle ;
\draw   (89.9,159.43) -- (120,159.43) -- (107.1,176) -- (77,176) -- cycle ;
\draw   (77,176) -- (107.1,176) -- (94.2,192.57) -- (64.1,192.57) -- cycle ;
\draw   (102.8,142.86) -- (132.9,142.86) -- (120,159.43) -- (89.9,159.43) -- cycle ;
\draw  [color={rgb, 255:red, 208; green, 2; blue, 27 }  ,draw opacity=1 ] (128.6,109.71) -- (158.7,109.71) -- (145.8,126.29) -- (115.7,126.29) -- cycle ;
\draw   (64.1,192.57) -- (94.2,192.57) -- (81.3,209.14) -- (51.2,209.14) -- cycle ;
\draw [color={rgb, 255:red, 208; green, 2; blue, 27 }  ,draw opacity=1 ] [dash pattern={on 0.84pt off 2.51pt}]  (117,57.43) -- (158.7,109.71) ;
\draw [color={rgb, 255:red, 208; green, 2; blue, 27 }  ,draw opacity=1 ] [dash pattern={on 0.84pt off 2.51pt}]  (74,74) -- (102.8,142.86) ;
\draw  [color={rgb, 255:red, 208; green, 2; blue, 27 }  ,draw opacity=1 ][fill={rgb, 255:red, 255; green, 255; blue, 255 }  ,fill opacity=1 ] (86.9,57.43) -- (117,57.43) -- (104.1,74) -- (74,74) -- cycle ;
\draw  [color={rgb, 255:red, 208; green, 2; blue, 27 }  ,draw opacity=1 ] (115.7,126.29) -- (145.8,126.29) -- (132.9,142.86) -- (102.8,142.86) -- cycle ;
\draw  [color={rgb, 255:red, 208; green, 2; blue, 27 }  ,draw opacity=1 ] (198.9,159.43) -- (229,159.43) -- (216.1,176) -- (186,176) -- cycle ;
\draw  [color={rgb, 255:red, 208; green, 2; blue, 27 }  ,draw opacity=1 ] (186,176) -- (216.1,176) -- (203.2,192.57) -- (173.1,192.57) -- cycle ;
\draw  [color={rgb, 255:red, 0; green, 0; blue, 0 }  ,draw opacity=1 ] (237.6,109.71) -- (267.7,109.71) -- (254.8,126.29) -- (224.7,126.29) -- cycle ;
\draw  [color={rgb, 255:red, 208; green, 2; blue, 27 }  ,draw opacity=1 ] (173.1,192.57) -- (203.2,192.57) -- (190.3,209.14) -- (160.2,209.14) -- cycle ;
\draw [color={rgb, 255:red, 208; green, 2; blue, 27 }  ,draw opacity=1 ] [dash pattern={on 0.84pt off 2.51pt}]  (200.2,90.57) -- (190.3,209.14) ;
\draw [color={rgb, 255:red, 208; green, 2; blue, 27 }  ,draw opacity=1 ] [dash pattern={on 0.84pt off 2.51pt}]  (213.1,74) -- (241.9,142.86) ;
\draw [color={rgb, 255:red, 208; green, 2; blue, 27 }  ,draw opacity=1 ] [dash pattern={on 0.84pt off 2.51pt}]  (170.1,90.57) -- (160.2,209.14) ;
\draw  [color={rgb, 255:red, 0; green, 0; blue, 0 }  ,draw opacity=1 ][fill={rgb, 255:red, 255; green, 255; blue, 255 }  ,fill opacity=1 ] (195.9,57.43) -- (226,57.43) -- (213.1,74) -- (183,74) -- cycle ;
\draw  [color={rgb, 255:red, 0; green, 0; blue, 0 }  ,draw opacity=1 ] (224.7,126.29) -- (254.8,126.29) -- (241.9,142.86) -- (211.8,142.86) -- cycle ;
\draw   (401,74) -- (431.1,74) -- (418.2,90.57) -- (388.1,90.57) -- cycle ;
\draw   (416.9,159.43) -- (447,159.43) -- (434.1,176) -- (404,176) -- cycle ;
\draw   (404,176) -- (434.1,176) -- (421.2,192.57) -- (391.1,192.57) -- cycle ;
\draw   (429.8,142.86) -- (459.9,142.86) -- (447,159.43) -- (416.9,159.43) -- cycle ;
\draw  [color={rgb, 255:red, 208; green, 2; blue, 27 }  ,draw opacity=1 ] (455.6,109.71) -- (485.7,109.71) -- (472.8,126.29) -- (442.7,126.29) -- cycle ;
\draw   (391.1,192.57) -- (421.2,192.57) -- (408.3,209.14) -- (378.2,209.14) -- cycle ;
\draw [color={rgb, 255:red, 208; green, 2; blue, 27 }  ,draw opacity=1 ] [dash pattern={on 0.84pt off 2.51pt}]  (444,57.43) -- (485.7,109.71) ;
\draw [color={rgb, 255:red, 208; green, 2; blue, 27 }  ,draw opacity=1 ] [dash pattern={on 0.84pt off 2.51pt}]  (431.1,74) -- (459.9,142.86) ;
\draw [color={rgb, 255:red, 208; green, 2; blue, 27 }  ,draw opacity=1 ] [dash pattern={on 0.84pt off 2.51pt}]  (401,74) -- (429.8,142.86) ;
\draw  [color={rgb, 255:red, 208; green, 2; blue, 27 }  ,draw opacity=1 ][fill={rgb, 255:red, 255; green, 255; blue, 255 }  ,fill opacity=1 ] (413.9,57.43) -- (444,57.43) -- (431.1,74) -- (401,74) -- cycle ;
\draw  [color={rgb, 255:red, 208; green, 2; blue, 27 }  ,draw opacity=1 ] (442.7,126.29) -- (472.8,126.29) -- (459.9,142.86) -- (429.8,142.86) -- cycle ;
\draw  [color={rgb, 255:red, 208; green, 2; blue, 27 }  ,draw opacity=1 ] (183,74) -- (213.1,74) -- (200.2,90.57) -- (170.1,90.57) -- cycle ;
\draw  [color={rgb, 255:red, 208; green, 2; blue, 27 }  ,draw opacity=1 ] (211.8,142.86) -- (241.9,142.86) -- (229,159.43) -- (198.9,159.43) -- cycle ;
\draw  [color={rgb, 255:red, 74; green, 144; blue, 226 }  ,draw opacity=1 ] (94,65) .. controls (94,63.9) and (94.9,63) .. (96,63) .. controls (97.1,63) and (98,63.9) .. (98,65) .. controls (98,66.1) and (97.1,67) .. (96,67) .. controls (94.9,67) and (94,66.1) .. (94,65)(91,65) .. controls (91,62.24) and (93.24,60) .. (96,60) .. controls (98.76,60) and (101,62.24) .. (101,65) .. controls (101,67.76) and (98.76,70) .. (96,70) .. controls (93.24,70) and (91,67.76) .. (91,65) ;
\draw  [color={rgb, 255:red, 74; green, 144; blue, 226 }  ,draw opacity=1 ] (122.3,134.57) .. controls (122.3,133.47) and (123.2,132.57) .. (124.3,132.57) .. controls (125.4,132.57) and (126.3,133.47) .. (126.3,134.57) .. controls (126.3,135.68) and (125.4,136.57) .. (124.3,136.57) .. controls (123.2,136.57) and (122.3,135.68) .. (122.3,134.57)(119.3,134.57) .. controls (119.3,131.81) and (121.54,129.57) .. (124.3,129.57) .. controls (127.06,129.57) and (129.3,131.81) .. (129.3,134.57) .. controls (129.3,137.33) and (127.06,139.57) .. (124.3,139.57) .. controls (121.54,139.57) and (119.3,137.33) .. (119.3,134.57) ;
\draw [color={rgb, 255:red, 208; green, 2; blue, 27 }  ,draw opacity=1 ] [dash pattern={on 0.84pt off 2.51pt}]  (104.1,74) -- (132.9,142.86) ;
\draw  [color={rgb, 255:red, 208; green, 2; blue, 27 }  ,draw opacity=1 ] (307.9,159.43) -- (338,159.43) -- (325.1,176) -- (295,176) -- cycle ;
\draw  [color={rgb, 255:red, 208; green, 2; blue, 27 }  ,draw opacity=1 ] (295,176) -- (325.1,176) -- (312.2,192.57) -- (282.1,192.57) -- cycle ;
\draw  [color={rgb, 255:red, 0; green, 0; blue, 0 }  ,draw opacity=1 ] (346.6,109.71) -- (376.7,109.71) -- (363.8,126.29) -- (333.7,126.29) -- cycle ;
\draw  [color={rgb, 255:red, 208; green, 2; blue, 27 }  ,draw opacity=1 ] (282.1,192.57) -- (312.2,192.57) -- (299.3,209.14) -- (269.2,209.14) -- cycle ;
\draw [color={rgb, 255:red, 208; green, 2; blue, 27 }  ,draw opacity=1 ] [dash pattern={on 0.84pt off 2.51pt}]  (309.2,90.57) -- (299.3,209.14) ;
\draw [color={rgb, 255:red, 208; green, 2; blue, 27 }  ,draw opacity=1 ] [dash pattern={on 0.84pt off 2.51pt}]  (322.1,74) -- (350.9,142.86) ;
\draw [color={rgb, 255:red, 208; green, 2; blue, 27 }  ,draw opacity=1 ] [dash pattern={on 0.84pt off 2.51pt}]  (279.1,90.57) -- (269.2,209.14) ;
\draw  [color={rgb, 255:red, 0; green, 0; blue, 0 }  ,draw opacity=1 ][fill={rgb, 255:red, 255; green, 255; blue, 255 }  ,fill opacity=1 ] (304.9,57.43) -- (335,57.43) -- (322.1,74) -- (292,74) -- cycle ;
\draw  [color={rgb, 255:red, 0; green, 0; blue, 0 }  ,draw opacity=1 ] (333.7,126.29) -- (363.8,126.29) -- (350.9,142.86) -- (320.8,142.86) -- cycle ;
\draw  [color={rgb, 255:red, 208; green, 2; blue, 27 }  ,draw opacity=1 ] (292,74) -- (322.1,74) -- (309.2,90.57) -- (279.1,90.57) -- cycle ;
\draw  [color={rgb, 255:red, 208; green, 2; blue, 27 }  ,draw opacity=1 ] (320.8,142.86) -- (350.9,142.86) -- (338,159.43) -- (307.9,159.43) -- cycle ;
\draw  [color={rgb, 255:red, 74; green, 144; blue, 226 }  ,draw opacity=1 ] (205.5,167.71) .. controls (205.5,166.61) and (206.4,165.71) .. (207.5,165.71) .. controls (208.6,165.71) and (209.5,166.61) .. (209.5,167.71) .. controls (209.5,168.82) and (208.6,169.71) .. (207.5,169.71) .. controls (206.4,169.71) and (205.5,168.82) .. (205.5,167.71)(202.5,167.71) .. controls (202.5,164.95) and (204.74,162.71) .. (207.5,162.71) .. controls (210.26,162.71) and (212.5,164.95) .. (212.5,167.71) .. controls (212.5,170.48) and (210.26,172.71) .. (207.5,172.71) .. controls (204.74,172.71) and (202.5,170.48) .. (202.5,167.71) ;
\draw   (509,74) -- (539.1,74) -- (526.2,90.57) -- (496.1,90.57) -- cycle ;
\draw   (524.9,159.43) -- (555,159.43) -- (542.1,176) -- (512,176) -- cycle ;
\draw   (512,176) -- (542.1,176) -- (529.2,192.57) -- (499.1,192.57) -- cycle ;
\draw   (537.8,142.86) -- (567.9,142.86) -- (555,159.43) -- (524.9,159.43) -- cycle ;
\draw  [color={rgb, 255:red, 208; green, 2; blue, 27 }  ,draw opacity=1 ] (563.6,109.71) -- (593.7,109.71) -- (580.8,126.29) -- (550.7,126.29) -- cycle ;
\draw   (499.1,192.57) -- (529.2,192.57) -- (516.3,209.14) -- (486.2,209.14) -- cycle ;
\draw [color={rgb, 255:red, 208; green, 2; blue, 27 }  ,draw opacity=1 ] [dash pattern={on 0.84pt off 2.51pt}]  (552,57.43) -- (593.7,109.71) ;
\draw [color={rgb, 255:red, 208; green, 2; blue, 27 }  ,draw opacity=1 ] [dash pattern={on 0.84pt off 2.51pt}]  (539.1,74) -- (567.9,142.86) ;
\draw [color={rgb, 255:red, 208; green, 2; blue, 27 }  ,draw opacity=1 ] [dash pattern={on 0.84pt off 2.51pt}]  (509,74) -- (537.8,142.86) ;
\draw  [color={rgb, 255:red, 208; green, 2; blue, 27 }  ,draw opacity=1 ][fill={rgb, 255:red, 255; green, 255; blue, 255 }  ,fill opacity=1 ] (521.9,57.43) -- (552,57.43) -- (539.1,74) -- (509,74) -- cycle ;
\draw  [color={rgb, 255:red, 208; green, 2; blue, 27 }  ,draw opacity=1 ] (550.7,126.29) -- (580.8,126.29) -- (567.9,142.86) -- (537.8,142.86) -- cycle ;
\draw  [color={rgb, 255:red, 74; green, 144; blue, 226 }  ,draw opacity=1 ] (288.7,200.86) .. controls (288.7,199.75) and (289.6,198.86) .. (290.7,198.86) .. controls (291.8,198.86) and (292.7,199.75) .. (292.7,200.86) .. controls (292.7,201.96) and (291.8,202.86) .. (290.7,202.86) .. controls (289.6,202.86) and (288.7,201.96) .. (288.7,200.86)(285.7,200.86) .. controls (285.7,198.1) and (287.94,195.86) .. (290.7,195.86) .. controls (293.46,195.86) and (295.7,198.1) .. (295.7,200.86) .. controls (295.7,203.62) and (293.46,205.86) .. (290.7,205.86) .. controls (287.94,205.86) and (285.7,203.62) .. (285.7,200.86) ;
\draw  [color={rgb, 255:red, 74; green, 144; blue, 226 }  ,draw opacity=1 ] (462.2,118) .. controls (462.2,116.9) and (463.1,116) .. (464.2,116) .. controls (465.3,116) and (466.2,116.9) .. (466.2,118) .. controls (466.2,119.1) and (465.3,120) .. (464.2,120) .. controls (463.1,120) and (462.2,119.1) .. (462.2,118)(459.2,118) .. controls (459.2,115.24) and (461.44,113) .. (464.2,113) .. controls (466.96,113) and (469.2,115.24) .. (469.2,118) .. controls (469.2,120.76) and (466.96,123) .. (464.2,123) .. controls (461.44,123) and (459.2,120.76) .. (459.2,118) ;
\draw  [color={rgb, 255:red, 74; green, 144; blue, 226 }  ,draw opacity=1 ] (189.6,82.29) .. controls (189.6,81.18) and (190.5,80.29) .. (191.6,80.29) .. controls (192.7,80.29) and (193.6,81.18) .. (193.6,82.29) .. controls (193.6,83.39) and (192.7,84.29) .. (191.6,84.29) .. controls (190.5,84.29) and (189.6,83.39) .. (189.6,82.29)(186.6,82.29) .. controls (186.6,79.52) and (188.84,77.29) .. (191.6,77.29) .. controls (194.36,77.29) and (196.6,79.52) .. (196.6,82.29) .. controls (196.6,85.05) and (194.36,87.29) .. (191.6,87.29) .. controls (188.84,87.29) and (186.6,85.05) .. (186.6,82.29) ;
\draw  [color={rgb, 255:red, 74; green, 144; blue, 226 }  ,draw opacity=1 ] (298.6,82.29) .. controls (298.6,81.18) and (299.5,80.29) .. (300.6,80.29) .. controls (301.7,80.29) and (302.6,81.18) .. (302.6,82.29) .. controls (302.6,83.39) and (301.7,84.29) .. (300.6,84.29) .. controls (299.5,84.29) and (298.6,83.39) .. (298.6,82.29)(295.6,82.29) .. controls (295.6,79.52) and (297.84,77.29) .. (300.6,77.29) .. controls (303.36,77.29) and (305.6,79.52) .. (305.6,82.29) .. controls (305.6,85.05) and (303.36,87.29) .. (300.6,87.29) .. controls (297.84,87.29) and (295.6,85.05) .. (295.6,82.29) ;
\draw  [color={rgb, 255:red, 74; green, 144; blue, 226 }  ,draw opacity=1 ] (420.5,65.71) .. controls (420.5,64.61) and (421.4,63.71) .. (422.5,63.71) .. controls (423.6,63.71) and (424.5,64.61) .. (424.5,65.71) .. controls (424.5,66.82) and (423.6,67.71) .. (422.5,67.71) .. controls (421.4,67.71) and (420.5,66.82) .. (420.5,65.71)(417.5,65.71) .. controls (417.5,62.95) and (419.74,60.71) .. (422.5,60.71) .. controls (425.26,60.71) and (427.5,62.95) .. (427.5,65.71) .. controls (427.5,68.48) and (425.26,70.71) .. (422.5,70.71) .. controls (419.74,70.71) and (417.5,68.48) .. (417.5,65.71) ;
\draw  [color={rgb, 255:red, 74; green, 144; blue, 226 }  ,draw opacity=1 ] (528.5,65.71) .. controls (528.5,64.61) and (529.4,63.71) .. (530.5,63.71) .. controls (531.6,63.71) and (532.5,64.61) .. (532.5,65.71) .. controls (532.5,66.82) and (531.6,67.71) .. (530.5,67.71) .. controls (529.4,67.71) and (528.5,66.82) .. (528.5,65.71)(525.5,65.71) .. controls (525.5,62.95) and (527.74,60.71) .. (530.5,60.71) .. controls (533.26,60.71) and (535.5,62.95) .. (535.5,65.71) .. controls (535.5,68.48) and (533.26,70.71) .. (530.5,70.71) .. controls (527.74,70.71) and (525.5,68.48) .. (525.5,65.71) ;
\draw  [color={rgb, 255:red, 74; green, 144; blue, 226 }  ,draw opacity=1 ] (557.3,134.57) .. controls (557.3,133.47) and (558.2,132.57) .. (559.3,132.57) .. controls (560.4,132.57) and (561.3,133.47) .. (561.3,134.57) .. controls (561.3,135.68) and (560.4,136.57) .. (559.3,136.57) .. controls (558.2,136.57) and (557.3,135.68) .. (557.3,134.57)(554.3,134.57) .. controls (554.3,131.81) and (556.54,129.57) .. (559.3,129.57) .. controls (562.06,129.57) and (564.3,131.81) .. (564.3,134.57) .. controls (564.3,137.33) and (562.06,139.57) .. (559.3,139.57) .. controls (556.54,139.57) and (554.3,137.33) .. (554.3,134.57) ;
\draw [color={rgb, 255:red, 74; green, 144; blue, 226 }  ,draw opacity=1 ]   (96,65) .. controls (152.5,58.29) and (152,83) .. (191.6,82.29) ;
\draw [color={rgb, 255:red, 74; green, 144; blue, 226 }  ,draw opacity=1 ]   (300.6,82.29) .. controls (353,83) and (352,65) .. (422.5,65.71) ;
\draw [color={rgb, 255:red, 74; green, 144; blue, 226 }  ,draw opacity=1 ]   (191.6,82.29) -- (300.6,82.29) ;
\draw [color={rgb, 255:red, 74; green, 144; blue, 226 }  ,draw opacity=1 ]   (124.3,134.57) .. controls (175,130) and (167.9,168.43) .. (207.5,167.71) ;
\draw [color={rgb, 255:red, 74; green, 144; blue, 226 }  ,draw opacity=1 ]   (207.5,167.71) .. controls (258.2,163.14) and (251.1,201.57) .. (290.7,200.86) ;
\draw [color={rgb, 255:red, 74; green, 144; blue, 226 }  ,draw opacity=1 ]   (290.7,200.86) .. controls (360,204) and (392,119) .. (464.2,118) ;
\draw [color={rgb, 255:red, 74; green, 144; blue, 226 }  ,draw opacity=1 ]   (422.5,65.71) -- (530.5,65.71) ;
\draw [color={rgb, 255:red, 74; green, 144; blue, 226 }  ,draw opacity=1 ]   (464.2,118) .. controls (516,116) and (512,138) .. (559.3,134.57) ;
\draw [color={rgb, 255:red, 74; green, 144; blue, 226 }  ,draw opacity=1 ] [dash pattern={on 4.5pt off 4.5pt}]  (530.5,65.71) -- (650,65) ;
\draw [color={rgb, 255:red, 74; green, 144; blue, 226 }  ,draw opacity=1 ] [dash pattern={on 4.5pt off 4.5pt}]  (559.3,134.57) -- (648.01,134.03) -- (651,134) ;
\draw [color={rgb, 255:red, 74; green, 144; blue, 226 }  ,draw opacity=1 ] [dash pattern={on 4.5pt off 4.5pt}]  (124.3,134.57) .. controls (173.73,130.11) and (180.48,150.78) .. (217.49,151.15) ;
\draw [shift={(220.4,151.14)}, rotate = 178.97] [fill={rgb, 255:red, 74; green, 144; blue, 226 }  ,fill opacity=1 ][line width=0.08]  [draw opacity=0] (10.72,-5.15) -- (0,0) -- (10.72,5.15) -- (7.12,0) -- cycle    ;
\draw [color={rgb, 255:red, 74; green, 144; blue, 226 }  ,draw opacity=1 ] [dash pattern={on 4.5pt off 4.5pt}]  (124.3,134.57) .. controls (163.3,138.47) and (156.38,169.4) .. (191.79,183.25) ;
\draw [shift={(194.6,184.29)}, rotate = 198.99] [fill={rgb, 255:red, 74; green, 144; blue, 226 }  ,fill opacity=1 ][line width=0.08]  [draw opacity=0] (10.72,-5.15) -- (0,0) -- (10.72,5.15) -- (7.12,0) -- cycle    ;
\draw [color={rgb, 255:red, 74; green, 144; blue, 226 }  ,draw opacity=1 ] [dash pattern={on 4.5pt off 4.5pt}]  (124.3,134.57) .. controls (163.5,138.49) and (140.95,185.08) .. (179.28,199.98) ;
\draw [shift={(181.7,200.86)}, rotate = 198.38] [fill={rgb, 255:red, 74; green, 144; blue, 226 }  ,fill opacity=1 ][line width=0.08]  [draw opacity=0] (10.72,-5.15) -- (0,0) -- (10.72,5.15) -- (7.12,0) -- cycle    ;
\draw [color={rgb, 255:red, 74; green, 144; blue, 226 }  ,draw opacity=1 ] [dash pattern={on 4.5pt off 4.5pt}]  (207.5,167.71) .. controls (257.24,169.97) and (277.39,147.68) .. (327.11,150.97) ;
\draw [shift={(329.4,151.14)}, rotate = 184.61] [fill={rgb, 255:red, 74; green, 144; blue, 226 }  ,fill opacity=1 ][line width=0.08]  [draw opacity=0] (10.72,-5.15) -- (0,0) -- (10.72,5.15) -- (7.12,0) -- cycle    ;
\draw [color={rgb, 255:red, 74; green, 144; blue, 226 }  ,draw opacity=1 ] [dash pattern={on 4.5pt off 4.5pt}]  (207.5,167.71) .. controls (257.24,169.97) and (264.88,163.76) .. (314.22,167.53) ;
\draw [shift={(316.5,167.71)}, rotate = 184.61] [fill={rgb, 255:red, 74; green, 144; blue, 226 }  ,fill opacity=1 ][line width=0.08]  [draw opacity=0] (10.72,-5.15) -- (0,0) -- (10.72,5.15) -- (7.12,0) -- cycle    ;
\draw [color={rgb, 255:red, 74; green, 144; blue, 226 }  ,draw opacity=1 ] [dash pattern={on 4.5pt off 4.5pt}]  (207.5,167.71) .. controls (256.99,169.95) and (253.18,186.33) .. (300.63,184.43) ;
\draw [shift={(303.6,184.29)}, rotate = 176.93] [fill={rgb, 255:red, 74; green, 144; blue, 226 }  ,fill opacity=1 ][line width=0.08]  [draw opacity=0] (10.72,-5.15) -- (0,0) -- (10.72,5.15) -- (7.12,0) -- cycle    ;
\draw [color={rgb, 255:red, 74; green, 144; blue, 226 }  ,draw opacity=1 ] [dash pattern={on 4.5pt off 4.5pt}]  (290.7,200.86) .. controls (359.95,201.98) and (390.09,134.08) .. (448.61,134.5) ;
\draw [shift={(451.3,134.57)}, rotate = 182.44] [fill={rgb, 255:red, 74; green, 144; blue, 226 }  ,fill opacity=1 ][line width=0.08]  [draw opacity=0] (10.72,-5.15) -- (0,0) -- (10.72,5.15) -- (7.12,0) -- cycle    ;
\draw [color={rgb, 255:red, 74; green, 144; blue, 226 }  ,draw opacity=1 ] [dash pattern={on 4.5pt off 4.5pt}]  (464.2,118) .. controls (533.45,119.13) and (512.56,115.54) .. (569.54,117.89) ;
\draw [shift={(572.2,118)}, rotate = 182.44] [fill={rgb, 255:red, 74; green, 144; blue, 226 }  ,fill opacity=1 ][line width=0.08]  [draw opacity=0] (10.72,-5.15) -- (0,0) -- (10.72,5.15) -- (7.12,0) -- cycle    ;

\draw (70,57) node [anchor=north west][inner sep=0.75pt]  [color={rgb, 255:red, 0; green, 0; blue, 0 }  ,opacity=1 ] [align=left] {{\scriptsize $0$}};
\draw (57,74) node [anchor=north west][inner sep=0.75pt]  [color={rgb, 255:red, 0; green, 0; blue, 0 }  ,opacity=1 ] [align=left] {{\scriptsize $1$}};
\draw (112,110) node [anchor=north west][inner sep=0.75pt]  [color={rgb, 255:red, 0; green, 0; blue, 0 }  ,opacity=1 ] [align=left] {{\scriptsize $0$}};
\draw (99,127) node [anchor=north west][inner sep=0.75pt]  [color={rgb, 255:red, 0; green, 0; blue, 0 }  ,opacity=1 ] [align=left] {{\scriptsize $1$}};
\draw (85,144) node [anchor=north west][inner sep=0.75pt]  [color={rgb, 255:red, 0; green, 0; blue, 0 }  ,opacity=1 ] [align=left] {{\scriptsize $2$}};
\draw (73,160) node [anchor=north west][inner sep=0.75pt]  [color={rgb, 255:red, 0; green, 0; blue, 0 }  ,opacity=1 ] [align=left] {{\scriptsize $3$}};
\draw (60,175) node [anchor=north west][inner sep=0.75pt]  [color={rgb, 255:red, 0; green, 0; blue, 0 }  ,opacity=1 ] [align=left] {{\scriptsize $4$}};
\draw (49,191) node [anchor=north west][inner sep=0.75pt]  [color={rgb, 255:red, 0; green, 0; blue, 0 }  ,opacity=1 ] [align=left] {{\scriptsize $5$}};

\draw (82,32) node [anchor=north west][inner sep=0.75pt]  [color={rgb, 255:red, 0; green, 0; blue, 0 }  ,opacity=1 ] [align=left] {$X_1=0$};
\draw (189,32) node [anchor=north west][inner sep=0.75pt]  [color={rgb, 255:red, 0; green, 0; blue, 0 }  ,opacity=1 ] [align=left] {$X_2=1$};
\draw (301,32) node [anchor=north west][inner sep=0.75pt]  [color={rgb, 255:red, 0; green, 0; blue, 0 }  ,opacity=1 ] [align=left] {$X_3=1$};
\draw (409,32) node [anchor=north west][inner sep=0.75pt]  [color={rgb, 255:red, 0; green, 0; blue, 0 }  ,opacity=1 ] [align=left] {$X_4=0$};
\draw (517,32) node [anchor=north west][inner sep=0.75pt]  [color={rgb, 255:red, 0; green, 0; blue, 0 }  ,opacity=1 ] [align=left] {$X_5=0$};
\draw (36,220) node [anchor=north west][inner sep=0.75pt]  [color={rgb, 255:red, 0; green, 0; blue, 0 }  ,opacity=1 ] [align=left] {$(\Sigma_\Lambda X)_1 = 1$};
\draw (141,220) node [anchor=north west][inner sep=0.75pt]  [color={rgb, 255:red, 0; green, 0; blue, 0 }  ,opacity=1 ] [align=left] {$(\Sigma_\Lambda X)_2 = 3$};
\draw (254,220) node [anchor=north west][inner sep=0.75pt]  [color={rgb, 255:red, 0; green, 0; blue, 0 }  ,opacity=1 ] [align=left] {$(\Sigma_\Lambda X)_3 = 5$};
\draw (362,220) node [anchor=north west][inner sep=0.75pt]  [color={rgb, 255:red, 0; green, 0; blue, 0 }  ,opacity=1 ] [align=left] {$(\Sigma_\Lambda X)_4 = 0$};
\draw (475,220) node [anchor=north west][inner sep=0.75pt]  [color={rgb, 255:red, 0; green, 0; blue, 0 }  ,opacity=1 ] [align=left] {$(\Sigma_\Lambda X)_5 = 1$};
\draw (32,56) node [anchor=north west][inner sep=0.75pt]   [align=left] {$X$};
\draw (26,159) node [anchor=north west][inner sep=0.75pt]   [align=left] {$\Sigma_\Lambda X$};

\end{tikzpicture}}
    \caption{\hl{Simulating a trajectory from $\Lambda_\star P \colon \calW(\set{0,1}) \to \calW(\set{0,1,2,3,4,5})$}.}
    \label{figure:trajectory-simulation}
\end{figure}
We can verify that one can simulate \hlc{(see Figure~\ref{figure:trajectory-simulation})} a trajectory $Y_1, \dots, Y_t, \dots$ sampled according to the embedded \hl{transition matrix} $\Lambda_\star P$ and with initial distribution $\nu$ as follows:
\begin{enumerate}
    \item[$(i)$] Sample $Y_1 \sim \nu_{| X_1}$.
    \item[$(ii)$] For $t \in \N, t \geq 2$, sample $Y_t \sim (\Lambda(Y_{t-1}, y'))_{y' \in \calS_{X_t}}$.
\end{enumerate}
Markov embeddings of chains can therefore essentially be simulated from a single trajectory of the original chain, similar to lumpings,\label{operator:randomized-simulation} \hl{and we write $\Sigma_\Lambda \colon \calX^\infty\to \calY^\infty$ for the randomized operation on trajectories induced from $\Lambda$}.
In certain cases it is possible to obtain an expression for the stationary distribution $\Lambda_\star \pi$ of the embedded chain (see e.g. Lemma~\ref{lemma:memoryless-embedding-stationary-distribution} or Lemma~\ref{lemma:reversibility-preserving-embedding-stationary-distribution}). Setting $\nu = \Lambda_\star \pi$ then starts the embedded chain stationarily.
In the subsequent lemma, we show that the Fisher metric, dual connections and information divergence are preserved under Markov embeddings.

\begin{lemma}
\label{lemma:markov-embedding-preserve-information-geometry}
Let $\calV = \set{\bar{P}_\theta \colon \theta \in \Theta \subset \R^d} \subset \calW(\calX, \calD)$ be a parametric family of irreducible \hl{stochastic matrices}.
Let $\bar{P}_\theta , \bar{P}_{\theta'} \in \calV$. Let a Markov embedding 
$$\Lambda_\star \colon \calW(\calX, \calD) \to \calW(\calY, \calE),$$ 
and consider the embedded \hl{stochastic matrices} $P_\theta \eqdef \Lambda_\star \bar{P}_\theta$ and $ P_{\theta'} \eqdef \Lambda_\star \bar{P}_{\theta'}$. For any $i,j \in [d]$, it holds that
\begin{equation*}
\begin{split}
    \fshr_{ij}(P_\theta) &= \fshr_{ij}(\bar{P}_\theta), \qquad 
    \kl{P_\theta}{P_{\theta'}} = \kl{\bar{P}_\theta}{\bar{P}_{\theta'}}. \\
    \Gamma_{ij,k}^{(e)}(P_\theta) &= \Gamma_{ij,k}^{(e)}(\bar{P}_\theta), \qquad
    \Gamma_{ij,k}^{(m)}(P_\theta) = \Gamma_{ij,k}^{(m)}(\bar{P}_\theta). \\
\end{split}
\end{equation*}
\end{lemma}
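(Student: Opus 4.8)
The plan is to reduce every quantity computed on $(\calY,\calE)$ to its counterpart on $(\calX,\calD)$ by regrouping the edge-sum according to the blocks of the partition $\biguplus_{x\in\calX}\calS_x = \calY$. Two elementary observations drive the whole argument. First, taking logarithms in the defining relation $P_\theta(y,y') = \bar P_\theta(\kappa(y),\kappa(y'))\,\Lambda(y,y')$ gives $\log P_\theta(y,y') = \log\bar P_\theta(\kappa(y),\kappa(y')) + \log\Lambda(y,y')$. Since $\Lambda$ is fixed and independent of $\theta$, every $\theta$-derivative of $\log P_\theta(y,y')$ equals the corresponding derivative of $\log\bar P_\theta$ evaluated at $(\kappa(y),\kappa(y'))$; moreover the $\Lambda$-term cancels outright in the log-ratio $\log\bigl(P_\theta(y,y')/P_{\theta'}(y,y')\bigr)$.

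Second, I would record the aggregated edge-measure relation. Because $P_\theta = \Lambda_\star\bar P_\theta \in \calW_\kappa(\calY,\calE)$ with $\kappa_\star P_\theta = \bar P_\theta$ --- which follows by summing the defining relation over $y'\in\calS_{x'}$, using that $\sum_{y'\in\calS_{x'}}\Lambda(y,y')=1$ by condition $(iii)$, and invoking Theorem~\ref{theorem:equivalence-lumpable} --- Corollary~\ref{corollary:lumped-stationary-distribution-and-edge-measure} yields $Q_\theta(\calS_x,\calS_{x'}) = \bar Q_\theta(x,x')$ for every $(x,x')\in\calD$. Because this is a finite linear identity in $Q_\theta$, it commutes with $\theta$-differentiation: $\partial_k Q_\theta(\calS_x,\calS_{x'}) = \partial_k \bar Q_\theta(x,x')$ and $\partial_i\partial_j Q_\theta(\calS_x,\calS_{x'}) = \partial_i\partial_j \bar Q_\theta(x,x')$, assuming as usual that $\calV$ is smooth so that $\pi_\theta$ and hence $Q_\theta$ depend smoothly on $\theta$. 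Crucially, I never need an explicit formula for the (generally complicated) stationary distribution of the embedded chain; only its block-aggregate behaviour, supplied by the corollary, enters.

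With these in hand each identity follows by one and the same manipulation. For the Fisher metric I substitute the log-derivative identity into \eqref{eq:mc-fisher-metric}, reindex $\sum_{(y,y')\in\calE}$ as $\sum_{(x,x')\in\calD}\sum_{y\in\calS_x,\,y'\in\calS_{x'}}$ (the support of $Q_\theta$ being $\calE$), pull the factors $\partial_i\log\bar P_\theta(x,x')\,\partial_j\log\bar P_\theta(x,x')$ out of the inner sum since they depend only on $(x,x')$, and collapse $\sum_{y\in\calS_x,\,y'\in\calS_{x'}}Q_\theta(y,y') = Q_\theta(\calS_x,\calS_{x'}) = \bar Q_\theta(x,x')$, recovering $\fshr_{ij}(\bar P_\theta)$. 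The two connections of \eqref{eq:mc-affine-connection} are handled identically: for $\Gamma^{(e)}_{ij,k}$ the inner sum becomes $\partial_k\bar Q_\theta(x,x')$, and for $\Gamma^{(m)}_{ij,k}$ it becomes $\partial_i\partial_j\bar Q_\theta(x,x')$, the second-derivative surviving factor being $\partial_k\log\bar P_\theta(x,x')$. For the divergence I write $\pi_\theta(y)P_\theta(y,y') = Q_\theta(y,y')$ in \eqref{equation:kl-divergence}, exploit the $\Lambda$-cancellation in the log-ratio, and once more collapse the block-sum of $Q_\theta$ to $\bar Q_\theta$, obtaining $\kl{\bar P_\theta}{\bar P_{\theta'}}$.

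The only genuinely non-routine step is the second observation: identifying the correct aggregate invariant of the embedded edge measure and checking that it commutes with $\theta$-differentiation. Once $\kappa_\star P_\theta = \bar P_\theta$ is in place, Corollary~\ref{corollary:lumped-stationary-distribution-and-edge-measure} supplies the invariant and the commutation is immediate because summing over a block is a finite linear operation; everything else is bookkeeping in the regrouping of a finite sum.
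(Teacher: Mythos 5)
Your proposal is correct and follows essentially the same route as the paper's proof: exploit the fact that $\Lambda$ is $\theta$-independent so that all $\theta$-derivatives of $\log P_\theta$ reduce to those of $\log\bar P_\theta\circ(\kappa,\kappa)$, then regroup the edge-sum over blocks $\calS_x\times\calS_{x'}$ and collapse the inner sum via Corollary~\ref{corollary:lumped-stationary-distribution-and-edge-measure}. You merely spell out two points the paper leaves implicit (the commutation of block-aggregation with $\theta$-differentiation, and the m-connection/divergence cases dismissed there as ``similar arguments''), which is fine.
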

\begin{proof}
See Section~\ref{proof:lemma-markov-embedding-preserve-information-geometry}.
\end{proof}

We conclude this section by showing that we can always view a lumpable matrix as the image of its lumped version
by some canonical Markov embedding.

\begin{lemma}[Canonical embedding]
\label{lemma:construct-canonical-embedding}
Let $P \in \calW_\kappa(\calY, \calE)$. There exists a $\kappa$-compatible Markov embedding, denoted by $\Lambda^{(P)}_\star$
such that 
$$P = \Lambda_\star^{(P)} \kappa_\star P.$$
\hlp{We call $\Lambda_\star^{(P)}$ the canonical embedding associated with $P$.
Moreover, $\Lambda^{(P)}_\star$ is induced from $\Lambda^{(P)} \in \calF_+(\calY, \calE)$ given by
$$\Lambda^{(P)}(y , y') = \frac{P(y,y')}{\kappa_\star P(\kappa(y), \kappa(y'))}, \text{ for any } (y,y') \in \calE.$$
}

\end{lemma}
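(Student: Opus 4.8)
Given a $\kappa$-lumpable kernel $P \in \calW_\kappa(\calY, \calE)$, I must produce a $\kappa$-compatible Markov embedding $\Lambda^{(P)}_\star$ that sends the lumped kernel $\kappa_\star P$ back to $P$. The candidate $\Lambda^{(P)}$ is explicitly handed to me: $\Lambda^{(P)}(y,y') = P(y,y')/\kappa_\star P(\kappa(y), \kappa(y'))$ on $\calE$. So the proof is essentially a verification that this $\Lambda^{(P)}$ is a legitimate $\Lambda$ in the sense of Definition~\ref{definition:markov-embeddings}, and that the associated map inverts lumping.

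**Plan of attack.** First I would check that $\Lambda^{(P)}$ is well-defined and positive. Because $(y,y') \in \calE$ and $P \in \calW(\calY, \calE)$ we have $P(y,y') > 0$; and by Theorem~\ref{theorem:equivalence-lumpable} the denominator equals $\kappa_\star P(\kappa(y),\kappa(y')) = P(y, \calS_{\kappa(y')})$, which is a sum of non-negative terms including the strictly positive $P(y,y')$, hence strictly positive. This simultaneously confirms $\Lambda^{(P)} \in \calF_+(\calY, \calE)$ (condition~$(ii)$). For condition~$(i)$, $\kappa$ is by hypothesis the lumping associated with $P$, so $\kappa_2(\calE) = \calD$, and since $\calW_\kappa(\calY,\calE) \ni P$ is nonempty, Proposition~\ref{proposition:lumpable-not-empty} is automatically satisfied.

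**The substantive step** is condition~$(iii)$: for each $y \in \calY$ and $x' \in \calX$ with $(\kappa(y), x') \in \calD$, the vector $(\Lambda^{(P)}(y,y'))_{y' \in \calS_{x'}}$ must be a probability distribution on $\calS_{x'}$. Summing the definition over $y' \in \calS_{x'}$ gives $\sum_{y' \in \calS_{x'}} \Lambda^{(P)}(y,y') = P(y, \calS_{x'}) / \kappa_\star P(\kappa(y), x')$, and the lumpability characterization (Theorem~\ref{theorem:equivalence-lumpable}) identifies the numerator with $\kappa_\star P(\kappa(y), x')$ — precisely the denominator — so the sum is $1$. I would be careful here to treat entries with $(y,y') \notin \calE$ as $0$ (consistent with the $\calF(\calY,\calE)$ convention) so that the sum over $\calS_{x'}$ matches the implicit-sum notation $P(y, \calS_{x'})$ used in the statement of lumpability. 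This identity is the crux and where the lumpability hypothesis is genuinely used.

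**Closing the argument.** Having verified $(i)$–$(iii)$, $\Lambda^{(P)}_\star$ is a bona fide $\kappa$-compatible Markov embedding. It remains to confirm $\Lambda_\star^{(P)} \kappa_\star P = P$, which is immediate by unwinding the Markov-embedding formula: for $(y,y') \in \calE$,
\begin{equation*}
\Lambda_\star^{(P)} (\kappa_\star P)(y,y') = \kappa_\star P(\kappa(y),\kappa(y')) \, \Lambda^{(P)}(y,y') = \kappa_\star P(\kappa(y),\kappa(y')) \cdot \frac{P(y,y')}{\kappa_\star P(\kappa(y),\kappa(y'))} = P(y,y').
\end{equation*}
I do not anticipate any serious obstacle; the only point demanding care is the bookkeeping in step~$(iii)$, making sure that the implicit sums $P(y, \calS_{x'})$ and $\kappa_\star P(\kappa(y), x')$ are correctly matched via Theorem~\ref{theorem:equivalence-lumpable} and that positivity of the denominator is justified before dividing.
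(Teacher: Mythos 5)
Your verification is correct and is exactly the ``straightforward computation'' that the paper's one-line proof alludes to: the key identity $\sum_{y' \in \calS_{x'}} P(y,y') = \kappa_\star P(\kappa(y),x')$ from Theorem~\ref{theorem:equivalence-lumpable} gives condition $(iii)$, and the rest is unwinding the definition. Same approach, just written out in full.
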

\begin{proof}
A straightforward computation yields that $P = \Lambda_\star^{(P)} \kappa_\star P$.
\end{proof}

\subsection{Congruent embeddings}
\label{section:embeddings-congruent}
Let us fix a lumping function $\kappa \colon \calY \to \calX$, and compatible edge sets $\calE, \calD = \kappa_2(\calE)$ such that $\calW_\kappa(\calY, \calE) \neq \emptyset$ (see Proposition~\ref{proposition:lumpable-not-empty}).
Consider an arbitrary embedding $\hlm{E} \colon \calW(\calX, \calD) \to \calW(\calY, \calE)$, that does not necessarily follow the prescribed structure of Definition~\ref{definition:markov-embeddings}.
When composing the embedding with its associated lumping always yields back the original chain (\hl{$\kappa_\star E P = P$}), \hl{$E$} is a right-inverse of $\kappa_\star$.
Adapting the terminology of \citet{campbell1986extended,cencov1981statistical}, we say in this case that \hl{$E$} is a $\kappa$-congruent embedding (Definition~\ref{definition:congruent-embedding}).
In a finite space distribution setting, Markov morphisms and congruent embeddings are known to coincide (see e.g. Example~5.2 in \citet{ay2017}).
The proof strategy for this claim consists in expanding the notion of Markov morphisms to signed measures, and proving the claim for morphisms seen as linear operators over a real vector space.
We will show that a similar fact holds in the Markovian setting and for our definition of Markov morphisms (Definition~\ref{definition:markov-embeddings}). We begin by extending the definition of lumpable \hl{stochastic matrices} to the more general class of lumpable matrices.

\begin{definition}[$\kappa$-lumpable matrix]
\label{definition:lumpable-matrix}
Let $\kappa \colon \calY \to \calX$ a lumping function with associated partition $\biguplus_{x \in \calX} \calS_{x} = \calY$, and let $A \in \calF(\calY, \calE)$. Then $A$ is \hl{called} $\kappa$-lumpable \hl{when}
for all $x,x' \in \calX$, and for all $y_1,y_2 \in \calS_x$,
$$A(y_1, \calS_{x'}) = A(y_2, \calS_{x'}).$$
In this case, the lumped matrix $\kappa_\star A$ is \hl{defined as}
$$\kappa_\star A(x,x') = A(y,\calS_{x'}), y \in \calS_{x}.$$
We write $\calF_\kappa(\calY, \calE) \subset \calF(\calY, \calE)$ for the subset of all $\kappa$-lumpable matrices.
\end{definition}

Recall that $\calF(\calY, \calE)$ can be endowed with a real vector space structure of dimension $\dim \calF(\calY , \calE) = \abs{\calE}$. Our next step consists in viewing $\calF_{\kappa}(\calY, \calE)$ as a linear subspace of $\calF(\calY, \calE)$.

\begin{lemma}
\label{lemma:lumpable-matrix-vector-space}
 The following statements hold.
\begin{enumerate}
    \item[$(i)$] $\calF_\kappa(\calY, \calE)$ forms a vector subspace of $\calF(\calY, \calE)$.
    \item[$(ii)$] $\kappa_\star \colon \calF_\kappa(\calY, \calE) \to \calF(\calX = \kappa(\calY), \calD = \kappa_2(\calE))$ is a surjective linear map.
    \item[$(iii)$] For $\biguplus_{x \in \calX} \calS_{x} = \calY$ the partition associated with $\kappa$,
    $$\dim \calF_\kappa(\calY, \calE) = \abs{\calE} - \sum_{(x,x') \in \calD} \abs{\calS_x} + \abs{\calD}.$$
\end{enumerate}
\end{lemma}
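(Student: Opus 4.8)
The plan is to dispatch the three claims in order, proving (i) and (ii) by elementary linear algebra and then extracting the dimension count (iii) from the rank--nullity theorem applied to $\kappa_\star$. For (i), I would observe that the defining conditions of Definition~\ref{definition:lumpable-matrix} are linear: recalling the implicit-sum convention $A(y, S) = \sum_{s \in S} A(y, s)$, each equation $A(y_1, \calS_{x'}) = A(y_2, \calS_{x'})$ (for $y_1, y_2 \in \calS_x$) asks that the linear functional $A \mapsto A(y_1, \calS_{x'}) - A(y_2, \calS_{x'})$ vanish. Thus $\calF_\kappa(\calY, \calE)$ is the intersection of the kernels of a finite family of linear functionals on $\calF(\calY, \calE)$, hence a subspace. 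For the linearity half of (ii), the same convention gives $\kappa_\star A(x, x') = \sum_{y' \in \calS_{x'}} A(y, y')$ for any fixed $y \in \calS_x$, an expression well-defined on $\calF_\kappa(\calY, \calE)$ by the lumpability constraint and manifestly linear in $A$.

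Surjectivity in (ii) I would establish by exhibiting an explicit preimage, and this is the first place the compatibility hypothesis enters. Given $B \in \calF(\calX, \calD)$, Proposition~\ref{proposition:lumpable-not-empty} guarantees that for every $(x, x') \in \calD$ and every $y \in \calS_x$ there is at least one $y' \in \calS_{x'}$ with $(y, y') \in \calE$. Selecting one such $y'$ for each pair $(y, x')$, placing the value $B(\kappa(y), x')$ on that single entry and $0$ elsewhere, produces an $A \in \calF(\calY, \calE)$ for which $A(y, \calS_{x'}) = B(\kappa(y), x')$ holds for every $y$; this $A$ is therefore $\kappa$-lumpable and satisfies $\kappa_\star A = B$.

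For (iii), I would invoke the rank--nullity theorem for $\kappa_\star \colon \calF_\kappa(\calY, \calE) \to \calF(\calX, \calD)$. By (ii) the range is all of $\calF(\calX, \calD)$, of dimension $\abs{\calD}$, so it remains to compute $\dim \Ker \kappa_\star$. The kernel is exactly the set of $A \in \calF(\calY, \calE)$ with $A(y, \calS_{x'}) = 0$ for all $(x, x') \in \calD$ and all $y \in \calS_x$; such $A$ automatically lie in $\calF_\kappa(\calY, \calE)$ since every lumpability equality then reads $0 = 0$ (including the cases $(x,x') \notin \calD$, where no edges of $\calE$ contribute). These are $\sum_{(x,x') \in \calD} \abs{\calS_x}$ linear constraints, and the main obstacle -- the only step that is not purely formal -- is to verify their independence. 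I would argue it as follows: distinct pairs $(y, x')$ address pairwise disjoint blocks of coordinates $\{A(y, y') : y' \in \calS_{x'},\, (y,y') \in \calE\}$, so the corresponding functionals have disjoint supports, and each functional is nonzero precisely because Proposition~\ref{proposition:lumpable-not-empty} supplies at least one coordinate in its support. Nonzero functionals with pairwise disjoint supports are linearly independent, so $\dim \Ker \kappa_\star = \abs{\calE} - \sum_{(x,x') \in \calD} \abs{\calS_x}$, and adding $\abs{\calD}$ yields the claimed dimension $\dim \calF_\kappa(\calY, \calE) = \abs{\calE} - \sum_{(x,x') \in \calD} \abs{\calS_x} + \abs{\calD}$.
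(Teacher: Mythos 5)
Your proof is correct, but it takes a genuinely different route from the paper's. Where you characterize $\calF_\kappa(\calY,\calE)$ as the common kernel of a family of linear functionals and then count independent constraints (nonzero functionals with pairwise disjoint supports), the paper instead constructs an explicit basis $\calB(\kappa)$ of $\calF_\kappa(\calY,\calE)$, consisting of elements $C^{x,x'} = \sum_{y\in\calS_x}E^{y,\ymax}$ and $F^{y,y'} = E^{y,y'}-E^{y,\ymax}$, verifies by hand that it is independent and generating, and reads off surjectivity from $\kappa_\star C^{x,x'}=E^{x,x'}$ and the kernel from $\Span\{F^{y,y'}\}$ before applying rank--nullity. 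Your argument is shorter and more conceptual for the lemma itself: the subspace claim is immediate once the defining conditions are seen as linear, surjectivity follows from a one-line preimage (which, like the paper, leans on the standing compatibility assumption of Proposition~\ref{proposition:lumpable-not-empty} to guarantee each constraint functional has nonempty support), and the independence of the constraints is transparent from disjointness of supports. What the paper's heavier construction buys is reuse: the basis $\calB(\kappa)$ and the identities $\kappa_\star C^{x,x'}=E^{x,x'}$, $\kappa_\star F^{y,y'}=0$ are the workhorses of the proof of Theorem~\ref{theorem:congruent-embeddings-are-lambda-embeddings}, where a congruent embedding is pinned down by expanding $K_\star E^{x_0,x_0'}$ in exactly these coordinates. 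So your proof suffices for the lemma as stated, but if you were writing the whole section you would want the explicit basis anyway.
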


\begin{proof}
See Section~\ref{proof:lemma-lumpable-matrix-vector-space}.
\end{proof}

\begin{remark}
Perhaps surprisingly, in the fully connected graph case $\calE = \calY^2$, 
$$\dim \calF_\kappa(\calY) = \abs{\calY}^2 - \abs{\calX}\abs{\calY} + \abs{\calX}^2,$$
which is oblivious to the exact partition defined by $\kappa$, and only depends on the alphabet sizes of its domain and range.
\end{remark}

We can expand the domain of Markov embeddings in Definition~\ref{definition:markov-embeddings} to subsets of $\calF(\calX, \calD)$, and verify that embedded matrices are lumpable (Definition~\ref{definition:lumpable-matrix}). It is noteworthy that Proposition~\ref{proposition:lumpable-not-empty} seamlessly extends to $\calF_\kappa(\calY, \calE)$. 
Inspired by the definition of congruent mappings in the context of distributions \citep[Definition~5.1]{ay2017} and statistics in the sense of \citep[Section~5.1.1]{ay2017}, we introduce embeddings of matrices that are congruent for lumpings.

\begin{definition}[$\kappa$-congruent embedding]
\label{definition:congruent-embedding}
Let a mapping
$$K_\star \colon \calF(\calX = \kappa(\calY), \calD = \kappa_2(\calE)) \to \calF_\kappa(\calY, \calE).$$ 
We say that $K_\star$ is a $\kappa$-congruent embedding\hl{\footnote{\hlc{Our notation $K_\star$ and terminology is based on \citet{ay2017}.}}} when
\begin{enumerate}
    \item[$(i)$] $K_\star$ is a linear map.
    \item[$(ii)$] $K_\star$ is monotonic\hl{\footnote{\hl{Also called non-negative.}}} in the sense that non-negative matrices are mapped to non-negative matrices, i.e. for any $A \in \calF(\calX, \calD)$,
    $$A \geq 0 \implies K_\star A \geq 0,$$
    \item[$(iii)$] $K_\star$ preserves irreducibility \footnote{Irreducibility preservation can also be interpreted as a form of monotonicity.},
    $$A \in \calF_+(\calX, \calD) \implies K_\star A \in \calF_+(\calY, \calE).$$
    \item[$(iv)$] $K_\star$ is a right inverse of $\kappa_\star$, i.e. for any $\forall A \in \calF(\calX, \calD)$,
$$\kappa_\star K_\star A = A.$$
\end{enumerate}
\end{definition}

The surjectivity of $\kappa_\star$  together with $\Ker \kappa_\star \neq \set{0}$ (Lemma~\ref{lemma:lumpable-matrix-vector-space}) guarantee the existence of multiple right inverses to $\kappa_\star$, i.e. potential candidates for congruent embeddings.
We will now show that $\kappa$-congruent embeddings are exactly the Markov embeddings whose partition of states coincides with the one defined by $\kappa$.
\begin{theorem}
\label{theorem:congruent-embeddings-are-lambda-embeddings}
Let $(\calX, \calD)$, $(\calY, \calE)$ be strongly connected digraphs, and $\kappa \colon \calY \to \calX$ a lumping function, such that $\kappa_2(\calE) = \calD$, and $\calF_\kappa(\calY, \calE) \neq \emptyset$. Then
$K_\star \colon \calF(\calX, \calD) \to \calF_\kappa(\calY, \calE)$ is a $\kappa$-congruent embedding (Definition~\ref{definition:congruent-embedding}), if and only if $K_\star$ is a $\kappa$-compatible Markov embedding (Definition~\ref{definition:markov-embeddings}).
\end{theorem}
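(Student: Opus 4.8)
The plan is to prove the two implications separately, with the reverse implication (every $\kappa$-compatible Markov embedding is $\kappa$-congruent) being routine verification of the four axioms, and the forward implication (every $\kappa$-congruent embedding arises as a Markov embedding) carrying the real content, since it requires reconstructing the block matrix $\Lambda$ from an abstract linear map.

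For the reverse direction, suppose $K_\star = \Lambda_\star$ is a $\kappa$-compatible Markov embedding, so $K_\star A(y,y') = A(\kappa(y),\kappa(y'))\Lambda(y,y')$ with $\Lambda \in \calF_+(\calY,\calE)$ satisfying Definition~\ref{definition:markov-embeddings}-$(iii)$. Linearity (Definition~\ref{definition:congruent-embedding}-$(i)$) is immediate from the linear dependence on $A(\kappa(y),\kappa(y'))$. Monotonicity $(ii)$ and irreducibility preservation $(iii)$ follow at once from $\Lambda > 0$ on $\calE$: if $A \geq 0$ (resp. $A > 0$ on $\calD$) then each entry $A(\kappa(y),\kappa(y'))\Lambda(y,y')$ is $\geq 0$ (resp. $> 0$) on $\calE$. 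For the right-inverse property $(iv)$, I would compute for $y \in \calS_x$ that $\kappa_\star(\Lambda_\star A)(x,x') = \sum_{y'\in\calS_{x'}} A(x,x')\Lambda(y,y') = A(x,x')\sum_{y'\in\calS_{x'}}\Lambda(y,y') = A(x,x')$, where the final sum equals $1$ precisely by Definition~\ref{definition:markov-embeddings}-$(iii)$; this same computation shows $\Lambda_\star A$ is $\kappa$-lumpable, so that $\kappa_\star$ indeed applies.

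For the forward direction, assume $K_\star$ satisfies Definition~\ref{definition:congruent-embedding}. By linearity $K_\star$ is determined by its values on the basis $\set{E^{x,x'}\colon (x,x')\in\calD}$ of $\calF(\calX,\calD)$, and since $A = \sum_{(x,x')\in\calD} A(x,x') E^{x,x'}$ we have $K_\star A = \sum_{(x,x')\in\calD} A(x,x')\, K_\star E^{x,x'}$. I would set $\Lambda(y,y') \eqdef (K_\star E^{\kappa(y),\kappa(y')})(y,y')$ for $(y,y')\in\calE$. The crux is to show that $K_\star E^{x,x'}$ is supported only on the block $\calS_x \times \calS_{x'}$ and that its rows there are probability vectors. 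This I would extract from the two remaining axioms: monotonicity gives $K_\star E^{x,x'} \geq 0$, while the right-inverse property $\kappa_\star K_\star E^{x,x'} = E^{x,x'}$ forces, for $y \in \calS_a$, the block row-sums $(K_\star E^{x,x'})(y,\calS_b) = \delta[a=x]\,\delta[b=x']$. Combining nonnegativity with vanishing row-sums kills every entry outside the $(x,x')$ block and forces the surviving rows to sum to $1$, i.e. to lie in $\calP(\calS_{x'})$. Substituting this block structure into the expansion of $K_\star A$ then yields $K_\star A(y,y') = A(\kappa(y),\kappa(y'))\Lambda(y,y')$, which is the Markov-embedding form, and Definition~\ref{definition:markov-embeddings}-$(iii)$ is exactly the row-probability statement just established.

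Finally I would check the remaining positivity requirement $\Lambda\in\calF_+(\calY,\calE)$: applying irreducibility preservation $(iii)$ to the strictly positive matrix $A=\sum_{(x,x')\in\calD} E^{x,x'}\in\calF_+(\calX,\calD)$ gives $K_\star A\in\calF_+(\calY,\calE)$, and since $K_\star A(y,y')=\Lambda(y,y')$ on $\calE$, this shows $\Lambda>0$ on $\calE$. The main obstacle I anticipate is the support/block-structure argument of the forward direction: one must argue carefully that the right-inverse condition controls only block-wise \emph{sums} of the nonnegative entries of $K_\star E^{x,x'}$, and that it is nonnegativity that upgrades these vanishing sums to vanishing individual entries. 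A related subtlety is that the probability-vector conclusion is only meaningful because $\calF_\kappa(\calY,\calE)\neq\emptyset$ guarantees, via the extension of Proposition~\ref{proposition:lumpable-not-empty} to $\calF_\kappa(\calY,\calE)$, that each relevant block row has nonempty support in $\calE$, so that the row-sum $1$ is distributed over an actually nonempty set of edges.
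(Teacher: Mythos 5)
Your proposal is correct, and the skeleton is the same as the paper's: both directions use linearity to reduce to the domain basis $\set{E^{x,x'}}_{(x,x')\in\calD}$, monotonicity for non-negativity, the right-inverse axiom for the block structure and row-normalization, and irreducibility preservation for $\Lambda>0$. The implementation of the forward direction, however, is genuinely different. The paper expands each $K_\star E^{x_0,x_0'}$ in the explicit basis $\calB(\kappa)=\set{C^{x,x'}}\cup\set{F^{y,y'}}$ of $\calF_\kappa(\calY,\calE)$ constructed in Lemma~\ref{lemma:lumpable-matrix-vector-space}, applies $\kappa_\star$ to the basis elements via \eqref{eq:lump-the-basis} to kill the off-diagonal coefficients $K^{x_0,x_0'}_{x,x'}$, and then carries the distinguished element $\ymax$ through the bookkeeping to recover the probability-vector condition. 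You bypass $\calB(\kappa)$ entirely and argue entrywise: since the codomain is $\calF_\kappa(\calY,\calE)$, the right-inverse identity $\kappa_\star K_\star E^{x,x'}=E^{x,x'}$ reads directly as the block row-sum condition $(K_\star E^{x,x'})(y,\calS_b)=\delta[a=x]\,\delta[b=x']$ for $y\in\calS_a$, and non-negativity upgrades the vanishing sums to vanishing entries and the unit sums to rows in $\calP(\calS_{x'})$. Your route is shorter and avoids the $\ymax$ machinery, at the cost of leaning on the codomain being exactly $\calF_\kappa(\calY,\calE)$ so that the block row-sums are constant on each $\calS_a$ (which the theorem statement does grant); the paper's basis computation is heavier but reuses Lemma~\ref{lemma:lumpable-matrix-vector-space} and makes the degrees of freedom of a congruent embedding (Remark~\ref{remark:dof-markov-embedding}) immediately visible. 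Your closing observations --- that the positivity of $\Lambda$ follows from applying axiom $(iii)$ to $\delta_\calD\in\calF_+(\calX,\calD)$, and that the compatibility condition of Proposition~\ref{proposition:lumpable-not-empty} is what makes the row-sum-one conclusion non-vacuous --- are both sound and match the role these facts play in the paper.
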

\begin{proof}
See Section~\ref{proof:theorem-congruent-embeddings-are-lambda-embeddings}.
\end{proof}

\begin{corollary}
The $\kappa$-congruent embeddings from $\calV \subset \calW(\calX, \calD)$ are exactly the $\kappa$-compatible Markov embeddings from $\calV$.
\label{term:Embkc}
\end{corollary}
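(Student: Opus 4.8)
The plan is to obtain the corollary directly from Theorem~\ref{theorem:congruent-embeddings-are-lambda-embeddings} by restricting the operators from the full matrix space $\calF(\calX,\calD)$ to the submanifold $\calV \subset \calW(\calX,\calD)$ of irreducible kernels. The theorem already supplies the equivalence at the level of linear operators, so the only work is to verify that passing to $\calV$ neither discards information nor manufactures new maps, and that the informal right-inverse notion of congruence used for embeddings from $\calV$ matches Definition~\ref{definition:congruent-embedding}.

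First I would dispatch the easy inclusion. Given a $\kappa$-compatible Markov embedding $\Lambda_\star$ with domain $\calV$, its defining rule $\Lambda_\star P(y,y') = P(\kappa(y),\kappa(y'))\Lambda(y,y')$ is linear in $P$ and extends verbatim to all of $\calF(\calX,\calD)$. Condition $(iii)$ of Definition~\ref{definition:markov-embeddings} yields $\kappa_\star \Lambda_\star P = P$ for every $P$, so this extension meets the four requirements of Definition~\ref{definition:congruent-embedding}: linearity is immediate, monotonicity and irreducibility preservation follow from $\Lambda \in \calF_+(\calY,\calE)$, and the right-inverse property is the computation just noted. Hence every Markov embedding from $\calV$ is the restriction of a $\kappa$-congruent embedding; conversely, the restriction to $\calV$ of any $\kappa$-congruent embedding is, by the theorem, the restriction of a Markov embedding.

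The substantive point -- and the step I expect to be the main obstacle -- is that restriction to $\calV$ is \emph{faithful}, so that the bijection on the full space descends to a genuine correspondence on $\calV$ rather than collapsing. Here I would use that every $P \in \calV \subset \calW(\calX,\calD)$ has full support on $\calD$, and that $\kappa_2(\calE) = \calD$ forces $P(\kappa(y),\kappa(y')) > 0$ for every $(y,y') \in \calE$. Consequently the embedding matrix is recovered pointwise from the image of even a single kernel via $\Lambda(y,y') = \Lambda_\star P(y,y') / P(\kappa(y),\kappa(y'))$, which shows that a Markov (equivalently congruent) embedding is uniquely determined by its restriction to $\calV$; distinct full operators therefore restrict to distinct maps from $\calV$, and each map from $\calV$ extends uniquely back to the full space.

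Combining these observations, restriction-to-$\calV$ is a bijection between the two families, and since Theorem~\ref{theorem:congruent-embeddings-are-lambda-embeddings} identifies them on the full space they coincide on $\calV$ as well, which is the assertion. The only care required is the bookkeeping around what ``embedding from $\calV$'' means: that the image lands in $\calW_\kappa(\calY,\calE)$ and that the congruence condition $\kappa_\star E_\star = \Id_\calV$ is the right Markovian analogue. I would reconcile this using Lemma~\ref{lemma:construct-canonical-embedding}, writing each $E_\star P = \Lambda^{(E_\star P)}_\star P$ and observing that, for a linear congruent $E_\star$, the recovered matrix $\Lambda^{(E_\star P)}$ is independent of $P \in \calV$, so a single embedding matrix $\Lambda$ governs the whole of $\calV$.
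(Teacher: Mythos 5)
Your proposal is correct and follows the same route the paper intends: the corollary is stated without proof as an immediate restriction of Theorem~\ref{theorem:congruent-embeddings-are-lambda-embeddings} to $\calV$. Your additional check that the restriction is faithful (recovering $\Lambda$ from a single full-support $P \in \calV$ via $\Lambda(y,y') = \Lambda_\star P(y,y')/P(\kappa(y),\kappa(y'))$) is a sensible piece of bookkeeping the paper leaves implicit.
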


\begin{remark}
\label{remark:dof-markov-embedding}
Notice that a $\kappa$-congruent embedding enjoys
\begin{equation*}
    \abs{\calE} - \sum_{(x,x') \in \calD} \abs{\calS_x}
\end{equation*}
degrees of freedom.
In particular, when $\calE = \calY^2$, a $\kappa$-congruent embedding has $\abs{\calY}(\abs{\calY} - \abs{\calX})$ degrees of freedom. In the context of distributions, a Markov morphism from $\calP_+(\calX)$ to $\calP_+(\calY)$ would have $\abs{\calY} - \abs{\calX}$ degrees of freedom.
\end{remark}

\subsection{Exponential embeddings}
\label{section:embeddings-exponential}
We fix a lumping function $\kappa \colon \calY \to \calX$, compatible edge sets $\calE, \calD = \kappa_2(\calE)$, such that $\calW_\kappa(\calY, \calE)  \neq \emptyset$, and consider
\begin{equation*}
    \kappa_\star \colon \calW_\kappa(\calY, \calE) \to \calW(\calX, \calD).
\end{equation*}
We now introduce another class of embeddings, which we call exponential embeddings. We show that this class preserves certain geometric features of families of \hl{stochastic matrices}, and strictly encompasses the previously defined Markov embeddings.

\begin{definition}[Exponential embedding]
\label{definition:exponential-embedding}
Let $P_{\origin} \in \calW_\kappa(\calY, \calE)$, and write $\bar{P}_{\origin} \eqdef \kappa_\star P_{\origin} \in \calW(\calX, \calD)$.
For a given $\bar{P} \in \calW(\calX, \calD)$,
we let $\tilde{P} \in \calF_+(\calY, \calE)$ be such that for any $y,y' \in \calY$,
\begin{equation*}
\begin{split}
    \tilde{P}(y,y') &\eqdef P_{\origin}(y,y')\bar{P}(\kappa(y), \kappa(y')).
\end{split}
\end{equation*}
The mapping
\begin{equation*}
    \begin{split}
        \hlm{\Phi} \colon \calW(\calX, \calD) &\to \calW(\calY, \calE) , \qquad \bar{P} \mapsto \stoch(\tilde{P})
    \end{split}
\end{equation*}
is called the $\kappa$-compatible exponential embedding with origin $P_{\origin}$.
\label{term:EEmbk}
\label{term:EEmbkc}
\label{term:EEmb}
\end{definition}

\begin{proposition}
\label{proposition:explicit-phi-embedding}
Let $\bar{P} \in \calW(\calX, \calD)$, and consider the exponential embedding with $\kappa$-lumpable origin $P_{\origin}$,
\begin{enumerate}
    \item[$(i)$] For any $y,y' \in \calY$, 
    $$\hlm{\Phi} \bar{P}(y,y') = \tilde{P}(y,y') \frac{v(\kappa(y'))}{\rho v(\kappa(y))},$$ where $(\rho, v)$ is the right PF pair of $\bar{P}_{\origin} \circ \bar{P}$, and $\tilde{P}$ is as in Definition~\ref{definition:exponential-embedding}.
    \item[$(ii)$] $\hlm{\Phi} \bar{P}$ is $\kappa$-lumpable [i.e. $\Range \hlm{\Phi} \subset \calW_\kappa(\calY, \calE)$], with $\kappa_\star \hlm{\Phi} \bar{P} = \stoch(\bar{P}_{\origin} \circ \bar{P})$.
\end{enumerate}

\end{proposition}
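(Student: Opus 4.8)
The plan is to recognize $\tilde{P}$ as an unnormalized Markov embedding of a Hadamard product and then transport the Perron--Frobenius data across that embedding. Write $\bar{A} \eqdef \bar{P}_{\origin} \circ \bar{P}$; since $\bar{P}_{\origin}, \bar{P} \in \calW(\calX, \calD)$ are positive exactly on $\calD$, we have $\bar{A} \in \calF_+(\calX, \calD)$, so it admits a right PF pair. Because $P_{\origin} \in \calW_\kappa(\calY, \calE)$, Theorem~\ref{theorem:equivalence-lumpable} gives $P_{\origin}(y, \calS_{x'}) = \bar{P}_{\origin}(\kappa(y), x')$ for $y \in \calS_x$, so the canonical embedding coefficients $\Lambda(y,y') \eqdef P_{\origin}(y,y')/\bar{P}_{\origin}(\kappa(y), \kappa(y'))$ from Lemma~\ref{lemma:construct-canonical-embedding} satisfy the channel identity $\sum_{y' \in \calS_{x'}} \Lambda(y,y') = 1$ for every $y \in \calS_x$ with $(x,x') \in \calD$. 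With this notation $\tilde{P}(y,y') = \Lambda(y,y')\,\bar{A}(\kappa(y), \kappa(y'))$, and since $\Lambda \in \calF_+(\calY, \calE)$ and $\bar{A} > 0$ on $\calD$, we get $\tilde{P} \in \calF_+(\calY, \calE)$; in particular $\stoch(\tilde{P})$ is well defined.

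Next I would lift the right PF pair. Let $(\rho, v)$ be the right PF pair of $\bar{A}$ and set $w(y) \eqdef v(\kappa(y))$, which is strictly positive since $v > 0$ and $\kappa$ is surjective. Computing $(\tilde{P}\,w\trn)(y)$ and grouping the inner sum over $y'$ by the block $x' = \kappa(y')$, the channel identity collapses each block and leaves
\begin{equation*}
(\tilde{P}\,w\trn)(y) = \sum_{x'} \bar{A}(\kappa(y), x')\, v(x') = \rho\, v(\kappa(y)) = \rho\, w(y).
\end{equation*}
Thus $w$ is a strictly positive right eigenvector of the irreducible non-negative matrix $\tilde{P}$ for the eigenvalue $\rho$; by Perron--Frobenius this forces $(\rho, w)$ to be the right PF pair of $\tilde{P}$.

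Claim $(i)$ is then immediate from the definition~\eqref{eq:stochastic-rescaling} of $\stoch$: substituting the PF pair $(\rho, w)$ into $\stoch(\tilde{P})(y,y') = \tilde{P}(y,y')\,w(y')/(\rho\, w(y))$ and using $w(y) = v(\kappa(y))$ gives the stated expression. For $(ii)$ I would sum this over $y' \in \calS_{x'}$ for fixed $y \in \calS_x$; the factor $v(\kappa(y')) = v(x')$ pulls out and the block sum of $\tilde{P}$ again collapses through the channel identity, leaving
\begin{equation*}
\Phi_\star \bar{P}(y, \calS_{x'}) = \frac{\bar{A}(x,x')\, v(x')}{\rho\, v(x)} = \stoch(\bar{A})(x,x'),
\end{equation*}
which depends on $y$ only through $x = \kappa(y)$. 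By Theorem~\ref{theorem:equivalence-lumpable} this is precisely $\kappa$-lumpability, and reading off the common row-block value identifies $\kappa_\star \Phi_\star \bar{P} = \stoch(\bar{P}_{\origin} \circ \bar{P})$.

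The only genuinely delicate step is the eigenvector lift: one must verify that $w$ is honestly the \emph{Perron} eigenvector and not merely some eigenvector. This is where positivity of $v$, surjectivity of $\kappa$, and irreducibility of $\tilde{P}$ (inherited from $\Lambda \in \calF_+(\calY,\calE)$ and $\bar{A} > 0$ on $\calD$) all combine to invoke the uniqueness clause of Perron--Frobenius. Everything else is bookkeeping driven by the single algebraic identity $\sum_{y' \in \calS_{x'}} \Lambda(y,y') = 1$.
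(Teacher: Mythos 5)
Your proof is correct and follows essentially the same route as the paper: both arguments lift the right PF pair of $\bar{P}_{\origin} \circ \bar{P}$ to $\tilde{P}$ by setting $w(y) = v(\kappa(y))$ and collapsing the block sums via the lumpability identity $P_{\origin}(y, \calS_{x'}) = \bar{P}_{\origin}(\kappa(y), x')$ (which is the same fact as your channel identity for $\Lambda$), then verify $\kappa$-lumpability of the image by the same block summation. Your explicit appeal to the uniqueness clause of Perron--Frobenius to certify that $w$ is genuinely the Perron eigenvector is a point the paper leaves implicit, but it does not change the substance of the argument.
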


\begin{proof}
See Section~\ref{proof:proposition-explicit-phi-embedding}.
\end{proof}

\begin{figure}%
\centering

\tikzset{every picture/.style={line width=0.75pt}} %

\begin{tikzpicture}[x=0.70pt,y=0.70pt,yscale=-1,xscale=1]

\draw   (101,94.59) .. controls (101,69.96) and (134.8,50) .. (176.5,50) .. controls (218.2,50) and (252,69.96) .. (252,94.59) .. controls (252,119.22) and (218.2,139.18) .. (176.5,139.18) .. controls (134.8,139.18) and (101,119.22) .. (101,94.59) -- cycle ;
\draw   (333,107.79) .. controls (333,72.56) and (381.35,44) .. (441,44) .. controls (500.65,44) and (549,72.56) .. (549,107.79) .. controls (549,143.01) and (500.65,171.57) .. (441,171.57) .. controls (381.35,171.57) and (333,143.01) .. (333,107.79) -- cycle ;
\draw  [color={rgb, 255:red, 208; green, 2; blue, 27 }  ,draw opacity=1 ] (195.28,61.72) -- (202.72,69.17)(202.72,61.72) -- (195.28,69.17) ;
\draw   (204.28,94.72) -- (211.72,102.17)(211.72,94.72) -- (204.28,102.17) ;
\draw   (435.28,78.72) -- (442.72,86.17)(442.72,78.72) -- (435.28,86.17) ;
\draw    (435,82) .. controls (364.35,51.15) and (334.3,49.02) .. (205.94,64.76) ;
\draw [shift={(204,65)}, rotate = 352.98] [fill={rgb, 255:red, 0; green, 0; blue, 0 }  ][line width=0.08]  [draw opacity=0] (10.72,-5.15) -- (0,0) -- (10.72,5.15) -- (7.12,0) -- cycle    ;
\draw   (131.28,99.72) -- (138.72,107.17)(138.72,99.72) -- (131.28,107.17) ;
\draw   (416.28,111.72) -- (423.72,119.17)(423.72,111.72) -- (416.28,119.17) ;
\draw    (211,99) .. controls (268.42,122.76) and (341.52,123) .. (410.9,115.24) ;
\draw [shift={(413,115)}, rotate = 173.48] [fill={rgb, 255:red, 0; green, 0; blue, 0 }  ][line width=0.08]  [draw opacity=0] (10.72,-5.15) -- (0,0) -- (10.72,5.15) -- (7.12,0) -- cycle    ;
\draw [color={rgb, 255:red, 74; green, 144; blue, 226 }  ,draw opacity=1 ]   (133,131) .. controls (136,97) and (127,66) .. (199,52) ;
\draw    (413,117) .. controls (327.86,131.85) and (219.2,137.88) .. (145.23,106.95) ;
\draw [shift={(143,106)}, rotate = 23.39] [fill={rgb, 255:red, 0; green, 0; blue, 0 }  ][line width=0.08]  [draw opacity=0] (10.72,-5.15) -- (0,0) -- (10.72,5.15) -- (7.12,0) -- cycle    ;
\draw [color={rgb, 255:red, 74; green, 144; blue, 226 }  ,draw opacity=1 ]   (191,138) .. controls (179,109) and (203,104) .. (243,74) ;
\draw [color={rgb, 255:red, 74; green, 144; blue, 226 }  ,draw opacity=1 ]   (333,107.79) .. controls (402,105) and (431,100) .. (466,170) ;
\draw (175,60) node [anchor=north west][inner sep=0.75pt]  [color={rgb, 255:red, 208; green, 2; blue, 27 }  ,opacity=1 ] [align=left] {$\bar{P}_{\origin}$};
\draw (445,66) node [anchor=north west][inner sep=0.75pt]   [align=left] {$P_{\origin}$};
\draw (266,45) node [anchor=north west][inner sep=0.75pt]   [align=left] {$\kappa_\star$};
\draw (50,51) node [anchor=north west][inner sep=0.75pt]   [align=left] {$\calW(\calX, \calD)$};
\draw (511,45) node [anchor=north west][inner sep=0.75pt]   [align=left] {$\calW_\kappa(\calY, \calE)$};
\draw (196,80) node [anchor=north west][inner sep=0.75pt]   [align=left] {$\bar{P}$};
\draw (264,132) node [anchor=north west][inner sep=0.75pt]   [align=left] {$\kappa_\star$};
\draw (107,80) node [anchor=north west][inner sep=0.75pt]   [align=left] {$\stoch(\bar{P}_{\origin} \circ \bar{P})$};
\draw (277,104) node [anchor=north west][inner sep=0.75pt]   [align=left] {\hl{$\Phi$}};
\draw (193,141) node [anchor=north west][inner sep=0.75pt]  [color={rgb, 255:red, 74; green, 144; blue, 226 }  ,opacity=1 ] [align=left] {$\calV$};
\draw (451,173) node [anchor=north west][inner sep=0.75pt]  [color={rgb, 255:red, 74; green, 144; blue, 226 }  ,opacity=1 ] [align=left] {$\hlm{\Phi} \calV$};
\draw (105,135) node [anchor=north west][inner sep=0.75pt]  [color={rgb, 255:red, 74; green, 144; blue, 226 }  ,opacity=1 ] [align=left] {$\kappa_\star \hlm{\Phi}  \calV$};

\end{tikzpicture}

\caption{Exponential embedding of a family $\calV$, with origin $\bar{P}_{\origin}$. When $\calV$ forms an e-family, so does $\hlm{\Phi} \calV$.}
\end{figure}

\begin{remark}
While $\kappa$-lumpability of the origin $P_{\origin}$ ensures $\kappa$-lumpability of all exponentially embedded chains, note that composing the embedding with $\kappa$-lumping
does not generally recover the original chain $\bar{P}$, but rather some translated version $\stoch(\bar{P}_{\origin} \circ \bar{P})$. This leads to non-congruency of the embedding, except for some well-chosen origin (Theorem~\ref{theorem:congruent-exponential-embeddings}).
\end{remark}

For $P_0, P_1 \in \calW(\calY, \calE)$,
the e-geodesic passing through $P_0$ and $P_1$ is defined \citep[Corollary~2]{nagaoka2017exponential} by
$$\gamma^{(e)}_{P_0, P_1} \colon \R \to \calW(\calY, \calE),  t \mapsto \stoch\left(P_0^{\circ 1 - t} \circ P_1^{\circ t} \right).$$
Essentially, $\gamma^{(e)}_{P_0, P_1}$ is the straight line in $\calW(\calY, \calE)$ with respect to the e-connection, that goes through $P_0$ and $P_1$, and forms the simplest kind of ($1$-dimensional) e-family.
Similarly, the m-geodesic that goes through $P_0, P_1$ with respective edge measures $Q_0, Q_1$ is defined by 
$$\gamma^{(m)}_{P_0, P_1} \colon I \to \calW(\calY, \calE), t \mapsto P_t \colon Q_t = (1-t)Q_0 +  t Q_1,$$
where \hl{$I = \set{t \in \R \colon Q_t \text{ is an edge measure positive over $\calE$}}$ and where $P_t$} is the unique stochastic matrix that pertains to $Q_t$. In particular, note that $I \supset [0,1]$ and may extend beyond its endpoints.
A compelling property for an embedding $\hlm{E} \colon \calW(\calX, \calD) \to \calW(\calY, \calE)$,
is to preserve  the geometric structure in the sense of mapping an e-family to an e-family or an m-family to an m-family. This quality reduces to that of being a geodesically affine map.

\begin{definition}[Geodesically affine map]
\label{definition:geodesic-affine-map}
Let $\hlm{E} \colon \calW(\calX, \calD) \to \calW(\calY, \calE)$ be an embedding. When for all $P_0, P_1 \in \calW(\calX, \calD)$,
\begin{equation*}
    \begin{split}
        \hlm{E} \gamma^{(e)}_{P_0, P_1} = \gamma^{(e)}_{\hlm{E} P_0, \hlm{E} P_1},
    \end{split}
\end{equation*}
then $\hlm{E}$ is said to be e-geodesic affine. When for all $P_0, P_1 \in \calW(\calX, \calD)$,
\begin{equation*}
    \begin{split}
        \hlm{E} \gamma^{(m)}_{P_0, P_1} = \gamma^{(m)}_{\hlm{E} P_0, \hlm{E} P_1},
    \end{split}
\end{equation*}
then $\hlm{E}$ is said to be m-geodesic affine.
\label{term:eEmb} \label{term:mEmb}
\end{definition}

\begin{theorem}
\label{theorem:exponential-embedding-geodesic-affinity}
Let \hl{$\Phi$} be the exponential embedding with origin $P_{\origin}$ .
\begin{enumerate}
    \item[$(i)$] \hl{$\Phi$} is an e-geodesic affine map.
    \item[$(ii)$] \hl{$\Phi$} is generally not an m-geodesic affine map.
\end{enumerate}
\end{theorem}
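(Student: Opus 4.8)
The plan is to reduce both sides of the claimed identity to one and the same canonical expression, using only the two invariances of the rescaling $\stoch$ recorded in \eqref{eq:stoch-properties}: invariance under multiplication by a positive scalar and under conjugation by a positive diagonal matrix. The algebraic engine is that Hadamard multiplication by a fixed matrix and Hadamard powers both commute with diagonal conjugation, and that the ``lift'' operator $L_\kappa$, defined by $(L_\kappa \bar P)(y,y') \eqdef \bar P(\kappa(y),\kappa(y'))$, is multiplicative in the sense $L_\kappa(\bar A \circ \bar B) = L_\kappa \bar A \circ L_\kappa \bar B$ and $L_\kappa(\bar A^{\circ s}) = (L_\kappa \bar A)^{\circ s}$. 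First I would rewrite Definition~\ref{definition:exponential-embedding} in the compact form $\Phi_\star \bar P = \stoch(P_{\origin} \circ L_\kappa \bar P)$. Then, writing any positive irreducible $C$ as $\stoch(C) = \rho_C^{-1}\diag(v_C)^{-1} C \diag(v_C)$ with right PF pair $(\rho_C, v_C)$, I would establish the two absorption identities
\begin{equation*}
\stoch\big(P_{\origin}\circ L_\kappa \stoch(C)\big) = \stoch\big(P_{\origin}\circ L_\kappa C\big), \qquad \stoch\big(\stoch(A)^{\circ(1-t)}\circ \stoch(B)^{\circ t}\big) = \stoch\big(A^{\circ(1-t)}\circ B^{\circ t}\big).
\end{equation*}
Both hold because the Perron root contributes a global positive scalar and the (possibly $\kappa$-lifted) Perron eigenvector contributes a positive diagonal conjugation, each of which is annihilated by \eqref{eq:stoch-properties}; in the first identity the relevant diagonal function is $v_C \circ \kappa$, which is still a positive function on $\calY$.

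With these in hand the collapse is immediate. Applying the compact form and then the first absorption identity with $C = \bar P_0^{\circ(1-t)}\circ \bar P_1^{\circ t}$, together with multiplicativity of $L_\kappa$, gives
\begin{equation*}
\Phi_\star \gamma^{(e)}_{\bar P_0, \bar P_1}(t) = \stoch\big(P_{\origin}\circ L_\kappa \stoch(\bar P_0^{\circ(1-t)}\circ \bar P_1^{\circ t})\big) = \stoch\big(P_{\origin}\circ (L_\kappa \bar P_0)^{\circ(1-t)}\circ (L_\kappa \bar P_1)^{\circ t}\big).
\end{equation*}
On the other side, writing $A_i = P_{\origin}\circ L_\kappa \bar P_i$ so that $\Phi_\star \bar P_i = \stoch(A_i)$, the second absorption identity yields
\begin{equation*}
\gamma^{(e)}_{\Phi_\star \bar P_0, \Phi_\star \bar P_1}(t) = \stoch\big(\stoch(A_0)^{\circ(1-t)}\circ \stoch(A_1)^{\circ t}\big) = \stoch\big(A_0^{\circ(1-t)}\circ A_1^{\circ t}\big),
\end{equation*}
and expanding the last Hadamard product and using $P_{\origin}^{\circ(1-t)}\circ P_{\origin}^{\circ t} = P_{\origin}$ (valid entrywise since $P_{\origin} \in \calF_+(\calY,\calE)$) turns this into the same expression as above. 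The only point to verify is that every argument of $\stoch$ stays in $\calF_+(\calY,\calE)$ so that $\stoch$ and the real Hadamard powers are well defined; this holds because $\kappa_2(\calE) = \calD$ guarantees that the lifts of positive matrices on $\calD$ are positive exactly on $\calE$.

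\textbf{Part (ii).} Since this is a genericity statement, the plan is to exhibit a single counterexample. I would take the smallest convenient setting --- for instance $\calX = \set{0,1}$ with the complete graph, and either the trivial lumping $\kappa = \Id$ (so $\Phi_\star \bar P = \stoch(P_{\origin}\circ \bar P)$) or a genuine two-to-one lumping --- together with a nontrivial origin $P_{\origin}$, and compare the two sides at the midpoint $t = 1/2$. The structural reason for failure is that the m-geodesic is \emph{affine} in the edge measure $Q$, whereas $\Phi_\star$ acts multiplicatively through $P_{\origin}\circ(\cdot)$ followed by the \emph{nonlinear} Perron normalization $\stoch$; these operations do not commute with additive interpolation of edge measures. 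Concretely I would compute the edge measure of $\gamma^{(m)}_{\Phi_\star \bar P_0, \Phi_\star \bar P_1}(1/2)$, which is the exact average of the two embedded edge measures, and the edge measure of $\Phi_\star \gamma^{(m)}_{\bar P_0, \bar P_1}(1/2)$, and display their disagreement for a generic numerical choice of $\bar P_0, \bar P_1, P_{\origin}$. The main obstacle in Part (i) is purely the bookkeeping of which factors are scalars and which are diagonal conjugations --- especially that the Perron eigenvector enters only through its $\kappa$-lift and is therefore still absorbed by $\stoch$ --- while Part (ii) involves no conceptual difficulty, only choosing numbers that make the inequality manifest after carrying out the stationary-distribution and edge-measure normalizations.
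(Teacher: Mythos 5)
Your proposal is correct and follows essentially the same route as the paper: part (i) is the same collapse of both sides to $\stoch\bigl(P_{\origin}\circ (L_\kappa \bar P_0)^{\circ(1-t)}\circ (L_\kappa \bar P_1)^{\circ t}\bigr)$ via the scalar/diagonal-conjugation invariances of $\stoch$ in \eqref{eq:stoch-properties} (the paper leaves your two ``absorption identities'' implicit), and part (ii) is likewise settled by a counterexample. The only difference of detail is that the paper's counterexample (Lemma~\ref{lemma:hudson-breaks-m-structure}, the Hudson embedding of two symmetric $2\times 2$ kernels) concludes by observing that the m-geodesic midpoint of the embedded kernels fails to be $\kappa$-lumpable, so it cannot lie in $\Range\Phi_\star$ at all — a slightly cleaner certificate than numerically comparing the two edge measures.
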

\label{theorem:exponential-embeddings-are-e-geodesic-affine-maps}
\begin{proof}
To prove $(i)$, we rely on the fact that the mapping $\stoch$ induces an equivalence class for diagonally similar matrices ---see \eqref{eq:stoch-properties}.
For any $\bar{P}_0, \bar{P}_1 \in \calW(\calX, \calD)$ and 
$t \in \R$,
\begin{equation*}
    \begin{split}
        \hlm{\Phi}\gamma^{(e)}_{\bar{P}_0, \bar{P}_1}(t) &= \stoch\left(P_{\origin} \circ \left[\gamma^{(e)}_{\bar{P}_0, \bar{P}_1}(t) (\kappa(y),\kappa(y'))\right]_{y,y' \in \calY}\right) \\
        &= \stoch\left(P_{\origin} \circ \left[\bar{P}_0 (\kappa(y),\kappa(y'))^{ 1 - t} \bar{P}_1 (\kappa(y),\kappa(y'))^{t}\right]_{y,y' \in \calY}\right) \\
        &= \stoch\left(\left(P_{\origin} \circ \left[\bar{P}_0(\kappa(y), \kappa(y')) \right]_{y,y' \in \calY}\right)^{\circ 1 - t} \circ \left(P_{\origin} \circ \left[\bar{P}_1(\kappa(y), \kappa(y')) \right]_{y,y' \in \calY}\right)^{\circ  t}\right) \\
         &= \stoch\left((\hlm{\Phi} \bar{P}_0)^{\circ 1 - t} \circ (\hlm{\Phi} \bar{P}_1)^{\circ t}\right) \\
        &= \gamma^{(e)}_{\hlm{\Phi} \bar{P}_0, \hlm{\Phi} \bar{P}_1}(t). \\
    \end{split}
\end{equation*}
Statement $(ii)$ stems from the later Lemma~\ref{lemma:hudson-breaks-m-structure}.
\end{proof}

\begin{remark}
The example in Lemma~\ref{lemma:hudson-breaks-m-structure} actually shows the stronger, and somewhat surprising statement that even Markov embeddings are not generally m-geodesic affine.
This is in stark contrast with Markov morphisms in the context of distributions, which can be shown to be geodesically affine for both the m-connection and e-connection.
We will later construct (Section~\ref{section:embeddings-memoryless}) a non-trivial subset of Markov embeddings that also preserves the m-structure.
\end{remark}

\begin{remark}
In addition, it is not difficult to see that
extending the invariance
Lemma~\ref{lemma:markov-embedding-preserve-information-geometry} to all exponential embeddings is not possible, as the latter distort the Fisher metric and affine connections. In particular, for $P,P'$ embeddings of $\bar{P}, \bar{P}'$, it holds that
\begin{equation*}
    \kl{P}{P'} \geq \kl{\kappa_\star P}{\kappa_\star P'} =  \kl{\stoch(\bar{P}_{\origin} \circ \bar{P})}{\stoch(\bar{P}_{\origin} \circ \bar{P}')}.
\end{equation*}
\end{remark}
Let us introduce the special element $\bar{\maxentmc} \in \calW(\calX, \calD)$,
\begin{equation}
\label{eq:null-origin}
    \bar{\maxentmc} = \stoch(\delta_{\calD}), \text{ with } \delta_{\calD}(x,x') = \pred{(x,x') \in \calD}.
\end{equation}
Observe that when $\calD = \calX^2$, $\bar{\maxentmc} = \frac{1}{\abs{\calX}} 1 \trn 1$ is the \hl{stochastic matrix} that induces a uniform iid process over $\calX$. We will later recall in Section~\ref{section:max-entropy} that $\bar{\maxentmc}$ corresponds to the maximum entropy rate \hl{stochastic matrix} defined in $\calW(\calX, \calD)$.

\begin{theorem}
\label{theorem:congruent-exponential-embeddings}
The $\kappa$-compatible exponential embeddings whose origin lumps into $\bar{\maxentmc}$, defined at \eqref{eq:null-origin}, are exactly the $\kappa$-congruent embeddings.
\end{theorem}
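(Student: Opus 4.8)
The plan is to reduce congruency to the right-inverse identity and then let Proposition~\ref{proposition:explicit-phi-embedding} do most of the work. Following the operational notion of congruency introduced before Definition~\ref{definition:congruent-embedding}, an embedding $\Phi_\star \colon \calW(\calX,\calD) \to \calW(\calY,\calE)$ is $\kappa$-congruent precisely when it is a right inverse of $\kappa_\star$, i.e. $\kappa_\star \Phi_\star \bar P = \bar P$ for every $\bar P$. By Proposition~\ref{proposition:explicit-phi-embedding}$(ii)$ we already have $\kappa_\star \Phi_\star \bar P = \stoch(\bar P_{\origin} \circ \bar P)$, and since every row-stochastic matrix has PF root $1$ and right PF eigenvector $\unit$, it is a fixed point of the rescaling, $\stoch(\bar P) = \bar P$. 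Hence congruency is equivalent to the single condition
\begin{equation*}
    \stoch(\bar P_{\origin} \circ \bar P) = \stoch(\bar P), \qquad \forall \bar P \in \calW(\calX,\calD).
\end{equation*}
I will show this holds if and only if $\bar P_{\origin} = \bar{\maxentmc}$.

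For the implication $\bar P_{\origin} = \bar{\maxentmc} \implies$ congruent, I would let $(\rho, v)$ be the right PF pair of the $0/1$ adjacency matrix $\delta_{\calD}$, so that by \eqref{eq:null-origin} and \eqref{eq:stochastic-rescaling} one has $\bar P_{\origin}(x,x') = v(x')/(\rho\, v(x))$ on $\calD$. The Hadamard product then factors as a scaled diagonal conjugation, $\bar P_{\origin} \circ \bar P = \rho^{-1}\diag(v)^{-1} \bar P \diag(v)$, and the two invariances in \eqref{eq:stoch-properties} give $\stoch(\bar P_{\origin} \circ \bar P) = \stoch(\bar P) = \bar P$ at once. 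To moreover exhibit $\Phi_\star$ as a genuine $\kappa$-compatible Markov embedding (so that congruency also follows from Theorem~\ref{theorem:congruent-embeddings-are-lambda-embeddings} in its full sense), I would read off from Proposition~\ref{proposition:explicit-phi-embedding}$(i)$ that $\Phi_\star \bar P(y,y') = \Lambda(y,y')\,\bar P(\kappa(y),\kappa(y'))$ with $\Lambda(y,y') = \rho\, P_{\origin}(y,y')\, v(\kappa(y))/v(\kappa(y'))$, and then check condition $(iii)$ of Definition~\ref{definition:markov-embeddings}: summing $\Lambda(y,\cdot)$ over $\calS_{x'}$, pulling out the factors constant on $\calS_{x'}$, and using $\kappa$-lumpability of $P_{\origin}$ together with $\kappa_\star P_{\origin}(\kappa(y),x') = \bar{\maxentmc}(\kappa(y),x') = v(x')/(\rho\, v(\kappa(y)))$ collapses the sum to $1$.

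For the converse I would use that $\stoch$ is injective on the equivalence classes it induces: from $\stoch(A) = \diag(v_A)^{-1}(A/\rho_A)\diag(v_A)$ one sees that $\stoch(A) = \stoch(B)$ forces $A = \alpha\,\diag(u)^{-1} B \diag(u)$ for some $\alpha > 0$ and strictly positive $u$. Applying this to $\stoch(\bar P_{\origin} \circ \bar P) = \stoch(\bar P)$ and cancelling the strictly positive entries $\bar P(x,x')$ on $\calD$ yields $\bar P_{\origin}(x,x') = \alpha\, u(x')/u(x)$ for all $(x,x') \in \calD$. The crucial point is that the left-hand side is independent of $\bar P$, so $\bar P_{\origin}$ carries this multiplicative (discrete-gradient) form with fixed $\alpha, u$. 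Imposing that $\bar P_{\origin}$ be row-stochastic then gives $\delta_{\calD}\, u\trn = \alpha^{-1} u\trn$, exhibiting $u$ as a strictly positive eigenvector of the irreducible matrix $\delta_{\calD}$; by Perron--Frobenius uniqueness $u$ must be the PF eigenvector and $\alpha^{-1}$ the PF root, whence $\bar P_{\origin} = \stoch(\delta_{\calD}) = \bar{\maxentmc}$.

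The main obstacle I anticipate is the converse, specifically the step of arguing that the a priori $\bar P$-dependent conjugation data $(\alpha, u)$ may be taken constant across all $\bar P$ — it is exactly this uniformity, forced by $\bar P_{\origin}$ not depending on $\bar P$, that lets Perron--Frobenius pin down $\bar P_{\origin}$ uniquely. The forward direction is by contrast essentially bookkeeping: the exponential embedding with maximum-entropy origin degenerates into a Markov embedding, and the invariance \eqref{eq:stoch-properties} handles the rescaling with no further analysis.
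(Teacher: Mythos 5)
Your forward direction and your ``converse'' are both sound, and the converse is in fact a nice self-contained argument (reading off $\bar P_{\origin}(x,x') = \alpha\,u(x')/u(x)$ from the $\stoch$-equivalence class and pinning down $(\alpha,u)$ by Perron--Frobenius applied to $\delta_{\calD}$) that the paper does not spell out; note though that the ``uniformity across all $\bar P$'' you flag as the main obstacle is a non-issue, since a single choice of $\bar P$ already yields the multiplicative form of $\bar P_{\origin}$. The real problem is that what you prove is the statement ``an exponential embedding is $\kappa$-congruent if and only if its origin lumps into $\bar{\maxentmc}$,'' which is only one half of the theorem. The theorem asserts a set equality, and the inclusion
\begin{equation*}
\set{\text{$\kappa$-congruent embeddings}} \subseteq \set{\text{$\kappa$-compatible exponential embeddings with origin lumping into } \bar{\maxentmc}}
\end{equation*}
is nowhere addressed in your argument: you never rule out the existence of a $\kappa$-congruent embedding that is not an exponential embedding at all. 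This is a genuine gap, albeit an easily filled one.

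The paper closes it by first invoking Theorem~\ref{theorem:congruent-embeddings-are-lambda-embeddings} to identify $\kappa$-congruent embeddings with $\kappa$-compatible Markov embeddings $\Lambda_\star$, and then observing that for the origin $P_{\origin} = \Lambda_\star \bar{\maxentmc}$ (which lumps into $\bar{\maxentmc}$ by congruency of $\Lambda_\star$), the associated exponential embedding coincides with $\Lambda_\star$. Concretely, your own forward computation already shows that the exponential embedding with a $\bar{\maxentmc}$-lumping origin equals the canonical Markov embedding $\Lambda^{(P_{\origin})}_\star$ of Lemma~\ref{lemma:construct-canonical-embedding}, and for $P_{\origin} = \Lambda_\star \bar{\maxentmc}$ one has $\Lambda^{(P_{\origin})}(y,y') = \Lambda(y,y')\bar{\maxentmc}(\kappa(y),\kappa(y'))/\bar{\maxentmc}(\kappa(y),\kappa(y')) = \Lambda(y,y')$, so the missing inclusion follows in one line once you add it. Aside from this omission (and the harmless looseness of equating ``congruent'' with ``right inverse of $\kappa_\star$,'' which you patch by separately verifying the Markov-embedding structure), the approach is correct and genuinely different in flavor from the paper's: you characterize congruency among exponential embeddings directly, where the paper routes everything through the identification with canonical Markov embeddings.
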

\begin{proof}
We will show that
the $\kappa$-compatible exponential embeddings whose origin lumps into $\bar{\maxentmc}$ are the $\kappa$-compatible Markov embeddings,
and invoke Theorem~\ref{theorem:congruent-embeddings-are-lambda-embeddings}. 
Let 
$$\hlm{\Phi} \colon \calW(\calX, \calD) \to \calW(\calY, \calE)$$ 
be the exponential embedding with origin $P_{\origin} \in \calW_\kappa(\calY, \calE)$ such that $\kappa_\star P_{\origin} = \bar{\maxentmc}$.
For $\bar{P} \in \calW(\calX, \calD)$ and $y,y' \in \calY$,
\begin{equation*}
    \hlm{\Phi} \bar{P}(y,y') = \rho^{-1} v(\kappa(y))^{-1} P_{\origin}(y,y') \bar{P}(\kappa(y), \kappa(y')) v(\kappa(y')),
\end{equation*}
where $(\rho, v)$ is the right PF pair of $\bar{\maxentmc} \circ \bar{P}$. 
By definition of $\bar{\maxentmc}$, $\rho = \rho_{\maxentmc}^{-1} $ and $v = v_{\maxentmc}^{-1}$, where $(\rho_{\maxentmc}, v_{\maxentmc})$ is the right PF pair of the matrix $\delta_{\calD}$ defined at \eqref{eq:null-origin} thus
\begin{equation*}
\label{eq:exponential-as-canonical}
    \hlm{\Phi} \bar{P}(y,y') = \rho_{\maxentmc} P_{\origin}(y,y') \bar{P}(\kappa(y), \kappa(y')) \frac{v_{\maxentmc}(\kappa(y))}{v_{\maxentmc}(\kappa(y'))} =\bar{P}(\kappa(y), \kappa(y')) \frac{P_{\origin}(y,y')}{\kappa_\star P_{\origin}(\kappa(y),\kappa(y')) }.
\end{equation*}
This corresponds to the canonical Markov embedding (Lemma~\ref{lemma:construct-canonical-embedding}) constructed from $P_{\origin}$.
Conversely, for any $\kappa$-compatible Markov embedding $\Lambda_\star$, by setting $P_{\origin} = \Lambda_\star \bar{\maxentmc}$, we can create the exponential embedding \hl{$\Phi$}.
\end{proof}

\subsection{Examples and notable sub-classes of embeddings}
\label{section:embeddings-notable-examples}

\hl{In this section, we will demonstrate that existing notions of embeddings align with our framework, and construct sub-classes of Markov embeddings which preserve additional structure.}

\subsubsection{Hudson expansions}
\label{section:embeddings-hudson}
In this section, we discuss a particular expansion of a Markov process that appears in \citet[Section~6.5,p.140]{kemeny1983finite}, which they consider to be the natural inverse of lumping.
The first analysis of this expansion being credited to S. Hudson, we henceforth refer to it as the Hudson \hl{expansion}, and denote it by $H_\star$.
We invite the reader to consult  \citep[Example~6.5.1]{kemeny1983finite} for an illustrative example of this expansion. 

\paragraph{Hudson expansions as a Markov embedding.}
\label{section:hudson-embedding-as-markov-embedding}

Our first order of business is to show that the Hudson \hl{expansion} is a very particular Markov embedding where the target space is nothing but the set of edges in the directed graph defined by the original chain. 
Namely,
$$H_\star \colon \calW(\calX, \calD) \to \calW_{h}(\calD, \calE),$$
for some $\calE \subset \calD^2$, \hl{and where the Hudson lumping function $h$ outputs the destination vertex of an edge,} 
$$h \colon \calD \to \calX, (x_1, x_2) \mapsto x_2.$$
For $x \in \calX$, we let $\calS_x = \set{ e = (x_1, x_2) \in \calD \colon x_2 = x }$. Then  $\biguplus_{x \in \calX} \calS_x = \calD$ is the partition associated with $h$.
We further define
\begin{equation*}
    H_\calD \eqdef \set{ (e = (x_1, x_2),e' = (x_1', x_2')) \in \calD^2 \colon x_2 = x_1' },
\end{equation*} 
and introduce the characteristic function $H \in \calF_{+}(\calD, H_\calD)$,
\begin{equation*}
H(e,e') 
 = \pred{ (e,e') \in H_\calD }. %
\end{equation*}
\begin{theorem}
\label{theorem:hudson-as-markov-embedding}
The Hudson \hl{expansion}
\begin{equation*}
\begin{split}
    H_\star \colon \calW(\calX, \calD) &\to \calW_h(\calD, H_\calD), \qquad \bar{P} \mapsto H_\star \bar{P},
\end{split}
\end{equation*}
with 
\begin{equation*}
\begin{split}
    H_\star \bar{P} (e,e') = H(e,e') P(h(e), h(e')),
\end{split}
\end{equation*}
is a Markov embedding congruent with the Hudson lumping $h$.%
\end{theorem}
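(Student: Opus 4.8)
The plan is to exhibit the Hudson embedding $H_\star$ as a concrete instance of the Markov embedding of Definition~\ref{definition:markov-embeddings}, and then to conclude congruency from the equivalence established in Theorem~\ref{theorem:congruent-embeddings-are-lambda-embeddings}. The correspondence I would set up is the following dictionary: the ``larger'' graph of the embedding is $(\calD, H_\calD)$, so that the edges of the original chain play the role of states; the ``smaller'' graph is $(\calX, \calD)$; the lumping function is the Hudson lumping $h$, whose associated partition is $\biguplus_{x \in \calX} \calS_x = \calD$; and the characteristic function $H \in \calF_+(\calD, H_\calD)$ plays the role of $\Lambda$. Reading $e, e'$ in place of $y, y'$, the defining identity $H_\star \bar{P}(e,e') = H(e,e')\, \bar{P}(h(e), h(e'))$ is then literally of the form $\Lambda_\star P(y,y') = P(\kappa(y), \kappa(y'))\,\Lambda(y,y')$.

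With this dictionary fixed, I would verify the three requirements of Definition~\ref{definition:markov-embeddings}. For $(i)$, the map $h$ is surjective since strong connectivity of $(\calX, \calD)$ forces every vertex to be the head of some edge; the same fact yields $h_2(H_\calD) = \calD$, because any target pair $(x,x') \in \calD$ is realized by some $(e,e') \in H_\calD$ with $e$ an edge into $x$ and $e' = (x,x')$. The compatibility condition of Proposition~\ref{proposition:lumpable-not-empty} follows identically: given $(x,x') \in \calD$ and $e = (x_1, x) \in \calS_x$, the edge $e' = (x,x') \in \calS_{x'}$ satisfies $(e,e') \in H_\calD$. Requirement $(ii)$ is immediate, since $H = \pred{(e,e') \in H_\calD}$ is positive exactly on $H_\calD$ and null outside, so $H \in \calF_+(\calD, H_\calD)$. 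For $(iii)$, fix $e = (x_1, x_2)$ with $(h(e), x') = (x_2, x') \in \calD$; among the $e' \in \calS_{x'}$ the value $H(e,e') = \pred{(e,e') \in H_\calD}$ equals $1$ precisely for the single edge $e' = (x_2, x')$ and vanishes otherwise, so $(H(e,e'))_{e' \in \calS_{x'}}$ is the point mass at $(x_2, x')$ and lies in $\calP(\calS_{x'})$.

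These checks establish that $H_\star$ is an $h$-compatible Markov embedding, and Theorem~\ref{theorem:congruent-embeddings-are-lambda-embeddings} (together with its corollary, applied on the domain $\calW(\calX, \calD)$) then immediately yields that $H_\star$ is $h$-congruent. Alternatively one may confirm congruency directly: for $(x,x') \in \calD$ and any $e \in \calS_x$, Theorem~\ref{theorem:equivalence-lumpable} gives $h_\star H_\star \bar{P}(x,x') = \sum_{e' \in \calS_{x'}} H(e,e')\, \bar{P}(h(e), h(e')) = \bar{P}(x,x') \sum_{e' \in \calS_{x'}} H(e,e') = \bar{P}(x,x')$, using $h(e) = x$, $h(e') = x'$, and that the $H$-weights sum to one by $(iii)$. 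The verification is otherwise mechanical, so the only genuinely non-formal step is the identification itself, namely recognizing that the edge-based Hudson expansion is exactly the destination-recording lumping $h$ turned into a Markov embedding; once that is seen, every condition collapses to the observation that $H$ restricts to a Dirac mass on each block $\calS_{x'}$, which is precisely where strong connectivity of $(\calX, \calD)$ enters, to guarantee that the relevant edges exist.
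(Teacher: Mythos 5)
Your proof is correct and follows essentially the same route as the paper: the paper's own (terser) proof consists exactly of the computation $\sum_{e' \in \calS_{x'}} H(e,e') = \pred{(x,x') \in \calD}$, independent of $e \in \calS_x$, which is the content of your observation that $H$ restricts to a Dirac mass on each block, and then the conclusion follows from Definition~\ref{definition:markov-embeddings} together with Theorem~\ref{theorem:congruent-embeddings-are-lambda-embeddings}. Your additional explicit checks of surjectivity of $h$, of $h_2(H_\calD)=\calD$, and of the compatibility condition of Proposition~\ref{proposition:lumpable-not-empty} are all valid and simply spell out what the paper leaves implicit.
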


\begin{proof}
We verify that for $x, x' \in \calX$ and $e = (x_1, x_2) \in \calS_{x}$,
\begin{equation*}
    \begin{split}
        \sum_{e' \in \calS_{x'}} H(e,e') &= \sum_{(x_1',x_2') \in \calD, x_2' = x' } \pred{(x_2, x') \in \calD}\pred{x_2 = x_1'} = \pred{(x, x') \in \calD},
    \end{split}
\end{equation*}
which is independent of $e$.
\end{proof}
\hl{Interestingly, Hudson expansions can be performed without any additional randomness.}
It is easy to obtain \citep[Theorem~6.5.2]{kemeny1983finite} a closed form expression for the stationary distribution of the embedded chain $\pi = H_\star \bar{\pi}$ in terms of the edge measure $\bar{Q}$ that pertains to $\bar{P}$,
\begin{equation*}
H_\star \bar{\pi} (e) = \bar{Q}(e). 
\end{equation*}
Moreover, being a Markov embedding, $H_\star$ is isometric, and preserves the dual affine connections $\nabla^{(e)}, \nabla^{(m)}$ defined in \eqref{eq:mc-affine-connection}.
However, although $H_\star$ is e-geodesic affine (Theorem~\ref{theorem:exponential-embeddings-are-e-geodesic-affine-maps}), we now show that it fails to preserve the m-structure, proving that Markov embeddings --and a fortiori exponential embeddings-- are not generally m-geodesic affine (Theorem~\ref{theorem:exponential-embedding-geodesic-affinity}).

\begin{lemma}
\label{lemma:hudson-breaks-m-structure}
The Hudson \hl{expansion} is not m-geodesic affine.
\end{lemma}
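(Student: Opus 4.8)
The plan is to disprove m-geodesic affinity by producing a single explicit counterexample, since the statement ``$H_\star$ is m-geodesic affine'' is a universal claim over all pairs $\bar P_0,\bar P_1$ and all $t$. Because two irreducible kernels coincide exactly when their edge measures coincide, and since both the m-geodesic and m-geodesic affinity are phrased through edge measures, I would compare the edge measure of $H_\star \gamma^{(m)}_{\bar P_0, \bar P_1}(t)$ with that of $\gamma^{(m)}_{H_\star \bar P_0, H_\star \bar P_1}(t)$ at one well-chosen edge and one value of $t$.

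First I would derive the edge-measure formula for a Hudson-embedded kernel. Combining the defining relation $H_\star \bar P(e,e') = H(e,e')\bar P(h(e),h(e'))$ with the stationary identity $H_\star \bar\pi(e) = \bar Q(e)$ recalled above, for a consecutive pair $e=(x_1,x_2)$, $e'=(x_2,x_2') \in H_\calD$ one obtains
\[
H_\star \bar Q(e,e') = H_\star\bar\pi(e)\, H_\star\bar P(e,e') = \bar Q(x_1,x_2)\,\bar P(x_2,x_2') = \bar Q(x_1,x_2)\,\frac{\bar Q(x_2,x_2')}{\bar\pi(x_2)}.
\]
This is the crux: the embedded edge measure depends on the base edge measure $\bar Q$ \emph{nonlinearly}, through a ratio. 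Along an m-geodesic, however, the base edge measure $\bar Q_t = (1-t)\bar Q_0 + t\bar Q_1$ varies affinely, whereas the image m-geodesic $\gamma^{(m)}_{H_\star \bar P_0, H_\star \bar P_1}(t)$ has edge measure $(1-t)H_\star\bar Q_0 + t\,H_\star \bar Q_1$, which is affine in $t$. So it remains only to certify that the ratio genuinely breaks affinity for one concrete instance.

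Then I would instantiate the minimal case $\calX=\set{0,1}$, $\calD = \calX^2$, so that the Hudson target $(\calD, H_\calD)$ has the four states $\set{(0,0),(0,1),(1,0),(1,1)}$. I would take $\bar P_0$ symmetric with $\bar P_0(0,1)=\bar P_0(1,0)=1/2$, and $\bar P_1$ with $\bar P_1(0,1)=1/4$, $\bar P_1(1,0)=3/4$, both irreducible with strictly positive edge measures, and evaluate at $t=1/2$. Computing $\bar Q_0$, $\bar Q_1$, the midpoint $\bar Q_{1/2}$, and the induced $\bar\pi_{1/2}$, $\bar P_{1/2}$, the displayed formula gives $H_\star\bar Q_{1/2}\bigl((0,0),(0,1)\bigr) = \tfrac{13}{32}\cdot\tfrac{7}{20}=\tfrac{91}{640}$, while the affine combination $\tfrac12 H_\star \bar Q_0 + \tfrac12 H_\star \bar Q_1$ at the same edge equals $\tfrac12\bigl(\tfrac18+\tfrac{9}{64}\bigr)=\tfrac{85}{640}$. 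Since $\tfrac{91}{640}\neq \tfrac{85}{640}$, the two edge measures differ, hence $H_\star \gamma^{(m)}_{\bar P_0,\bar P_1}(1/2)\neq \gamma^{(m)}_{H_\star\bar P_0, H_\star \bar P_1}(1/2)$, and $H_\star$ is not m-geodesic affine.

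The main obstacle is bookkeeping rather than conceptual: I must correctly account for the embedded chain's own stationary distribution (the identity $H_\star\bar\pi=\bar Q$ is precisely what makes the edge-measure formula tractable), keep $\bar P_0,\bar P_1$ irreducible and the m-geodesic midpoint in the interior of $\calW(\calX,\calD)$ so that $\bar P_{1/2}$ is well defined, and check that no accidental cancellation collapses the ratio into an affine function. The worked numbers above confirm a genuine discrepancy, so the only remaining care is a clean presentation of the arithmetic.
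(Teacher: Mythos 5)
Your proof is correct and follows essentially the same route as the paper's: an explicit two-state counterexample, certified by computing edge measures at the m-geodesic midpoint $t=1/2$ (your arithmetic, $91/640$ versus $85/640$, checks out, and the formula $H_\star \bar{Q}(e,e') = \bar{Q}(h(e))\bar{P}(h(e),h(e'))$ is a sound consequence of $H_\star\bar{\pi} = \bar{Q}$). The only difference worth noting is the certificate used: the paper shows that the target-space midpoint $\gamma^{(m)}_{H_\star \bar{P}_0, H_\star \bar{P}_1}(1/2)$ is not even $h$-lumpable --- a slightly stronger fact that it reuses in Section~\ref{section:foliations} to conclude that lumpable kernels do not form an m-family --- whereas you directly exhibit a discrepancy in a single edge-measure entry between the two sides of the affinity identity.
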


\begin{proof}
See Section~\ref{proof:lemma-hudson-breaks-m-structure}.
\end{proof}

\paragraph{Hudson expansion as a sliding windows of observations.}
\label{section:hudson-embedding-sliding-window}

We can view the Hudson \hl{expansion} more operationally by considering sliding windows of observations
of a Markov chain.
Namely, for $X_1, X_2, \dots,$ an irreducible Markov chain with dynamics governed by $\bar{P} \in \calW(\calX, \calD)$ and stationary $\bar{\pi}$, the stochastic process defined by
$$(X_1, X_2), (X_2, X_3), \dots, (X_t, X_{t+1}), \dots$$ 
also defines a Markov chain, with transition matrix $H_\star \bar{P}$ and stationary distribution $\bar{Q}$ (see for example \citep[Lemma~6.1]{wolfer2021} or \citet{qiu2020matrix}).
In particular, it is straightforward to simulate a trajectory of $H_\star \bar{P}$ as a deterministic function of a trajectory from $\bar{P}$.
Furthermore, let us consider second-order Markov chains over $\calX$, whose dynamics can be encoded in $P^{(2)}$ such that for any $t \in N$, and for any $x,x',x'' \in \calX$,
\begin{equation*}
    P^{(2)}(x,x',x'') = \PR{X_{t+1} = x'' | X_{t-1} = x, X_{t} = x'}.
\end{equation*}
Following the identification in \citet[Section~IV]{csiszar1987conditional}, we introduce $P \in \calW(\calD, H_\calD)$ such that 
\begin{equation*}
    P((x_1, x_2), (x'_1, x'_2)) = P^{(2)}(x_1, x_2, x_2') \pred{ x'_1 = x_2 },
\end{equation*}
i.e. we regard a second-order \hl{stochastic matrix} on $\calX$ as a first-order \hl{stochastic matrix} on $(\calD, H_\calD)$.
This allows us to view the Hudson \hl{expansion} of $\calW(\calX, \calD)$ as a first-order Markov sub-family of $\calW(\calD, H_{\calD})$, the family identified with second-order \hl{stochastic matrices}. 
Note that Lemma~\ref{lemma:hudson-breaks-m-structure} also implies that the Hudson \hl{expansion} of $\calW(\calX, \calD)$ does not form an m-family in $\calW(\calD, H_{\calD})$.

\paragraph{Higher-order Hudson expansions.}
\label{section:hudson-embedding-higher-order}
For an observation window of size $k > 1$,
$$(X_1, X_2, \dots, X_k), (X_2, X_3 \dots, X_{k+1}), \dots,$$
still defines a Markov chain,
inviting us to extend the definition of $H_\star$ to higher orders. 
We write $\calD^{(k)}$ the collection of all possible paths $s = (x_1, x_2, \dots, x_k)$ of length $k$ over the connection graph of $P$. In particular, $\calD^{(2)} = \calD$ and $\calD^{(1)} = \calX$.
The definition of the Hudson lumping extends seamlessly as follows,
\begin{equation*}
\begin{split}
    h^{(k)} \colon \calD^{(k)} &\to \calX \\
    s = (x_1, x_2,\dots, x_k) &\mapsto x_k,
\end{split}
\end{equation*}
and the edge set of the target space is naturally defined by 
\begin{equation*}
    H_\calD^{(k)} \eqdef \set{ (s, s') \in \calD^{(k)} \times \calD^{(k)} \colon \forall t \in [k -1], x_{t+1} = x_t' \text{ \hl{and} } (x_k, x'_k) \in \calD }.
\end{equation*}
The $k$-th order Hudson \hl{expansion} \label{term:k-th-order-hudson-embedding} is then
\begin{equation*}
H^{(k)}_\star \colon \calW(\calX, \calD) \to \calW_{h^{(k)}}\left(\calD^{(k)}, H_\calD^{(k)}\right),
\end{equation*}
where
\begin{equation*}
    H^{(k)}_\star \bar{P}(s, s') = \pred{ (s, s') \in H^{(k)}_\calD } P\left(h^{(k)}(s), h^{(k)}(s')\right),
\end{equation*}
and Theorem~\ref{theorem:hudson-as-markov-embedding} can be extended to any $k$th order. 
Finally, we can also view the $k$th order Hudson \hl{expansion} of $\calW(\calX, \calD)$ as a first-order subfamily of $\calW(\calD^{(k)}, H_{\calD}^{(k)})$, the family identified  \citep[Section~IV]{csiszar1987conditional} with $k$th order \hl{stochastic matrices}.

\subsubsection{Memoryless Markov embeddings}
\label{section:embeddings-memoryless}
Recall that there is a natural one-to-one embedding of the positive simplex $\calP_+(\calX)$ into positive \hl{stochastic matrices} that forms $\calW \iid(\calX)$, the family of irreducible memoryless \hl{stochastic matrices} \citep[Section~8]{wolfer2021information}. 
In the same spirit, we now
define memoryless Markov embeddings, \label{term:MMEmb} as the one-to-one embedding of all Markov morphisms in the context of distribution into Markov morphisms in the context of \hl{stochastic matrices}.
For some fixed family of \hl{stochastic matrices} $\calW (\calX, \calD)$, every memoryless Markov embedding, as a Markov embedding, is associated with 
a lumping function $\kappa$, and
a linear operator $\hlm{\Lambda} \in \calF_+(\calY, \calE)$(Definition~\ref{definition:markov-embeddings}), with the additional property that for any $y,y' \in \calY$,
$$\hlm{\Lambda}(y,y') = L(y') \pred{ (\kappa(y), \kappa(y')) \in \calD},$$
where $L \colon \calY \to \R$ essentially \hl{induces a Markov channel} in the context of distributions.
\label{term:MMEmbk} 
We write 
\begin{equation*}
\begin{split}
    L_\star \colon \calW(\calX, \calD) &\to \calW(\calY, \calE) \\
    \bar{P}(x,x') &\mapsto P(y,y') = P(\kappa(y), \kappa(y')) L(y').
\end{split}
\end{equation*}
The stationary distribution of a \hl{stochastic matrix} embedded with $L_\star$
has a closed-form expression.
\hl{Memorylessly embedded chains are easier to simulate, since they do not require to store the previous point, as depicted in  Figure~}\hlc{\ref{figure:trajectory-simulation-memoryless}}\hl{.}
\begin{figure}
    \centering
    \hlp{

\tikzset{every picture/.style={line width=0.75pt}} %

\begin{tikzpicture}[x=0.55pt,y=0.55pt,yscale=-1,xscale=1]

\draw   (74,74) -- (104.1,74) -- (91.2,90.57) -- (61.1,90.57) -- cycle ;
\draw   (89.9,159.43) -- (120,159.43) -- (107.1,176) -- (77,176) -- cycle ;
\draw   (77,176) -- (107.1,176) -- (94.2,192.57) -- (64.1,192.57) -- cycle ;
\draw   (102.8,142.86) -- (132.9,142.86) -- (120,159.43) -- (89.9,159.43) -- cycle ;
\draw  [color={rgb, 255:red, 208; green, 2; blue, 27 }  ,draw opacity=1 ] (128.6,109.71) -- (158.7,109.71) -- (145.8,126.29) -- (115.7,126.29) -- cycle ;
\draw   (64.1,192.57) -- (94.2,192.57) -- (81.3,209.14) -- (51.2,209.14) -- cycle ;
\draw [color={rgb, 255:red, 208; green, 2; blue, 27 }  ,draw opacity=1 ] [dash pattern={on 0.84pt off 2.51pt}]  (117,57.43) -- (158.7,109.71) ;
\draw [color={rgb, 255:red, 208; green, 2; blue, 27 }  ,draw opacity=1 ] [dash pattern={on 0.84pt off 2.51pt}]  (74,74) -- (102.8,142.86) ;
\draw  [color={rgb, 255:red, 208; green, 2; blue, 27 }  ,draw opacity=1 ][fill={rgb, 255:red, 255; green, 255; blue, 255 }  ,fill opacity=1 ] (86.9,57.43) -- (117,57.43) -- (104.1,74) -- (74,74) -- cycle ;
\draw  [color={rgb, 255:red, 208; green, 2; blue, 27 }  ,draw opacity=1 ] (115.7,126.29) -- (145.8,126.29) -- (132.9,142.86) -- (102.8,142.86) -- cycle ;
\draw  [color={rgb, 255:red, 208; green, 2; blue, 27 }  ,draw opacity=1 ] (198.9,159.43) -- (229,159.43) -- (216.1,176) -- (186,176) -- cycle ;
\draw  [color={rgb, 255:red, 208; green, 2; blue, 27 }  ,draw opacity=1 ] (186,176) -- (216.1,176) -- (203.2,192.57) -- (173.1,192.57) -- cycle ;
\draw  [color={rgb, 255:red, 0; green, 0; blue, 0 }  ,draw opacity=1 ] (237.6,109.71) -- (267.7,109.71) -- (254.8,126.29) -- (224.7,126.29) -- cycle ;
\draw  [color={rgb, 255:red, 208; green, 2; blue, 27 }  ,draw opacity=1 ] (173.1,192.57) -- (203.2,192.57) -- (190.3,209.14) -- (160.2,209.14) -- cycle ;
\draw [color={rgb, 255:red, 208; green, 2; blue, 27 }  ,draw opacity=1 ] [dash pattern={on 0.84pt off 2.51pt}]  (200.2,90.57) -- (190.3,209.14) ;
\draw [color={rgb, 255:red, 208; green, 2; blue, 27 }  ,draw opacity=1 ] [dash pattern={on 0.84pt off 2.51pt}]  (213.1,74) -- (241.9,142.86) ;
\draw [color={rgb, 255:red, 208; green, 2; blue, 27 }  ,draw opacity=1 ] [dash pattern={on 0.84pt off 2.51pt}]  (170.1,90.57) -- (160.2,209.14) ;
\draw  [color={rgb, 255:red, 0; green, 0; blue, 0 }  ,draw opacity=1 ][fill={rgb, 255:red, 255; green, 255; blue, 255 }  ,fill opacity=1 ] (195.9,57.43) -- (226,57.43) -- (213.1,74) -- (183,74) -- cycle ;
\draw  [color={rgb, 255:red, 0; green, 0; blue, 0 }  ,draw opacity=1 ] (224.7,126.29) -- (254.8,126.29) -- (241.9,142.86) -- (211.8,142.86) -- cycle ;
\draw   (401,74) -- (431.1,74) -- (418.2,90.57) -- (388.1,90.57) -- cycle ;
\draw   (416.9,159.43) -- (447,159.43) -- (434.1,176) -- (404,176) -- cycle ;
\draw   (404,176) -- (434.1,176) -- (421.2,192.57) -- (391.1,192.57) -- cycle ;
\draw   (429.8,142.86) -- (459.9,142.86) -- (447,159.43) -- (416.9,159.43) -- cycle ;
\draw  [color={rgb, 255:red, 208; green, 2; blue, 27 }  ,draw opacity=1 ] (455.6,109.71) -- (485.7,109.71) -- (472.8,126.29) -- (442.7,126.29) -- cycle ;
\draw   (391.1,192.57) -- (421.2,192.57) -- (408.3,209.14) -- (378.2,209.14) -- cycle ;
\draw [color={rgb, 255:red, 208; green, 2; blue, 27 }  ,draw opacity=1 ] [dash pattern={on 0.84pt off 2.51pt}]  (444,57.43) -- (485.7,109.71) ;
\draw [color={rgb, 255:red, 208; green, 2; blue, 27 }  ,draw opacity=1 ] [dash pattern={on 0.84pt off 2.51pt}]  (431.1,74) -- (459.9,142.86) ;
\draw [color={rgb, 255:red, 208; green, 2; blue, 27 }  ,draw opacity=1 ] [dash pattern={on 0.84pt off 2.51pt}]  (401,74) -- (429.8,142.86) ;
\draw  [color={rgb, 255:red, 208; green, 2; blue, 27 }  ,draw opacity=1 ][fill={rgb, 255:red, 255; green, 255; blue, 255 }  ,fill opacity=1 ] (413.9,57.43) -- (444,57.43) -- (431.1,74) -- (401,74) -- cycle ;
\draw  [color={rgb, 255:red, 208; green, 2; blue, 27 }  ,draw opacity=1 ] (442.7,126.29) -- (472.8,126.29) -- (459.9,142.86) -- (429.8,142.86) -- cycle ;
\draw  [color={rgb, 255:red, 208; green, 2; blue, 27 }  ,draw opacity=1 ] (183,74) -- (213.1,74) -- (200.2,90.57) -- (170.1,90.57) -- cycle ;
\draw  [color={rgb, 255:red, 208; green, 2; blue, 27 }  ,draw opacity=1 ] (211.8,142.86) -- (241.9,142.86) -- (229,159.43) -- (198.9,159.43) -- cycle ;
\draw  [color={rgb, 255:red, 74; green, 144; blue, 226 }  ,draw opacity=1 ] (94,65) .. controls (94,63.9) and (94.9,63) .. (96,63) .. controls (97.1,63) and (98,63.9) .. (98,65) .. controls (98,66.1) and (97.1,67) .. (96,67) .. controls (94.9,67) and (94,66.1) .. (94,65)(91,65) .. controls (91,62.24) and (93.24,60) .. (96,60) .. controls (98.76,60) and (101,62.24) .. (101,65) .. controls (101,67.76) and (98.76,70) .. (96,70) .. controls (93.24,70) and (91,67.76) .. (91,65) ;
\draw  [color={rgb, 255:red, 74; green, 144; blue, 226 }  ,draw opacity=1 ] (122.3,134.57) .. controls (122.3,133.47) and (123.2,132.57) .. (124.3,132.57) .. controls (125.4,132.57) and (126.3,133.47) .. (126.3,134.57) .. controls (126.3,135.68) and (125.4,136.57) .. (124.3,136.57) .. controls (123.2,136.57) and (122.3,135.68) .. (122.3,134.57)(119.3,134.57) .. controls (119.3,131.81) and (121.54,129.57) .. (124.3,129.57) .. controls (127.06,129.57) and (129.3,131.81) .. (129.3,134.57) .. controls (129.3,137.33) and (127.06,139.57) .. (124.3,139.57) .. controls (121.54,139.57) and (119.3,137.33) .. (119.3,134.57) ;
\draw [color={rgb, 255:red, 208; green, 2; blue, 27 }  ,draw opacity=1 ] [dash pattern={on 0.84pt off 2.51pt}]  (104.1,74) -- (132.9,142.86) ;
\draw  [color={rgb, 255:red, 208; green, 2; blue, 27 }  ,draw opacity=1 ] (307.9,159.43) -- (338,159.43) -- (325.1,176) -- (295,176) -- cycle ;
\draw  [color={rgb, 255:red, 208; green, 2; blue, 27 }  ,draw opacity=1 ] (295,176) -- (325.1,176) -- (312.2,192.57) -- (282.1,192.57) -- cycle ;
\draw  [color={rgb, 255:red, 0; green, 0; blue, 0 }  ,draw opacity=1 ] (346.6,109.71) -- (376.7,109.71) -- (363.8,126.29) -- (333.7,126.29) -- cycle ;
\draw  [color={rgb, 255:red, 208; green, 2; blue, 27 }  ,draw opacity=1 ] (282.1,192.57) -- (312.2,192.57) -- (299.3,209.14) -- (269.2,209.14) -- cycle ;
\draw [color={rgb, 255:red, 208; green, 2; blue, 27 }  ,draw opacity=1 ] [dash pattern={on 0.84pt off 2.51pt}]  (309.2,90.57) -- (299.3,209.14) ;
\draw [color={rgb, 255:red, 208; green, 2; blue, 27 }  ,draw opacity=1 ] [dash pattern={on 0.84pt off 2.51pt}]  (322.1,74) -- (350.9,142.86) ;
\draw [color={rgb, 255:red, 208; green, 2; blue, 27 }  ,draw opacity=1 ] [dash pattern={on 0.84pt off 2.51pt}]  (279.1,90.57) -- (269.2,209.14) ;
\draw  [color={rgb, 255:red, 0; green, 0; blue, 0 }  ,draw opacity=1 ][fill={rgb, 255:red, 255; green, 255; blue, 255 }  ,fill opacity=1 ] (304.9,57.43) -- (335,57.43) -- (322.1,74) -- (292,74) -- cycle ;
\draw  [color={rgb, 255:red, 0; green, 0; blue, 0 }  ,draw opacity=1 ] (333.7,126.29) -- (363.8,126.29) -- (350.9,142.86) -- (320.8,142.86) -- cycle ;
\draw  [color={rgb, 255:red, 208; green, 2; blue, 27 }  ,draw opacity=1 ] (292,74) -- (322.1,74) -- (309.2,90.57) -- (279.1,90.57) -- cycle ;
\draw  [color={rgb, 255:red, 208; green, 2; blue, 27 }  ,draw opacity=1 ] (320.8,142.86) -- (350.9,142.86) -- (338,159.43) -- (307.9,159.43) -- cycle ;
\draw  [color={rgb, 255:red, 74; green, 144; blue, 226 }  ,draw opacity=1 ] (205.5,167.71) .. controls (205.5,166.61) and (206.4,165.71) .. (207.5,165.71) .. controls (208.6,165.71) and (209.5,166.61) .. (209.5,167.71) .. controls (209.5,168.82) and (208.6,169.71) .. (207.5,169.71) .. controls (206.4,169.71) and (205.5,168.82) .. (205.5,167.71)(202.5,167.71) .. controls (202.5,164.95) and (204.74,162.71) .. (207.5,162.71) .. controls (210.26,162.71) and (212.5,164.95) .. (212.5,167.71) .. controls (212.5,170.48) and (210.26,172.71) .. (207.5,172.71) .. controls (204.74,172.71) and (202.5,170.48) .. (202.5,167.71) ;
\draw   (509,74) -- (539.1,74) -- (526.2,90.57) -- (496.1,90.57) -- cycle ;
\draw   (524.9,159.43) -- (555,159.43) -- (542.1,176) -- (512,176) -- cycle ;
\draw   (512,176) -- (542.1,176) -- (529.2,192.57) -- (499.1,192.57) -- cycle ;
\draw   (537.8,142.86) -- (567.9,142.86) -- (555,159.43) -- (524.9,159.43) -- cycle ;
\draw  [color={rgb, 255:red, 208; green, 2; blue, 27 }  ,draw opacity=1 ] (563.6,109.71) -- (593.7,109.71) -- (580.8,126.29) -- (550.7,126.29) -- cycle ;
\draw   (499.1,192.57) -- (529.2,192.57) -- (516.3,209.14) -- (486.2,209.14) -- cycle ;
\draw [color={rgb, 255:red, 208; green, 2; blue, 27 }  ,draw opacity=1 ] [dash pattern={on 0.84pt off 2.51pt}]  (552,57.43) -- (593.7,109.71) ;
\draw [color={rgb, 255:red, 208; green, 2; blue, 27 }  ,draw opacity=1 ] [dash pattern={on 0.84pt off 2.51pt}]  (539.1,74) -- (567.9,142.86) ;
\draw [color={rgb, 255:red, 208; green, 2; blue, 27 }  ,draw opacity=1 ] [dash pattern={on 0.84pt off 2.51pt}]  (509,74) -- (537.8,142.86) ;
\draw  [color={rgb, 255:red, 208; green, 2; blue, 27 }  ,draw opacity=1 ][fill={rgb, 255:red, 255; green, 255; blue, 255 }  ,fill opacity=1 ] (521.9,57.43) -- (552,57.43) -- (539.1,74) -- (509,74) -- cycle ;
\draw  [color={rgb, 255:red, 208; green, 2; blue, 27 }  ,draw opacity=1 ] (550.7,126.29) -- (580.8,126.29) -- (567.9,142.86) -- (537.8,142.86) -- cycle ;
\draw  [color={rgb, 255:red, 74; green, 144; blue, 226 }  ,draw opacity=1 ] (288.7,200.86) .. controls (288.7,199.75) and (289.6,198.86) .. (290.7,198.86) .. controls (291.8,198.86) and (292.7,199.75) .. (292.7,200.86) .. controls (292.7,201.96) and (291.8,202.86) .. (290.7,202.86) .. controls (289.6,202.86) and (288.7,201.96) .. (288.7,200.86)(285.7,200.86) .. controls (285.7,198.1) and (287.94,195.86) .. (290.7,195.86) .. controls (293.46,195.86) and (295.7,198.1) .. (295.7,200.86) .. controls (295.7,203.62) and (293.46,205.86) .. (290.7,205.86) .. controls (287.94,205.86) and (285.7,203.62) .. (285.7,200.86) ;
\draw  [color={rgb, 255:red, 74; green, 144; blue, 226 }  ,draw opacity=1 ] (462.2,118) .. controls (462.2,116.9) and (463.1,116) .. (464.2,116) .. controls (465.3,116) and (466.2,116.9) .. (466.2,118) .. controls (466.2,119.1) and (465.3,120) .. (464.2,120) .. controls (463.1,120) and (462.2,119.1) .. (462.2,118)(459.2,118) .. controls (459.2,115.24) and (461.44,113) .. (464.2,113) .. controls (466.96,113) and (469.2,115.24) .. (469.2,118) .. controls (469.2,120.76) and (466.96,123) .. (464.2,123) .. controls (461.44,123) and (459.2,120.76) .. (459.2,118) ;
\draw  [color={rgb, 255:red, 74; green, 144; blue, 226 }  ,draw opacity=1 ] (189.6,82.29) .. controls (189.6,81.18) and (190.5,80.29) .. (191.6,80.29) .. controls (192.7,80.29) and (193.6,81.18) .. (193.6,82.29) .. controls (193.6,83.39) and (192.7,84.29) .. (191.6,84.29) .. controls (190.5,84.29) and (189.6,83.39) .. (189.6,82.29)(186.6,82.29) .. controls (186.6,79.52) and (188.84,77.29) .. (191.6,77.29) .. controls (194.36,77.29) and (196.6,79.52) .. (196.6,82.29) .. controls (196.6,85.05) and (194.36,87.29) .. (191.6,87.29) .. controls (188.84,87.29) and (186.6,85.05) .. (186.6,82.29) ;
\draw  [color={rgb, 255:red, 74; green, 144; blue, 226 }  ,draw opacity=1 ] (298.6,82.29) .. controls (298.6,81.18) and (299.5,80.29) .. (300.6,80.29) .. controls (301.7,80.29) and (302.6,81.18) .. (302.6,82.29) .. controls (302.6,83.39) and (301.7,84.29) .. (300.6,84.29) .. controls (299.5,84.29) and (298.6,83.39) .. (298.6,82.29)(295.6,82.29) .. controls (295.6,79.52) and (297.84,77.29) .. (300.6,77.29) .. controls (303.36,77.29) and (305.6,79.52) .. (305.6,82.29) .. controls (305.6,85.05) and (303.36,87.29) .. (300.6,87.29) .. controls (297.84,87.29) and (295.6,85.05) .. (295.6,82.29) ;
\draw  [color={rgb, 255:red, 74; green, 144; blue, 226 }  ,draw opacity=1 ] (420.5,65.71) .. controls (420.5,64.61) and (421.4,63.71) .. (422.5,63.71) .. controls (423.6,63.71) and (424.5,64.61) .. (424.5,65.71) .. controls (424.5,66.82) and (423.6,67.71) .. (422.5,67.71) .. controls (421.4,67.71) and (420.5,66.82) .. (420.5,65.71)(417.5,65.71) .. controls (417.5,62.95) and (419.74,60.71) .. (422.5,60.71) .. controls (425.26,60.71) and (427.5,62.95) .. (427.5,65.71) .. controls (427.5,68.48) and (425.26,70.71) .. (422.5,70.71) .. controls (419.74,70.71) and (417.5,68.48) .. (417.5,65.71) ;
\draw  [color={rgb, 255:red, 74; green, 144; blue, 226 }  ,draw opacity=1 ] (528.5,65.71) .. controls (528.5,64.61) and (529.4,63.71) .. (530.5,63.71) .. controls (531.6,63.71) and (532.5,64.61) .. (532.5,65.71) .. controls (532.5,66.82) and (531.6,67.71) .. (530.5,67.71) .. controls (529.4,67.71) and (528.5,66.82) .. (528.5,65.71)(525.5,65.71) .. controls (525.5,62.95) and (527.74,60.71) .. (530.5,60.71) .. controls (533.26,60.71) and (535.5,62.95) .. (535.5,65.71) .. controls (535.5,68.48) and (533.26,70.71) .. (530.5,70.71) .. controls (527.74,70.71) and (525.5,68.48) .. (525.5,65.71) ;
\draw  [color={rgb, 255:red, 74; green, 144; blue, 226 }  ,draw opacity=1 ] (557.3,134.57) .. controls (557.3,133.47) and (558.2,132.57) .. (559.3,132.57) .. controls (560.4,132.57) and (561.3,133.47) .. (561.3,134.57) .. controls (561.3,135.68) and (560.4,136.57) .. (559.3,136.57) .. controls (558.2,136.57) and (557.3,135.68) .. (557.3,134.57)(554.3,134.57) .. controls (554.3,131.81) and (556.54,129.57) .. (559.3,129.57) .. controls (562.06,129.57) and (564.3,131.81) .. (564.3,134.57) .. controls (564.3,137.33) and (562.06,139.57) .. (559.3,139.57) .. controls (556.54,139.57) and (554.3,137.33) .. (554.3,134.57) ;
\draw [color={rgb, 255:red, 74; green, 144; blue, 226 }  ,draw opacity=1 ]   (96,65) .. controls (152.5,58.29) and (152,83) .. (191.6,82.29) ;
\draw [color={rgb, 255:red, 74; green, 144; blue, 226 }  ,draw opacity=1 ]   (300.6,82.29) .. controls (353,83) and (352,65) .. (422.5,65.71) ;
\draw [color={rgb, 255:red, 74; green, 144; blue, 226 }  ,draw opacity=1 ]   (191.6,82.29) -- (300.6,82.29) ;
\draw [color={rgb, 255:red, 74; green, 144; blue, 226 }  ,draw opacity=1 ]   (124.3,134.57) .. controls (175,130) and (167.9,168.43) .. (207.5,167.71) ;
\draw [color={rgb, 255:red, 74; green, 144; blue, 226 }  ,draw opacity=1 ]   (207.5,167.71) .. controls (258.2,163.14) and (251.1,201.57) .. (290.7,200.86) ;
\draw [color={rgb, 255:red, 74; green, 144; blue, 226 }  ,draw opacity=1 ]   (290.7,200.86) .. controls (360,204) and (392,119) .. (464.2,118) ;
\draw [color={rgb, 255:red, 74; green, 144; blue, 226 }  ,draw opacity=1 ]   (422.5,65.71) -- (530.5,65.71) ;
\draw [color={rgb, 255:red, 74; green, 144; blue, 226 }  ,draw opacity=1 ]   (464.2,118) .. controls (516,116) and (512,138) .. (559.3,134.57) ;
\draw [color={rgb, 255:red, 74; green, 144; blue, 226 }  ,draw opacity=1 ] [dash pattern={on 4.5pt off 4.5pt}]  (530.5,65.71) -- (650,65) ;
\draw [color={rgb, 255:red, 74; green, 144; blue, 226 }  ,draw opacity=1 ] [dash pattern={on 4.5pt off 4.5pt}]  (559.3,134.57) -- (648.01,134.03) -- (651,134) ;
\draw [color={rgb, 255:red, 74; green, 144; blue, 226 }  ,draw opacity=1 ] [dash pattern={on 4.5pt off 4.5pt}]  (191.6,82.29) .. controls (191.02,104.43) and (181.3,149.53) .. (217.52,151.11) ;
\draw [shift={(220.4,151.14)}, rotate = 178.97] [fill={rgb, 255:red, 74; green, 144; blue, 226 }  ,fill opacity=1 ][line width=0.08]  [draw opacity=0] (10.72,-5.15) -- (0,0) -- (10.72,5.15) -- (7.12,0) -- cycle    ;
\draw [color={rgb, 255:red, 74; green, 144; blue, 226 }  ,draw opacity=1 ] [dash pattern={on 4.5pt off 4.5pt}]  (191.6,82.29) .. controls (190.04,117.11) and (160.53,169.31) .. (192.05,183.29) ;
\draw [shift={(194.6,184.29)}, rotate = 199.04] [fill={rgb, 255:red, 74; green, 144; blue, 226 }  ,fill opacity=1 ][line width=0.08]  [draw opacity=0] (10.72,-5.15) -- (0,0) -- (10.72,5.15) -- (7.12,0) -- cycle    ;
\draw [color={rgb, 255:red, 74; green, 144; blue, 226 }  ,draw opacity=1 ] [dash pattern={on 4.5pt off 4.5pt}]  (191.6,82.29) .. controls (189.05,117.29) and (141.94,184.25) .. (179.31,199.96) ;
\draw [shift={(181.7,200.86)}, rotate = 198.38] [fill={rgb, 255:red, 74; green, 144; blue, 226 }  ,fill opacity=1 ][line width=0.08]  [draw opacity=0] (10.72,-5.15) -- (0,0) -- (10.72,5.15) -- (7.12,0) -- cycle    ;
\draw [color={rgb, 255:red, 74; green, 144; blue, 226 }  ,draw opacity=1 ] [dash pattern={on 4.5pt off 4.5pt}]  (300.6,82.29) .. controls (297.11,114.02) and (295.12,149.78) .. (326.4,151.12) ;
\draw [shift={(329.4,151.14)}, rotate = 178.57] [fill={rgb, 255:red, 74; green, 144; blue, 226 }  ,fill opacity=1 ][line width=0.08]  [draw opacity=0] (10.72,-5.15) -- (0,0) -- (10.72,5.15) -- (7.12,0) -- cycle    ;
\draw [color={rgb, 255:red, 74; green, 144; blue, 226 }  ,draw opacity=1 ] [dash pattern={on 4.5pt off 4.5pt}]  (300.6,82.29) .. controls (298.05,120.23) and (270.15,165.16) .. (313.74,167.62) ;
\draw [shift={(316.5,167.71)}, rotate = 180.86] [fill={rgb, 255:red, 74; green, 144; blue, 226 }  ,fill opacity=1 ][line width=0.08]  [draw opacity=0] (10.72,-5.15) -- (0,0) -- (10.72,5.15) -- (7.12,0) -- cycle    ;
\draw [color={rgb, 255:red, 74; green, 144; blue, 226 }  ,draw opacity=1 ] [dash pattern={on 4.5pt off 4.5pt}]  (300.6,82.29) .. controls (298.05,127.09) and (254.78,184.65) .. (300.68,184.38) ;
\draw [shift={(303.6,184.29)}, rotate = 176.93] [fill={rgb, 255:red, 74; green, 144; blue, 226 }  ,fill opacity=1 ][line width=0.08]  [draw opacity=0] (10.72,-5.15) -- (0,0) -- (10.72,5.15) -- (7.12,0) -- cycle    ;
\draw [color={rgb, 255:red, 74; green, 144; blue, 226 }  ,draw opacity=1 ] [dash pattern={on 4.5pt off 4.5pt}]  (422.5,65.71) .. controls (413.14,109.34) and (388.75,134.25) .. (448.51,134.57) ;
\draw [shift={(451.3,134.57)}, rotate = 179.61] [fill={rgb, 255:red, 74; green, 144; blue, 226 }  ,fill opacity=1 ][line width=0.08]  [draw opacity=0] (10.72,-5.15) -- (0,0) -- (10.72,5.15) -- (7.12,0) -- cycle    ;
\draw [color={rgb, 255:red, 74; green, 144; blue, 226 }  ,draw opacity=1 ] [dash pattern={on 4.5pt off 4.5pt}]  (530.5,65.71) .. controls (527.05,102.44) and (518.27,118.52) .. (569.8,118.03) ;
\draw [shift={(572.2,118)}, rotate = 178.94] [fill={rgb, 255:red, 74; green, 144; blue, 226 }  ,fill opacity=1 ][line width=0.08]  [draw opacity=0] (10.72,-5.15) -- (0,0) -- (10.72,5.15) -- (7.12,0) -- cycle    ;
\draw [color={rgb, 255:red, 74; green, 144; blue, 226 }  ,draw opacity=1 ] [dash pattern={on 4.5pt off 4.5pt}]  (191.6,82.29) .. controls (191,105) and (167.9,168.43) .. (207.5,167.71) ;
\draw [color={rgb, 255:red, 74; green, 144; blue, 226 }  ,draw opacity=1 ] [dash pattern={on 4.5pt off 4.5pt}]  (300.6,82.29) .. controls (298,128) and (242,199) .. (290.7,200.86) ;
\draw [color={rgb, 255:red, 74; green, 144; blue, 226 }  ,draw opacity=1 ] [dash pattern={on 4.5pt off 4.5pt}]  (422.5,65.71) .. controls (422,110) and (417,119) .. (464.2,118) ;
\draw [color={rgb, 255:red, 74; green, 144; blue, 226 }  ,draw opacity=1 ] [dash pattern={on 4.5pt off 4.5pt}]  (530.5,65.71) .. controls (527,103) and (515,136) .. (559.3,134.57) ;
\draw [color={rgb, 255:red, 74; green, 144; blue, 226 }  ,draw opacity=1 ] [dash pattern={on 4.5pt off 4.5pt}]  (95.5,65.71) .. controls (94.91,87.86) and (85.2,132.96) .. (121.42,134.54) ;
\draw [shift={(124.3,134.57)}, rotate = 178.97] [fill={rgb, 255:red, 74; green, 144; blue, 226 }  ,fill opacity=1 ][line width=0.08]  [draw opacity=0] (10.72,-5.15) -- (0,0) -- (10.72,5.15) -- (7.12,0) -- cycle    ;
\draw [color={rgb, 255:red, 74; green, 144; blue, 226 }  ,draw opacity=1 ] [dash pattern={on 4.5pt off 4.5pt}]  (96,65) .. controls (95.42,87.15) and (97.49,117.17) .. (134.3,117.99) ;
\draw [shift={(137.2,118)}, rotate = 178.97] [fill={rgb, 255:red, 74; green, 144; blue, 226 }  ,fill opacity=1 ][line width=0.08]  [draw opacity=0] (10.72,-5.15) -- (0,0) -- (10.72,5.15) -- (7.12,0) -- cycle    ;

\draw (70,57) node [anchor=north west][inner sep=0.75pt]  [color={rgb, 255:red, 0; green, 0; blue, 0 }  ,opacity=1 ] [align=left] {{\scriptsize $0$}};
\draw (57,74) node [anchor=north west][inner sep=0.75pt]  [color={rgb, 255:red, 0; green, 0; blue, 0 }  ,opacity=1 ] [align=left] {{\scriptsize $1$}};
\draw (112,110) node [anchor=north west][inner sep=0.75pt]  [color={rgb, 255:red, 0; green, 0; blue, 0 }  ,opacity=1 ] [align=left] {{\scriptsize $0$}};
\draw (99,127) node [anchor=north west][inner sep=0.75pt]  [color={rgb, 255:red, 0; green, 0; blue, 0 }  ,opacity=1 ] [align=left] {{\scriptsize $1$}};
\draw (85,144) node [anchor=north west][inner sep=0.75pt]  [color={rgb, 255:red, 0; green, 0; blue, 0 }  ,opacity=1 ] [align=left] {{\scriptsize $2$}};
\draw (73,160) node [anchor=north west][inner sep=0.75pt]  [color={rgb, 255:red, 0; green, 0; blue, 0 }  ,opacity=1 ] [align=left] {{\scriptsize $3$}};
\draw (60,175) node [anchor=north west][inner sep=0.75pt]  [color={rgb, 255:red, 0; green, 0; blue, 0 }  ,opacity=1 ] [align=left] {{\scriptsize $4$}};
\draw (49,191) node [anchor=north west][inner sep=0.75pt]  [color={rgb, 255:red, 0; green, 0; blue, 0 }  ,opacity=1 ] [align=left] {{\scriptsize $5$}};

\draw (82,32) node [anchor=north west][inner sep=0.75pt]  [color={rgb, 255:red, 0; green, 0; blue, 0 }  ,opacity=1 ] [align=left] {$X_1=0$};
\draw (189,32) node [anchor=north west][inner sep=0.75pt]  [color={rgb, 255:red, 0; green, 0; blue, 0 }  ,opacity=1 ] [align=left] {$X_2=1$};
\draw (301,32) node [anchor=north west][inner sep=0.75pt]  [color={rgb, 255:red, 0; green, 0; blue, 0 }  ,opacity=1 ] [align=left] {$X_3=1$};
\draw (409,32) node [anchor=north west][inner sep=0.75pt]  [color={rgb, 255:red, 0; green, 0; blue, 0 }  ,opacity=1 ] [align=left] {$X_4=0$};
\draw (517,32) node [anchor=north west][inner sep=0.75pt]  [color={rgb, 255:red, 0; green, 0; blue, 0 }  ,opacity=1 ] [align=left] {$X_5=0$};
\draw (36,220) node [anchor=north west][inner sep=0.75pt]  [color={rgb, 255:red, 0; green, 0; blue, 0 }  ,opacity=1 ] [align=left] {$(\Sigma_\Lambda X)_1 = 1$};
\draw (141,220) node [anchor=north west][inner sep=0.75pt]  [color={rgb, 255:red, 0; green, 0; blue, 0 }  ,opacity=1 ] [align=left] {$(\Sigma_\Lambda X)_2 = 1$};
\draw (254,220) node [anchor=north west][inner sep=0.75pt]  [color={rgb, 255:red, 0; green, 0; blue, 0 }  ,opacity=1 ] [align=left] {$(\Sigma_\Lambda X)_3 = 1$};
\draw (362,220) node [anchor=north west][inner sep=0.75pt]  [color={rgb, 255:red, 0; green, 0; blue, 0 }  ,opacity=1 ] [align=left] {$(\Sigma_\Lambda X)_4 = 1$};
\draw (475,220) node [anchor=north west][inner sep=0.75pt]  [color={rgb, 255:red, 0; green, 0; blue, 0 }  ,opacity=1 ] [align=left] {$(\Sigma_\Lambda X)_5 = 1$};
\draw (32,56) node [anchor=north west][inner sep=0.75pt]   [align=left] {$X$};
\draw (26,159) node [anchor=north west][inner sep=0.75pt]   [align=left] {$\Sigma_\Lambda X$};

\end{tikzpicture}}
    \caption{\hlc{Trajectory simulation with a memoryless embedding, where $\Sigma_\Lambda$ is defined in Section~\ref{section:embeddings} (p.\pageref{operator:randomized-simulation}).}}
    \label{figure:trajectory-simulation-memoryless}
\end{figure}

\begin{lemma}
\label{lemma:memoryless-embedding-stationary-distribution}
Let $L_\star \colon  \calW(\calX, \calD) \to \calW(\calY, \calE)$ be a memoryless Markov embedding, and $\kappa$ the lumping function associated with $L_\star$. Let $\bar{P} \in \calW (\calX, \calD)$ with stationary distribution $\bar{\pi}$.
Then $P = L_\star \bar{P}$ has stationary distribution $\pi$ given by
\begin{equation*}
\pi(y) = \bar{\pi}(\kappa(y))L(y).
\end{equation*}
\end{lemma}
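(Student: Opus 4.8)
The plan is to exhibit the candidate $\pi(y) = \bar\pi(\kappa(y))L(y)$ as a probability vector on $\calY$ that is fixed by $P = L_\star \bar P$, and then invoke the uniqueness of the stationary distribution of an irreducible kernel (the same uniqueness cited in Section~\ref{section:irreducible-kernels}) to conclude that this candidate must be the stationary distribution of $P$. Since $\bar\pi \geq 0$ and $L \geq 0$, non-negativity of $\pi$ is immediate, so only two facts remain to be established: that $\pi$ is normalized, and that $\pi P = \pi$.

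Both verifications rest on the single cell-wise identity that, for every $x \in \calX$,
\begin{equation*}
    \sum_{y \in \calS_x} L(y) = 1.
\end{equation*}
I would read this off from condition $(iii)$ of Definition~\ref{definition:markov-embeddings} specialized to the memoryless form $\Lambda(y,y') = L(y')\pred{(\kappa(y),\kappa(y'))\in\calD}$: for any $y$ with $(\kappa(y), x)\in\calD$, the family $(L(y'))_{y'\in\calS_x}$ lies in $\calP(\calS_x)$ and hence sums to $1$; strong connectivity of $(\calX,\calD)$ guarantees that every $x$ has at least one incoming edge, so the identity holds for all $x$. Normalization of $\pi$ then follows by partitioning $\calY = \biguplus_x \calS_x$ and using $\sum_x \bar\pi(x) = 1$. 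For stationarity, I would fix $y'\in\calY$, set $x' = \kappa(y')$, substitute $P(y,y') = \bar P(\kappa(y),x')L(y')$ (with $\bar P$ vanishing off $\calD$), factor out $L(y')$, and regroup the sum over $y$ by the cell $x = \kappa(y)$. Inside each cell the factor $\bar\pi(x)\bar P(x,x')$ is constant in $y$, so the inner sum collapses via the identity above, reducing $\sum_y \pi(y)P(y,y')$ to $L(y')\sum_x \bar\pi(x)\bar P(x,x') = L(y')\bar\pi(x') = \pi(y')$ by stationarity of $\bar\pi$ for $\bar P$.

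The computation is essentially bookkeeping with the partition, and the only genuine subtlety—the step I would treat most carefully—is the cell-wise row-sum identity, since it is precisely the memoryless restriction of the defining property $(iii)$ and is what makes a closed form available here, in contrast to a general Markov embedding. Once that identity is in hand, normalization and the eigenvector equation both fall out, and uniqueness of the stationary distribution of the irreducible kernel $P$ closes the argument. I would also note in passing that $L \in \calF_+(\calY,\calE)$ (equivalently, irreducibility of $P$, which forbids any state without an incoming edge) forces $L(y) > 0$ for all $y$, so the resulting $\pi$ is strictly positive, consistent with $\pi \in \calP_+(\calY)$.
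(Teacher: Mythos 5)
Your proposal is correct and follows essentially the same route as the paper: verify that the candidate $\pi(y)=\bar\pi(\kappa(y))L(y)$ satisfies $\pi P=\pi$ by regrouping the sum over the cells $\calS_x$, collapsing each inner sum via $\sum_{y\in\calS_x}L(y)=1$, and invoking stationarity of $\bar\pi$. The paper's proof is terser (it does not spell out the normalization, the cell-wise row-sum identity, or the appeal to uniqueness of the stationary distribution), but these are exactly the implicit ingredients, so your added care is a refinement rather than a different argument.
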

\begin{proof}
We simply verify that $\pi P = P$. Let $y' \in \calY$,
\begin{equation*}
\begin{split}
    \sum_{y \in \calY} \pi(y) P(y,y') &= \sum_{y \in \calY} \bar{\pi}(\kappa(y))L(y) \bar{P}(\kappa(y),\kappa(y')) L(y') \\
    &= \sum_{x \in \calX} \sum_{y \in \calS_x} \bar{\pi}(x)L(y) \bar{P}(x,\kappa(y')) L(y') \\
    &= \sum_{x \in \calX} \bar{\pi}(x) \bar{P}(x,\kappa(y')) L(y')  = \bar{\pi}(\kappa(y')) L(y') = \pi(y'). \\
\end{split}
\end{equation*}
\end{proof}
A direct consequence of Lemma~\ref{lemma:memoryless-embedding-stationary-distribution} is that for Markov chains with rational stationary distributions, we can construct a natural embedding that produces a doubly stochastic matrix.
\begin{corollary}
\label{corollary:embedding-to-doubly-stochastic-kernel}
Let $\bar{P} \in \calW([n], \calD)$ with stationary distribution $\bar{\pi}$ such that
\begin{equation*}
    \bar{\pi} = \left( \frac{p_1}{m}, \dots, \frac{p_n}{m}\right),
\end{equation*}
for $m \in \N$ and $p_1, \dots, p_n \in \N$ with $\sum_{i = 1}^{n} p_i = m$.
There exists a lumping function 
$$\kappa \colon [m] \to [n],$$ 
and a memoryless embedding $L_\star$ such that \begin{equation*}
    L_\star \bar{P} \in \calW \bis ([m], \calE),
\end{equation*}
where $\calE= \set{ (y,y') \in [m]^2 \colon \kappa_2(y,y') \in \calD }$.
\end{corollary}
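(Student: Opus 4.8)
The plan is to choose both the partition underlying $\kappa$ and the memoryless weights $L$ so that the stationary distribution of the embedded kernel comes out \emph{uniform}; for an irreducible row-stochastic matrix, uniform stationarity is equivalent to double stochasticity, and that equivalence will close the argument. First I would construct the lumping function $\kappa \colon [m] \to [n]$ by partitioning $[m]$ into blocks $\calS_1, \dots, \calS_n$ with $\abs{\calS_x} = p_x$. This is possible precisely because $\sum_{x=1}^{n} p_x = m$, and every block is nonempty since $\bar{\pi} \in \calP_+([n])$ forces $p_x \geq 1$. Taking $\calE = \set{(y,y') \in [m]^2 \colon \kappa_2(y,y') \in \calD}$ as in the statement (the full pullback of $\calD$, where $\kappa_2(y,y') = (\kappa(y),\kappa(y'))$), the compatibility condition of Proposition~\ref{proposition:lumpable-not-empty} is immediate---whenever $(x,x') \in \calD$, every $y' \in \calS_{x'}$ is joined to every $y \in \calS_x$---and $\kappa_2(\calE) = \calD$.

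Next I would define the embedding weights by $L(y) = 1/p_{\kappa(y)}$ for $y \in [m]$. These are positive, and they satisfy the normalization demanded of a memoryless Markov embedding: for each $x' \in [n]$,
\begin{equation*}
\sum_{y' \in \calS_{x'}} L(y') = \frac{\abs{\calS_{x'}}}{p_{x'}} = \frac{p_{x'}}{p_{x'}} = 1.
\end{equation*}
Hence $L_\star$ is a bona fide memoryless Markov embedding, and since memoryless embeddings are a subclass of Markov embeddings (which preserve irreducibility by Definition~\ref{definition:markov-embeddings}-$(ii)$), the kernel $P \eqdef L_\star \bar{P}$ lies in $\calW([m], \calE)$ and is irreducible with support exactly $\calE$.

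The crux is then the stationary distribution, which is where the choice of block sizes pays off. By Lemma~\ref{lemma:memoryless-embedding-stationary-distribution}, the stationary distribution of $P$ is $\pi(y) = \bar{\pi}(\kappa(y)) L(y) = \frac{p_{\kappa(y)}}{m} \cdot \frac{1}{p_{\kappa(y)}} = \frac{1}{m}$, i.e.\ $\pi = \frac{1}{m}\unit$ is uniform. To conclude, I would invoke the equivalence between uniform stationarity and double stochasticity: $P$ is row-stochastic by construction, and from $\frac{1}{m}\unit P = \pi P = \pi = \frac{1}{m}\unit$ we get $\unit P = \unit$, so every column of $P$ also sums to one. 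Therefore $P = L_\star \bar{P} \in \calW\bis([m], \calE)$, as claimed. The construction is direct rather than obstacle-ridden; the only genuinely clever step---and the one I would flag as the heart of the proof---is matching the block sizes $\abs{\calS_x}$ to the numerators $p_x$ of the rational stationary distribution, which is exactly what makes the single choice $L(y) = 1/p_{\kappa(y)}$ simultaneously a valid memoryless embedding and a uniformizer of $\pi$.
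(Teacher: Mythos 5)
Your proof is correct and follows essentially the same route as the paper: partition $[m]$ into blocks of sizes $p_1,\dots,p_n$ (the paper writes this as the explicit formula $\kappa(j) = \argmin_{i}\{\sum_{k=1}^i p_k \geq j\}$), set $L(y) = p_{\kappa(y)}^{-1}$, and apply Lemma~\ref{lemma:memoryless-embedding-stationary-distribution} to conclude the stationary distribution is uniform, hence the kernel is bistochastic. Your additional checks (the normalization $\sum_{y' \in \calS_{x'}} L(y') = 1$ and the uniform-stationarity-implies-bistochastic step) are details the paper leaves implicit.
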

\begin{proof}
We construct $\kappa$ and $L$ as follows. For any $j \in [m]$,
\begin{equation*}
    \kappa(j) = \argmin_{i \in [n]} \set{ \sum_{k = 1}^{i} p_k \geq j}, \qquad L(j) = p_{\kappa(j)}^{-1}.
\end{equation*}
Then, for any $j \in \N$, $L_\star \bar{\pi}(j) = \bar{\pi}(\kappa(j))L(j) = \frac{1}{m}$. The stationary distribution is uniform, thus $L_\star \bar{P}$ is bistochastic.
\end{proof}

\begin{remark}
Since a transition \hl{stochastic matrix} with rational entries enjoys a rational stationary distribution, any transition \hl{stochastic matrix} can be embedded into a doubly stochastic one, modulo some rational approximation.
\end{remark}

\begin{lemma}
\label{lemma:memoryless-embedding-m-geodesic-affine}
Memoryless Markov embeddings are m-geodesic affine.
\end{lemma}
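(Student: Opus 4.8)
The plan is to show that a memoryless Markov embedding acts \emph{linearly} on edge measures, with coefficients that are fixed by the embedding and do not depend on the embedded kernel; since an m-geodesic is by definition the affine interpolation of edge measures, this will immediately yield m-geodesic affinity.

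First I would compute the edge measure of an embedded kernel. Let $\bar P \in \calW(\calX, \calD)$ have stationary distribution $\bar\pi$ and edge measure $\bar Q(x,x') = \bar\pi(x)\bar P(x,x')$, and set $P = L_\star \bar P$. By Lemma~\ref{lemma:memoryless-embedding-stationary-distribution}, $P$ has stationary distribution $\pi(y) = \bar\pi(\kappa(y)) L(y)$, and by definition $P(y,y') = \bar P(\kappa(y),\kappa(y')) L(y')$. Multiplying, the edge measure $Q$ of $P$ satisfies
\begin{equation*}
Q(y,y') = \pi(y) P(y,y') = \bar\pi(\kappa(y))\,\bar P(\kappa(y),\kappa(y'))\, L(y) L(y') = \bar Q(\kappa(y),\kappa(y'))\, L(y) L(y').
\end{equation*}
The crucial observation is that the multiplier $L(y)L(y')$ is fixed by the embedding and \emph{independent} of $\bar P$, so $\bar Q \mapsto Q$ is a linear map of edge measures. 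This product factorization is the only real content of the argument, and it is precisely what fails for general Markov embeddings (cf. Hudson, Lemma~\ref{lemma:hudson-breaks-m-structure}), whose embedded stationary distribution does not factor through $\kappa$ in this way.

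Next I would exploit this linearity. Fix $\bar P_0, \bar P_1 \in \calW(\calX, \calD)$ with edge measures $\bar Q_0, \bar Q_1$, and let $t \in [0,1]$. The m-geodesic $\gamma^{(m)}_{\bar P_0, \bar P_1}(t)$ is the kernel whose edge measure is $\bar Q_t = (1-t)\bar Q_0 + t \bar Q_1$, so by the formula above the embedded kernel $L_\star \gamma^{(m)}_{\bar P_0, \bar P_1}(t)$ has edge measure
\begin{equation*}
\bar Q_t(\kappa(y),\kappa(y'))\, L(y) L(y') = (1-t)\,\bar Q_0(\kappa(y),\kappa(y'))L(y)L(y') + t\,\bar Q_1(\kappa(y),\kappa(y'))L(y)L(y').
\end{equation*}
Applying the edge-measure formula once more to each term, this equals $(1-t) Q_0(y,y') + t Q_1(y,y')$, where $Q_0, Q_1$ are the edge measures of $L_\star \bar P_0$ and $L_\star \bar P_1$; but $(1-t)Q_0 + tQ_1$ is exactly the edge measure defining $\gamma^{(m)}_{L_\star \bar P_0, L_\star \bar P_1}(t)$.

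Finally I would invoke the fact that an irreducible kernel is uniquely recovered from its edge measure (via $\pi(y)=\sum_{y'} Q(y,y')$ and $P(y,y')=Q(y,y')/\pi(y)$), so two kernels with equal edge measures coincide. This gives $L_\star \gamma^{(m)}_{\bar P_0, \bar P_1}(t) = \gamma^{(m)}_{L_\star \bar P_0, L_\star \bar P_1}(t)$ for all $t \in [0,1]$, establishing the claim. The computation is short; I do not anticipate a genuine obstacle beyond correctly deriving the factorized edge-measure formula and recalling that the edge measure determines the kernel.
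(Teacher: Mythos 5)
Your proposal is correct and follows essentially the same route as the paper: both arguments hinge on the factorization $Q(y,y') = L(y)L(y')\,\bar Q(\kappa(y),\kappa(y'))$ obtained from Lemma~\ref{lemma:memoryless-embedding-stationary-distribution}, and then identify the affine combination of embedded edge measures with the embedded affine combination. The only difference is the direction of bookkeeping (the paper starts from $\gamma^{(m)}_{L_\star\bar P_0, L_\star\bar P_1}(t)$ and shows it equals $L_\star\gamma^{(m)}_{\bar P_0,\bar P_1}(t)$, while you go the other way and appeal to uniqueness of the kernel given its edge measure), which is immaterial.
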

\begin{proof}
Let $\bar{P}_0, \bar{P}_1 \in \calW(\calX, \calD)$, a memoryless Markov embedding $L_\star$, and write $P_0 = L_\star \bar{P}_0, P_1 = L_\star \bar{P}_1$. 
For simplicity, denote also $\bar{P}_t = \gamma^{(m)}_{\bar{P}_0, \bar{P}_1}(t)$ and $P_t = \gamma^{(m)}_{P_0, P_1}(t)$ for points on the geodesics at parameter time $t$.
Write $\bar{Q}_0, \bar{Q}_1, \bar{Q}_t, Q_0, Q_1, Q_t$  $\bar{\pi}_0, \bar{\pi}_1, \bar{\pi}_t, \pi_0, \pi_1, \pi_t$ for the respective edge measures and stationary distributions.
By definition, $P_t$ is such that $Q_t$ satisfies for any $y,y' \in \calY$,
\begin{equation*}
\begin{split}
    Q_t(y,y') &= (1-t) Q_0(y,y')  + t Q_1(y,y') \\
    &= (1-t) \bar{\pi}_0(\kappa(y)) L(y) \bar{P}_0(\kappa(y),\kappa(y'))L(y')  + t \bar{\pi}_1(\kappa(y))L(y) \bar{P}_1(\kappa(y),\kappa(y'))L(y') \\
    &= L(y) L(y') \left[(1-t)  \bar{Q}_0(\kappa(y),\kappa(y'))  + t \bar{Q}_1(\kappa(y),\kappa(y')) \right] \\
    &= L(y) L(y') \bar{Q}_t(\kappa(y),\kappa(y')),
\end{split}
\end{equation*}
and by marginalization,
\begin{equation*}
\begin{split}
    \pi_t(y) &= L(y) \sum_{x' \in \calX} \sum_{y' \in \calS_{x'}}  L(y') \bar{Q}_t(\kappa(y), x') = L(y) \sum_{x' \in \calX} \bar{Q}_t(\kappa(y), x') = L(y) \bar{\pi}_t(\kappa(y)). \\
\end{split}
\end{equation*}
Consequently,
\begin{equation*}
\begin{split}
    P_t(y,y') &= \frac{Q_t(y,y')}{\pi_t(y)} = \frac{L(y') \bar{Q}_t(\kappa(y),\kappa(y'))}{\bar{\pi}_t(\kappa(y))} = \bar{P}_t(\kappa(y), \kappa(y')) L(y') = L_\star \bar{P}_t(y,y'),\\
\end{split}
\end{equation*}
and $L_\star$ is m-geodesically affine.
\end{proof}

Since memoryless Markov embeddings are also e-geodesic affine,
they preserve the entire \hl{e-structure and m-structure} \hlc{(Section~\ref{section:preliminaries-information-geometry})} of \hl{stochastic matrices}.

\subsubsection{Reversible embeddings}
\label{section:embeddings-reversible}
A Markov chain is reversible when its transition \hl{matrix} $P \in \calW(\calY, \calE)$ with stationary distribution $\pi$ satisfies the detailed balance equation, i.e. for any $y,y' \in \calY$,
\begin{equation*}
    \pi(y) P(y,y') = \pi(y') P(y', y).
\end{equation*}
Recall that $\calW \rev (\calY, \calE)$ denotes the subset of $\calW(\calY, \calE)$ of reversible Markov chains.
While lumping always preserves reversibility of \hl{stochastic matrices} (Proposition~\ref{proposition:lumping-reversibility}), embedding a reversible chain ---even by a Markov embedding--- can yield a non-reversible one.
To illustrate this fact, consider for example the Hudson \hl{expansion} of $\bar{P}_0 \in \calW \rev(\calX = \set{0, 1}, \calX^2)$ as defined in Lemma~\ref{lemma:hudson-breaks-m-structure}, and notice that $H(\calX^2)$ is not symmetric, precluding the reversibility\footnote{It is however easy to verify that ``\emph{the [time] reverse process for the expanded process is simply the reverse process for the origin chain expanded} '' \citep[p.144]{kemeny1983finite}.} of $H_\star \bar{P}_0$.
Some Markov embeddings, however, do preserve reversibility (Lemma~\ref{lemma:memoryless-embedding-reversibility}). 

\begin{proposition}[{\citet[Theorem~6.4.7]{kemeny1983finite}}]
\label{proposition:lumping-reversibility}
Let $P \in \calW_\kappa(\calY, \calE) \cap \calW \rev(\calY, \calE)$.
Then $\kappa_\star P \in \calW \rev(\calX, \calD)$.
\end{proposition}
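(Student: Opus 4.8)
The plan is to reduce reversibility to symmetry of the edge measure, a property that transfers cleanly through lumping. First I would recall that a kernel $P \in \calW(\calY, \calE)$ with stationary distribution $\pi$ is reversible if and only if its edge measure $Q = \diag(\pi)P$ is a symmetric matrix: the detailed balance equation $\pi(y)P(y,y') = \pi(y')P(y',y)$ is literally the identity $Q(y,y') = Q(y',y)$. Hence from the hypothesis $P \in \calW\rev(\calY, \calE)$ I obtain immediately that $Q$ is symmetric.

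Next I would invoke Corollary~\ref{corollary:lumped-stationary-distribution-and-edge-measure}, which identifies the stationary distribution and edge measure of the lumped kernel $\kappa_\star P$ as $\kappa_\star \pi(x) = \pi(\calS_x)$ and $\kappa_\star Q(x,x') = Q(\calS_x, \calS_{x'}) = \sum_{y \in \calS_x}\sum_{y' \in \calS_{x'}} Q(y,y')$. The symmetry of $Q$ then transfers to $\kappa_\star Q$ by simply exchanging the roles of the summation indices: summing $Q(y,y') = Q(y',y)$ over $y \in \calS_x$ and $y' \in \calS_{x'}$ gives $\kappa_\star Q(x,x') = Q(\calS_x, \calS_{x'}) = Q(\calS_{x'}, \calS_x) = \kappa_\star Q(x',x)$, so $\kappa_\star Q$ is symmetric.

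Finally, reading the equivalence of the first step in the reverse direction applied to the lumped chain, symmetry of the edge measure $\kappa_\star Q$ that pertains to $\kappa_\star P$ is exactly the detailed balance equation for $\kappa_\star P$ with respect to $\kappa_\star \pi$, namely $\kappa_\star\pi(x)\,\kappa_\star P(x,x') = \kappa_\star Q(x,x') = \kappa_\star Q(x',x) = \kappa_\star\pi(x')\,\kappa_\star P(x',x)$. This yields $\kappa_\star P \in \calW\rev(\calX, \calD)$, as desired.

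There is no substantial obstacle here; the only point demanding care is to confirm that $\kappa_\star Q$ really is the edge measure \emph{pertaining to} $\kappa_\star P$ (so that its symmetry is genuinely equivalent to reversibility of $\kappa_\star P$), which is precisely what Corollary~\ref{corollary:lumped-stationary-distribution-and-edge-measure} asserts. One could alternatively bypass edge measures and verify detailed balance directly, writing $\kappa_\star P(x,x') = P(y,\calS_{x'})$ for a representative $y \in \calS_x$ via Theorem~\ref{theorem:equivalence-lumpable}; but the edge-measure formulation makes the symmetry manifest and avoids the bookkeeping of choosing representatives.
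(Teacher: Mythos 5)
Your proposal is correct and follows essentially the same route as the paper: both arguments reduce reversibility to symmetry of the edge measure and invoke Corollary~\ref{corollary:lumped-stationary-distribution-and-edge-measure} to identify $\kappa_\star Q(x,x') = Q(\calS_x,\calS_{x'})$, from which detailed balance for the lumped kernel is immediate. Your write-up merely spells out the equivalence ``reversible $\Leftrightarrow$ edge measure symmetric'' more explicitly than the paper's one-line computation.
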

\begin{proof}

Write $\bar{P} = \kappa_\star P$, and $\bar{\pi}$ for the stationary distribution of $\bar{P}$.
It immediately follows from Corollary~\ref{corollary:lumped-stationary-distribution-and-edge-measure}, that for any $(x,x') \in \calD$,
\begin{equation*}
    \bar{\pi}(x)\bar{P}(x,x') = \sum_{y \in \calS_x} \pi(y) \sum_{y' \in \calS_{x'}}P(y,y') = \bar{\pi}(x')\bar{P}(x',x).
\end{equation*}
\end{proof}

\begin{remark}
Interestingly, Proposition~\ref{proposition:lumping-reversibility} implies that Markov embeddings are ``non-reversibility preserving" in the sense that a non-reversible \hl{stochastic matrix} cannot be congruently embedded into a reversible one.
\end{remark}

\begin{lemma}
\label{lemma:memoryless-embedding-reversibility}
Memoryless Markov embeddings preserve reversibility.
Let 
$$L_\star \colon  \calW(\calX, \calD) \to \calW(\calY, \calE)$$ be a Memoryless Markov embedding. Then,
\begin{equation*}
    \bar{P} \in \calW \rev(\calX, \calD) \implies L_\star \bar{P} \in \calW \rev(\calY, \calE).
\end{equation*}
\end{lemma}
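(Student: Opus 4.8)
The plan is to verify the detailed balance equation for $P \eqdef L_\star \bar{P}$ by direct substitution. The two ingredients I would combine are the closed-form stationary distribution from Lemma~\ref{lemma:memoryless-embedding-stationary-distribution}, namely $\pi(y) = \bar{\pi}(\kappa(y)) L(y)$ where $\bar{\pi}$ is the stationary distribution of $\bar{P}$, and the defining relation of a memoryless Markov embedding, $P(y,y') = \bar{P}(\kappa(y), \kappa(y')) L(y')$. With these two facts in hand, the statement reduces to an algebraic identity.

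Concretely, for any $y, y' \in \calY$ I would expand the left-hand side of detailed balance and collect the $L$-factors:
\begin{equation*}
\pi(y) P(y,y') = \bar{\pi}(\kappa(y)) L(y) \cdot \bar{P}(\kappa(y), \kappa(y')) L(y') = L(y) L(y') \, \bar{\pi}(\kappa(y)) \bar{P}(\kappa(y), \kappa(y')).
\end{equation*}
The key observation is that the scalar prefactor $L(y) L(y')$ is \emph{symmetric} under the exchange $y \leftrightarrow y'$, so it plays no role in the balance and can simply be carried along. Since $\bar{P} \in \calW\rev(\calX, \calD)$, detailed balance for $\bar{P}$ applied at the lumped states $\kappa(y), \kappa(y') \in \calX$ gives $\bar{\pi}(\kappa(y)) \bar{P}(\kappa(y), \kappa(y')) = \bar{\pi}(\kappa(y')) \bar{P}(\kappa(y'), \kappa(y))$.

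Substituting this and reading the resulting expression backwards recovers exactly $\pi(y') P(y', y) = \bar{\pi}(\kappa(y')) L(y') \cdot \bar{P}(\kappa(y'), \kappa(y)) L(y)$, so that $\pi(y) P(y,y') = \pi(y') P(y', y)$ for all $y, y'$, which is the detailed balance condition for $L_\star \bar{P}$. I do not anticipate any genuine obstacle here: the argument is purely algebraic and the content is entirely in recognizing the symmetric multiplier $L(y) L(y')$. It is precisely this symmetric, destination-and-source-separable form of the embedding that preserves reversibility, in contrast to a general Markov embedding (or the Hudson embedding of Lemma~\ref{lemma:hudson-breaks-m-structure}), whose weights $\Lambda(y,y')$ need not split into such a symmetric product and hence can destroy detailed balance.
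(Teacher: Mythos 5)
Your proposal is correct and follows exactly the paper's argument: both expand $\pi(y)P(y,y')$ using the stationary distribution formula of Lemma~\ref{lemma:memoryless-embedding-stationary-distribution}, apply detailed balance for $\bar{P}$ at the lumped states, and observe that the symmetric factor $L(y)L(y')$ carries through to give $\pi(y')P(y',y)$. There is nothing to add.
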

\begin{proof}
Let $\bar{P} \in \calW \rev(\calX, \calD)$, with corresponding stationary distribution $\bar{\pi}$, and let $P = L_\star \bar{P}$, with stationary distribution $\pi$.
We verify that $P$ satisfies the detailed-balance equation. For $y,y' \in \calY$,
\begin{equation*}
\begin{split}
    \pi(y)P(y,y') &\stackrel{(i)}{=} \bar{\pi}(\kappa(y))L(y) \bar{P}(\kappa(y), \kappa(y'))L(y') \\
    &\stackrel{(ii)}{=} \bar{\pi}(\kappa(y'))L(y) \bar{P}(\kappa(y'), \kappa(y))L(y') = \pi(y')P(y',y),\\
\end{split}
\end{equation*}
where $(i)$ follows from Lemma~\ref{lemma:memoryless-embedding-stationary-distribution}, and $(ii)$ from reversibility of $\bar{P}$.
\end{proof}

This last observation enables us to isometrically embed elements of $\calW \rev (\calX)$ with rational stationary distributions into $\calW \sym (\calY)$.

\begin{corollary}
\label{corollary:embed-reversible-to-symmetric}
Let $\bar{P} \in \calW([n], \calD) \cap \calW \rev([n])$, with stationary distribution $\bar{\pi}$ such that
\begin{equation*}
    \bar{\pi} = \left( \frac{p_1}{m}, \dots, \frac{p_n}{m}\right),
\end{equation*}
for $m \in \N$ and $p_1, \dots, p_n \in \N$ with $\sum_{i = 1}^{n} p_i = m$.
There exists a lumping function $\kappa \colon [m] \to [n]$ and a memoryless embedding $L_\star$ such that \begin{equation*}
    L_\star \bar{P} \in \calW \sym ([m], \calE),
\end{equation*}
where $\calE= \set{ (y,y') \in [m]^2 \colon \kappa_2(y,y') \in \calD }$.
\end{corollary}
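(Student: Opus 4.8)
The plan is to reuse verbatim the lumping function $\kappa \colon [m] \to [n]$ and the memoryless embedding $L_\star$ constructed in the proof of Corollary~\ref{corollary:embedding-to-doubly-stochastic-kernel}, namely $\kappa(j) = \argmin_{i \in [n]} \set{\sum_{k=1}^{i} p_k \geq j}$ and $L(j) = p_{\kappa(j)}^{-1}$, since these depend only on the stationary distribution $\bar\pi$ and not at all on reversibility. First I would record that $L_\star \bar P$ indeed lands in $\calW([m], \calE)$ with the prescribed edge set: since $L(y') = p_{\kappa(y')}^{-1} > 0$, the entry $L_\star \bar P(y,y') = \bar P(\kappa(y), \kappa(y')) L(y')$ is positive exactly when $(\kappa(y), \kappa(y')) \in \calD$, i.e. when $\kappa_2(y,y') \in \calD$. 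Hence the support of $L_\star \bar P$ is precisely $\calE$, and irreducibility is inherited from $\bar P$ through the Markov embedding.

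The key observation is that symmetry of a stochastic matrix is equivalent to the conjunction of reversibility and uniformity of the stationary distribution. Concretely, the detailed-balance equation reads $\pi(y) P(y,y') = \pi(y') P(y',y)$; when $\pi$ is the uniform distribution $\pi \equiv 1/m$, this collapses to $P(y,y') = P(y',y)$, i.e. $P$ is symmetric. I would therefore establish the two halves separately and combine them.

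For uniformity, Corollary~\ref{corollary:embedding-to-doubly-stochastic-kernel} already guarantees that with this choice of $\kappa$ and $L$ the embedded kernel $L_\star \bar P$ is doubly stochastic, equivalently $L_\star \bar\pi \equiv 1/m$ by Lemma~\ref{lemma:memoryless-embedding-stationary-distribution} (indeed $\bar\pi(\kappa(y)) L(y) = \tfrac{p_{\kappa(y)}}{m} \cdot p_{\kappa(y)}^{-1} = \tfrac{1}{m}$). For reversibility, Lemma~\ref{lemma:memoryless-embedding-reversibility} shows that memoryless Markov embeddings preserve reversibility, so from $\bar P \in \calW\rev([n])$ we obtain $L_\star \bar P \in \calW\rev([m], \calE)$. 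Feeding the uniform stationary distribution into the detailed-balance equation then yields $L_\star \bar P(y,y') = L_\star \bar P(y',y)$ for all $y,y'$, whence $L_\star \bar P \in \calW\sym([m], \calE)$.

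This argument is essentially a bookkeeping composition of two already-established facts, so I do not anticipate a genuine obstacle; the only points deserving care are the elementary equivalence that a reversible chain with uniform stationary law is symmetric, and the verification that the support of the embedded kernel is exactly $\calE$ rather than a strict subset. Both are immediate from the explicit positivity of $L$ and the structure of the detailed-balance equation.
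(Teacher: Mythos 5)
Your proposal is correct and follows essentially the same route as the paper: the paper's proof simply notes that $\calW \sym([m], \calE) = \calW \bis([m], \calE) \cap \calW \rev([m], \calE)$ and invokes Lemma~\ref{lemma:memoryless-embedding-reversibility} together with Corollary~\ref{corollary:embedding-to-doubly-stochastic-kernel}, which is exactly the decomposition you spell out (reversibility preserved by the memoryless embedding, plus uniform stationary distribution from the doubly stochastic construction). Your additional remarks about the support being exactly $\calE$ and the detailed-balance collapse under a uniform stationary law are just explicit versions of what the paper leaves implicit.
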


\begin{proof}
Noting that 
$$\calW \sym([m], \calE) = \calW \bis([m], \calE) \cap \calW \rev([m], \calE),$$
Lemma~\ref{lemma:memoryless-embedding-reversibility} and  Corollary~\ref{corollary:embedding-to-doubly-stochastic-kernel} yield the claim.
\end{proof}

Finally, we show that the canonical Markov embedding induced from a reversible \hl{stochastic matrix} is reversibility preserving.

\begin{lemma}
\label{lemma:reversibility-preserving-embedding-stationary-distribution}
Let $P_0 \in \calW_\kappa(\calY, \calE) \cap \calW \rev (\calY, \calE)$, and let the canonical embedding (Lemma~\ref{lemma:construct-canonical-embedding}), such that $y,y' \in \calY$,
\begin{equation*}
    \Lambda^{(P_0)}(y,y') = \frac{P_0(y,y')}{\bar{P}_0(\kappa(y), \kappa(y'))}.
\end{equation*}
Then for any $\bar{P} \in \calW \rev(\calX, \calD)$,  $\Lambda^{(P_0)}_\star \bar{P}$ is also reversible.
Moreover, the stationary distribution $\pi$ of $\Lambda^{(P_0)}_\star \bar{P}$ is given by
\begin{equation*}
\pi(y)  =  \bar{\pi}(\kappa(y)) \frac{\pi_0(y)}{\bar{\pi}_0(\kappa(y))}, y \in \calY,
\end{equation*}
where $\bar{\pi}$, $\bar{\pi}_0$ and $\pi_0$ are the respective stationary distributions of $\bar{P}$, $\bar{P}_0$ and $P_0$.
\end{lemma}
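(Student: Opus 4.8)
The plan is to verify both claims in one stroke by exhibiting the candidate distribution $\pi(y) = \bar{\pi}(\kappa(y))\,\pi_0(y)/\bar{\pi}_0(\kappa(y))$ and checking that it satisfies the detailed balance equation together with the embedded kernel $P \eqdef \Lambda^{(P_0)}_\star \bar{P}$. Since detailed balance implies stationarity (by summing over the first index) and the stationary distribution of an irreducible kernel is unique, establishing detailed balance for this particular $\pi$ would simultaneously identify it as \emph{the} stationary distribution and prove that $P$ is reversible. This avoids having to solve $\pi P = \pi$ directly.

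First I would record the explicit form of the embedded kernel from Lemma~\ref{lemma:construct-canonical-embedding}: for $(y,y') \in \calE$,
$$P(y,y') = \bar{P}(\kappa(y), \kappa(y')) \frac{P_0(y,y')}{\bar{P}_0(\kappa(y), \kappa(y'))}.$$
Next I would observe the crucial auxiliary fact that $\bar{P}_0 = \kappa_\star P_0$ is itself reversible: this is exactly Proposition~\ref{proposition:lumping-reversibility} applied to the reversible lumpable kernel $P_0$, and it supplies the relation $\bar{\pi}_0(x)\bar{P}_0(x,x') = \bar{\pi}_0(x')\bar{P}_0(x',x)$ on which the whole argument hinges.

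The heart of the proof is a factorization. Writing $x = \kappa(y)$, $x' = \kappa(y')$, the product $\pi(y)P(y,y')$ splits as
$$\pi(y)P(y,y') = \frac{\bar{\pi}(x)\bar{P}(x,x')}{\bar{\pi}_0(x)\bar{P}_0(x,x')}\cdot \pi_0(y)P_0(y,y').$$
The second factor is invariant under the swap $(y,y') \leftrightarrow (y',y)$ by detailed balance for $P_0$. The first factor is invariant under the same swap because its numerator is symmetric by reversibility of $\bar{P}$ and its denominator is symmetric by the reversibility of $\bar{P}_0$ just established. Hence the entire product is symmetric in $(y,y')$, which is precisely the detailed balance relation $\pi(y)P(y,y') = \pi(y')P(y',y)$.

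Finally I would confirm that $\pi$ is a genuine probability vector. Summing over each fiber $\calS_x$ and invoking Corollary~\ref{corollary:lumped-stationary-distribution-and-edge-measure}, which gives $\bar{\pi}_0(x) = \pi_0(\calS_x) = \sum_{y \in \calS_x}\pi_0(y)$, the inner sum $\sum_{y \in \calS_x}\pi_0(y)/\bar{\pi}_0(x)$ collapses to $1$, leaving $\sum_{y \in \calY}\pi(y) = \sum_{x \in \calX}\bar{\pi}(x) = 1$. The main, and really only, obstacle is spotting the factorization together with the realization that the denominator's symmetry is furnished for free by Proposition~\ref{proposition:lumping-reversibility}; once both reversibility relations are in hand, the remaining computation is immediate.
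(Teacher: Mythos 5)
Your proposal is correct and follows essentially the same route as the paper: both verify the detailed balance equation for the proposed $\pi$, using reversibility of $P_0$ and $\bar{P}$ together with Proposition~\ref{proposition:lumping-reversibility} to get reversibility of $\bar{P}_0$, and then conclude stationarity from detailed balance. Your explicit check that $\pi$ is normalized via Corollary~\ref{corollary:lumped-stationary-distribution-and-edge-measure} is a small addition the paper leaves implicit, but the argument is the same.
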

\begin{proof}
We verify that the embedded \hl{stochastic matrix} verifies the detailed-balance equation for the proposed distribution \footnote{A more constructive proof consists in first invoking the Kolmogorov criterion to show that $P$ is reversible. The expression of the stationary distribution may subsequently be derived from the detailed-balance equation.}.
Recall from Proposition~\ref{proposition:lumping-reversibility} that when $P_0$ is reversible, $\bar{P}_0$ is also reversible.
Let $(y, y') \in \calE$,
\begin{equation*}
\begin{split}
    \pi(y) P(y,y') &= \bar{\pi}(\kappa(y)) \bar{P}(\kappa(y), \kappa(y')) \frac{\pi_0(y)P_0(y, y')}{\bar{\pi}_0(\kappa(y)\bar{P}_0(\kappa(y, y')))} \\
    &= \bar{\pi}(\kappa(y')) \bar{P}(\kappa(y'), \kappa(y)) \frac{\pi_0(y')P_0(y', y)}{\bar{\pi}_0(\kappa(y')\bar{P}_0(\kappa(y', y)))} = \pi(y')P(y', y).\\
\end{split}
\end{equation*}
As a result, $P$ is reversible and $\pi$ is the stationary distribution (\citet[Proposition~1.20]{levin2009markov}).
\end{proof}

\hlp{
\subsubsection{Application to inference in Markov chains from a single trajectory of observations}
\label{section:application-identity-testing-reversible-markov-chains}
We briefly describe an application of Markov embeddings for identity testing of Markov chains within the context of the property testing framework
\citep{daskalakis2018testing, pmlr-v99-cherapanamjeri19a, pmlr-v108-wolfer20a, pmlr-v151-fried22a}.
Given a fixed proximity parameter $\eps \in (0,1)$ and a reference stochastic matrix $P_0 \in \calW(\calX, \calD)$, along with a single trajectory of observations sampled according to an unknown stochastic matrix $P \in \calW(\calX, \calD)$, the task consists in constructing an algorithm to distinguish between the two hypotheses,
\begin{equation*}
    P = P_0 \;\;\;\text{ or }\;\;\; K(P, P_0) > \eps.
\end{equation*}
Here
\begin{equation*}
    K(P,P') \eqdef 1 - \rho\left( P^{\circ 1/2} \circ P'^{\circ 1/2} \right),
\end{equation*}
based on the work of \citet{kazakos1978bhattacharyya}, serves as a measure of contrast, and was first introduced in this context by \citet{daskalakis2018testing}.
Inspired by \citet{goldreich2016uniform}, who reduced the problem of identity testing to an arbitrary distribution \citep{valiant2017automatic} to the problem of uniformity testing \citep{paninski2008coincidence} in the iid setting,
\citep{wolfer2023geometric}
reduced the problem of testing identity of $\pi$-reversible Markov chains to testing identity of symmetric Markov chains. To achieve this, the authors of \citep{wolfer2023geometric} relied on the embedding of Corollary~\ref{corollary:embed-reversible-to-symmetric}, and demonstrated its preservation of the contrast function $K$. 
Not only does this approach simplify the methodology, but it also recovers the state-of-the-art result of \citep{pmlr-v151-fried22a} from \citep{pmlr-v99-cherapanamjeri19a}.

}

\begin{table}
    \centering
    \begin{tabular}{r|l|l}
Symbol & Description & First apparition \\
\hline
        $\Emb$ & Totality of embeddings from $\calV \subset \calW(\calX, \calD)$ & Section~\ref{term:Emb} (p.\pageref{term:Emb})\\
        $\Embk$ & $\kappa$-compatible embeddings & Section~\ref{term:Embk} (p.\pageref{term:Embk}) \\
        $\Embkc$ & $\kappa$-congruent embeddings & Definition~\ref{term:Embkc} \\ %
        $\MEmb$ & Markov embeddings & Definition~\ref{term:MEmb} \\ %
        $\MEmbk$ & $\kappa$-compatible Markov embeddings & Definition~\ref{term:MEmbk} \\ %
        $\mEmb$ & m-geodesic affine embeddings & Definition~\ref{definition:geodesic-affine-map} \\ %
        $\eEmb$ & e-geodesic affine embeddings &  Definition~\ref{definition:geodesic-affine-map} \\ %
        $\EEmb$ & Exponential embeddings & Definition~\ref{definition:exponential-embedding}  \\ %
        $\EEmbk$ & $\kappa$-compatible exponential embeddings & Definition~\ref{definition:exponential-embedding}  \\
        $\EEmbkc$ & $\kappa$-compatible exp. emb. whose carrier lumps into $C$& Definition~\ref{definition:exponential-embedding}  \\
        $\MMEmbk$ & $\kappa$-compatible memoryless Markov embeddings & Section~\ref{term:MMEmbk} (p.\pageref{term:MMEmb}) \\
        $H^{(k)}_\star$ & The Hudson \hl{expansion} of order $k$ &  Section~\ref{section:embeddings-hudson} (p.\pageref{term:k-th-order-hudson-embedding}) \\
        \end{tabular}
        \caption{Nomenclature of embedding classes. \label{table:nomenclature}}
\end{table}

\begin{figure}%
\centering

\tikzset{every picture/.style={line width=0.75pt}} %

\begin{tikzpicture}[x=0.61pt,y=0.61pt,yscale=-1,xscale=1]

\draw   (94.8,70.66) .. controls (94.8,35.5) and (123.3,7) .. (158.46,7) -- (588.14,7) .. controls (623.3,7) and (651.8,35.5) .. (651.8,70.66) -- (651.8,261.63) .. controls (651.8,296.79) and (623.3,325.29) .. (588.14,325.29) -- (158.46,325.29) .. controls (123.3,325.29) and (94.8,296.79) .. (94.8,261.63) -- cycle ;
\draw [color={rgb, 255:red, 208; green, 2; blue, 27 }  ,draw opacity=1 ]   (94.8,70.66) .. controls (120.8,70) and (589.8,70) .. (588.14,325.29) ;
\draw [color={rgb, 255:red, 74; green, 144; blue, 226 }  ,draw opacity=1 ]   (291.8,326) .. controls (294.14,74.71) and (621.8,71) .. (651.8,70.66) ;
\draw [color={rgb, 255:red, 0; green, 102; blue, 0 }  ,draw opacity=1 ]   (94.8,100) .. controls (247.8,97) and (550.8,145) .. (549.8,325) ;
\draw    (469.16,227.71) .. controls (506.8,258) and (516.8,288) .. (517.8,324) ;
\draw  [draw opacity=0][line width=1.5]  (469.49,226.95) .. controls (446.6,278.81) and (394.73,315) .. (334.4,315) .. controls (252.88,315) and (186.8,248.92) .. (186.8,167.4) .. controls (186.8,155.3) and (188.26,143.54) .. (191,132.28) -- (334.4,167.4) -- cycle ; \draw  [color={rgb, 255:red, 208; green, 2; blue, 27 }  ,draw opacity=1 ][line width=1.5]  (469.49,226.95) .. controls (446.6,278.81) and (394.73,315) .. (334.4,315) .. controls (252.88,315) and (186.8,248.92) .. (186.8,167.4) .. controls (186.8,155.3) and (188.26,143.54) .. (191,132.28) ;
\draw  [draw opacity=0] (191.29,131.11) .. controls (207.47,67.14) and (265.41,19.8) .. (334.4,19.8) .. controls (415.92,19.8) and (482,85.88) .. (482,167.4) .. controls (482,188.88) and (477.41,209.3) .. (469.16,227.71) -- (334.4,167.4) -- cycle ; \draw   (191.29,131.11) .. controls (207.47,67.14) and (265.41,19.8) .. (334.4,19.8) .. controls (415.92,19.8) and (482,85.88) .. (482,167.4) .. controls (482,188.88) and (477.41,209.3) .. (469.16,227.71) ;
\draw [color={rgb, 255:red, 208; green, 2; blue, 27 }  ,draw opacity=1 ][line width=1.5]    (191,132.28) .. controls (298.8,142) and (410.8,183) .. (469.16,227.71) ;
\draw    (94.8,127) .. controls (112.31,127.89) and (149.8,128) .. (191,132.28) ;
\draw   (118.9,216.9) -- (128.1,226.1)(128.1,216.9) -- (118.9,226.1) ;
\draw   (138.9,236.9) -- (148.1,246.1)(148.1,236.9) -- (138.9,246.1) ;
\draw   (178.9,276.9) -- (188.1,286.1)(188.1,276.9) -- (178.9,286.1) ;
\draw   (431.22,248.1) -- (344.8,292) -- (350.36,194.64) -- cycle ;

\draw (130,20) node [anchor=north west][inner sep=0.75pt]   [align=left] {$\Emb$};
\draw (97,75) node [anchor=north west][inner sep=0.75pt]  [color={rgb, 255:red, 208; green, 2; blue, 27 }  ,opacity=1 ] [align=left] {$\eEmb$};
\draw (570,80) node [anchor=north west][inner sep=0.75pt]  [color={rgb, 255:red, 74; green, 144; blue, 226 }  ,opacity=1 ] [align=left] {$\mEmb$};
\draw (98,105) node [anchor=north west][inner sep=0.75pt]  [color={rgb, 255:red, 0; green, 102; blue, 0 }  ,opacity=1 ] [align=left] {$\EEmb$};
\draw (350,30) node [anchor=north west][inner sep=0.75pt]  [color={rgb, 255:red, 0; green, 0; blue, 0 }  ,opacity=1 ] [align=left] {$\Embk$};
\draw (97,135) node [anchor=north west][inner sep=0.75pt]   [align=left] {$\MEmb$};
\draw (188,178) node [anchor=north west][inner sep=0.75pt]  [color={rgb, 255:red, 208; green, 2; blue, 27 }  ,opacity=1 ] [align=left] {$\Embkc = \MEmbk = \EEmbk^{\bar{U}}$};
\draw (122,202) node [anchor=north west][inner sep=0.75pt]   [align=left] {$H_\star$};
\draw (143,217) node [anchor=north west][inner sep=0.75pt]   [align=left] {$H_\star^{(2)}$};
\draw (188,261) node [anchor=north west][inner sep=0.75pt]   [align=left] {$H_\star^{(k)}$};
\draw (155,246) node [anchor=north west][inner sep=0.75pt]   [align=left] {$\ddots$};
\draw (195,285) node [anchor=north west][inner sep=0.75pt]   [align=left] {$\ddots$};
\draw (347,237) node [anchor=north west][inner sep=0.75pt]   [align=left] {$\MMEmbk$};

\end{tikzpicture}

\caption{Landscape of the different classes of embeddings, as summarized in Table~\ref{table:nomenclature} ($\kappa \neq h^{(k)}, k \in \N$).
It is instructive to observe that unlike the distribution setting, congruent Markov embeddings are not necessarily m-geodesic affine.
}
\label{figure:embeddings-landscape}
\end{figure}

\section{Information projection on geodesically convex sets}
\label{section:information-projection}
Geodesic convexity generalizes the familiar notion of convexity in the Euclidean space,
to Riemannian manifolds. Recall that in the Riemannian setting, \emph{straight lines}, termed geodesics, are defined with respect to some affine connection $\nabla$. A submanifold $\calC$ is geodesically convex with respect to $\nabla$ whenever all $\nabla$-geodesics joining two points in $\calC$ remain in $\calC$ at all times.

\begin{definition}[Geodesically convex family]
\label{definition:geodesically-convex-family}
Let $\calC$ be a submanifold of $\calW(\calX, \calD)$. The family
$\calC$ is said to be e-convex (resp. m-convex), when for any $P_0, P_1 \in \calC$ and any $t \in [0,1]$, it holds that $\gamma_{P_0, P_1}^{(e)}(t) \in \calC$ (resp. $\gamma_{P_0, P_1}^{(m)}(t) \in \calC$).
\end{definition}

\begin{remark}
Note that an e-family (resp. m-family)  $\calV \subset \calW(\calX, \calD)$ is e-convex (resp. m-convex) with respect to $\nabla^{(e)}$ (resp. $\nabla^{(m)}$).
\end{remark}

\begin{definition}[Geodesically convex function]
Let $\calC \subset \calW(\calX, \calD)$ be geodesically convex. 
A function $\phi: \calC \to \R$ is said to be a (strictly) geodesically convex if the composition
$\phi \circ \gamma \colon [0,1] \to \R$
is a (strictly) convex function for any geodesic arc $\gamma \colon [0, 1] \to \calC$ contained within $\calC$.
\end{definition}

Recall that the information (KL) divergence, as defined in \eqref{equation:kl-divergence}, is strictly m-convex in its first argument \citep[Theorem~3.3]{hayashi2014information}, and strictly e-convex in its second argument \citep[Lemma~4.5]{hayashi2014information}\footnote{\citet[Theorem~3.3,Lemma~4.5]{hayashi2014information} are only stated in terms of convexity, but a straightforward adaptation of the proofs yields the strong convexity.}, i.e. for $t \in (0,1)$, $P, P_0, P_1 \in \calW(\calX, \calD)$, with $P_0 \neq P_1$,
\begin{equation*}
\begin{split}
    \kl{\gamma_{P_0, P_1}^{(m)}(t)}{P} &< (1 -t) \kl{P_0}{P} + t \kl{P_1}{P}, \\
    \kl{P}{\gamma_{P_0, P_1}^{(e)}(t)} &< (1-t) \kl{P}{P_0} + t \kl{P}{P_1}.
\end{split}
\end{equation*}
We complement these results by noting that for the e-geodesic 
we readily obtain the general expression for any $t \in \R$,
\begin{equation*}
    \kl{P}{\gamma_{P_0, P_1}^{(e)}(t)} = (1 - t) \kl{P}{P_0} + t\kl{P}{P_1} + \log \rho_t,
\end{equation*}
where $\rho_t$ is the PF root of $P_0^{\circ 1 -t} \circ P_1^{\circ t}$.
When $\abs{t} > 1$, the strong convexity of $t \mapsto \rho_t$ and $\rho_0 = \rho_1 = 1$ immediately implies the following property of the information divergence,
\begin{equation*}
\begin{split}
    \kl{P}{\gamma_{P_0, P_1}^{(e)}(t)} &>  (1 - t)\kl{P}{P_0} + t \kl{P}{P_1}.
\end{split}
\end{equation*}

\begin{remark}
It is also noteworthy that the information divergence for \hl{stochastic matrices} does not seem to enjoy any joint m-convexity. Consider
\begin{equation*}
    P_0 = \begin{pmatrix}
    1 / 2 & 1/ 2 \\ 1 / 2 & 1/ 2
    \end{pmatrix},
    P'_0 = \begin{pmatrix}
    1 / 4 & 3/ 4 \\ 1 / 4 & 3/ 4
    \end{pmatrix},
    P_1 = P'_1  = \begin{pmatrix}
    1 / 8 & 7/ 8 \\ 7 / 8 & 1/ 8
    \end{pmatrix}.
\end{equation*}
Then simple calculations show that at $t = 1/2$,
\begin{equation*}
    \kl{\gamma^{(m)}_{P_0, P_1}(t)}{\gamma^{(m)}_{P'_0, P'_1}(t)} > (1 - t) \kl{P_0}{P_0'} + t \kl{P_1}{P'_1}.
\end{equation*}
This contrasts with the distribution setting, where the information divergence belongs to the class of $f$-divergences, hence is jointly m-convex.
\end{remark}

\subsection{Pythagorean inequalities}

In Euclidean geometry, the projection of a point onto a convex body yields a natural Pythagorean inequality involving the point, its projection, and other points on the surface of the convex body.
An information geometric analogue of this fact, with the information divergence in lieu of the squared Euclidean distance, is also well-known to hold in the simplex (see e.g. \citet[Theorem~3.1]{csiszar2004information}).
We briefly recall an extension of this Pythagorean inequality for m-convex sets of \hl{stochastic matrices} \citet[Lemma~1]{csiszar1987conditional}.
Let $\calC \subset \calW(\calX, \calD')$ with $\calD' \subset \calD$ be non-empty, closed and m-convex,
we define the \emph{$\IPr$-projection} (information projection) onto $\calC$ as the mapping\footnote{
When $\dim \calC < \dim \calW(\calX, \calD)$, note that $\hlm{\IPr^{(\calC)}}$ cannot be diffeomorphic.}
\begin{equation*}
\begin{split}
\hlm{\IPr^{(\calC)}} \colon \calW(\calX, \calD) \to \calC, \qquad 
P \mapsto \argmin_{\bar{P} \in \calC} \kl{\bar{P}}{P}.
\end{split}
\end{equation*}
For a fixed $P \in \calW(\calX, \calD)$, the function $\bar{P} \mapsto \kl{\bar{P}}{P}$ is continuous and strictly m-convex.

\begin{proposition}[Pythagorean inequality -- $\IPr$-projection onto m-convex -- {\citep[Lemma~1]{csiszar1987conditional}}]
\label{proposition:pythagorean-inequality-m-convex}
Let $P \in \calW(\calX, \calD)$, and
let $\calC \subset \calW(\calX, \calD')$ with $\calD' \subset \calD$, be non-empty, closed and m-convex (Definition~\ref{definition:geodesically-convex-family}).
Then 
$\hlm{\IPr^{(\calC)}P}$ exists in the sense where the minimum is attained for a unique element in $\calC$. Let $P_0 \in \calC$, 
the following two statements are equivalent:
\begin{enumerate}
    \item[$(i)$] For any $\bar{P} \in \calC$, 
$D( \bar{P} \| P) \geq D( \bar{P} \| P_0) + D( P_0 \| P)$. See Figure~\ref{figure:pythagorean-inequalities} (left).
\item[$(ii)$] $P_0 = \hlm{\IPr^{(\calC)} P}$.
\end{enumerate}
\end{proposition}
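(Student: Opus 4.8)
The plan is to dispatch existence and uniqueness first, then handle the two implications, with the bulk of the work in $(ii)\Rightarrow(i)$. \textbf{Existence and uniqueness.} As recalled above, $\bar{P}\mapsto\kl{\bar{P}}{P}$ is continuous and strictly m-convex; since $P$ has full support on $\calD\supseteq\calD'$, the boundary terms $\bar{P}(x,x')\log\bar{P}(x,x')\to 0$ as $\bar{P}(x,x')\to 0$, so this objective extends continuously to the compact set of stochastic matrices supported on $\calD'$, and the infimum over the closed set $\calC$ is attained. Uniqueness follows from strict m-convexity: two minimizers would be joined by an m-geodesic lying in $\calC$ (by m-convexity), along which the strictly convex objective could not remain minimal at both endpoints. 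The implication $(i)\Rightarrow(ii)$ is then immediate: discarding the non-negative term $\kl{\bar{P}}{P_0}$ in $(i)$ gives $\kl{\bar{P}}{P}\geq\kl{P_0}{P}$ for every $\bar{P}\in\calC$, so $P_0$ attains the minimum and equals $\IPr_\star^{(\calC)}P$ by uniqueness.

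For $(ii)\Rightarrow(i)$ I would run a first-order variational argument. Fix $\bar{P}\in\calC$ and consider the m-geodesic $t\mapsto P_t\eqdef\gamma^{(m)}_{P_0,\bar{P}}(t)$, which remains in $\calC$ for $t\in[0,1]$ by m-convexity. Writing $Q_t,\pi_t$ for its edge measure and stationary distribution, the defining property of the m-geodesic makes both $Q_t=(1-t)Q_0+t\bar{Q}$ and $\pi_t(x)=\sum_{x'}Q_t(x,x')=(1-t)\pi_0(x)+t\bar{\pi}(x)$ \emph{affine} in $t$. Because $P$ is fully supported on $\calD\supseteq\calD'$ and $Q_t>0$ on $\calD'$ throughout $[0,1]$, the map $f(t)\eqdef\kl{P_t}{P}$ is smooth. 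Since $P_0$ minimizes $\kl{\cdot}{P}$ over $\calC$ and $P_t\in\calC$, we have $f(t)\geq f(0)$ on $[0,1]$, hence $f'(0)\geq 0$.

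The crux is the computation of $f'(0)$. Using $\log P_t=\log Q_t-\log\pi_t$, I would write $\kl{P_t}{P}=\sum_{(x,x')}Q_t\log Q_t-\sum_x\pi_t\log\pi_t-\sum_{(x,x')}Q_t\log P$ and differentiate; the constant terms drop because $\sum\dot{Q}=\sum\dot{\pi}=0$, and the stationary-entropy contribution cancels against part of the edge term thanks to the marginal identity $\sum_{x'}\dot{Q}(x,x')=\dot{\pi}(x)$. This collapse — in which the nonlinearity of $P_t=Q_t/\pi_t$ disappears — is the main obstacle to see cleanly, and it leaves
\begin{equation*}
f'(0)=\sum_{(x,x')\in\calD'}\left(\bar{Q}(x,x')-Q_0(x,x')\right)\log\frac{P_0(x,x')}{P(x,x')}=\kl{\bar{P}}{P}-\kl{\bar{P}}{P_0}-\kl{P_0}{P}.
\end{equation*}
Combining this identity with $f'(0)\geq 0$ gives exactly statement $(i)$ for the arbitrary $\bar{P}\in\calC$, which closes the equivalence.
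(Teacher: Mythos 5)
Your argument is correct. Note that the paper does not actually reprove this proposition --- it is imported verbatim as \citet[Lemma~1]{csiszar1987conditional} --- but your proof is precisely the m-geodesic mirror of the argument the paper \emph{does} give for the dual statement (Proposition~\ref{proposition:pythagorean-inequality-e-convex}, the $\rIPr$-projection onto an e-convex set): fix $\bar{P}\in\calC$, move along the geodesic $t\mapsto\gamma_{P_0,\bar{P}}(t)$ staying inside $\calC$, compute the one-sided derivative of the divergence at $t=0$, and identify it with the Pythagorean defect. Your computation of $f'(0)$ checks out: writing $\kl{P_t}{P}=\sum Q_t\log Q_t-\sum_x\pi_t\log\pi_t-\sum Q_t\log P$, the affine dependence of $Q_t$ and $\pi_t$ on $t$ together with $\sum_{x'}\dot{Q}_t(x,x')=\dot{\pi}_t(x)$ gives $f'(t)=\sum\dot{Q}_t\log(P_t/P)$, hence $f'(0)=\kl{\bar{P}}{P}-\kl{\bar{P}}{P_0}-\kl{P_0}{P}$, and $f'(0)\ge 0$ yields $(i)$; this is arguably cleaner than the e-convex case, where the paper must additionally invoke the derivative of the Perron--Frobenius root. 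Two minor points: your phrase that the entropy term ``cancels'' is loose --- it combines with the edge term rather than vanishing --- though the displayed formula is right; and the existence step deserves one more word of care, since $\calW(\calX,\calD')$ is not itself compact (irreducibility and full support on $\calD'$ can fail in the limit), so one should say explicitly that $\calC$ is closed in the compact ambient set of stochastic matrices supported in $\calD'$ and contained in $\calW(\calX,\calD')$, which is how Csisz\'ar's original treatment (at the level of edge measures) handles it.
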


\begin{figure}%
\centering

\tikzset{every picture/.style={line width=0.75pt}} %

\begin{tikzpicture}[x=0.50pt,y=0.50pt,yscale=-1,xscale=1]

\draw   (197.3,59) -- (333.8,278) -- (60.8,278) -- cycle ;
\draw    (128.8,171) .. controls (168.6,186) and (175.8,224) .. (168.8,278) ;
\draw   (151.72,190.16) -- (162.28,195.84)(159.84,187.72) -- (154.16,198.28) ;
\draw   (150.76,243.76) -- (159.24,252.24)(159.24,243.76) -- (150.76,252.24) ;
\draw   (216.37,157.94) -- (227.63,162.06)(224.06,154.37) -- (219.94,165.63) ;
\draw [color={rgb, 255:red, 208; green, 2; blue, 27 }  ,draw opacity=1 ]   (221.8,160) -- (156.8,193) ;
\draw [shift={(189.3,176.5)}, rotate = 333.08] [fill={rgb, 255:red, 208; green, 2; blue, 27 }  ,fill opacity=1 ][line width=0.08]  [draw opacity=0] (10.72,-5.15) -- (0,0) -- (10.72,5.15) -- (7.12,0) -- cycle    ;
\draw [color={rgb, 255:red, 208; green, 2; blue, 27 }  ,draw opacity=1 ]   (157.8,193) -- (154.8,248) ;
\draw [shift={(156.3,220.5)}, rotate = 273.12] [fill={rgb, 255:red, 208; green, 2; blue, 27 }  ,fill opacity=1 ][line width=0.08]  [draw opacity=0] (10.72,-5.15) -- (0,0) -- (10.72,5.15) -- (7.12,0) -- cycle    ;
\draw [color={rgb, 255:red, 208; green, 2; blue, 27 }  ,draw opacity=1 ]   (154.8,248) .. controls (189.8,216) and (210.8,203) .. (221.8,160) ;
\draw [shift={(196.3,209.26)}, rotate = 312.87] [fill={rgb, 255:red, 208; green, 2; blue, 27 }  ,fill opacity=1 ][line width=0.08]  [draw opacity=0] (10.72,-5.15) -- (0,0) -- (10.72,5.15) -- (7.12,0) -- cycle    ;

\draw (117,200) node [anchor=north west][inner sep=0.75pt]   [align=left] {$\calC$};
\draw (40,108) node [anchor=north west][inner sep=0.75pt]   [align=left] {$\calW(\calX, \calD)$};
\draw (203,139) node [anchor=north west][inner sep=0.75pt]   [align=left] {$P$};
\draw (140,160) node [anchor=north west][inner sep=0.75pt]   [align=left] {$\hlm{\IPr P}$};
\draw (130,240) node [anchor=north west][inner sep=0.75pt]   [align=left] {$\bar{P}$};
\end{tikzpicture}
\hspace{1cm}
\begin{tikzpicture}[x=0.50pt,y=0.50pt,yscale=-1,xscale=1]

\draw   (197.3,59) -- (333.8,278) -- (60.8,278) -- cycle ;
\draw    (128.8,171) .. controls (168.6,186) and (175.8,224) .. (168.8,278) ;
\draw   (151.72,190.16) -- (162.28,195.84)(159.84,187.72) -- (154.16,198.28) ;
\draw   (150.76,243.76) -- (159.24,252.24)(159.24,243.76) -- (150.76,252.24) ;
\draw   (216.37,157.94) -- (227.63,162.06)(224.06,154.37) -- (219.94,165.63) ;
\draw [color={rgb, 255:red, 208; green, 2; blue, 27 }  ,draw opacity=1 ]   (221.8,160) -- (156.8,193) ;
\draw [shift={(189.3,176.5)}, rotate = 153.08] [fill={rgb, 255:red, 208; green, 2; blue, 27 }  ,fill opacity=1 ][line width=0.08]  [draw opacity=0] (10.72,-5.15) -- (0,0) -- (10.72,5.15) -- (7.12,0) -- cycle    ;
\draw [color={rgb, 255:red, 208; green, 2; blue, 27 }  ,draw opacity=1 ]   (157.8,193) -- (154.8,248) ;
\draw [shift={(156.3,220.5)}, rotate = 93.12] [fill={rgb, 255:red, 208; green, 2; blue, 27 }  ,fill opacity=1 ][line width=0.08]  [draw opacity=0] (10.72,-5.15) -- (0,0) -- (10.72,5.15) -- (7.12,0) -- cycle    ;
\draw [color={rgb, 255:red, 208; green, 2; blue, 27 }  ,draw opacity=1 ]   (154.8,248) .. controls (189.8,216) and (210.8,203) .. (221.8,160) ;
\draw [shift={(196.3,209.26)}, rotate = 132.87] [fill={rgb, 255:red, 208; green, 2; blue, 27 }  ,fill opacity=1 ][line width=0.08]  [draw opacity=0] (10.72,-5.15) -- (0,0) -- (10.72,5.15) -- (7.12,0) -- cycle    ;

\draw (117,200) node [anchor=north west][inner sep=0.75pt]   [align=left] {$\calC$};
\draw (40,108) node [anchor=north west][inner sep=0.75pt]   [align=left] {$\calW(\calX, \calD)$};
\draw (203,139) node [anchor=north west][inner sep=0.75pt]   [align=left] {$P$};
\draw (140,160) node [anchor=north west][inner sep=0.75pt]   [align=left] {\hlm{$\rIPr P$}};
\draw (130,240) node [anchor=north west][inner sep=0.75pt]   [align=left] {$\bar{P}$};
\end{tikzpicture}

\caption{Pythagorean inequalities by $\IPr$-projection of $P$ onto m-convex (left), and $\rIPr$-projection of $P$ onto e-convex (right).}
\label{figure:pythagorean-inequalities}
\end{figure}

Let now $\calC \subset \calW(\calX, \calD)$ be non-empty, closed and e-convex.
In the distribution setting, a log-convexity counterpart of Proposition~\ref{proposition:pythagorean-inequality-m-convex} is known to hold \citep[Theorem~1]{csiszar2003information}.
For fixed $P \in \calW(\calX, \calD)$, adapting the terminology therein to the Markovian setting,
we define the \emph{$\rIPr$-projection} (reverse information projection) onto $\calC$ as the mapping
\begin{equation*}
\begin{split}
\hlm{\rIPr^{(\calC)}} \colon \calW(\calX, \calD) \to \calC, \qquad
P \mapsto \argmin_{\bar{P} \in \calC} \kl{P}{\bar{P}}.
\end{split}
\end{equation*}
For a fixed $P \in \calW(\calX, \calD)$, the function $\bar{P} \mapsto \kl{P}{\bar{P}}$ is continuous and strictly e-convex.
We show the following counterpart to Proposition~\ref{proposition:pythagorean-inequality-m-convex}.

\begin{proposition}[Pythagorean inequality -- $rI$-projection onto e-convex]
\label{proposition:pythagorean-inequality-e-convex}
Let $P \in \calW(\calX, \calD)$, and
let $\calC \subset \calW(\calX, \calD')$ with $\calD \subset \calD'$, be non-empty, closed and e-convex (Definition~\ref{definition:geodesically-convex-family}).
Then 
$\hlm{\rIPr^{(\calC)}P}$ exists in the sense where the minimum is attained for a unique element in $\calC$. Let $P_0 \in \calC$, the following two statements are equivalent:
\begin{enumerate}
    \item[$(i)$] For any $\bar{P} \in \calC$, $\kl{P}{\bar{P}} \geq \kl{P}{P_0} + \kl{ P_0}{\bar{P}}$. See Figure~\ref{figure:pythagorean-inequalities} (right).
\item[$(ii)$] $P_0 = \hlm{\rIPr^{(\calC)} P}$.
\end{enumerate}
\end{proposition}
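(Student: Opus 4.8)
The plan is to follow the variational strategy underlying Proposition~\ref{proposition:pythagorean-inequality-m-convex}, but carried out along e-geodesics and driven by the closed-form expression for the divergence along an e-geodesic recorded immediately before the statement. First I would dispatch existence and uniqueness of $\rIPr_\star^{(\calC)}P$. Because $\calD \subset \calD'$, every $\bar{P} \in \calC$ has support containing that of $P$, so $\bar{P} \mapsto \kl{P}{\bar{P}}$ is finite and continuous on $\calC$; it is moreover strictly e-convex and $\calC$ is e-convex, so the restriction of the divergence to $\calC$ is strictly convex along every e-geodesic arc contained in $\calC$. Coercivity comes from the fact that as the transition probability of any edge of $\calD$ tends to $0$ one has $\kl{P}{\bar{P}} \to +\infty$, so minimizing sequences stay in a compact subset of the closed set $\calC$; this yields existence, and strict convexity along e-geodesics forces uniqueness.

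For the implication $(ii)\implies(i)$, fix $\bar{P} \in \calC$ and consider the e-geodesic $\gamma(t) = \gamma^{(e)}_{P_0, \bar{P}}(t)$, which remains in $\calC$ for $t \in [0,1]$ by e-convexity. Using the displayed formula for the divergence along an e-geodesic,
\begin{equation*}
f(t) \eqdef \kl{P}{\gamma(t)} = (1-t)\kl{P}{P_0} + t\kl{P}{\bar{P}} + \log \rho_t,
\end{equation*}
where $\rho_t$ is the PF root of $P_0^{\circ 1-t} \circ \bar{P}^{\circ t}$. Since $\gamma(0) = P_0$ minimizes the divergence over $\calC$ and $f$ is convex on $[0,1]$, its right derivative satisfies $f'(0) \geq 0$. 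The only nonroutine term is $\frac{d}{dt}\log\rho_t\big|_{t=0}$, which I would evaluate by first-order Perron--Frobenius perturbation theory: at $t=0$ the matrix is $P_0$, with simple PF root $\rho_0 = 1$, right eigenvector $\unit\trn$ and left eigenvector the stationary distribution $\pi_0$ (normalized so that $\pi_0 \unit\trn = 1$). The eigenvalue-derivative formula $\dot\rho_0 = \pi_0\,\dot A_0\,\unit\trn$, applied to $A_t = P_0^{\circ 1-t}\circ \bar{P}^{\circ t}$ whose derivative at $t=0$ has entries $P_0(x,x')\log\frac{\bar{P}(x,x')}{P_0(x,x')}$, gives
\begin{equation*}
\frac{d}{dt}\log\rho_t\Big|_{t=0} = \sum_{(x,x')\in\calD'} \pi_0(x)\,P_0(x,x')\,\log\frac{\bar{P}(x,x')}{P_0(x,x')} = -\kl{P_0}{\bar{P}}.
\end{equation*}
Substituting this into $f'(0) = -\kl{P}{P_0} + \kl{P}{\bar{P}} + \frac{d}{dt}\log\rho_t\big|_{t=0} \geq 0$ yields exactly $\kl{P}{\bar{P}} \geq \kl{P}{P_0} + \kl{P_0}{\bar{P}}$, which is statement $(i)$.

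The converse $(i)\implies(ii)$ is immediate: assuming $(i)$, nonnegativity of the divergence gives, for every $\bar{P} \in \calC$, the bound $\kl{P}{\bar{P}} \geq \kl{P}{P_0} + \kl{P_0}{\bar{P}} \geq \kl{P}{P_0}$, so $P_0$ attains the minimum and, by the uniqueness established at the outset, $P_0 = \rIPr_\star^{(\calC)}P$.

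I expect the main obstacle to be the careful justification of the derivative of the log-PF-root term and its identification with $-\kl{P_0}{\bar{P}}$. The eigenvalue perturbation step relies on $P_0$ being irreducible so that the leading eigenvalue is simple with one-dimensional eigenspaces (valid since $\calC \subset \calW(\calX, \calD')$), and one must check that the normalization $\pi_0 \unit\trn = 1$ is the one for which the eigenvalue-derivative formula carries no extra scaling factor. A secondary point deserving care is the existence argument, since the coercivity only controls the $\calD$-edges of $\bar{P}$; verifying that minimizing sequences cannot escape $\calC$ through the remaining directions is where closedness of $\calC$ is genuinely used.
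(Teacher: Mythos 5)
Your proof is correct and follows essentially the same route as the paper: a first-order variational argument along the e-geodesic $\gamma^{(e)}_{P_0,\bar P}$, with the whole content residing in the identity $\partial_t \log\rho_t\at{t=0} = -\kl{P_0}{\bar P}$, which you obtain by direct Perron--Frobenius eigenvalue perturbation while the paper instead invokes the general-$t$ formula $\partial_t\log\rho_t = \sum_{(x,x')}Q_t(x,x')\log\bigl(\bar P(x,x')/P_0(x,x')\bigr)$ from Nagaoka and lets $t\to 0$. The remaining steps (nonnegativity of the one-sided derivative at the minimizer, the easy converse via $D\geq 0$, and uniqueness from strict e-convexity) match the paper's argument.
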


\begin{proof}
See Section~\ref{proof:proposition-pythagorean-inequality-e-convex}.
\end{proof}

\begin{remark}
When $\calC$ forms an m-family (resp. e-family), the Pythagorean inequality in Proposition~\ref{proposition:pythagorean-inequality-m-convex} (resp. Proposition~\ref{proposition:pythagorean-inequality-e-convex}) becomes an equality \citep[Corollary~4.7, Corollary~4.8]{hayashi2014information}. When $\calC$ is m-convex (resp. e-convex), the $\IPr$-projection (resp. $\rIPr$-projection) is commonly referred to as the e-projection (resp. m-projection) \citep{amari2007methods}. This can be understood by verifying via the Pythagorean inequality that for an m-convex $\calC$ and $P_0 = \hlm{\IPr P}$, letting $P_t = \stoch(P^{\circ 1 - t} \circ P_0^{\circ t})$, leads to $\hlm{\IPr P_t} = P_0$ for any $0 \leq t \leq 1$. Similarly, for an e-convex $\calC$, and $\hlm{\rIPr P} = P_0$, letting $Q_t = (1 - t) P + t P_0$ yields $\hlm{\rIPr P_t} = P_0$ for any $0 \leq t \leq 1$.
\end{remark}

\subsection{Information projection as a form of data processing}

A natural question is whether taking the $\rIPr$-projection onto an m-convex set also yields practical inequalities.
This is the case for example in the context of distributions, where the following \emph{four-point property} is known to hold with respect to the informational divergence.
\begin{proposition}[Four-point property, distribution setting {(\citet{csiszar1984information},
\citet[Theorem~3.4]{csiszar2004information}, \citet[Theorem~8]{adamvcik2014information})}]
\label{proposition:four-point-property-distribution}
Let $\calC \subset \calP(\calX)$ be non-empty, closed and convex. For any $\mu, \mu' \in \calP(\calX)$ and $\bar{\mu} \in \calC$. When $\mu_0 = \hlm{\rIPr^{(\calC)} \mu}$, 
it holds that
\begin{equation*}
    \kl{\mu'}{\mu_0} \leq \kl{\mu' }{\bar{\mu}} + \kl{\mu'}{\mu}.
\end{equation*}
\end{proposition}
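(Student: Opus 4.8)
The plan is to combine the first-order optimality condition satisfied by the reverse information projection $\mu_0$ with a single nonnegativity-of-divergence estimate. First I would establish the variational characterization of $\mu_0 = \rIPr_\star^{(\calC)}\mu = \argmin_{\bar\nu \in \calC} \kl{\mu}{\bar\nu}$. Since $\calC$ is closed and convex and $\bar\nu \mapsto \kl{\mu}{\bar\nu}$ is continuous and strictly convex in its second argument, the minimizer exists, is unique, and satisfies $\mu_0(x) > 0$ on the support of $\mu$ (otherwise the divergence would be infinite). Perturbing along the segment $\nu_t = (1-t)\mu_0 + t\bar\mu \in \calC$ for an arbitrary $\bar\mu \in \calC$, differentiating $t \mapsto \kl{\mu}{\nu_t}$, and evaluating the one-sided derivative at $t = 0^+$, minimality forces it to be nonnegative, which rearranges to the key inequality $\sum_{x} \mu(x)\,\bar\mu(x)/\mu_0(x) \le 1$.

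Next I would expand the three-term combination directly. Cancelling the $\log\mu'(x)$ factors term by term yields
$$\kl{\mu'}{\bar\mu} + \kl{\mu'}{\mu} - \kl{\mu'}{\mu_0} = \sum_x \mu'(x) \log \frac{\mu'(x)\,\mu_0(x)}{\bar\mu(x)\,\mu(x)}.$$
Introducing the tilted probability vector $\tilde\nu(x) = Z^{-1}\mu(x)\bar\mu(x)/\mu_0(x)$ with normalizer $Z = \sum_x \mu(x)\bar\mu(x)/\mu_0(x)$, the right-hand side becomes $\kl{\mu'}{\tilde\nu} - \log Z$. Since $\kl{\mu'}{\tilde\nu} \ge 0$ and, crucially, $Z \le 1$ by the optimality inequality of the first step, we get $-\log Z \ge 0$, so the whole expression is nonnegative; this is exactly the claimed inequality $\kl{\mu'}{\mu_0} \le \kl{\mu'}{\bar\mu} + \kl{\mu'}{\mu}$.

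The main obstacle is not analytic but bookkeeping around supports and finiteness: I must ensure $\tilde\nu$ is well defined (i.e.\ $\mu_0(x) > 0$ wherever $\mu(x)\bar\mu(x) > 0$, which follows from finiteness of $\kl{\mu}{\mu_0}$), restrict all sums to $\operatorname{supp}\mu$ so that the one-sided derivative exists even when $\mu_0$ vanishes off the support of $\mu$, and dispose of the degenerate case where the right-hand side is $+\infty$ (where the inequality is trivial). A secondary point to get right is the direction of the optimality inequality: because $\rIPr$ minimizes the second argument of the divergence, the derivative bound produces $\sum_x \mu(x)\bar\mu(x)/\mu_0(x) \le 1$ rather than an equality, and it is precisely this slack (as opposed to the equality one would obtain for an e-family, cf.\ the remark following Proposition~\ref{proposition:pythagorean-inequality-e-convex}) that supplies $-\log Z \ge 0$. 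The argument uses only convexity of $\calC$ and nonnegativity of the divergence, and so transfers without change to distributions on any finite alphabet.
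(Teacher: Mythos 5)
The paper does not actually prove this proposition; it is imported verbatim from the literature (Csisz\'ar 1984, Csisz\'ar--Mat\'u\v{s} 2004), so there is no in-paper argument to compare against. Your proof is correct and is essentially the standard one from those references: the first-order optimality condition along the mixture segment $\nu_t=(1-t)\mu_0+t\bar\mu\in\calC$ gives $Z=\sum_x\mu(x)\bar\mu(x)/\mu_0(x)\le 1$, and the three-divergence combination collapses to $\kl{\mu'}{\tilde\nu}-\log Z\ge 0$ after normalizing the tilted measure. Your handling of the side conditions is also right: $\mu_0>0$ on $\operatorname{supp}\mu$ by finiteness of the minimum, the case where $\operatorname{supp}\mu'\not\subset\operatorname{supp}\bar\mu\cap\operatorname{supp}\mu$ makes the right-hand side infinite and hence trivial, and in the remaining case $0<Z\le 1$ so $\tilde\nu$ is a genuine probability vector. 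It is worth noting that your Step 1 is the distribution-level analogue of exactly the technique the paper \emph{does} use for its Markovian Proposition~\ref{proposition:pythagorean-inequality-e-convex} (differentiating the divergence along a geodesic contained in $\calC$ and invoking minimality at the endpoint), with the ordinary convex combination playing the role that the e-geodesic plays there; the only genuinely new ingredient in your argument is the normalization trick converting the residual sum into $\kl{\mu'}{\tilde\nu}-\log Z$, which is what upgrades the two-point Pythagorean inequality to the four-point property.
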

We first wish to extend the aforementioned property to the geometry of \hl{stochastic matrices}.
We let $\calC \subset \calW(\calX, \calD)$ be non-empty, closed and m-convex, and
for any $P, P' \in \calW(\calX, \calD)$ and $\bar{P} \in \mathcal{C}$,
we will say that the four-point property holds for the
quadruple $(P, P', \calC, \bar{P})$ whenever 
it holds that
\begin{equation}
\label{eq:four-point-property}
    \kl{P'}{P_0} \leq \kl{P' }{\bar{P}} + \kl{P'}{P},
\end{equation}
with $P_0 = \hlm{\rIPr^{(\calC)} P}$.

\begin{example}
When $\calC \subset \calW_{\bis}(\calX, \calD)$, $P, P' \in \calW_{\bis}(\calX, \calD)$, the
four-point property always holds.
\end{example}

\begin{example}
\label{example:divergence-inequality-multiplicative-additive-reversiblizations}
We derive an inequality for divergences involving two irreducible chains and their respective additive and multiplicative reversiblizations (Figure~\ref{figure:four-point-property}, left).
Let $\calC = \calW \rev(\calX, \calD)$, which is known to form an em-family (both an e-family and an m-family) \citep{wolfer2021information}.
Consider $P_+ = \frac{P + P^\star }{2}$the m-projection (i.e. the $\rIPr$-projection) of $P$ onto $\calW \rev$, and $P'_\times = P'(P')^\star$ the multiplicative reversiblization of $P'$ (note that $P'_\times$ is not the $\IPr$-projection of $P'$).
Writing $\pi, \pi', \pi'_\times, \pi_+$ for the stationary distributions of the chains under consideration,
we note that by construction, $\pi = \pi_+$, $\pi' = \pi'_\times$, while $\pi$ and $\pi'$ generally need not be equal.
Straightforward calculations yield
\begin{equation*}
\kl{P'}{P_+} \leq \kl{P'}{P'_\times} + \kl{P' }{ P}.
\end{equation*}
\end{example}
We now show that under the four-point property, the operation of taking m-projections onto a doubly autoparallel submanifold of \hl{stochastic matrices} can only bring \hl{stochastic matrices} closer together (Figure~\ref{figure:four-point-property}, right).
\begin{proposition}[Contraction property for m-projection onto em-family]
\label{proposition:data-processing-by-projecting}
Let $\calV_{em}$ be an em-family in $\calW(\calX, \calD)$ (doubly autoparallel submanifold),  $P, P' \in \calW(\calX, \calD)$, and  let $P_m$ (resp. $P'_m$) be the m-projection of $P$ (resp. $P'$) onto $\calV_{em}$.
Suppose that the four-point property holds for the quadruple $(P, P', \calV_{em}, P'_m)$. Then it holds that
$$\kl{P_m'}{P_m} \leq \kl{P'}{P}.$$
\end{proposition}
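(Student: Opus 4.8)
The plan is to avoid any real computation and instead combine two facts about the em-family $\calV_{em}$: the Pythagorean \emph{equality} that holds on doubly autoparallel submanifolds, and the assumed four-point property. A single substitution followed by a cancellation then yields the contraction. First I would recast the m-projections in the notation of this section. Since $\calV_{em}$ is an em-family, it is in particular an e-family and hence e-convex, so by the remark following Proposition~\ref{proposition:pythagorean-inequality-e-convex} the m-projection of a kernel is precisely its reverse information projection onto $\calV_{em}$; that is, $P_m = \rIPr_\star^{(\calV_{em})}P$ and $P_m' = \rIPr_\star^{(\calV_{em})}P'$, both of which lie in $\calV_{em}$.

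Next I would extract the Pythagorean identity attached to the projection of $P'$. Applying Proposition~\ref{proposition:pythagorean-inequality-e-convex} with first argument $P'$ and minimizer $P_0 = P_m' = \rIPr_\star^{(\calV_{em})}P'$, and using that the inequality there degenerates to an equality because $\calV_{em}$ is an e-family, I obtain for the admissible test point $P_m \in \calV_{em}$ the relation
\begin{equation*}
\kl{P'}{P_m} = \kl{P'}{P_m'} + \kl{P_m'}{P_m}.
\end{equation*}
On the other hand, the hypothesis supplies the four-point property for the quadruple $(P, P', \calV_{em}, P_m')$. Instantiating the definition in \eqref{eq:four-point-property} with $\bar P = P_m'$ and $P_0 = \rIPr_\star^{(\calV_{em})}P = P_m$ gives
\begin{equation*}
\kl{P'}{P_m} \leq \kl{P'}{P_m'} + \kl{P'}{P}.
\end{equation*}
Substituting the Pythagorean identity into the left-hand side and cancelling the common finite term $\kl{P'}{P_m'}$ leaves exactly $\kl{P_m'}{P_m} \leq \kl{P'}{P}$, which is the asserted inequality.

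There is no genuine obstacle here beyond careful bookkeeping; the argument is essentially a two-line cancellation once the ingredients are aligned. The only points demanding care are the directional choices: confirming that ``m-projection onto $\calV_{em}$'' means the $\rIPr$-projection (because $\calV_{em}$ is e-convex), that the Pythagorean equality is the one centered at $P_m'$ (the projection of $P'$, not of $P$) and evaluated at the third point $P_m$, and that the four-point property is invoked with $\bar P = P_m'$ so that its $P_0$ is $P_m$. Cancellation is legitimate because all kernels involved are irreducible over $(\calX,\calD)$ and $P_m' \in \calV_{em} \subset \calW(\calX,\calD)$ has full support on $\calD$, so $\kl{P'}{P_m'}$ is finite.
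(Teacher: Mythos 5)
Your proposal is correct and follows essentially the same route as the paper's proof: the Pythagorean equality at $P_m'$ evaluated at the test point $P_m$ (valid because $\calV_{em}$ is an e-family), combined with the assumed four-point property instantiated at $\bar P = P_m'$, and a cancellation of $\kl{P'}{P_m'}$. Your additional remarks on identifying the m-projection with the $\rIPr$-projection and on the finiteness needed for the cancellation are sound bookkeeping that the paper leaves implicit.
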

\begin{proof}
Since $P'_m$ is the m-projection onto the e-family $\calV_{em}$, the Pythagorean identity yields
\begin{equation*}
    \kl{P'}{P_m} = \kl{P'}{P'_m} + \kl{P'_m}{P_m}.
\end{equation*}
Combining with \eqref{eq:four-point-property} yields the claim.
\end{proof}
\begin{figure}[ht!]
\centering

\tikzset{every picture/.style={line width=0.75pt}} %

\begin{tikzpicture}[x=0.60pt,y=0.60pt,yscale=-1,xscale=1]

\draw   (175.84,151) -- (352.8,151) -- (276.96,242.03) -- (100,242.03) -- cycle ;
\draw   (204.76,75.76) -- (213.24,84.24)(213.24,75.76) -- (204.76,84.24) ;
\draw   (177.76,185.76) -- (186.24,194.24)(186.24,185.76) -- (177.76,194.24) ;
\draw   (285.76,101.76) -- (294.24,110.24)(294.24,101.76) -- (285.76,110.24) ;
\draw   (225.76,208.76) -- (234.24,217.24)(234.24,208.76) -- (225.76,217.24) ;
\draw  [dash pattern={on 0.84pt off 2.51pt}]  (182.8,184) .. controls (186.8,146) and (196.8,108) .. (205.8,86) ;
\draw [shift={(191.21,134.16)}, rotate = 102.54] [fill={rgb, 255:red, 0; green, 0; blue, 0 }  ][line width=0.08]  [draw opacity=0] (10.72,-5.15) -- (0,0) -- (10.72,5.15) -- (7.12,0) -- cycle    ;
\draw  [color={rgb, 255:red, 0; green, 0; blue, 0 }  ,draw opacity=1 ] (176,178) -- (176,188) -- (166,188) ;
\draw [color={rgb, 255:red, 208; green, 2; blue, 27 }  ,draw opacity=1 ]   (185.8,184) .. controls (203.8,148) and (244.8,122) .. (283.8,108) ;
\draw [shift={(227.97,136.99)}, rotate = 143.22] [fill={rgb, 255:red, 208; green, 2; blue, 27 }  ,fill opacity=1 ][line width=0.08]  [draw opacity=0] (10.72,-5.15) -- (0,0) -- (10.72,5.15) -- (7.12,0) -- cycle    ;
\draw [color={rgb, 255:red, 208; green, 2; blue, 27 }  ,draw opacity=1 ]   (216,80) .. controls (247.8,76) and (269.8,86) .. (283.8,100) ;
\draw [shift={(252.44,82)}, rotate = 192.99] [fill={rgb, 255:red, 208; green, 2; blue, 27 }  ,fill opacity=1 ][line width=0.08]  [draw opacity=0] (10.72,-5.15) -- (0,0) -- (10.72,5.15) -- (7.12,0) -- cycle    ;
\draw [color={rgb, 255:red, 208; green, 2; blue, 27 }  ,draw opacity=1 ]   (230.8,207) .. controls (234.6,186) and (255.8,145) .. (283.8,112) ;
\draw [shift={(252.29,156.78)}, rotate = 119.64] [fill={rgb, 255:red, 208; green, 2; blue, 27 }  ,fill opacity=1 ][line width=0.08]  [draw opacity=0] (10.72,-5.15) -- (0,0) -- (10.72,5.15) -- (7.12,0) -- cycle    ;

\draw (188,70) node [anchor=north west][inner sep=0.75pt]   [align=left] {$P$};
\draw (299,88) node [anchor=north west][inner sep=0.75pt]   [align=left] {$P'$};
\draw (161,196) node [anchor=north west][inner sep=0.75pt]   [align=left] {$P_+$};
\draw (123,218) node [anchor=north west][inner sep=0.75pt]   [align=left] {$\calW \rev(\calY, \calE)$};
\draw (239,209) node [anchor=north west][inner sep=0.75pt]   [align=left] {$P'_\times$};

\end{tikzpicture}
\hspace{1cm}
\begin{tikzpicture}[x=0.60pt,y=0.60pt,yscale=-1,xscale=1]

\draw   (175.84,151) -- (352.8,151) -- (276.96,242.03) -- (100,242.03) -- cycle ;
\draw   (219.76,48.76) -- (228.24,57.24)(228.24,48.76) -- (219.76,57.24) ;
\draw   (177.76,185.76) -- (186.24,194.24)(186.24,185.76) -- (177.76,194.24) ;
\draw   (294.76,116.76) -- (303.24,125.24)(303.24,116.76) -- (294.76,125.24) ;
\draw   (268.76,189.76) -- (277.24,198.24)(277.24,189.76) -- (268.76,198.24) ;
\draw  [dash pattern={on 0.84pt off 2.51pt}]  (182.8,184) .. controls (186.8,146) and (210.8,82) .. (219.8,60) ;
\draw [shift={(197.83,120.96)}, rotate = 107.32] [fill={rgb, 255:red, 0; green, 0; blue, 0 }  ][line width=0.08]  [draw opacity=0] (10.72,-5.15) -- (0,0) -- (10.72,5.15) -- (7.12,0) -- cycle    ;
\draw  [color={rgb, 255:red, 0; green, 0; blue, 0 }  ,draw opacity=1 ] (176,178) -- (176,188) -- (166,188) ;
\draw [color={rgb, 255:red, 208; green, 2; blue, 27 }  ,draw opacity=1 ]   (185.8,187) .. controls (203.8,179) and (246.8,180) .. (266,189) ;
\draw [shift={(226.37,181.65)}, rotate = 180.68] [fill={rgb, 255:red, 208; green, 2; blue, 27 }  ,fill opacity=1 ][line width=0.08]  [draw opacity=0] (10.72,-5.15) -- (0,0) -- (10.72,5.15) -- (7.12,0) -- cycle    ;
\draw [color={rgb, 255:red, 208; green, 2; blue, 27 }  ,draw opacity=1 ]   (229.8,54) .. controls (256.8,62) and (286.8,95) .. (296.8,114) ;
\draw [shift={(268.21,78.49)}, rotate = 222.06] [fill={rgb, 255:red, 208; green, 2; blue, 27 }  ,fill opacity=1 ][line width=0.08]  [draw opacity=0] (10.72,-5.15) -- (0,0) -- (10.72,5.15) -- (7.12,0) -- cycle    ;
\draw [color={rgb, 255:red, 0; green, 0; blue, 0 }  ,draw opacity=1 ] [dash pattern={on 0.84pt off 2.51pt}]  (273.8,186) .. controls (276.8,166) and (281.8,148) .. (294.8,128) ;
\draw [shift={(281.05,155.57)}, rotate = 108.23] [fill={rgb, 255:red, 0; green, 0; blue, 0 }  ,fill opacity=1 ][line width=0.08]  [draw opacity=0] (10.72,-5.15) -- (0,0) -- (10.72,5.15) -- (7.12,0) -- cycle    ;
\draw  [color={rgb, 255:red, 0; green, 0; blue, 0 }  ,draw opacity=1 ] (289,194) -- (279,194) -- (279,184) ;

\draw (225,60) node [anchor=north west][inner sep=0.75pt]   [align=left] {$P$};
\draw (306,113) node [anchor=north west][inner sep=0.75pt]   [align=left] {$P'$};
\draw (161,196) node [anchor=north west][inner sep=0.75pt]   [align=left] {$P_m$};
\draw (123,218) node [anchor=north west][inner sep=0.75pt]   [align=left] {$\calV_{em}$};
\draw (271,199) node [anchor=north west][inner sep=0.75pt]   [align=left] {$P_m'$};

\end{tikzpicture}
\caption{
Illustration of the four-point property for
multiplicative and additive reversiblization (left). Interpretation of m-projection as a form of data-processing (right).}
\label{figure:four-point-property}
\end{figure}
This contractive property is similar to the one enjoyed by nearest-point projections onto convex sets in Hilbert spaces.
We now briefly see how 
m-projecting
can be interpreted as a form of data-processing.
Let $\Lambda_\star$ be a memoryless Markov embedding, and define
\begin{equation*}
    \calJ \eqdef \set{ \Lambda_\star \bar{P} \colon \bar{P} \in \calW(\calX, \calD) }.
\end{equation*}
Since $\calW(\calX, \calD)$ forms an em-family, and $\Lambda_\star$ is e-geodesically and m-geodesically affine (Theorem~\ref{theorem:exponential-embedding-geodesic-affinity}, Lemma~\ref{lemma:memoryless-embedding-m-geodesic-affine}),
$\calJ$ also forms an em-family.
Let $P, P' \in \calW_\kappa(\calY , \calE)$,
and $P_m ,P'_m$ the m-projections of $P, P'$ onto $\calJ$.
We henceforth suppose that the four-point property holds in our context for the quadruple $(P,P', \calJ, P'_m)$. In this case, the m-projections of $P$ and $P'$ are readily obtained by composition of lumping and embedding by $\Lambda_\star$ (Lemma~\ref{lemma:m-projection-as-lumping-embedding}).
By Proposition~\ref{proposition:data-processing-by-projecting}, we then recover the data-processing inequality
\begin{equation*}
    \kl{P}{P'} \geq \kl{\Lambda_\star \kappa_\star P}{\Lambda_\star \kappa_\star P'} = \kl{\kappa_\star P}{\kappa_\star P'}.
\end{equation*}
\begin{lemma}
\label{lemma:m-projection-as-lumping-embedding}
\begin{equation*}
\begin{split}
    P_m &\eqdef  \argmin_{\tilde{P} \in \calJ} \kl{P}{\tilde{P}} = \Lambda_\star \kappa_\star P, \\
    P'_m &\eqdef  \argmin_{\tilde{P} \in \calJ} \kl{P'}{\tilde{P}} = \Lambda_\star \kappa_\star P'.
\end{split}
\end{equation*}
\end{lemma}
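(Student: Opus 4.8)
The plan is to reduce the minimization defining $P_m$ over the curved family $\calJ$ to an ordinary reverse information projection on $\calW(\calX, \calD)$, by peeling off the contribution of the embedding coefficients, which will not depend on the free parameter. Since a memoryless Markov embedding $L_\star$ is in particular a Markov embedding, it is a diffeomorphism onto $\calJ$; hence ranging over $\tilde P \in \calJ$ is the same as ranging over $\bar P \in \calW(\calX, \calD)$ via $\tilde P = L_\star \bar P$, and it suffices to identify the $\bar P$ minimizing $\kl{P}{L_\star \bar P}$.

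First I would expand $\kl{P}{L_\star \bar P}$ using $L_\star \bar P(y,y') = \bar P(\kappa(y),\kappa(y')) L(y')$ from the definition of a memoryless Markov embedding. Writing $\log L_\star \bar P(y,y') = \log \bar P(\kappa(y),\kappa(y')) + \log L(y')$, the terms carrying $\log P(y,y')$ and $\log L(y')$ are manifestly independent of $\bar P$ and may be collected into a constant $C$. The remaining $\bar P$-dependent contribution is $-\sum_{(y,y')\in\calE}\pi(y)P(y,y')\log\bar P(\kappa(y),\kappa(y'))$. Regrouping this sum over the lumped edges, i.e. collecting all $(y,y')$ with $(\kappa(y),\kappa(y')) = (x,x')$ and invoking Corollary~\ref{corollary:lumped-stationary-distribution-and-edge-measure}, the inner sum satisfies $\sum_{y\in\calS_x, y'\in\calS_{x'}}\pi(y)P(y,y') = \kappa_\star Q(x,x') = \kappa_\star\pi(x)\,\kappa_\star P(x,x')$, where I use that $P$ is $\kappa$-lumpable to guarantee $\kappa_\star P \in \calW(\calX, \calD)$. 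This shows that the $\bar P$-dependent part of $\kl{P}{L_\star \bar P}$ coincides exactly with that of $\kl{\kappa_\star P}{\bar P}$, so that
\begin{equation*}
\kl{P}{L_\star \bar P} = \kl{\kappa_\star P}{\bar P} + C,
\end{equation*}
with $C$ independent of $\bar P$. Minimizing the right-hand side over $\bar P \in \calW(\calX, \calD)$, non-negativity of the divergence rate and its vanishing precisely when the two kernels coincide force the minimizer $\bar P = \kappa_\star P$, while strict e-convexity of $\bar P \mapsto \kl{\kappa_\star P}{\bar P}$ guarantees uniqueness; hence $P_m = L_\star \kappa_\star P$, and the identical argument applied to $P'$ yields $P'_m = L_\star \kappa_\star P'$.

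The computation is essentially routine; the one point requiring care is the bookkeeping that isolates the $\bar P$-dependent part of the divergence and the use of lumpability of $P$ to ensure the candidate minimizer $\kappa_\star P$ is feasible. Alternatively, the claim could be verified through the Pythagorean equality for the m-projection onto the e-family $\calJ$: with $P_0 = L_\star\kappa_\star P$ one finds $\kl{P}{P_0} = C$ (taking $\bar P = \kappa_\star P$ above), while $\kl{P_0}{L_\star\bar P} = \kl{\kappa_\star P}{\bar P}$ by divergence preservation (Lemma~\ref{lemma:markov-embedding-preserve-information-geometry}), so that $\kl{P}{L_\star\bar P} = \kl{P}{P_0} + \kl{P_0}{L_\star\bar P}$ holds for every $\bar P$, which is exactly the characterization of the m-projection onto an e-convex set.
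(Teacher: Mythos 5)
Your proposal is correct and follows essentially the same route as the paper: both arguments isolate the $\bar{P}$-dependent part of $\kl{P}{\Lambda_\star\bar{P}}$, regroup the sum over lumped edges via Corollary~\ref{corollary:lumped-stationary-distribution-and-edge-measure}, and arrive at the decomposition $\kl{P}{\Lambda_\star\bar{P}} = \kl{P}{\Lambda_\star\kappa_\star P} + \kl{\kappa_\star P}{\bar{P}}$, from which non-negativity of the second term yields the minimizer. Your constant $C$ is exactly the paper's residual term $\kl{P}{\Lambda_\star\kappa_\star P}$, so the two write-ups differ only in presentation.
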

\begin{proof}
Let $\tilde{P} \in \calJ$.
\begin{equation*}
\begin{split}
    \kl{P}{\tilde{P}} &= \sum_{y,y' \in \calE} Q(y,y') \log \frac{P(y,y')}{\tilde{P}(y,y')} \\ 
    &= \sum_{y,y' \in \calE} Q(y,y') \log \frac{P(y,y')}{\Lambda_\star \kappa_\star P(y,y')} + \sum_{y,y' \in \calE} Q(y,y') \log \frac{\Lambda_\star \kappa_\star P(y,y')}{\tilde{P}(y,y')} \\
    &= \kl{P}{\Lambda_\star \kappa_\star P} + \sum_{y,y' \in \calE} Q(y,y') \log \frac{\kappa_\star P(\kappa(y),\kappa(y')) \Lambda(y,y')}{\kappa_\star \tilde{P}(\kappa(y),\kappa(y')) \Lambda(y,y')} \\ 
    &= \kl{P}{\Lambda_\star \kappa_\star P} + \sum_{x,x' \in \calD} \left( \sum_{y \in \calS_x, y' \in \calS_{x'}} Q(y,y') \right) \log \frac{\kappa_\star P(\kappa(y),\kappa(y'))}{\kappa_\star \tilde{P}(\kappa(y),\kappa(y'))} \\ 
    &= \kl{P}{\Lambda_\star \kappa_\star P} + \kl{\kappa_\star P}{ \kappa_\star \tilde{P}} \geq \kl{P}{\Lambda_\star \kappa_\star P},\\
\end{split}
\end{equation*}
and the claim holds since $\Lambda_\star \kappa_\star P \in \calJ$.
\end{proof}

\begin{example}
\label{example:additive-reversiblizations-data-processing}
Irreducible \hl{stochastic matrices} can only be brought closer together by taking their additive reversiblization, which corresponds to the
m-projection onto $\calW \rev(\calX, \calD)$.
Indeed, without even relying on the four-point property, we directly prove by joint convexity of the KL divergence in the context of distributions that
\begin{equation*}
\begin{split}
    D((P_0 + P_0^\star)/2 \| (P_1 + P_1^\star)/2) &= D((Q_0 + Q_0^\star)/2 \| (Q_1 + Q_1^\star)/2) - D(\pi_0 \| \pi_1) \\ 
    &\leq \frac{1}{2} D(Q_0 \| Q_1) +  \frac{1}{2} D(Q_0^\star \| Q_1^\star) - D(\pi_0 \| \pi_1) \\
    &= D(Q_0 \| Q_1)   - D(\pi_0 \| \pi_1) = D(P_0 \| P_1).
\end{split}
\end{equation*}
\end{example}

\section{Information geometry of lumpable stochastic matrices}
\label{section:information-geometry-lumpable}
Many important families of \hl{stochastic matrices} (e.g. doubly stochastic matrices, symmetric matrices, reversible matrices,...) are known to enjoy favorable geometrical features  \citep[Table~1]{wolfer2021information}.
In this section we analyze the geometrical structure of the family $\calW_\kappa(\calY, \calE)$ of lumpable \hl{stochastic matrices}.

\hlp{\subsection{The foliated manifold of lumpable stochastic matrices}}
\label{section:foliations}
Recall that in the example at Lemma~\ref{lemma:hudson-breaks-m-structure}, we inspected the nature of the geodesic midpoint $\gamma^{(m)}_{P_0, P_1}(1/2)$ where $P_0, P_1 \in \calW_{h}(\calX, H_{\calX^2})$ with $\calX = \set{0, 1}$ and $h$ is the Hudson lumping. There, we found that this point is not lumpable, i.e. $\gamma^{(m)}_{P_0, P_1}(1/2) \not \in \calW_\kappa(\calX, H_{\calX^2})$, thereby already showing that lumpable \hl{stochastic matrices} do not generally form m-families \footnote{It was incorrectly stated in \citet[Example~3]{hayashi2014information} that a special class of lumpable \hl{stochastic matrices} (therein referred to as ``non-hidden") forms an m-family.}.
Similarly, it is possible to numerically produce pairs of lumpable transition \hl{matrices} over a ternary alphabet, with an e-geodesic passing through them that leaves the manifold of lumpable \hl{stochastic matrices}.
It is then a consequence of \citet[Corollary~3]{nagaoka2017exponential} that lumpable \hl{stochastic matrices} do not form e-families either.

A foliation is the decomposition of a manifold into a union of connected but disjoint submanifolds, called leaves, all sharing the same dimension.
See for example \citet[Chapter~19]{lee2013smooth} for a thorough exposition.
The concept of mutually dual foliations and mixed coordinate systems play a significant role in information geometry \citep[Section~3.7]{amari2007methods}.
Let us fix some origin $\bar{P}_{0} \in \calW(\calX, \calD)$, and let
\begin{equation}
\label{eq:family-L}
    \calL(\bar{P}_{0}) \eqdef \set{ P \in \calW_\kappa(\calY, \calE) \colon \kappa_\star P = \bar{P}_{0}},
\end{equation}
be the submanifold in $\calW_\kappa(\calY, \calE)$ of all \hl{stochastic matrices} that $\kappa$-lump into $\bar{P}_{0}$.
For any $P \in \calL(\bar{P}_{0})$, following Lemma~\ref{lemma:construct-canonical-embedding}, recall that we can construct the canonical embedding $\Lambda^{(P)}_\star$, which verifies $\Lambda^{(P)}_\star \kappa_\star P = P$.
For $P_{\origin} \in \calW_\kappa(\calY, \calE)$, we can then define
\begin{equation}
\label{eq:family-J}
    \calJ(P_{\origin}) \eqdef \set{ \Lambda_\star^{(P_{\origin})}\bar{P} \colon \bar{P} \in \calW(\calX, \calD) } \subset \calW_\kappa(\calY, \calE)
\end{equation}
to be the image of the entire manifold $\calW(\calX, \calD)$ by the embedding $\Lambda^{(P_{\origin})}_\star$. 

\begin{figure}%
\centering

\tikzset{every picture/.style={line width=0.75pt}} %

\begin{tikzpicture}[x=0.40pt,y=0.40pt,yscale=-1,xscale=1]

\draw   (307,181) .. controls (307,128.53) and (365.16,86) .. (436.9,86) .. controls (508.64,86) and (566.8,128.53) .. (566.8,181) .. controls (566.8,233.47) and (508.64,276) .. (436.9,276) .. controls (365.16,276) and (307,233.47) .. (307,181) -- cycle ;
\draw   (50,91) .. controls (50,61.18) and (84.88,37) .. (127.9,37) .. controls (170.92,37) and (205.8,61.18) .. (205.8,91) .. controls (205.8,120.82) and (170.92,145) .. (127.9,145) .. controls (84.88,145) and (50,120.82) .. (50,91) -- cycle ;
\draw [color={rgb, 255:red, 208; green, 2; blue, 27 }  ,draw opacity=1 ][line width=0.08]    (311.8,206) .. controls (337.8,156) and (518.8,151) .. (564.8,197) ;
\draw    (436.8,165) .. controls (476.6,135.15) and (263.94,86.49) .. (156.41,92.9) ;
\draw [shift={(154.8,93)}, rotate = 356.26] [fill={rgb, 255:red, 0; green, 0; blue, 0 }  ][line width=0.08]  [draw opacity=0] (10.72,-5.15) -- (0,0) -- (10.72,5.15) -- (7.12,0) -- cycle    ;
\draw   (145.76,88.76) -- (154.24,97.24)(154.24,88.76) -- (145.76,97.24) ;
\draw [color={rgb, 255:red, 74; green, 144; blue, 226 }  ,draw opacity=1 ][line width=0.08]    (436.9,276) .. controls (338.8,201) and (396.9,116) .. (436.9,86) ;
\draw   (379.76,166.76) -- (388.24,175.24)(388.24,166.76) -- (379.76,175.24) ;
\draw    (156.8,96) .. controls (217.19,124.71) and (299.14,155.38) .. (373.55,167.63) ;
\draw [shift={(375.8,168)}, rotate = 189.09] [fill={rgb, 255:red, 0; green, 0; blue, 0 }  ][line width=0.08]  [draw opacity=0] (10.72,-5.15) -- (0,0) -- (10.72,5.15) -- (7.12,0) -- cycle    ;
\draw    (127.9,145) .. controls (189.18,234.1) and (319.17,247.73) .. (389.68,225.68) ;
\draw [shift={(391.8,225)}, rotate = 161.81] [fill={rgb, 255:red, 0; green, 0; blue, 0 }  ][line width=0.08]  [draw opacity=0] (10.72,-5.15) -- (0,0) -- (10.72,5.15) -- (7.12,0) -- cycle    ;

\draw (66,06) node [anchor=north west][inner sep=0.75pt]   [align=left] {$\calW(\calX, \calD)$};
\draw (447,56) node [anchor=north west][inner sep=0.75pt]   [align=left] {$\calW_\kappa(\calY, \calE)$};
\draw (276,80) node [anchor=north west][inner sep=0.75pt]   [align=left] {$\kappa_\star$};
\draw (115,65) node [anchor=north west][inner sep=0.75pt]   [align=left] {$\bar{P}_{0}$};
\draw (479,140) node [anchor=north west][inner sep=0.75pt]  [color={rgb, 255:red, 208; green, 2; blue, 27 }  ,opacity=1 ] [align=left] {$\calL(\bar{P}_{0})$};
\draw (424,233) node [anchor=north west][inner sep=0.75pt]   [align=left] {\textcolor[rgb]{0.29,0.56,0.89}{$\calJ(P_{\origin})$}};
\draw (393,172) node [anchor=north west][inner sep=0.75pt]   [align=left] {$P_{\origin}$};
\draw (191,136) node [anchor=north west][inner sep=0.75pt]   [align=left] {$\Lambda^{(P_{\origin})}_\star$};
\draw (142,209) node [anchor=north west][inner sep=0.75pt]   [align=left] {$\Lambda^{(P_{\origin})}_\star$};

\end{tikzpicture}

\caption{The submanifolds $\calL(\bar{P}_{0})$ and $\calJ(P_{\origin})$.}
\end{figure}

\begin{lemma}
\label{lemma:L-and-J-are-special-families}
For any $\bar{P}_0 \in \calW(\calX, \calD)$, and $P_{\origin} \in \calW(\calY, \calE)$,
\begin{enumerate}
    \item[$(i)$] $\calJ(P_{\origin})$ forms an e-family in $\calW(\calY, \calE)$, with $\dim \calJ(P_{\origin}) = \abs{\calD} - \abs{\calX}$.
    \item[$(ii)$] $\calL(\bar{P}_0)$ forms an m-family in $\calW(\calY, \calE)$, with $\dim \calL(\bar{P}_0) =
    \abs{\calE} - \sum_{(x,x') \in \calD} \abs{\calS_x}$.
\end{enumerate}
\end{lemma}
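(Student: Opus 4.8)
The plan is to treat the two statements separately, exploiting that $\calJ(P_{\origin})$ is the image, and $\calL(\bar{P}_0)$ a fibre, of the canonical embedding/lumping pair.

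For $(i)$, I would first note that $\calW(\calX,\calD)$ is itself an e-family of dimension $\abs{\calD}-\abs{\calX}$: it is the full manifold, hence trivially $e$-autoparallel, and \eqref{eq:dimension-w} gives the dimension. By Lemma~\ref{lemma:construct-canonical-embedding} the map $\Lambda^{(P_{\origin})}_\star$ is a $\kappa$-compatible Markov embedding, which by Theorem~\ref{theorem:congruent-exponential-embeddings} is a $\kappa$-compatible exponential embedding, and therefore by Theorem~\ref{theorem:exponential-embedding-geodesic-affinity}$(i)$ is e-geodesic affine. Since it is a diffeomorphism onto its image and carries e-geodesics to e-geodesics, any two points of $\calJ(P_{\origin})$ are joined by an e-geodesic that stays inside $\calJ(P_{\origin})$ (push the connecting e-geodesic of their preimages forward); hence $\calJ(P_{\origin})$ is e-autoparallel, i.e. an e-family, and the diffeomorphism preserves the dimension $\abs{\calD}-\abs{\calX}$. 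If a fully explicit check of Definition~\ref{definition:e-family-parametric} is preferred, I would compose the exponential representation of a chart of $\calW(\calX,\calD)$ with $\kappa$ and read off $K(y,y')=\log\Lambda^{(P_{\origin})}(y,y')+\bar{K}(\kappa(y),\kappa(y'))$, $g_i(y,y')=\bar{g}_i(\kappa(y),\kappa(y'))$, $R(\theta,y)=\bar{R}(\theta,\kappa(y))$, and $\psi=\bar{\psi}$.

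For $(ii)$, the key is to re-express the defining constraint $\kappa_\star P=\bar{P}_0$ as an affine constraint on the edge measure $Q$. Using $Q=\diag(\pi)P$ with $\pi(y)=Q(y,\calY)$, the lumping identity of Theorem~\ref{theorem:equivalence-lumpable} together with $\kappa_\star P=\bar{P}_0$ reads $P(y,\calS_{x'})=\bar{P}_0(\kappa(y),x')$, which upon multiplying by $\pi(y)$ becomes the \emph{linear} condition $Q(y,\calS_{x'})=\bar{P}_0(\kappa(y),x')\,Q(y,\calY)$ for all $y\in\calY$ and $(\kappa(y),x')\in\calD$. I would then verify the converse: any balanced, unit-mass, nonnegative $Q$ that is positive on $\calE$ and satisfies these equations is the edge measure of an irreducible kernel in $\calL(\bar{P}_0)$ (set $P=\diag(\pi)^{-1}Q$ with $\pi(y)=Q(y,\calY)$ and observe that the constraint makes $P(y,\calS_{x'})$ independent of $y\in\calS_x$, so $P$ is lumpable with $\kappa_\star P=\bar{P}_0$). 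Thus the edge measures of $\calL(\bar{P}_0)$ are exactly the positive points of a linear slice of edge-measure space; taking $C$ to be the edge measure of one reference element of $\calL(\bar{P}_0)$ and $F_1,\dots,F_d$ a basis of the difference space (each summing to $0$ and balanced), I recover precisely the form of Definition~\ref{definition:m-family-parametric}, so $\calL(\bar{P}_0)$ is an m-family.

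For the dimension in $(ii)$, I would parametrize $\calL(\bar{P}_0)$ by the canonical embedding: by Lemma~\ref{lemma:construct-canonical-embedding} every $P$ with $\kappa_\star P=\bar{P}_0$ equals $\Lambda_\star\bar{P}_0$ for the $\kappa$-compatible Markov embedding $\Lambda(y,y')=P(y,y')/\bar{P}_0(\kappa(y),\kappa(y'))$, and since $\bar{P}_0>0$ on $\calD$ this correspondence $\Lambda\leftrightarrow P$ is a diffeomorphism. Hence $\dim\calL(\bar{P}_0)$ equals the degrees of freedom of a $\kappa$-congruent embedding, which by Remark~\ref{remark:dof-markov-embedding} is $\abs{\calE}-\sum_{(x,x')\in\calD}\abs{\calS_x}$ (equivalently $\dim\Ker\kappa_\star$ from Lemma~\ref{lemma:lumpable-matrix-vector-space}). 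I expect the main obstacle to be the edge-measure step of $(ii)$: the lumping condition is only affine after clearing the denominator $Q(y,\calY)$, and one must establish the exact two-way correspondence between the positive part of the linear slice and genuine irreducible lumpable kernels, rather than a mere inclusion.
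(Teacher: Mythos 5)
Your part $(i)$ is essentially the paper's argument: $\calW(\calX,\calD)$ is an e-family, the canonical embedding $\Lambda^{(P_{\origin})}_\star$ is e-geodesic affine by the exponential-embedding results, and being a diffeomorphism onto its image it preserves the dimension $\abs{\calD}-\abs{\calX}$; your fallback of writing out the parametric form $g_i(y,y')=\bar{g}_i(\kappa(y),\kappa(y'))$ is also consistent with what the paper does elsewhere (Lemma~\ref{lemma:lumped-family-of-functions}). For part $(ii)$ you take a genuinely different route. The paper proves closure under affine combinations directly: for $P_1=\Lambda_{1\star}\bar{P}_0$, $P_2=\Lambda_{2\star}\bar{P}_0$ and $t\in\R$ with $Q_t=tQ_1+(1-t)Q_2$ a valid edge measure, it computes $\pi_t=t\pi_1+(1-t)\pi_2$ and factors $P_t(y,y')=\bar{P}_0(\kappa(y),\kappa(y'))\Lambda_t(y,y')$ with $\Lambda_t=t\tfrac{\pi_1}{\pi_t}\Lambda_1+(1-t)\tfrac{\pi_2}{\pi_t}\Lambda_2$ a valid embedding matrix, then appeals to the equivalence between affine closedness and the parametric definition. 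You instead exhibit the affine hull explicitly: clearing the denominator turns $P(y,\calS_{x'})=\bar{P}_0(\kappa(y),x')$ into the linear condition $Q(y,\calS_{x'})=\bar{P}_0(\kappa(y),x')\,Q(y,\calY)$, and the two-way correspondence you flag as the main obstacle does go through (positivity of $Q$ on the strongly connected $\calE$ gives $\pi>0$ and irreducibility, balance gives stationarity, and the constraint forces $P(y,\calS_{x'})$ to be constant on each $\calS_x$), so the edge measures of $\calL(\bar{P}_0)$ are exactly the positive points of a linear slice, matching Definition~\ref{definition:m-family-parametric} verbatim. Your version has the advantage of producing the linear constraints $\set{g_i=c_i}$ cutting out the leaf, which the paper only asserts later (Section~\ref{section:max-entropy}) as a consequence of the m-family property; the paper's version is shorter because closure needs no converse inclusion. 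Both dimension counts go through the same canonical-embedding correspondence and Remark~\ref{remark:dof-markov-embedding}.
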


\begin{proof}
See Section~\ref{proof:lemma-L-and-J-are-special-families}.
\end{proof}

\begin{remark}
When $\calD = \calX^2, \calE = \calY^2$,
\begin{equation*}
    \dim \calJ(P_{\origin}) = \abs{\calX} (\abs{\calX} - 1), \;\;\; \dim \calL(\bar{P}_{0}) = \abs{\calY} (\abs{\calY} - \abs{\calX}).
\end{equation*}
\end{remark}

\begin{remark}
In the case where $\bar{P}_0$ is reversible, there is a one-to-one correspondence between a reversibility preserving embedding 
$\Lambda^{(P)}$ and $P \in \calL(\bar{P}_0) \cap \calW \rev(\calX)$. In this case, it follows from Lemma~\ref{lemma:reversibility-preserving-embedding-stationary-distribution} that $\calJ(P)$ forms a reversible e-family, as defined in \citet[Section~4]{wolfer2021information}.
\end{remark}

We now prove that the manifold of $\kappa$-lumpable \hl{stochastic matrices} can be foliated, with the collection of submanifolds $\set{\calJ(P)}_{P \in \calL(\bar{P}_0)}$ acting as leaves, for any base point $\bar{P}_0$.
Fixing $\bar{P}_0$, we can then refer to a $\kappa$-lumpable \hl{stochastic matrix} $P$ in two steps. We first specify the leaf it belongs to, i.e. its coordinate along the family $\calL(\bar{P}_0)$, to which corresponds to some lumpable $P_{\origin}$. As a second step, we indicate the coordinates of $P$ in $\calJ(P_{\origin})$.

\begin{theorem}[Foliated structure  of lumpable \hl{stochastic matrices}]
\label{theorem:foliation-of-lumpable-kernels}
For any fixed $\bar{P}_0 \in \calW(\calX, \calD)$,
\begin{equation*}
    \calW_\kappa(\calY, \calE) = \biguplus_{P \in \calL(\bar{P}_0)} \calJ(P).
\end{equation*}
\begin{equation*}
    \dim \calW_\kappa(\calY, \calE) = \abs{\calE} - \sum_{(x,x') \in \calD} \abs{\calS_x} + \abs{\calD} - \abs{\calX}.
\end{equation*}
\end{theorem}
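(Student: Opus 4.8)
The plan is to prove the set-theoretic disjoint union first, and then read off the dimension from a product diffeomorphism assembled from the two submanifold families of Lemma~\ref{lemma:L-and-J-are-special-families}. The workhorse throughout is the canonical embedding of Lemma~\ref{lemma:construct-canonical-embedding}: for $P \in \calW_\kappa(\calY, \calE)$ with $\bar{P} \eqdef \kappa_\star P$, one has $\Lambda^{(P)}(y,y') = P(y,y')/\bar{P}(\kappa(y), \kappa(y'))$ and $P = \Lambda^{(P)}_\star \bar{P}$, while $\Lambda^{(P)}_\star$ is a $\kappa$-compatible (hence $\kappa$-congruent) Markov embedding, so that $\kappa_\star \Lambda^{(P)}_\star = \Id$ and $\Range \Lambda^{(P)}_\star \subset \calW_\kappa(\calY, \calE)$.

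For the covering $\calW_\kappa(\calY, \calE) \subseteq \bigcup_{P' \in \calL(\bar{P}_0)} \calJ(P')$, I would take an arbitrary $P \in \calW_\kappa(\calY, \calE)$, set $\bar{P} = \kappa_\star P$, and define the candidate leaf base $P_{\origin} \eqdef \Lambda^{(P)}_\star \bar{P}_0$. Congruence gives $\kappa_\star P_{\origin} = \bar{P}_0$, so $P_{\origin} \in \calL(\bar{P}_0)$. The key bookkeeping identity is that the canonical embedding attached to $P_{\origin}$ coincides with the one attached to $P$: since $\kappa_\star P_{\origin} = \bar{P}_0$,
\[
\Lambda^{(P_{\origin})}(y,y') = \frac{P_{\origin}(y,y')}{\bar{P}_0(\kappa(y),\kappa(y'))} = \frac{\bar{P}_0(\kappa(y),\kappa(y'))\,\Lambda^{(P)}(y,y')}{\bar{P}_0(\kappa(y),\kappa(y'))} = \Lambda^{(P)}(y,y'),
\]
whence $P = \Lambda^{(P)}_\star \bar{P} = \Lambda^{(P_{\origin})}_\star \bar{P} \in \calJ(P_{\origin})$. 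Since each $\calJ(P')$ is contained in $\calW_\kappa(\calY, \calE)$ by construction of the canonical embedding, the union is exactly $\calW_\kappa(\calY, \calE)$.

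For disjointness I would suppose $P \in \calJ(P_1) \cap \calJ(P_2)$ with $P_1, P_2 \in \calL(\bar{P}_0)$, writing $P = \Lambda^{(P_i)}_\star \bar{Q}_i$. Applying $\kappa_\star$ and using congruence forces $\bar{Q}_1 = \bar{Q}_2 = \kappa_\star P =: \bar{P}$; comparing the two embedding formulas and cancelling the factors $\bar{P}(\kappa(y),\kappa(y'))$ and $\bar{P}_0(\kappa(y),\kappa(y'))$, which are strictly positive on $\calE$ by irreducibility, yields $P_1 = P_2$. Thus distinct base points index disjoint leaves, and with the covering this gives the $\biguplus$.

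For the dimension I would package the above into the map $\Psi \colon \calL(\bar{P}_0) \times \calW(\calX, \calD) \to \calW_\kappa(\calY, \calE)$, $(P_{\origin}, \bar{P}) \mapsto \Lambda^{(P_{\origin})}_\star \bar{P}$, with inverse $P \mapsto (\Lambda^{(P)}_\star \bar{P}_0, \kappa_\star P)$; both directions are rational maps with non-vanishing denominators, so $\Psi$ is a diffeomorphism. Consequently $\dim \calW_\kappa(\calY, \calE) = \dim \calL(\bar{P}_0) + \dim \calW(\calX, \calD)$, and substituting $\dim \calW(\calX, \calD) = \abs{\calD} - \abs{\calX}$ from \eqref{eq:dimension-w} together with $\dim \calL(\bar{P}_0) = \abs{\calE} - \sum_{(x,x') \in \calD}\abs{\calS_x}$ from Lemma~\ref{lemma:L-and-J-are-special-families}$(ii)$ produces the stated formula. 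The main obstacle is really the single identity $\Lambda^{(P_{\origin})} = \Lambda^{(P)}$: it is what makes the covering argument close and simultaneously certifies that the proposed inverse of $\Psi$ is genuinely inverse; once it is established the remaining steps are routine cancellations and dimension counting.
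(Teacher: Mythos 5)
Your proposal is correct and follows essentially the same route as the paper's proof: the covering and disjointness arguments hinge on the same identity $\Lambda^{(P_{\origin})} = \Lambda^{(P)}$ for the canonical embeddings, and the dimension count is the same sum $\dim \calL(\bar{P}_0) + \dim \calW(\calX,\calD)$ drawn from Lemma~\ref{lemma:L-and-J-are-special-families}. The only cosmetic difference is that you package the dimension step as an explicit product diffeomorphism $\Psi$, whereas the paper simply adds the leaf and base dimensions; both are justified by the same one-to-one correspondence.
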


\begin{figure}%
\label{figure:foliations-representation}
\centering

\tikzset{every picture/.style={line width=0.75pt}} %

\begin{tikzpicture}[x=0.65pt,y=0.65pt,yscale=-1,xscale=1]

\draw   (368.7,37) -- (407.8,37) .. controls (403.69,37) and (400.35,82.22) .. (400.35,138) .. controls (400.35,193.78) and (403.69,239) .. (407.8,239) -- (368.7,239) .. controls (364.58,239) and (361.25,193.78) .. (361.25,138) .. controls (361.25,82.22) and (364.58,37) .. (368.7,37) -- cycle ;
\draw   (306.7,37) -- (345.8,37) .. controls (341.69,37) and (338.35,82.22) .. (338.35,138) .. controls (338.35,193.78) and (341.69,239) .. (345.8,239) -- (306.7,239) .. controls (302.58,239) and (299.25,193.78) .. (299.25,138) .. controls (299.25,82.22) and (302.58,37) .. (306.7,37) -- cycle ;
\draw   (243.7,37) -- (282.8,37) .. controls (278.69,37) and (275.35,82.22) .. (275.35,138) .. controls (275.35,193.78) and (278.69,239) .. (282.8,239) -- (243.7,239) .. controls (239.58,239) and (236.25,193.78) .. (236.25,138) .. controls (236.25,82.22) and (239.58,37) .. (243.7,37) -- cycle ;
\draw    (403,61) -- (449.8,61) ;
\draw    (400,130) -- (450.8,130) ;
\draw    (402,200) -- (450.8,200) ;
\draw    (341,60) -- (381.8,60) ;
\draw    (338,130) -- (379.8,130) ;
\draw    (340,200) -- (381.8,200) ;
\draw    (278,60) -- (321.8,60) ;
\draw    (276,130) -- (320.8,130) ;
\draw    (277,200) -- (320.8,200) ;
\draw    (220,60) -- (257.8,60) ;
\draw    (217,130) -- (254.8,130) ;
\draw    (220,200) -- (257.8,200) ;
\draw   (123,148) .. controls (123,74.55) and (216.75,15) .. (332.4,15) .. controls (448.05,15) and (541.8,74.55) .. (541.8,148) .. controls (541.8,221.45) and (448.05,281) .. (332.4,281) .. controls (216.75,281) and (123,221.45) .. (123,148) -- cycle ;

\draw (426,43) node [anchor=north west][inner sep=0.75pt]  [color={rgb, 255:red, 208; green, 2; blue, 27 }  ,opacity=1 ] [align=left] {$\calL(\bar{P}_0)$};
\draw (426,112) node [anchor=north west][inner sep=0.75pt]  [color={rgb, 255:red, 208; green, 2; blue, 27 }  ,opacity=1 ] [align=left] {$\calL(\bar{P}_1)$};
\draw (426,183) node [anchor=north west][inner sep=0.75pt]  [color={rgb, 255:red, 208; green, 2; blue, 27 }  ,opacity=1 ] [align=left] {$\calL(\bar{P}_k)$};
\draw (232,245) node [anchor=north west][inner sep=0.75pt]  [color={rgb, 255:red, 74; green, 144; blue, 226 }  ,opacity=1 ] [align=left] {$\calJ(P_{\origin, 0})$};
\draw (438,143) node [anchor=north west][inner sep=0.75pt]  [color={rgb, 255:red, 208; green, 2; blue, 27 }  ,opacity=1 ] [align=left] {$\vdots$};
\draw (293,245) node [anchor=north west][inner sep=0.75pt]  [color={rgb, 255:red, 74; green, 144; blue, 226 }  ,opacity=1 ] [align=left] {$\calJ(P_{\origin, 1})$};
\draw (349,250) node [anchor=north west][inner sep=0.75pt]  [color={rgb, 255:red, 74; green, 144; blue, 226 }  ,opacity=1 ] [align=left] {$\hdots$};
\draw (365,245) node [anchor=north west][inner sep=0.75pt]  [color={rgb, 255:red, 74; green, 144; blue, 226 }  ,opacity=1 ] [align=left] {$\calJ(P_{\origin, k})$};
\draw (87,59) node [anchor=north west][inner sep=0.75pt]   [align=left] {$\calW_\kappa(\calY, \calE)$};

\end{tikzpicture}

\caption{Mutually dual foliated structures of $\calW_\kappa(\calY, \calE)$.}
\end{figure}

\begin{proof}
For any $P \in \calW_\kappa(\calY, \calE)$, $\Lambda^{(P)}_\star$ is $\kappa$-congruent by construction, thus $\calJ(P) \subset \calW_\kappa(\calY, \calE)$.
To prove the other direction, we first let $P' \in \calW_\kappa(\calY, \calE)$, and construct the canonical $\Lambda^{(P')}_\star$, with
$$\Lambda^{(P')}(y , y') = \frac{P'(y,y')}{\kappa_\star P'(\kappa(y), \kappa(y'))}.$$
We then introduce $P \in \calL(\bar{P}_0)$ such that $P = \Lambda_\star^{(P')} \bar{P}_0$.
$$P(y , y') = \bar{P}_{0}(\kappa(y),\kappa(y')) \Lambda^{(P')}(y,y').$$
Note that in general, $P \neq P'$.
We proceed to construct the corresponding canonical $\Lambda^{(P)}$, and we observe that since $\kappa_\star P = \bar{P}_{0}$, and $P = \Lambda_\star^{(P')}\bar{P}_0$,
\begin{equation*}
    \begin{split}
        \Lambda^{(P)}(y , y') = \frac{P(y,y')}{\kappa_\star P(\kappa(y), \kappa(y'))} = \frac{\bar{P}_{0}(\kappa(y),\kappa(y')) P'(y,y')}{\kappa_\star P(\kappa(y), \kappa(y'))\kappa_\star P'(\kappa(y), \kappa(y'))} = \Lambda^{(P')}(y,y').
    \end{split}
\end{equation*}
It follows that $P' = \Lambda_\star^{(P')} \kappa_\star P' = \Lambda_\star^{(P)} \kappa_\star P'$, and
$P' \in \calJ(P)$.
It remains to prove the
 disjointedness of the leaves.
 We let $P_{\origin}, P'_{\origin} \in \calL(\bar{P}_0)$, and let
 $P \in \calJ(P_{\origin}) \cap \calJ(P'_{\origin})$.
 Since $P \in \calJ(P_{\origin})$, there exists $\bar{P} \in \calW (\calX, \calD)$ such that $P = \Lambda_\star^{(P_{\origin})} \bar{P}$, and similarly, since $P \in \calJ(P'_{\origin})$, there exists $\bar{P}' \in \calW (\calX, \calD)$ such that $P = \Lambda_\star^{(P'_{\origin})} \bar{P}'$.
 It follows that for any $y,y' \in \calE$,
 \begin{equation}
 \label{eq:disjointedness-leaves}
    \bar{P}(\kappa(y), \kappa(y')) \frac{P_{\origin}(y,y')}{\kappa_\star \bar{P}_{\origin}(\kappa(y)\kappa(y'))} = P(y,y') = \bar{P}'(\kappa(y), \kappa(y')) \frac{P'_{\origin}(y,y')}{\kappa_\star \bar{P}'_{\origin}(\kappa(y)\kappa(y'))}.
 \end{equation}
 But since $\Lambda_\star^{(P_{\origin})}$ and $\Lambda_\star^{(P'_{\origin})}$ are $\kappa$-congruent embeddings, 
 $$\kappa_\star P = \kappa_\star \Lambda_\star^{(P'_{\origin})} \bar{P}' = \bar{P}', \qquad \kappa_\star P = \kappa_\star \Lambda_\star^{(P_{\origin})} \bar{P} = \bar{P},$$ thus $\bar{P} = \bar{P}'$. Moreover, $\kappa_\star P_{\origin} = \bar{P}_0 = \kappa_\star P'_{\origin}$, and from \eqref{eq:disjointedness-leaves},
  $P_{\origin} = P'_{\origin}$, whence the foliation structure.
The dimension of $\calW_\kappa(\calY,\calE)$ is then readily obtained by summing the dimensions of $\calL(\bar{P}_0)$ and $\calJ(P)$ for $P \in \calL(\bar{P}_0)$, which are both given in Lemma~\ref{lemma:L-and-J-are-special-families}.
\end{proof}

\begin{remark}
We can intuitively relate the dimension of $\calW_\kappa(\calY, \calE)$ as a manifold to that of
the vector space $\calF_\kappa(\calY, \calE)$ (Lemma~\ref{lemma:lumpable-matrix-vector-space}). Indeed, setting $\abs{\calX}$ additional constraints to
a lumpable matrix ensures it is row-stochastic:
$$\dim \calW_\kappa(\calY, \calE) = \dim \calF_\kappa(\calY, \calE) - \abs{\calX}.$$
\end{remark}

\subsection{Interpretations \& Applications}
\label{section:foliation-interpretations-applications}

In this section, we first illustrate how $\calL(\bar{P}_0)$ forming an m-family in $\calW(\calY, \calE)$ enables us to efficiently select a chain on a finer state space that lumps into $\bar{P}_0$, while making the fewest additional assumptions (Section~\ref{section:max-entropy}). We then proceed to show how the foliation
 introduced in  Theorem~\ref{theorem:foliation-of-lumpable-kernels}, leads to an orthogonality result for projections onto leaves (Section~\ref{section:leaf-projection}). 
Finally, we offer an interpretation of the foliated structure in the context of maximum likelihood estimation of embedded models (Section~\ref{section:embedded-models}).

\subsubsection{The e-projection \& maximum entropy principle}
\label{section:max-entropy}
Fix $\bar{P}_0 \in \calW(\calX, \calD)$, and recall that although 
$\calW_\kappa(\calY, \calE)$ is not even m-convex,
$\calL(\bar{P}_0)$ forms an m-family in $\calW(\calY, \calE)$ of dimension $d = \abs{\calE} - \sum_{(x,x') \in \calD} \abs{\calS_x}$ (Lemma~\ref{lemma:L-and-J-are-special-families}).
As a consequence, there exist $g_1, \dots, g_d \in \calF(\calY, \calE)$ and $c= (c_1, \dots, c_d) \in \R^d$ such that we can express $\calL(\bar{P}_0)$ as the polytope generated by the set of linear constraints $\set{g_i = c_i}$,
\begin{equation*}
    \calL(\bar{P}_0) = \set{ P \in \calW(\calY, \calE) \colon \sum_{(y,y') \in \calE} Q(y,y') g_i(y,y') = c_i, \forall i \in [d]} \subset \calW_\kappa(\calY, \calE).
\end{equation*}
We let $P \in \calW(\calY, \calE)$, and take interest into its orthogonal e-projection onto $\calL(\bar{P}_0)$.
\begin{equation*}
    P_e \eqdef \argmin_{P' \in \calL(\bar{P}_0)} \kl{P'}{P}.
\end{equation*}
It is well-known that the solution to this minimization problem belongs to an exponential family \citep{csiszar1984sanov, csiszar1987conditional}.
In fact, introducing for $(y,y') \in \calE$ and $\lambda \in \R^d$,
\begin{equation*}
    \tilde{P}_\lambda(y,y') = \log P(y,y') - \sum_{i \in [d]} \lambda^i g_i(y,y'),
\end{equation*}
and denoting $\psi(\lambda)$ for the logarithm of the PF root of $\tilde{P}_\lambda$,
the minimizer is given by $P_e = \stoch(\tilde{P}_{\lambda_\star})$ with 
\begin{equation*}
\begin{split}\lambda^\star &= \argmax_{\lambda \in \R^d} \set{  \lambda \cdot c - \psi(\lambda) }. \\
\end{split}
\end{equation*}
Furthermore, recall the following expression for the \emph{entropy rate} $H$ of a Markov chain,
\begin{equation*}
    H(P) \eqdef \lim_{k \to \infty} \frac{1}{k} H(Y_1, Y_2, \dots, Y_k) = - \sum_{(y,y') \in \calE} Q(y,y') \log P(y,y'),
\end{equation*}
and rewrite
\begin{equation*}
\begin{split}
    \argmin_{P' \in \calL(\bar{P}_0)} \kl{P'}{P} &= \argmax_{P' \in \calL(\bar{P}_0)} \set{ H(P') + \E[(Y,Y') \sim Q']{\log P(Y,Y')} }.
\end{split}
\end{equation*}
Suppose we wish to embed some prescribed \hl{stochastic matrix} $\bar{P}_0$ into a larger state space. Following the \emph{maximum entropy principle}, we should choose the embedded \hl{stochastic matrix} with the largest entropy rate which follows the constraint of belonging to $\calL(\bar{P}_0)$.
Let us define $\delta_\calE \in \calF_+(\calY ,\calE)$ as $\delta_\calE(y,y') = 1$, and introduce the rescaled \hl{stochastic matrix}
$$\maxentmc = \stoch(\delta_\calE) = \frac{1}{\rho_{\maxentmc}} \diag(v_{\maxentmc})^{-1} \delta_\calE \diag (v_{\maxentmc}).$$
It is easy to verify that
$H(\maxentmc) = \log \rho_{\maxentmc}$,
and it is known that $\maxentmc$ is the \emph{maxentropic} chain
\citep{spitzer1972variational, justesen1984maxentropic, duda2007optimal, burda2009localization}
subject solely to graph constraints. In particular, $\maxentmc$ gives uniform probability among paths of a given length linking two prescribed points.
When $P = \maxentmc$, $ \E[(Y,Y') \sim Q']{\log P(Y,Y')} = \log \rho_{\maxentmc}$, which is a property of $\calE$ only. As a result,
\begin{equation*}
\begin{split}
    \argmin_{P' \in \calL(\bar{P}_0)} \kl{P'}{\maxentmc} &= \argmax_{P' \in \calL(\bar{P}_0)} H(P'),
\end{split}
\end{equation*}
i.e. the e-projection of $\maxentmc$ onto $\calL(\bar{P}_0)$ corresponds to the \hl{stochastic matrix} that lumps into $\bar{P}_0$ with maximum entropy rate.

\subsubsection{Pythagorean projection on a leaf}
\label{section:leaf-projection}

\begin{figure}[ht!]

\centering
\tikzset{every picture/.style={line width=0.75pt}} %

\begin{tikzpicture}[x=0.75pt,y=0.75pt,yscale=-1,xscale=1]

\draw   (340,69.39) -- (340,226.63) -- (240,211.23) -- (240,53.99) -- cycle ;
\draw    (69,110) -- (280,110) ;
\draw  [dash pattern={on 4.5pt off 4.5pt}]  (280,110) -- (340,110) ;
\draw    (340,110) -- (421,110) ;
\draw   (285.76,105.76) -- (294.24,114.24)(294.24,105.76) -- (285.76,114.24) ;
\draw   (156.76,105.76) -- (165.24,114.24)(165.24,105.76) -- (156.76,114.24) ;
\draw   (285.76,165.76) -- (294.24,174.24)(294.24,165.76) -- (285.76,174.24) ;
\draw    (290,110) -- (290,170.67) ;
\draw  [color={rgb, 255:red, 208; green, 2; blue, 27 }  ,draw opacity=1 ] (275,115) -- (285,115) -- (285,125) ;

\draw (273,31) node [anchor=north west][inner sep=0.75pt]   [align=left] {$\calJ(P_{\origin})$};
\draw (277,81) node [anchor=north west][inner sep=0.75pt]   [align=left] {$P_{\origin}$};
\draw (88,77) node [anchor=north west][inner sep=0.75pt]   [align=left] {$\calL(\bar{P}_0)$};
\draw (154,120) node [anchor=north west][inner sep=0.75pt]   [align=left] {$P$};
\draw (303,163) node [anchor=north west][inner sep=0.75pt]   [align=left] {$P'$};
\end{tikzpicture}
\caption{Pythagorean projection on a leaf.}
\label{figure:leaf-projection}
\end{figure}
Let us project a lumpable transition \hl{stochastic matrix} onto a leaf, as represented in Figure~\ref{figure:leaf-projection}.

\begin{theorem}
\label{theorem:foliation-pythagorean-theorem}
Fix $\bar{P}_0 \in \calW(\calX, \calD)$. 
Let $P_{\origin}, P \in \calL(\bar{P}_0)$ and $ P' \in \calJ(P_{\origin})$ be arbitrarily chosen.
The following Pythagorean identity holds,
\begin{equation*}
    \kl{P}{P'} = \kl{P}{P_{\origin}} + \kl{P_{\origin}}{P'},
\end{equation*}
and $P_{\origin}$ verifies
\begin{equation*}
    P_{\origin} = \argmin_{P'' \in \calL(\bar{P}_0)} \kl{P''}{P'} = \argmin_{P'' \in \calJ(P_{\origin})} \kl{P}{P''}.
\end{equation*}
\end{theorem}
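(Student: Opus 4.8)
The plan is to establish the Pythagorean identity directly by a change-of-measure computation, and then read off both minimization characterizations as immediate corollaries. First I would record the two structural facts that make everything work. Since $P_{\origin} \in \calL(\bar{P}_0)$ we have $\kappa_\star P_{\origin} = \bar{P}_0$, so by Lemma~\ref{lemma:construct-canonical-embedding} the canonical embedding satisfies $P_{\origin} = \Lambda_\star^{(P_{\origin})}\kappa_\star P_{\origin} = \Lambda_\star^{(P_{\origin})}\bar{P}_0$; thus $P_{\origin}$ is exactly the intersection point $\calL(\bar{P}_0)\cap\calJ(P_{\origin})$. Moreover, any $P' \in \calJ(P_{\origin})$ can be written $P' = \Lambda_\star^{(P_{\origin})}\bar{P}'$ with $\bar{P}' = \kappa_\star P'$ by congruency. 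The crucial observation is then that
\[
\frac{P_{\origin}(y,y')}{P'(y,y')} = \frac{\bar{P}_0(\kappa(y),\kappa(y'))}{\bar{P}'(\kappa(y),\kappa(y'))},
\]
a quantity depending on $(y,y')$ only through the lumped pair $(\kappa(y),\kappa(y'))$.

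Next I would split the integrand of $\kl{P}{P'}$ as $\log (P/P') = \log(P/P_{\origin}) + \log(P_{\origin}/P')$ and integrate against the edge measure $Q$ of $P$. The first piece is by definition $\kl{P}{P_{\origin}}$. For the second piece, the lumped dependence above lets me regroup the sum over $\calE$ into a sum over $\calD$, replacing $\sum_{y\in\calS_x,\,y'\in\calS_{x'}} Q(y,y')$ by the edge measure $\bar{Q}_0(x,x')$ of $\bar{P}_0$ via Corollary~\ref{corollary:lumped-stationary-distribution-and-edge-measure} together with $\kappa_\star P = \bar{P}_0$; this produces exactly $\kl{\bar{P}_0}{\bar{P}'}$. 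Finally, since $P_{\origin} = \Lambda_\star^{(P_{\origin})}\bar{P}_0$ and $P' = \Lambda_\star^{(P_{\origin})}\bar{P}'$ are images of the \emph{same} Markov embedding, Lemma~\ref{lemma:markov-embedding-preserve-information-geometry} gives $\kl{\bar{P}_0}{\bar{P}'} = \kl{P_{\origin}}{P'}$, which assembles the Pythagorean identity.

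For the two argmin statements I would simply invoke the identity with the appropriate argument allowed to range. For the first, fixing $P'$ and letting $P''$ range over $\calL(\bar{P}_0)$, the identity (with $P$ replaced by $P''$) reads $\kl{P''}{P'} = \kl{P''}{P_{\origin}} + \kl{P_{\origin}}{P'} \geq \kl{P_{\origin}}{P'}$, with equality precisely when $\kl{P''}{P_{\origin}}=0$, that is $P''=P_{\origin}$. For the second, fixing $P$ and letting $P''$ range over $\calJ(P_{\origin})$ (noting $P_{\origin}\in\calJ(P_{\origin})$), the identity (with $P'$ replaced by $P''$) reads $\kl{P}{P''} = \kl{P}{P_{\origin}} + \kl{P_{\origin}}{P''} \geq \kl{P}{P_{\origin}}$, again with equality only at $P''=P_{\origin}$. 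Both conclusions use only non-negativity and definiteness of $D$ on irreducible kernels.

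I expect the only delicate point to be the bookkeeping in the second term: correctly recognizing $\log(P_{\origin}/P')$ as a function of the lumped indices and matching the block sums $\sum_{y\in\calS_x,\,y'\in\calS_{x'}} Q(y,y')$ to the lumped edge measure $\bar{Q}_0$ (the edge measure of $\kappa_\star P$) rather than to that of some other kernel. Everything else follows directly from divergence-invariance of Markov embeddings (Lemma~\ref{lemma:markov-embedding-preserve-information-geometry}) and the elementary properties of $D$.
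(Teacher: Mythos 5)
Your proposal is correct and follows essentially the same route as the paper: both split $\log(P/P')$ into $\log(P/P_{\origin})+\log(P_{\origin}/P')$, observe that the ratio $P_{\origin}/P'$ is constant on the blocks $\calS_x\times\calS_{x'}$ because $P_{\origin}$ and $P'$ are images of the same canonical embedding, and regroup via Corollary~\ref{corollary:lumped-stationary-distribution-and-edge-measure}. The only cosmetic difference is that you finish by identifying the cross term as $\kl{\bar{P}_0}{\bar{P}'}$ and lifting it back with Lemma~\ref{lemma:markov-embedding-preserve-information-geometry}, whereas the paper notes directly that the residual $\sum(\bar{Q}-\bar{Q}_{\origin})\log(\bar{P}_{\origin}/\bar{P}')$ vanishes since $P$ and $P_{\origin}$ both lump to $\bar{P}_0$.
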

\begin{proof}
Writing $Q, Q_{\origin}$ for the edge measures of $P, P_{\origin}$, we compute
\begin{equation*}
    \begin{split}
        \kl{P}{P'} -\kl{P}{P_{\origin}} - \kl{P_{\origin}}{P'} = \sum_{(y,y') \in \calE} \left[ Q(y,y') - Q_{\origin}(y,y') \right] \log \frac{P_{\origin}(y,y')}{P'(y,y')}.
    \end{split}
\end{equation*}
Since $P' \in \calJ(P_{\origin})$, there exists $\bar{P}' \in \calW(\calX, \calD)$ such that $P' = \Lambda_\star^{(P_{\origin})} \bar{P}'$. Similarly, since trivially $P_{\origin} \in \calJ(P_{\origin})$, we can also write $P_{\origin} = \Lambda^{(P_{\origin})}_\star \bar{P}_{\origin}$ for some $\bar{P}_{\origin} \in \calW(\calX, \calD)$.
We obtain,
\begin{equation*}
    \begin{split}
         \kl{P}{P'} -\kl{P}{P_{\origin}} &- \kl{P_{\origin}}{P'} = \sum_{(y,y') \in \calE} \left[ Q(y,y') - Q_{\origin}(y,y') \right] \log \frac{\bar{P}_{\origin}(\kappa(y), \kappa(y'))}{ \bar{P}'(\kappa(y),\kappa(y'))} \\
        &= \sum_{(x,x') \in \calD} \log \frac{\bar{P}_{\origin}(x, x')}{ \bar{P}'(x,x')} \sum_{y \in\calS_x, y' \in\calS_{x'}} \left[ Q(y,y') - Q_{\origin}(y,y') \right]  \\
        &= \sum_{(x,x') \in \calD} \log \frac{\bar{P}_{\origin}(x, x')}{ \bar{P}'(x,x')}  \left[ \bar{Q}(x,x') - \bar{Q}_{\origin}(x,x') \right],\\
    \end{split}
\end{equation*}
where the last equality stems from Corollary~\ref{corollary:lumped-stationary-distribution-and-edge-measure}, $P$ an $P_{\origin}$ being $\kappa$-lumpable, and where we wrote $\bar{Q}, \bar{Q}_{\origin}$ for the respective edge measures of $\kappa_\star P$ and $\kappa_\star P_{\origin}$.
But by assumption, $P, P_{\origin} \in \calL(\bar{P}_0)$, thus $\bar{Q} = \bar{Q}_{\origin}$ and the identity holds.
\end{proof}

\subsubsection{The m-projection \& maximum likelihood estimation for embedded models}
\label{section:embedded-models}
The foliation in Theorem~\ref{theorem:foliation-of-lumpable-kernels} has a natural interpretation in the context of the lumpable \hl{stochastic matrix} estimation problem. For $k \in \N$, we observe a Markov chain
\begin{equation*}
    Y_1, Y_2, \dots, Y_k,
\end{equation*}
drawn with respect to some unknown $\kappa$-lumpable \hl{stochastic matrix} $P$ and initial distribution $\mu$. 
We can express the likelihood of a trajectory $y_1, y_2, \dots, y_k \in \calY$ as
\begin{equation*}
    \PR{Y_1 = y_1, Y_2 = y_2, \dots, Y_k = y_k} = \mu(y_1) \prod_{t = 1}^{k-1} P(y_t, y_{t+1}) = \mu(y_1) \prod_{e \in \calE} P(e)^{(k-1) T(e)},
\end{equation*}
where $T(e) = \frac{1}{k-1}\sum_{t = 1}^{k-1} \pred{(y_t, y_{t+1}) = e}$, and $\set{T(e)}_{e \in \calE}$ is called a Markov type.
We consider a base e-family $\calV_e = \set{\bar{P}_\theta \colon \theta \in \Theta} \subset \calW(\calX, \calD)$ following Definition~\ref{definition:e-family-parametric},
\begin{equation*}
    \bar{P}_\theta(x,x') = \exp \left(C(x,x') + \sum_{i = 1}^{d} \theta^i \bar{g}_i(x,x') + R_\theta(x') - R_\theta(x) - \psi_\theta \right),
\end{equation*}
where $\bar{g}_1, \dots, \bar{g}_d \in \calF(\calX, \calD)$ are independent in $\calG(\calX, \calD)$.
We then let $P_{\origin} \in \calW(\calY, \calE)$ and look at the lumpable embedded model defined as in \eqref{eq:family-J},
\begin{equation*}
    \calJ(P_{\origin}) = \set{ P_{\origin, \theta} \eqdef \Lambda^{(P_{\origin})}_\star \bar{P}_\theta \colon  \bar{P}_\theta \in \calV_e }.
\end{equation*}
From Lemma~\ref{lemma:L-and-J-are-special-families}, $\calJ(P_{\origin})$ forms an e-family with $\dim \calJ(P_{\origin}) =  \dim \calV_e$, but that consists of chains over a larger state space.
Furthermore, a basis for $\calJ(P_{\origin})$ is given by the below stated Lemma~\ref{lemma:lumped-family-of-functions}.
\begin{lemma}
\label{lemma:lumped-family-of-functions}
Let $d \in \N$, and let a collection of functions $\bar{g}_1, \dots, \bar{g}_d \in \calF(\calX, \calD)$ that is independent in $\calG(\calX, \calD)$. Then the collection
$g_1, \dots, g_d \in \calF(\calY, \calE)$ defined for $i \in [d]$ by
\begin{equation*}
\begin{split}
    g_i \colon \calE &\to \R, (y,y') \mapsto \bar{g}_i(\kappa(y), \kappa(y')),
\end{split}
\end{equation*}
is independent in $\calG(\calY, \calE)$.
\end{lemma}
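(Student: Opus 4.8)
The plan is to recognise the assignment $\bar g_i\mapsto g_i$, $g_i(y,y')=\bar g_i(\kappa(y),\kappa(y'))$, as the restriction to $\calE$ of a single linear \emph{pullback} operator $\kappa^{*}\colon\calF(\calX,\calD)\to\calF(\calY,\calE)$, $(\kappa^{*}\bar h)(y,y')=\bar h(\kappa(y),\kappa(y'))$, so that $g_i=\kappa^{*}\bar g_i$. Independence of $g_1,\dots,g_d$ in $\calG(\calY,\calE)=\calF(\calY,\calE)/\calN(\calY,\calE)$ means $\sum_i a_i g_i\in\calN(\calY,\calE)\Rightarrow a=0$ for $a\in\R^d$. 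Since $\sum_i a_i g_i=\kappa^{*}\bar g$ with $\bar g=\sum_i a_i\bar g_i$, and since independence of the $\bar g_i$ in $\calG(\calX,\calD)$ already forces $a=0$ as soon as $\bar g\in\calN(\calX,\calD)$, the whole statement collapses to the single implication
\[
\kappa^{*}\bar g\in\calN(\calY,\calE)\ \Longrightarrow\ \bar g\in\calN(\calX,\calD),\qquad \bar g\in\calF(\calX,\calD).
\]
Equivalently, I would show that the induced map $\kappa^{*}\colon\calG(\calX,\calD)\to\calG(\calY,\calE)$ is injective; its well-definedness, $\kappa^{*}\calN(\calX,\calD)\subset\calN(\calY,\calE)$, is immediate because $\bar h=\bar f(x')-\bar f(x)+\bar c$ pulls back to $(\bar f\circ\kappa)(y')-(\bar f\circ\kappa)(y)+\bar c$.

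The engine is the directed-cycle characterisation of $\calN$ on a strongly connected digraph $(\calZ,\mathcal{A})$: one has $h\in\calN(\calZ,\mathcal{A})$ iff there is a constant $c$ with $\sum_{t=0}^{\ell-1}h(z_t,z_{t+1})=\ell c$ around every directed cycle $z_0\to\cdots\to z_\ell=z_0$. The forward direction is telescoping; the converse path-integrates $h-c$ from a fixed base vertex to build a potential $\bar f$, single-valued precisely because the cycle-sums of $h-c$ vanish and $(\calZ,\mathcal{A})$ is strongly connected. I would use the easy forward direction on $(\calY,\calE)$ and the converse on $(\calX,\calD)$, both strongly connected (the latter since $\calD=\kappa_2(\calE)$ supports the irreducible lumped kernels).

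Assume then $\kappa^{*}\bar g$ decomposes on $\calE$ as $\bar g(\kappa(y),\kappa(y'))=f(y')-f(y)+c$. Fix a directed cycle $C\colon x_0\to x_1\to\cdots\to x_\ell=x_0$ in $\calD$ and put $S=\sum_{t=0}^{\ell-1}\bar g(x_t,x_{t+1})$; the target is $S=\ell c$. Using the compatibility hypothesis of Proposition~\ref{proposition:lumpable-not-empty}, I would lift $C$ stepwise into $\calE$: from an arbitrary $y_0\in\calS_{x_0}$, the condition $(x_t,x_{t+1})\in\calD$ with $y_t\in\calS_{x_t}$ produces $y_{t+1}\in\calS_{x_{t+1}}$ with $(y_t,y_{t+1})\in\calE$. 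Iterating the lift of $C$ yields $y_0,y_\ell,y_{2\ell},\dots\in\calS_{x_0}$; finiteness of $\calS_{x_0}$ and pigeonhole give $j<k$ with $y_{j\ell}=y_{k\ell}$, so $y_{j\ell}\to\cdots\to y_{k\ell}$ is a genuine cycle of $(\calY,\calE)$ projecting onto $C$ traversed $k-j$ times. Because $\kappa^{*}\bar g$ depends only on $\kappa$-images, each lifted copy contributes exactly $S$ to the cycle-sum; the forward characterisation on $(\calY,\calE)$ then gives $(k-j)S=(k-j)\ell c$, whence $S=\ell c$. As $C$ was arbitrary, $\bar g$ satisfies the cycle condition on $(\calX,\calD)$ with constant $c$, so the converse characterisation delivers $\bar g\in\calN(\calX,\calD)$, and independence of the $\bar g_i$ concludes.

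I expect the crux to be exactly the closing-up of the lifted cycle: a single lift of $C$ need not return to its start (equivalently, $f$ need not be constant on the blocks $\calS_x$), and it is the repetition-plus-pigeonhole device, combined with the insensitivity of $\kappa^{*}\bar g$ to the choice of lift, that circumvents this. The cycle characterisation of $\calN$ itself (the potential-construction step) is routine but should be stated carefully, since that is where strong connectivity of $(\calX,\calD)$ enters.
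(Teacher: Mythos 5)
Your proof is correct, but it takes a genuinely different route from the paper's. The paper argues directly: from $\sum_i\alpha_i g_i(y,y')=f(y')-f(y)+c$ on $\calE$ it averages over $y\in\calS_x$, $y'\in\calS_{x'}$ and reads off $\sum_i\alpha_i\bar g_i(x,x')=\bar f(x')-\bar f(x)+c$ with the block-averaged potential $\bar f(x)=f(\calS_x)/\abs{\calS_x}$, then invokes independence of the $\bar g_i$. That is a two-line construction of the potential on $\calX$, but it tacitly sums the identity $f(y')-f(y)+c$ over \emph{all} of $\calS_x\times\calS_{x'}$, which is only licensed when every such pair lies in $\calE$ (e.g. $\calE=\calY^2$); over $\calE\cap(\calS_x\times\calS_{x'})$ alone the average of $f(y')-f(y)+c$ need not split into a difference of block potentials. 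Your argument — reducing to injectivity of the induced pullback $\calG(\calX,\calD)\to\calG(\calY,\calE)$, characterizing $\calN$ by the condition that cycle sums equal $\ell c$, and transporting that condition from $(\calY,\calE)$ down to $(\calX,\calD)$ by repeatedly lifting a $\calD$-cycle via the compatibility condition of Proposition~\ref{proposition:lumpable-not-empty} and closing it up with the pigeonhole principle — is longer and requires you to actually prove the cycle characterization (the potential-building converse on the strongly connected $(\calX,\calD)$), but it works verbatim for arbitrary compatible edge sets and correctly only ever evaluates the hypothesis on genuine edges of $\calE$. Two small points of hygiene: the closed walk $y_{j\ell}\to\cdots\to y_{k\ell}$ you produce need not be a simple cycle, so you should state the forward (telescoping) direction for closed directed walks rather than cycles, which costs nothing; and the converse characterization's well-definedness of the potential requires decomposing closed directed walks into directed cycles, which deserves the careful statement you already flag.
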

\begin{proof}
Let $\alpha_1, \dots, \alpha_d \in \R$ be such that $\sum_{i = 1}^{d} \alpha_i g_i = 0_{\calG(\calY, \calE)}$,
then there exist $f \in \R^{\calY}$ and $c \in \R$ where for any $(y,y') \in \calE$,
\begin{equation*}
    \sum_{i = 1}^{d} \alpha_i g_i(y,y') = f(y') - f(y) + c.
\end{equation*}
For $(x,x') \in \calD$, taking an average on both sides yields
\begin{equation*}
\begin{split}
    \frac{1}{\abs{\calS_x}\abs{\calS_x'}}\sum_{y \in \calS_{x}, y' \in \calS_{x'}}  \sum_{i = 1}^{d} \alpha_i g_i(y,y') &= \frac{1}{\abs{\calS_x}\abs{\calS_x'}} \sum_{y \in \calS_{x}, y' \in \calS_{x'}}  \left(f(y') - f(y) + c\right) \\
     \sum_{i = 1}^{d} \alpha_i \bar{g}_i(x, x') &= \frac{1}{\abs{\calS_{x'}}} f(\calS_{x'}) - \frac{1}{\abs{\calS_x}} f(\calS_{x}) + c = \bar{f}(x') - \bar{f}(x) + c, \\
\end{split}
\end{equation*}
where we introduced $\bar{f} \colon \calX \to \R, x \mapsto f(\calS_{x})/\abs{\calS_{x}}$.
In other words, 
$\sum_{i = 1}^{d} \alpha_i \bar{g}_i(x, x') = 0_{\calG(\calX, \calD)}$, and
since $\bar{g}_1, \dots, \bar{g}_d$ are independent in $\calG(\calX, \calD)$,
$\alpha_i = 0$ for all $i \in [d]$.
\end{proof}
When the observed Markov type $\set{T(e)}_{e \in \calE}$ defines a proper edge measure, we can construct the associated \hl{stochastic matrix} $\hat{P}_T  \in \calW_\kappa(\calY, \calE)$, and
we are often interested in solving the minimization problem,
\begin{equation*}
    P_m \eqdef \argmin_{\theta \in \Theta} \kl{\hat{P}_{T}}{P_{\origin, \theta}}.
\end{equation*}
\begin{figure}%

\centering

\tikzset{every picture/.style={line width=0.75pt}} %

\begin{tikzpicture}[x=0.50pt,y=0.50pt,yscale=-1,xscale=1]

\draw   (155,132) .. controls (155,80.64) and (244.5,39) .. (354.9,39) .. controls (465.3,39) and (554.8,80.64) .. (554.8,132) .. controls (554.8,183.36) and (465.3,225) .. (354.9,225) .. controls (244.5,225) and (155,183.36) .. (155,132) -- cycle ;
\draw [color={rgb, 255:red, 208; green, 2; blue, 27 }  ,draw opacity=1 ]   (155,132) .. controls (200.8,192) and (522.8,180) .. (554.8,132) ;
\draw [color={rgb, 255:red, 208; green, 2; blue, 27 }  ,draw opacity=1 ] [dash pattern={on 0.84pt off 2.51pt}]  (155,132) .. controls (197.8,73) and (516.8,80) .. (554.8,132) ;
\draw    (296.8,60) -- (296.9,204) ;
\draw  [color={rgb, 255:red, 208; green, 2; blue, 27 }  ,draw opacity=1 ] (291,128) -- (291,138) -- (281,138) ;
\draw   (292.76,135.76) -- (301.24,144.24)(301.24,135.76) -- (292.76,144.24) ;
\draw   (365.76,106.76) -- (374.24,115.24)(374.24,106.76) -- (365.76,115.24) ;
\draw   (391.76,139.76) -- (400.24,148.24)(400.24,139.76) -- (391.76,148.24) ;
\draw   (481.76,138.76) -- (490.24,147.24)(490.24,138.76) -- (481.76,147.24) ;

\draw (382,97) node [anchor=north west][inner sep=0.75pt]   [align=left] {$P_{T_1}$};
\draw (406,135) node [anchor=north west][inner sep=0.75pt]   [align=left] {$P_{T_2}$};
\draw (444,135) node [anchor=north west][inner sep=0.75pt]   [align=left] {$\cdots$};
\draw (496,124) node [anchor=north west][inner sep=0.75pt]   [align=left] {$P_{T_N}$};
\draw (304,119) node [anchor=north west][inner sep=0.75pt]   [align=left] {$P_m$};
\draw (306,47) node [anchor=north west][inner sep=0.75pt]   [align=left] {$\calJ(P_{\origin})$};
\draw (90,56) node [anchor=north west][inner sep=0.75pt]   [align=left] {$\calW_\kappa(\calY, \calE)$};
\draw [color={rgb, 255:red, 208; green, 2; blue, 27 }  ,draw opacity=1 ] (180,120) node [anchor=north west][inner sep=0.75pt]   [align=left] {$\calL(\bar{P}_m)$};

\end{tikzpicture}
\caption{Embedded model $\calJ(P_{\origin})$. The Pythagorean leaf $\calL(\bar{P}_m)$ contains all lumpable frequency matrices $P_{T_1}, \dots, P_{T_N}$ that would result in $P_m$ being the minimizer.}
\label{figure:embedded-models}
\end{figure}
Then $P_m$ is the m-projection onto $\calL(\bar{P}_m)$, the leaf that contains the collection of all lumpable frequency matrices constructed from types that would have resulted in $P_m$ being selected as the minimizer (see Figure~\ref{figure:embedded-models}). 
In fact, from a straightforward computation,
\begin{equation*}
    \begin{split}
    \argmin_{\theta \in \Theta} \kl{\hat{P}_T}{P_{\origin, \theta}} %
    &= \argmax_{\theta \in \Theta} \sum_{e \in \calE} T(e) \log P_{\origin, \theta}(e) \\
    &= \argmax_{\theta \in \Theta} \log \mu(y_1) \prod_{e \in \calE} P_{\origin, \theta}(e)^{(n-1) T(e)} \\
    &= \argmax_{\theta \in \Theta}  \PR[\origin, \theta]{Y_1 = y_1, \dots, Y_n = y_n},\\
    \end{split}
\end{equation*}
where $Y_1, \dots, Y_n$ is sampled according to $P_{\origin, \theta}$ with arbitrary initial distribution $\mu$ \footnote{If the chain is assumed to be started stationarily, then $\mu = \pi_{\origin, \theta}$ depends on the model, and the projection does no longer correspond to the MLE.}.

\section{Extension to higher-order data processing, and composite embeddings}
\label{section:higher-order}
\hl{We naturally} extend the data-processing model to the multi-letter case.
For a trajectory
\begin{equation*}
    Y_1, Y_2, \dots, Y_t, \dots
\end{equation*}
sampled according to some \hl{stochastic matrix} $P$, we
define the $k$th order lumping 
\begin{equation*}
    \kappa^{(k)} \colon \calY^k \to \calX,
\end{equation*}
that outputs
\begin{equation*}
    \kappa^{(k)}(Y_1, Y_2, \dots, Y_k), \kappa^{(k)}(Y_2, Y_2, \dots, Y_{k+1}), \dots, \kappa^{(k)}(Y_t, \dots, Y_{t+k - 1}), \dots
\end{equation*}
Notice that this operation can be reduced to the composition of some first order lumping $\kappa'_\star$ together with a $k$th order Hudson \hl{expansion} (Section~\ref{section:embeddings-hudson}).
\begin{equation*}
    P \mapsto \kappa^{(k)}_\star P = \kappa'_{\star} H^{(k)}_\star P .
\end{equation*}

\begin{example}
\label{example:non-markov-embedding}
We let $\calY = \Z/m \Z$ be the quotient group equipped with the addition modulo $m$.
\begin{equation}
\label{eq:jumps-on-cycles}
    \calW \cyc (\calY, \calE) = \set{ P  \in \calW(\calY, \calE) \colon \exists  \mu \in \calP(\calY), P(y,y') = \mu(y' - y)}.
\end{equation}
It is easy to see that $\calW \cyc (\calY, \calE) \subset \calW \bis (\calY, \calE)$, thus all elements have uniform stationary distributions.
Furthermore, observe that $\calW \cyc (\calY, \calE) \not \subset \calW \rev (\calY, \calE)$.
We can verify that
$\calW \cyc (\calY, \calE)$ forms both an e-family and an m-family in $\calW (\calY, \calE)$ with 
$$\dim \calW \cyc (\calY, \calE) = \frac{\abs{\calE}}{\abs{\calY}} - 1.$$
Whereas the natural embedding from $\calP_+(\calY)$ to $\calW \iid (\calY)$ tears the m-structure of the simplex \citep[Lemma~8]{wolfer2021information}, the natural embedding 
$$\calP_+(\calY) \to \calW \cyc(\calY) \cap \calW_+(\calY)$$ 
preserves both the e-structure and m-structure.
For simplicity, 
let us consider a full support \hl{stochastic matrix} $P \in \calW \cyc (\calY) \cap \calW_+(\calY)$.
Let $\mu \in \calP_+(\calY)$ be such that for any $y,y' \in \calY$, $P(y,y') = \mu(y' - y)$.
Performing a Hudson \hl{expansion}, for any $y_1, y_2, y_1', y'_2 \in \calY$,
\begin{equation*}
    H_\star P((y_1, y_2), (y_1', y_2')) = \pred{ y_2 = y_1' } P(y_2, y_2') = \pred{ y_2 = y_1' } \mu(y_2' - y_2).
\end{equation*}
Then, if we choose $\kappa_2(y_1, y_2) = y_2 - y_1$, for any $x' \in \calX$ and any $y_2 \in \calY$, we have
\begin{equation*}
    \sum_{\substack{(y_1', y_2') \in \calY^2 \\ y'_2 - y'_1 = x'}} \pred{y_2 = y'_1} \mu(y'_2 - y'_1) = \mu(x') \sum_{\substack{(y_1', y_2') \in \calY^2 \\ y'_2 - y'_1 = x'}} \pred{y_2 = y'_1} = \mu(x').
\end{equation*}
We have thus effectively embedded an element of $\calW \cyc (\calY) \cap \calW_+(\calY)$ into $\calW \iid(\calY)$. Observe that 
$$\dim \calW \cyc (\calY) \cap \calW_+(\calY) = \dim \calW \iid (\calY) = m - 1,$$ and that the resulting embedding cannot represented by a Markov embedding.
\end{example}

The previous example suggests a definition for higher-order embeddings, as a composition of a Markov embedding $\Lambda_\star$, followed by a Hudson lumping $h^{(k)}_\star$ for some order $k$.
\begin{equation*}
    E_\star \colon \calW(\calX, \calD) \stackrel{\Lambda_\star}{\to} \calW_\kappa(\calY,\calE) \subset \calW_{h^{(k)}}(\calY, \calE) \stackrel{h_\star^{(k)}}{\to} \calW(\calZ, \calF).
\end{equation*}

\section{Proofs}
\label{section:proofs}
\subsection{Proof of Lemma~\ref{lemma:markov-embedding-preserve-information-geometry}}
\label{proof:lemma-markov-embedding-preserve-information-geometry}

Let $\kappa$ and $\biguplus_{x \in \calX} \calS_x = \calY$ be the associated lumping function and partition of the Markov embedding.  
For all $y,y' \in \calY$ and $i \in [d]$, it holds that
\begin{equation*}
\begin{split}
 \partial_i \log \Lambda_\star \bar{P}_\theta(y, y') &= \partial_i \log [\Lambda(y,y' )\bar{P}_\theta(\kappa(y), \kappa(y'))] = \partial_i \log \bar{P}_\theta(\kappa(y), \kappa(y')),\\
\end{split}
\end{equation*}
thus
\begin{equation*}
\begin{split}
\fshr_{ij}(\theta) &= \sum_{(y, y') \in \calE} Q_\theta(y, y') \partial_i \log \bar{P}_\theta(\kappa(y), \kappa(y')) \partial_j \log \bar{P}_\theta(\kappa(y), \kappa(y')) \\
&= \sum_{(x, x') \in \calD} \left(\sum_{y \in \calS_{x}, y' \in \calS_{x'}} Q_\theta(y, y')\right) \partial_i \log \bar{P}_\theta(x, x') \partial_j \log \bar{P}_\theta(x, x'). \\
\end{split}
\end{equation*}
It follows from Corollary~\ref{corollary:lumped-stationary-distribution-and-edge-measure} that $\fshr$ is preserved. 
We proceed to prove conservation of the e-connection,
\begin{equation*}
\begin{split}
\Gamma^{(e)}_{ij, k}(\theta) 
=& \sum_{(y, y') \in \calE}  \partial_i \partial_j \log \Lambda(y,y') \bar{P}_\theta(\kappa(y), \kappa(y')) \partial_k Q_\theta(y,  y') \\
=& \sum_{(x,x') \in \calD} \partial_i \partial_j \log \bar{P}_\theta(x, x') \partial_k \left( \sum_{ y \in \calS_x, y' \in \calS_{x'}} Q_\theta(y,  y') \right) \\
=& \sum_{(x,x') \in \calD}  \partial_i \partial_j \log \bar{P}_\theta(x, x') \partial_k \bar{Q}_\theta(x,  x'), \\
\end{split}
\end{equation*}
where the last equality follows from Corollary~\ref{corollary:lumped-stationary-distribution-and-edge-measure}.
Invariance of the m-connection and information divergence can be proven using similar arguments \footnote{A more economical but less elementary proof consists in first proving the invariance claim for the information divergence, and then recover the conjugate connection manifold following the construction of \citet{eguchi1983second, eguchi1985differential}.}.

\qed

\subsection{Proof of Lemma~\ref{lemma:lumpable-matrix-vector-space}}
\label{proof:lemma-lumpable-matrix-vector-space}

Let $A, B \in \calF_\kappa(\calY, \calE)$, and $\alpha, \beta \in \R$.
Then, for all $x,x' \in \calX$, 
and for all $y_1,y_2 \in \calS_x$, by operations on matrices,
\begin{equation*}
    \begin{split}
        (\alpha A +  \beta B)(y_1, \calS_{x'})
        &= \alpha A(y_1, \calS_{x'}) + \beta B(y_1, \calS_{x'}) \\ 
        &= \alpha A(y_2, \calS_{x'}) + \beta B(y_2, \calS_{x'}) = (\alpha A + \beta B)(y_2, \calS_{x'}),
    \end{split}
\end{equation*}
thus $\alpha A + \beta B \in \calF_\kappa(\calY, \calE)$.
Moreover, $0 \in \calF_\kappa(\calY, \calE)$,
hence $\calF_\kappa(\calY, \calE)$ is a subspace of $\calF(\calY, \calE)$, and $(i)$ holds.
Moving on to $(ii)$,
let $A, B \in \calF(\calY, \calE)$, and $\alpha, \beta \in \R$. For any $x,x' \in \calX$, and $y \in \calS_x$,
\begin{equation*}
\begin{split}
    \kappa_\star(\alpha A + \beta B)(x,x') &= (\alpha A + \beta B)(y,\calS_{x'}) \\ &= \alpha A (y,\calS_{x'}) + \beta B(y,\calS_{x'}) = (\alpha \kappa_\star A + \beta \kappa_\star B)(x,x'),
\end{split}
\end{equation*}
thus $\kappa_\star$ is a linear map.
In order to prove surjectivity in $(ii)$ and claim $(iii)$, we proceed to construct a basis.
Taking the total order on $\calY = [m]$ induced from the natural numbers, and
for $(x,x') \in \calD$, we write
$$\calR_{x,x'} \eqdef \set{(y,y') \in \calE \colon y \in \calS_x, y' \in \calS_{x'}, y' \neq \ymax(\calS_{x'}, y)},$$ 
where 
$$\ymax(\calS_{x'}, y) \eqdef \max \set{y' \in \calS_{x'} \colon (y, y') \in \calE}.$$
For simplicity, we will use the shorthands 
$$\ymax = \ymax(\calS_{x'}, y) \text{ and } \ymax_0 = \ymax(\calS_{x_0'}, y_0).$$
Writing $E^{y_0,y'_0}(y,y') = \delta[y_0 = y] \delta[y'_0 = y']$ for $(y, y'), (y_0, y_0') \in \calE$,
we define
\begin{equation*}
\begin{split}
C^{x, x'} &\eqdef  \sum_{y \in \calS_x} E^{y, \ymax} \;\; \text{ for } (x,x') \in \calD,
\\
F^{y,y'} &\eqdef E^{y,y'} - E^{y, \ymax} \;\; \text{ for } (y,y') \in \calR_{x,x'}, \text{ and } (x,x') \in \calD,\\
\end{split}
\end{equation*}
which are all elements of $\calW_\kappa(\calY, \calE)$ by construction, and consider the collection
\begin{equation}
\label{eq:lumpable-matrices-basis}
    \calB(\kappa) \eqdef \set{ C^{x, x'} \colon (x,x') \in \calD } \cup \set{F^{y,y'} \colon (y,y') \in \calR_{x,x'}, (x,x') \in \calD}.
\end{equation}
Let $(\alpha_{x,x'} \in \R \colon (x,x') \in \calD)$ and $(\alpha_{y,y'} \in \R \colon (y,y') \in \calR_{x,x'}, (x,x') \in \calD)$ be such that
\begin{equation}
    \label{eq:null-linear-combination}
    \sum_{(x,x') \in \calD} \alpha_{x,x'} C^{x,x'} + \sum_{(y,y') \in \calR_{x,x'}, (x,x') \in \calD} \alpha_{y,y'} F^{y,y'} = 0_{\calF_{\kappa}(\calY, \calE)}.
\end{equation}
Let $(x_0, x_0') \in \calD$.
Taking \eqref{eq:null-linear-combination} for any $(y_0, y_0') \in \calR_{x_0, x_0'}$ yields $\alpha_{y_0, y_0'} = 0$, while
\eqref{eq:null-linear-combination} taken at $(y_0, \ymax_0)$ for any $y_0 \in \calS_{x_0}$ yields $\alpha_{x_0,x_0'} = 0$. As a result, $\calB(\kappa)$ forms a linearly independent family in $\calF_\kappa(\calY, \calE)$.
Let now $A \in \calF_\kappa(\calY, \calE)$. Then $A \in \calF(\calY, \calE)$, and there exists $\set{a_{y,y'} \in \R \colon (y,y') \in \calE}$ such that $A = \sum_{(y,y') \in \calE} a_{y,y'} E^{y,y'}$. Since $A$ is $\kappa$-lumpable, it holds that for any $(x,x') \in \calD$, and any $y_1, y_2 \in \calS_{x}$,
\begin{equation*}
    \sum_{y' \in \calS_{x'}} a_{y_1, y'} = \sum_{y' \in \calS_{x'}} a_{y_2, y'} \eqdef a_{x,x'}.
\end{equation*}
By decomposition,
\begin{equation*}
    \begin{split}
        A &= \sum_{(x,x') \in \calD} \sum_{(y,y') \in \calR_{x,x'}} a_{y,y'} E^{y,y'} +  \sum_{(x,x') \in \calD} \sum_{y \in \calS_{x}} a_{y,\ymax} E^{y,\ymax} \\
        &= \sum_{(x,x') \in \calD} \sum_{(y,y') \in \calR_{x,x'}} a_{y,y'} E^{y,y'} +  \sum_{(x,x') \in \calD} \sum_{y \in \calS_{x}} \left(a_{x,x'} - \sum_{y' \in \calS_{x'}, y' \neq \ymax}a_{y,y'}\right) E^{y,\ymax} \\
        &= \sum_{(x,x') \in \calD} \sum_{(y,y') \in \calR_{x,x'}} a_{y,y'} \underbrace{\left(E^{y,y'} - E^{y,\ymax}\right)}_{ F^{y,y'}} +  \sum_{(x,x') \in \calD}  a_{x,x'} \underbrace{\sum_{y \in \calS_{x}} E^{y,\ymax}}_{C^{x,x'}}, \\
    \end{split}
\end{equation*}
thus $\calB(\kappa)$ is also a generating family for $\calF_\kappa(\calY, \calE)$.
Further notice that
\begin{equation}
\label{eq:lump-the-basis}
\begin{split}
\kappa_\star C^{x, x'} = E^{x,x'}, \qquad \kappa_\star F^{y,y'} = 0.
\end{split}
\end{equation}
In fact,
\begin{equation*}
\begin{split}
 \Ker \kappa_\star &= \Span\left(\set{F^{y,y'} \colon (y,y') \in \calR_{x,x'}, (x, x') \in \calD}\right), \\
\Range \kappa_\star &= \Span\left(\set{\kappa_\star C^{x, x'} \colon (x,x') \in \calD}\right)= \calF(\calX, \calD),
\end{split}
\end{equation*}
hence the surjectivity of $\kappa_\star$,
and from the rank-nullity theorem,
\begin{equation*}
\begin{split}
    \dim \calF_\kappa(\calY, \calE) &= \dim \Ker \kappa_\star + \dim \Range \kappa_\star
    = \abs{\calE} - \sum_{(x,x') \in \calD} \abs{\calS_x} + \abs{\calD}.
\end{split}
\end{equation*}

\qed

\subsection{Proof of Theorem~\ref{theorem:congruent-embeddings-are-lambda-embeddings}}
\label{proof:theorem-congruent-embeddings-are-lambda-embeddings}

Let $K_\star$ be a $\kappa$-congruent embedding.
Recall the basis $\calB(\kappa)$ of $\calF_\kappa(\calY, \calE)$ introduced in \eqref{eq:lumpable-matrices-basis}.
Since $K_\star$ is a linear map, we can define it uniquely by the coordinates of the image of basis vectors of $\calF(\calX, \calD)$ onto the basis $\calB(\kappa)$. Namely, for $(x_0, x_0') \in \calD$, we write
\begin{equation*}
    \begin{split}
        K_\star E^{x_0,x_0'} = \sum_{\substack{(x,x') \in \calD \\ (y,y') \in \calR_{x,x'} }}  K_{y,y'}^{x_0, x_0'} F^{y,y'} + \sum_{(x,x') \in \calD} K_{x, x'}^{x_0, x_0'} C^{x, x'},
    \end{split}
\end{equation*}
where the $K_{y,y'}^{x_0, x_0'}$ and $K_{x, x'}^{x_0, x_0'}$ are real numbers.
Let $(x,x') \in \calD$, and  $(y,y') \in (\calS_{x} \times \calS_{x'}) \cap \calE$. Since $E^{x_0,x_0'}$ is non-negative, it follows from monotonicity that when $y' \neq \ymax$,
\begin{equation*}
    \begin{split}
        (K_\star E^{x_0,x_0'})(y,y') = K_{y,y'}^{x_0, x_0'},
    \end{split}
\end{equation*}
is non-negative. On the other hand, 
\begin{equation*}
    \begin{split}
        (K_\star E^{x_0,x_0'})(y,\ymax) = K^{x_0,x_0'}_{x, x'} - \sum_{\bar{y} \in \calS_{x'}, (y,\bar{y}) \in \calE, \bar{y} \neq \ymax} K_{y,\bar{y}}^{x_0, x_0'},
    \end{split}
\end{equation*}
must be non-negative, thus
\begin{equation}
\label{eq:all-positive-tool}
    \begin{split}
        K^{x_0,x_0'}_{x, x'} \geq  \max_{y \in \calS_{x}} \sum_{\bar{y} \in \calS_{x'}, (y,\bar{y}) \in \calE, \bar{y} \neq \ymax} K_{y,\bar{y}}^{x_0, x_0'} \geq 0.
    \end{split}
\end{equation}
From the requirement that $\kappa_\star K_\star = \Id_{\calF(\calX, \calD)}$,  linearity of $\kappa_\star$ (Lemma~\ref{lemma:lumpable-matrix-vector-space}-$(ii)$), and  \eqref{eq:lump-the-basis}, we have for any $(x_0, x_0') \in \calD$,
\begin{equation*}
    \begin{split}
        E^{x_0,x_0'} &= \kappa_\star K_\star E^{x_0,x_0'} = \sum_{\substack{(x,x') \in \calD \\ (y,y') \in \calR_{x,x'} }} K_{y,y'}^{x_0, x_0'} \kappa_\star F^{y,y'} + \sum_{(x,x') \in \calD} K_{x, x'}^{x_0, x_0'} \kappa_\star C^{x,x'} \\
        & = \sum_{(x,x') \in \calD} K_{x, x'}^{x_0, x_0'} E^{x,x'}. \\
    \end{split}
\end{equation*}
Thus, on one hand, for $(x, x') \neq (x_0, x_0')$, $K_{x, x'}^{x_0, x_0'} = 0$, and from \eqref{eq:all-positive-tool}, it follows that
for any $(y,y') \in \calR_{x,x'}, K_{y,y'}^{x_0,x_0'} = 0$.
On the other hand, 
$K_{x_0, x_0'}^{x_0, x_0'} = 1$,
hence 
$$\sum_{\bar{y}_0 \in \calS_{x_0'}, (y_0, \bar{y}_0) \in \calE, \bar{y}_0 \neq \ymax_0} K_{y_0,\bar{y}_0}^{x_0,x_0'} \leq 1$$ 
for any $y_0 \in \calS_{x_0}$, and from non-negativity, each individual coefficient $K_{y_0, \bar{y}_0}^{x_0, x_0'}$ is also in $[0,1]$.
We therefore obtain that for any $(x,x') \in \calD$,
\begin{equation*}
    \begin{split}
        K_\star E^{x,x'} = C^{x, x'} +  \sum_{(y,y') \in \calR_{x,x'}} K_{y,y'}^{x, x'} F^{y,y'}. 
    \end{split}
\end{equation*}
Consider $A = \sum_{(x,x') \in \calD}A(x,x')E^{x,x'} \in \calF(\calX, \calD)$. Embedding $A$ yields
\begin{equation*}
    \begin{split}
        K_\star A &= \sum_{(x,x') \in \calD} A(x,x') K_\star E^{x,x'} \\
        &= \sum_{(x,x') \in \calD} A(x,x' ) \left( C^{x, x'} +  \sum_{(y,y') \in \calR_{x,x'}} K_{y,y'}^{x, x'} F^{y,y'} \right) \\
        &= \sum_{(x,x') \in \calD} A(x,x' ) \left( \sum_{y \in \calS_x} E^{y, \ymax} +  \sum_{(y,y') \in \calR_{x,x'}} K_{y,y'}^{x, x'} \left(E^{y,y'} - E^{y, \ymax}\right) \right) \\
        &= \sum_{(x,x') \in \calD} A(x,x' ) \sum_{y \in \calS_x}\left( E^{y, \ymax} +  \sum_{\substack{ y' \in \calS_{x'} \\ (y,y') \in \calE, y' \neq \ymax}} K_{y,y'}^{x, x'} \left(E^{y,y'} - E^{y, \ymax}\right) \right) \\
        &= \sum_{(x,x') \in \calD} A(x,x' ) \sum_{\substack{y \in \calS_x \\ y' \in \calS_{x'} \\ (y,y') \in \calE} } \left[ \delta[y' = \ymax] \left(1 - \sum_{\substack{\bar{y} \in \calS_{x'} \\ (y,\bar{y}) \in \calE \\ \bar{y} \neq \ymax}} K_{y,\bar{y}}^{x, x'} \right) +  \delta[y' \neq \ymax] K_{y,y'}^{x, x'} \right] E^{y, y'}. \\
    \end{split}
\end{equation*}
Recall that $\sum_{\bar{y} \in \calS_{x'}, (y,\bar{y}) \in \calE, \bar{y} \neq \ymax} K_{y,\bar{y}}^{x, x'} \leq 1$.
It is then convenient to define
\begin{equation*}
    K_{y,\ymax}^{x, x'} \eqdef 1 - \sum_{\bar{y} \in \calS_{x'}, (y,\bar{y}) \in \calE, \bar{y} \neq \ymax} K_{y,\bar{y}}^{x, x'} \in [0,1],
\end{equation*}
so that we can write more compactly
\begin{equation*}
    \begin{split}
        K_\star A
        &= \sum_{(x,x') \in \calD} A(x,x' ) \sum_{y \in \calS_x, y' \in \calS_{x'}, (y,y') \in \calE} K_{y,y'}^{x, x'}  E^{y, y'}, \\
        &= \sum_{(y,y') \in \calE} \sum_{(x,x') \in \calD} A(x,x' ) \delta[y' \in \calS_{x'}] \delta[y \in \calS_{x}]  K_{y,y'}^{x, x'}  E^{y, y'}, \\
        &= \sum_{(y,y') \in \calE}  A(\kappa(y),\kappa(y') )  K_{y,y'}^{\kappa(y), \kappa(y')}  E^{y, y'}, \\
    \end{split}
\end{equation*}
where for any $x \in \calX$ and $y \in \calS_{x}$,
\begin{equation}
    \label{eq:is-distribution}
    (K_{y,y'}^{x, x'})_{y' \in \calS_{x'}} \in \calP(\calS_{x'}).
\end{equation}
Let us introduce $\Lambda \in \calF(\calY, \calE)$, with $\Lambda(y,y') = K_{y,y'}^{\kappa(y), \kappa(y')}$ for any $(y,y') \in \calE$.
Then for $(y,y') \in \calY$, $K_\star A(y,y') = A(\kappa(y),\kappa(y') )  \Lambda(y,y')$,
and $K_\star$ satisfies $(i)$ of Definition~\ref{definition:markov-embeddings}.
Suppose now for contradiction that there exists $(y_0,y_0') \in \calE$ such that $\Lambda(y_0,y_0') = 0$, and let $B \in \calF_+(\calX, \calD)$.
Then $K_\star B(y_0,y_0') = B(\kappa(y_0), \kappa(y'_0)) \Lambda(y_0, y_0') = 0$ and $\Lambda_\star B \not \in \calF_+(\calY, \calE)$.
Thus, $\Lambda \in \calF_+(\calY, \calE)$, and requirement $(iii)$ is met.
Finally, \eqref{eq:is-distribution} leads to condition $(iv)$. 
As a result,
any congruent embedding can be expressed as a Markov embedding.
It is straightforward to verify that conversely, a $\kappa$-compatible Markov embedding is always $\kappa$-congruent, whence the theorem.

\qed

\subsection{Proof of Proposition~\ref{proposition:explicit-phi-embedding}}
\label{proof:proposition-explicit-phi-embedding}

Let $y \in \calY$, and let $(\rho, v)$ be the right PF pair of $\bar{P}_{\origin} \circ \bar{P}$. It holds that
\begin{equation*}
\begin{split}
    \sum_{y' \in \calY} \tilde{P}(y,y')v(\kappa(y')) &= \sum_{y' \in \calY} P_{\origin}(y,y')\bar{P}(\kappa(y), \kappa(y'))v(\kappa(y'))\\ 
    &= \sum_{x' \in \calX} \sum_{y' \in \calS_{x'}} P_{\origin}(y,y')\bar{P}(\kappa(y), \kappa(y'))v(\kappa(y'))\\
    &= \sum_{x' \in \calX} \sum_{y' \in \calS_{x'}} P_{\origin}(y,y')\bar{P}(\kappa(y), x')v(x')\\
    &= \sum_{x' \in \calX} \bar{P}_{\origin}(\kappa(y),x')\bar{P}(\kappa(y), x')v(x')\\
    &= \rho v(\kappa(y)), \\
\end{split}
\end{equation*}
where the fourth equality stems from $\kappa_\star P_{\origin} = \bar{P}_{\origin}$, hence $(i)$ holds.
Furthermore, for all $x,x' \in \calX$, and for all $y \in \calS_x$,
\begin{equation*}
\begin{split}
    \sum_{y' \in \calS_{x'}} \hlm{\Phi} \bar{P}(y,y') &= \frac{1}{\rho} \sum_{y' \in \calS_{x'}}v(\kappa(y))^{-1} P_{\origin}(y,y')\bar{P}(\kappa(y), \kappa(y'))v(\kappa(y')) \\
    &= \frac{1}{\rho} \sum_{y' \in \calS_{x'}}v(x)^{-1} P_{\origin}(y,y')\bar{P}(x, x')v(x') \\
    &= \frac{1}{\rho} v(x)^{-1} \bar{P}_{\origin}(\kappa(y),x')\bar{P}(x, x')v(x'), \\
    &= \frac{1}{\rho} v(x)^{-1} \bar{P}_{\origin}(x,x')\bar{P}(x, x')v(x'), \\
\end{split}
\end{equation*}
where the first equality is $(i)$.
Observe that the expression we obtain is independent of $y$, thus the chain is $\kappa$-lumpable, and
in fact, by definition of lumping, $\kappa_\star \hlm{\Phi} \bar{P} = \stoch(\bar{P}_{\origin} \circ \bar{P})$, whence $(ii)$.

\qed

\subsection{Proof of Lemma~\ref{lemma:hudson-breaks-m-structure}}
\label{proof:lemma-hudson-breaks-m-structure}

Let $p \in (0,1), p \neq 1/2$, $\calX = \set{0, 1}$, and consider the two positive \hl{stochastic matrices}
$$\bar{P}_0 = \begin{pmatrix} 1 - p & p \\ p & 1 - p \end{pmatrix} \qquad \bar{P}_1 = \begin{pmatrix} p & 1 - p \\ 1 - p & p \end{pmatrix}.$$
We compute successively,

\begin{equation*}
    P_0 = \begin{array}{c c} &
\begin{array}{c c c c} 00 & 01 & 10 & 11 \\
\end{array}
\\
\begin{array}{c c c c}
00 \\
01 \\
10 \\
11
\end{array}
&
\left(
\begin{array}{c c c c}
1 - p  & p  & 0 & 0 \\
0 & 0 & p & 1 - p \\
1 - p  & p & 0 & 0 \\
0 & 0 & p & 1 - p \\
\end{array}
\right)
\end{array}, P_1 = 
\left(
\begin{array}{c c c c}
p  & 1 - p  & 0 & 0 \\
0 & 0 & 1- p & p \\
p  & 1 - p & 0 & 0 \\
0 & 0 & 1 - p & p \\
\end{array}
\right),
\end{equation*}
\begin{equation*}
\pi(00) = \frac{1 - p}{2}, \; \pi(01) = \frac{p}{2}, \; \pi(10) = \frac{p}{2}, \; \pi(11) = \frac{1 - p}{2},
\end{equation*}

\begin{equation*}
    Q_0 = 
\left(
\begin{array}{c c c c}
\frac{(1 - p)^2}{2}  & \frac{p(1-p)}{2} & 0 & 0 \\
0 & 0 & \frac{p^2}{2} & \frac{p(1-p)}{2} \\
\frac{p(1-p)}{2} & \frac{p^2}{2} & 0 & 0 \\
0 & 0 & \frac{p(1-p)}{2} & \frac{(1 - p)^2}{2} 
\end{array}
\right),
\end{equation*}
\begin{equation*}
Q_1 = 
\left(
\begin{array}{c c c c}
\frac{p^2}{2} & \frac{p(1-p)}{2} & 0 & 0 \\
0 & 0 & \frac{(1 -p)^2}{2} & \frac{p(1-p)}{2} \\
\frac{p(1-p)}{2} & \frac{(1- p)^2}{2} & 0 & 0 \\
0 & 0 & \frac{p(1-p)}{2} & \frac{p^2}{2} \\
\end{array}
\right),
\end{equation*}

\begin{equation*}
    Q_{1/2} = \frac{1}{2} Q_0 + \frac{1}{2} Q_1 = 
\left(
\begin{array}{c c c c}
\frac{(1 - p)^2 + p^2}{4} & \frac{p(1-p)}{2} & 0 & 0 \\
0 & 0 & \frac{(1 -p)^2 + p^2}{4} & \frac{p(1-p)}{2} \\
\frac{p(1-p)}{2} & \frac{(1- p)^2 + p^2}{4} & 0 & 0 \\
0 & 0 & \frac{p(1-p)}{2} & \frac{(1 - p)^2 + p^2}{4} \\
\end{array}
\right),
\end{equation*}
\begin{equation*}
    \pi_{1/2} = (1/4, 1/4, 1/4, 1/4),
\end{equation*}
\begin{equation*}
    \gamma^{(m)}_{P_0, P_1}(1/2) = P_{1/2} = 
\left(
\begin{array}{c c c c}
(1 - p)^2 + p^2 & 2 p(1-p) & 0 & 0 \\
0 & 0 & (1 -p)^2 + p^2 & 2 p(1-p) \\
2 p(1-p) & (1- p)^2 + p^2 & 0 & 0 \\
0 & 0 & 2 p(1-p) & (1 - p)^2 + p^2 \\
\end{array}
\right),
\end{equation*}
which is not lumpable, i.e. $\gamma^{(m)}_{P_0, P_1}(1/2) \not \in \calW_h(\calX^2, H_{\calX^2})$.

\qed
\subsection{Proof of Proposition~\ref{proposition:pythagorean-inequality-e-convex}}
\label{proof:proposition-pythagorean-inequality-e-convex}

The proof follows the same strategy as for the distribution setting.
It is easy to see that $(i)$ implies $(ii)$, and it remains to prove the converse statement.
Set $P_t = \gamma^{(e)}_{P_0, \bar{P}}(t)$. We can write for any $(x,x') \in \calD'$,
$$P_t(x,x') = \frac{P_0(x,x')^{1-t} \bar{P}(x,x')^t v_t(x')}{v_t(x) \rho_t},$$
where $(\rho_t, v_t)$ is the right PF pair of the matrix $P_0^{\circ (1-t)} \circ \bar{P}^{\circ t}$.
We compute,
\begin{equation*}
    \begin{split}
        \partial_t \kl{P}{P_t} =& \sum_{(x,x') \in \calD'} Q(x,x') \partial_t \log \frac{1}{P_t(x,x')} \\
        =& \sum_{(x,x') \in \calD'} Q(x,x') \partial_t \log \frac{v_t(x) \rho_t}{P_0(x,x')^{1-t} \bar{P}(x,x')^t v_t(x')} \\
        \stackrel{(a)}{=}& \sum_{(x,x') \in \calD'} Q(x,x') \partial_t \log \frac{ \rho_t}{P_0(x,x')^{1-t} \bar{P}(x,x')^t} \\
        =& - \sum_{(x,x') \in \calD'} Q(x,x') \partial_t  \left( (1-t) \log P_0(x,x') \right) \\&- \sum_{(x,x') \in \calD'} Q(x,x') \partial_t \left(t \log  \bar{P}(x,x') \right) + \partial_t \log \rho_t \\
        =& \sum_{(x,x') \in \calD'} Q(x,x') \log P_0(x,x') \\
        &- \sum_{(x,x') \in \calD'} Q(x,x')  \log  \bar{P}(x,x')  + \partial_t \log \rho_t \\
        =& \sum_{(x,x') \in \calD'} Q(x,x') \log \frac{ P_0(x,x')}{\bar{P}(x,x')}  + \partial_t \log \rho_t \\
        \stackrel{(b)}{=}& \sum_{(x,x') \in \calD'} \left[ Q(x,x') - Q_t(x,x') \right] \log \frac{ P_0(x,x')}{\bar{P}(x,x')}, \\
    \end{split}
\end{equation*}
where $(a)$ follows from
$$\sum_{(x,x') \in \calD'} Q(x,x') \left[ \partial_t \log v_t(x) - \partial_t \log v_t(x') \right] = 0,$$
and $(b)$ stems from \citet[Theorem~4]{nagaoka2017exponential}, which yields
$$\partial_t \log \rho_t = \sum_{(x,x') \in \calD'}Q_t(x,x') \log \frac{\bar{P}(x,x')}{P_0(x,x')}.$$
By a first-order Taylor expansion around $P_0$, there exists $s \in [0, t]$ such that
\begin{equation*}
    \kl{P}{P_t} = \kl{P}{P_0} + t \frac{\partial}{\partial_t} \kl{P}{P_t} \at{t = s}.
\end{equation*}
Moreover, since $P_0, \bar{P} \in \calC$, it follows by e-convexity that also $P_t \in \calC$, and $P_0$ being the minimizer implies 
\begin{equation*}
    \frac{1}{t}\left( \kl{P}{P_t} - \kl{P}{P_0}\right) \geq 0.
\end{equation*}
Taking the limit $t \to 0$ yields that
\begin{equation*}
    \sum_{(x,x') \in \calD'} \left(Q(x,x') - Q_0(x,x') \right) \log \frac{P_0(x,x')}{\bar{P}(x,x')} \geq 0.
\end{equation*}
Uniqueness follows from strict e-convexity, whence the theorem.

\qed

\subsection{Proof of Lemma~\ref{lemma:L-and-J-are-special-families}}
\label{proof:lemma-L-and-J-are-special-families}

To prove $(i)$, notice that since $\calW(\calX, \calD)$ forms an e-family \citep[Corollary~1]{nagaoka2017exponential}, and $\Lambda^{(P_{\origin})}$ preserves the e-structure (Theorem~\ref{theorem:exponential-embeddings-are-e-geodesic-affine-maps}), $\calJ(P_{\origin})$ also forms an e-family.
Since $\Lambda_\star^{(P_{\origin})}$ is an embedding, it is a diffeomorphism onto its image, thus $\dim \calJ(P_{\origin}) = \dim \calW (\calX, \calD)$.
It remains to prove $(ii)$, i.e. $\calL(\bar{P}_0)$ is closed under affine combination.
Let two Markov embeddings induced by $\Lambda_1, \Lambda_2$ that embed $\bar{P}_0$ respectively into $P_1, P_2 \in \calL(\bar{P}_0)$,
\begin{equation*}
\begin{split}
    P_1(y,y') &= \bar{P}_0(\kappa(y), \kappa(y')) \Lambda_1(y,y'), \\
    P_2(y,y') &= \bar{P}_0(\kappa(y), \kappa(y')) \Lambda_2(y,y'). \\
\end{split}
\end{equation*}
Let $\pi_1$ (resp. $\pi_2$) be the stationary distribution of $P_1$ (resp. $P_2$),
and let $t \in \R$,
such that 
\begin{equation*}
    Q_t(y,y') = t \pi_1(y) P_1(y,y') + (1 - t) \pi_2(y) P_2(y,y'),
\end{equation*}
defines a proper edge measure (note that $t$ is allowed to take value outside of $(0,1)$).
The stationary distribution of the \hl{stochastic matrix} pertaining to $Q_t$ is immediately given by
\begin{equation*}
    \pi_t(y) = t \pi_1(y) + (1 - t) \pi_2(y),
\end{equation*}
and
\begin{equation*}
\begin{split}
    P_t(y,y') &= \frac{Q_t(y,y')}{\pi_t(y)} = t\frac{ \pi_1(y)}{\pi_t(y)} P_1(y,y') + (1-t)\frac{\pi_2(y)}{\pi_t(y)}P_2(y,y'),\\
    &= \bar{P}_0(\kappa(y), \kappa(y')) \Lambda_t(y,y'),
\end{split}
\end{equation*}
where we wrote
\begin{equation*}
    \Lambda_t(y,y') \eqdef  t\frac{ \pi_1(y)}{\pi_t(y)} \Lambda_1(y,y') + (1-t)\frac{\pi_2(y)}{\pi_t(y)}\Lambda_2(y,y').
\end{equation*}
Non-negativity of $P_t(y,y')$ and $\bar{P}_0(y,y')$ implies that $\Lambda_t(y,y')$ is non-negative.
Moreover, for $x' \in \calX$,
\begin{equation*}
\begin{split}
    \sum_{y' \in\calS_{x'}} \Lambda_t(y,y') &= t\frac{ \pi_1(y)}{\pi_t(y)} \sum_{y' \in\calS_{x'}}\Lambda_1(y,y') + (1-t)\frac{\pi_2(y)}{\pi_t(y)}\sum_{y' \in\calS_{x'}}\Lambda_2(y,y') \\
    &= t\frac{ \pi_1(y)}{\pi_t(y)}  + (1-t)\frac{\pi_2(y)}{\pi_t(y)} = 1.\\
\end{split}
\end{equation*}
As a result, $\Lambda_t$ defines a proper Markov embedding, and $P_t \in \calL(\bar{P}_0)$.
The dimension of $\calL(\bar{P}_{0})$ is obtained by considering its one-to-one correspondence with the canonical embedding map,
which has the same number of degrees of freedom as a Markov embedding (Remark~\ref{remark:dof-markov-embedding}).

\qed

\section*{Acknowledgments}
We are thankful to Martin Adam\v{c}\'{i}k, \hl{Jun'ichi Takeuchi and Hiroshi Nagaoka} for the insightful discussions.
We also express our gratitude to the anonymous referees for their insightful comments, which have greatly contributed to improving the quality of this manuscript.

\bibliography{bibliography}
\bibliographystyle{abbrvnat}

\end{document}